\documentclass[12pt,twoside,a4paper]{article}
\usepackage[utf8]{inputenc}
\usepackage{amsmath, amssymb, mathtools, enumerate}
\usepackage{mathrsfs} 
\usepackage{amsfonts, amsthm, stmaryrd, fullpage}
\usepackage{authblk}
\usepackage{tikz-cd}
\usepackage[german, english]{babel}
\usepackage[nottoc,numbib]{tocbibind}

\usepackage{color}
\usepackage{hyperref}
\hypersetup{colorlinks=true,
linktoc=all,
citecolor=blue,
filecolor=black,
linkcolor=blue,
urlcolor=blue}

\newtheorem{thm}{Theorem}[section]
\newtheorem{lemma}[thm]{Lemma}
\newtheorem{cor}[thm]{Corollary}
\newtheorem{prop}[thm]{Proposition}
\newtheorem{prop-def}[thm]{Proposition-Definition}

\theoremstyle{definition} 
\newtheorem{mydef}[thm]{Definition}
  
\newtheorem{example}[thm]{Example}

\theoremstyle{remark}
\newtheorem{rmk}[thm]{Remark}

\newcommand\ad{{\rm ad}}
\newcommand\Aff{{\rm Aff}}

\newcommand\Bl{{\rm Bl}}

\newcommand\id{{\rm id}}

\newcommand\Proj{{\rm Proj}}

\newcommand\rig{{\rm rig}}

\newcommand\spc{{\rm sp}}
\newcommand\Sp{{\rm Sp}}
\newcommand\Spa{{\rm Spa}}

\newcommand\Spec{{\rm Spec}} 
\newcommand\Spf{{\rm Spf}}

\begin{document}

\title{Formal models for relative adic spaces}
\author{Dimitri Dine}
\date{}
\maketitle

\begin{abstract}We extend Raynaud's theory of formal models from rigid-analytic spaces over a nonarchimedean field to uniform qcqs adic spaces $X$, with no finite-type assumptions, over an arbitrary Tate affinoid base $S$. The key new ingredient is the notion of a normalized formal blow-up which takes on the role played by admissible formal blow-ups in the classical theory. \end{abstract}

\tableofcontents

\section{Introduction}

\subsection{Motivation}

In his 1974 report \cite{RaynaudReport}, Raynaud introduced new ideas into rigid-analytic geometry which allow us to view rigid geometry as a "birational geometry of formal schemes". In the decades that followed, Raynaud's theory of formal models established itself as an indispensable tool, with applications to many topics of interest in algebraic geometry and number theory: Néron models of abelian varieties \cite{Bosch-Xarles}, rigid-analytic Picard varieties \cite{HL00}, classification of bounded smooth rigid-analytic groups \cite{Luetkebohmert}, proper morphisms in rigid geometry \cite{L90}, descent for coherent sheaves on rigid-analytic spaces \cite{Bosch-Goertz}, \cite{ConradDescent}, canonical subgroups and overconvergent modular forms \cite{Andreatta-Gasbarri}, \cite{Conrad06-2}, \cite{Conrad06-3}, \cite{Kisin-Lai}, \cite{Bijakowski-Pilloni-Stroh}, \cite{Pilloni20}, nonarchimedean Arakelov theory \cite{Gubler98}, \cite{Gubler-Kuennemann17} (via formal metrics on line bundles), motivic integration and complex singularities \cite{Loeser-Sebag}, \cite{Nicaise-Sebag} (via weak Néron models), Raynaud's and Harbater's proof of Abhyankar's conjecture \cite{Raynaud94}, \cite{Harbater94}, mod-$p$ Poincaré duality on smooth rigid-analytic varieties \cite{Zavyalov24}, rigid-analytic $D$-modules \cite{Ardakov-Wadsley}, Zariski-constructible sheaves on rigid-analytic varieties \cite{Bhatt-Hansen} - the list could go on. On the other hand, since the introduction of perfectoid spaces in \cite{Scholze}, modern arithmetic geometry abounds with important examples of analytic adic spaces which do not satisfy any Noetherian or finite type assumptions: Shimura varieties at infinite level (also known as perfectoid Shimura varieties), as in the work of Scholze \cite{Scholze15} and Caraiani-Scholze \cite{CS17}, generic fibers of Rapoport-Zink spaces at infinite level \cite{SW13}, relative Fargues-Fontaine curves (\cite{Kedlaya-Liu}, \S8.7) over non-zero-dimensional perfectoid spaces of characteristic $p$, to name just a few. Thus it appears desirable to have an analog of Raynaud's theory of formal models for such spaces.

An important step towards this goal was taken in the book of Fujiwara and Kato \cite{FK}, where the authors define the category of qcqs rigid spaces as the category of qcqs adic formal schemes of finite ideal type localized by admissible formal blow-ups (more general rigid spaces are then defined by gluing qcqs ones) and where they prove many basic properties of admissible formal blow-ups for general adic formal schemes of finite ideal type, with no finiteness or Noetherian assumptions. At the same time, their result on comparison with adic spaces (\cite{FK}, Appendix A to Ch.~II, Theorem A.5.2) still relies both on a Noetherian assumption on the base space and on a (topologically) finite-type assumption on the adic spaces and formal schemes involved.

In \cite{AGV}, Corollary 1.2.7, Ayoub, Gallauer and Vezzani constructed an embedding of the category of uniform analytic adic spaces, with no finite-type assumptions, into the category of rigid spaces in the sense of Fujiwara-Kato, thus establishing the existence of formal models for qcqs uniform analytic adic spaces. However, this result still does not describe the category of (qcqs) uniform adic spaces as a localization of a category of formal models. Indeed, the embedding functor of loc.~cit.~is given on affinoids by \begin{equation*}\Spa(A, A^{+})\mapsto \Spf(A^{+})^{\rig}\end{equation*}(so all rigid spaces in the essential image have formal models which are integrally closed in their generic fiber), but an admissible formal blow-up of the formal scheme $\Spf(A^{+})$ can fail to be integrally closed in its the generic fiber (consider the admissible formal blow-up in any finitely generated open ideal of $A^{+}$ which is not a normal ideal).

Hence, thus far, there has been no general analog of Raynaud theory for qcqs uniform analytic adic spaces satisfying no Noetherian or finite-type conditions, such as perfectoid spaces. The aim of this paper is to produce such a theory.

\subsection{Main theorems}       

Our set-up is as follows. We fix a complete adic ring $R$ whose ring of definition is generated by a single non-zero-divisor and non-unit $\varpi$ and we assume that the Tate ring $R[\varpi^{-1}]$, with pair of definition $(R, \varpi)$, is sheafy, so that $S=\Spa(R[\varpi^{-1}], \overline{R})$ (where $\overline{R}$ denotes the integral closure of $R$ in $R[\varpi^{-1}]$) is an affinoid adic space. We consider adic spaces $X\to S$ over $S$, with $X$ quasi-compact quasi-separated. We ask for a theory of formal models of $X\to S$ over $\Spf(R)$ which would work under mild assumptions on $X$ (such as qcqs and uniform) and with no finite-type or Noetherian assumptions. To begin with, let us introduce the relevant category of formal schemes which come into question as potential formal models.
\begin{mydef}[Definition \ref{Locally rig-sheafy formal schemes}, Definition \ref{Locally rig-sheafy formal schemes 2}, Definition \ref{Locally stably uniform adic formal scheme}]Let $(R, \varpi)$ be as above. \begin{enumerate}[(1)]\item An adic formal $R$-scheme is a formal scheme $\mathfrak{X}$ equipped with an adic morphism $\mathfrak{X}\to\Spf(R)$, i.e., for every affine open subset $\mathfrak{U}=\Spf(A)$ an ideal of definition of the adic ring $A$ is generated by the image of $\varpi$. \item An adic formal $R$-scheme $\mathfrak{X}$ is called locally rig-sheafy if for every affine open subset $\mathfrak{U}=\Spf(A)$ the Tate ring $A[\varpi^{-1}]$, with pair of definition $(A, \varpi)$, is sheafy. \item An adic formal $R$-scheme $\mathfrak{X}$ is called locally uniform if for every affine open subset $\mathfrak{U}=\Spf(A)$ the Tate ring $A[\varpi^{-1}]$ is stably uniform. \end{enumerate}\end{mydef}
By a theorem of Buzzard-Verberkmoes-Mihara \cite{BV}, \cite{Mihara}, stably uniform Tate rings are sheafy, so locally stably uniform adic formal $R$-schemes are also locally rig-sheafy. In the body of the paper (Definition \ref{Locally rig-sheafy formal schemes}, Definition \ref{Locally rig-sheafy formal schemes 2}), we also introduce locally rig-sheafy formal $R$-schemes which are not necessarily adic over $R$; however, the adic case is sufficient for formulating our main results on the existence of formal models. As in the classical theory of Raynaud, we associate to every locally rig-sheafy adic formal $R$-scheme $\mathfrak{X}$ an analytic adic space $\mathfrak{X}_{\eta}^{\ad}$, called the adic analytic generic fiber of $\mathfrak{X}$. For $\mathfrak{X}=\Spf(A)$ affine, this is simply the affinoid adic space \begin{equation*}X=\Spa(A[\varpi^{-1}], \overline{A}),\end{equation*}where the topology on the Tate ring $A[\varpi^{-1}]$ is given by the pair of definition $(A, \varpi)$ and where $\overline{A}$ denotes the integral closure of $A$ in $A[\varpi^{-1}]$ (note that the locally rig-sheafy assumption is necessary to ensure that $X$ is indeed an adic space). For general locally rig-sheafy adic formal $R$-schemes $\mathfrak{X}$, the adic space $\mathfrak{X}_{\eta}^{\ad}$ is obtained by gluing the adic analytic generic fibers of affine open subsets of $\mathfrak{X}$. The assignment \begin{equation*}\mathfrak{X}\mapsto \mathfrak{X}_{\eta}^{\ad}\end{equation*}is functorial; we denote by $f_{0\eta}$ the generic fiber of a morphism $f_{0}$. We obtain a corresponding notion of formal $R$-models. 
\begin{mydef}[Formal models, Definition \ref{Definition of formal models}]Let $(R, \varpi)$ be as before and let \begin{equation*}S=\Spa(R[\varpi^{-1}], \overline{R}).\end{equation*}For an adic space $X\to S$ over $S$, a formal $R$-model $\mathfrak{X}$ is an adic formal $R$-scheme $\mathfrak{X}$ such that \begin{equation*}\mathfrak{X}_{\eta}^{\ad}\cong X.\end{equation*}For a morphism $f: X\to X'$ of adic spaces over $S$, a formal $R$-model of $f$ is a morphism of adic formal $R$-schemes $f_{0}: \mathfrak{X}\to\mathfrak{X}'$ such that $f_{0\eta}=f$. A formal $R$-model $\mathfrak{X}$ of an adic space $X$ over $S$ is called integrally closed if for every affine open subset $\mathfrak{U}=\Spf(A)$ of $\mathfrak{X}$ the ring $A$ is integrally closed in $A[\varpi^{-1}]$.\end{mydef}
Recall that we want to develop a theory of formal $R$-models for relative adic spaces $X\to S$ which avoids finite-type assumptions on the structure morphism $X\to S$ and Noetherian assumptions on $X$ or $S$. To achieve this goal whenever $X$ is uniform qcqs, we use the notion of normalization of a locally rig-sheafy adic formal $R$-scheme inside its generic fiber, which is based on the theory of adically quasi-coherent sheaves over adic formal schemes worked out by Fujiwara and Kato in \cite{FK}; we recall the definition of adically quasi-coherent sheaves in Definition \ref{Adically quasi-coherent sheaf}. The following key proposition and definition was inspired by a result of Pilloni and Stroh (\cite{Pilloni-Stroh16}, Proposition 1.1). 
\begin{prop-def}[Proposition \ref{Quasi-coherent algebras and formal models}, Proposition \ref{Normalization of a formal model}, Definition \ref{Definition of normalization}]\label{Normalization, introduction}Fix $(R, \varpi)$ as before. Let $\mathfrak{S}$ be a locally rig-sheafy $\varpi$-torsion-free qcqs adic formal $R$-scheme with adic analytic generic fiber $S=\mathfrak{S}_{\eta}^{\ad}$. Then $\spc_{S,\mathfrak{S}\ast}\mathcal{O}_{S}^{+}$ is an adically quasi-coherent algebra on $\mathfrak{S}$ in the sense of Fujiwara-Kato and its formal spectrum \begin{equation*}\mathfrak{X}=\Spf(\spc_{S,\mathfrak{S}\ast}\mathcal{O}_{S}^{+})\end{equation*}is an integrally closed formal $R$-model of $S$. We call $\mathfrak{X}$ (and the canonical affine morphism of formal schemes $\mathfrak{X}\to\mathfrak{S}$) the normalization of $\mathfrak{S}$ in its generic fiber.\end{prop-def}
The notion of formal spectrum of an adically quasi-coherent algebra referred to in the above proposition was introduced and studied by Fujiwara-Kato in \cite{FK}, Ch.~I, \S4.1(c). Besides the theory of adically quasi-coherent sheaves, we also rely on the work of Fujiwara-Kato for the basic properties of admissible formal blow-ups of adic formal $R$-schemes which were proved in \cite{FK} without Noetherian or topologically of finite type assumptions on the formal schemes involved, see \cite{FK}, Ch.~II, \S1. We can then define the notion of a normalized formal blow-up, which in our theory takes on the role played by admissible formal blow-ups in Raynaud's classical theory.
\begin{mydef}[Normalized formal blow-up]\label{Normalized formal blow-up, introduction}Let $\mathfrak{X}$ be a locally rig-sheafy qcqs $\varpi$-torsion-free adic formal $R$-scheme with uniform generic fiber over $\Spa(R[\varpi^{-1}], \overline{R})$. A morphism $f_{0}: \mathfrak{Z}'\to\mathfrak{X}$ of $\mathfrak{X}$ is called a \textit{normalized formal blow-up} if it is the composition of a $\varpi$-torsion-free admissible formal blow-up $\mathfrak{X}'\to\mathfrak{X}$ followed by its normalization $\mathfrak{Z}'\to\mathfrak{X}'$ in the sense of Proposition-Definition \ref{Normalization, introduction}.\end{mydef}
We can now finally state the main theorem of the paper, which provides a generalization of Raynaud theory to qcqs uniform adic spaces over any Tate affinoid base $S$, with no finite-type assumptions on the structure morphism $X\to S$.
\begin{thm}[Theorem \ref{Analog of Raynaud theory}]\label{Main theorem}Let $(R, \varpi)$ be as before and let $S=\Spa(R[\varpi^{-1}], \overline{R})$, where $\overline{R}$ denotes the integral closure of $R$ inside $R[\varpi^{-1}]$. The functor\begin{equation*}\mathfrak{X}\mapsto \mathfrak{X}_{\eta}^{\ad}\end{equation*}gives rise to an equivalence of categories between \begin{enumerate}[(1)]\item the category of locally stably uniform $\varpi$-torsion-free quasi-compact quasi-separated adic formal $R$-schemes which are integrally closed in their generic fibers, localized by normalized formal blow-ups, and 
\item the category of uniform quasi-compact quasi-separated adic spaces over $S$.\end{enumerate}\end{thm}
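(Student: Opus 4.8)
The plan is to adapt Raynaud's argument in the form given by Fujiwara--Kato (\cite{FK}, Ch.~II), replacing admissible formal blow-ups throughout by normalized formal blow-ups (Definition~\ref{Normalized formal blow-up, introduction}) and replacing the Noetherian control of normalization by the explicit description in Proposition--Definition~\ref{Normalization, introduction}. Write $\mathbf{C}$ for the category in~(1) and $\Sigma$ for the class of normalized formal blow-ups. First one observes that $\mathfrak{X}\mapsto\mathfrak{X}_{\eta}^{\ad}$ inverts every element of $\Sigma$: admissible formal blow-ups induce isomorphisms on adic generic fibers by \cite{FK}, Ch.~II, \S1, and a normalization in the generic fiber is an isomorphism on generic fibers by construction. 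The functor therefore factors through $\mathbf{C}[\Sigma^{-1}]$ and takes values in category~(2) (a locally stably uniform $\varpi$-torsion-free qcqs adic formal $R$-scheme is locally rig-sheafy, so its generic fiber is a uniform qcqs adic space over $S$), so it remains to show that the induced functor is essentially surjective, full and faithful.

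The technical heart, and the step I expect to be the main obstacle, is a package of structural properties of $\Sigma$ mirroring \cite{FK}, Ch.~II, \S1, but now requiring the normalization functor $\mathfrak{X}\rightsquigarrow\Spf(\spc_{\ast}\mathcal{O}^{+})$ to be handled directly from the adically quasi-coherent algebra $\spc_{X,\mathfrak{X}\ast}\mathcal{O}_{X}^{+}$ and its universal property, since no finiteness of normalization is available. Concretely I would prove: \emph{(i)} $\Sigma$ is closed under composition, i.e.\ a $\varpi$-torsion-free admissible blow-up of the normalization of a $\varpi$-torsion-free admissible blow-up of $\mathfrak{X}\in\mathbf{C}$ is again a normalized formal blow-up of $\mathfrak{X}$ (pull the outer blow-up ideal back to the inner blow-up, where after a further blow-up it descends to an ideal on $\mathfrak{X}$, then invoke the universal property of the normalization and the fact that the total space is integrally closed in its generic fiber); \emph{(ii)} a morphism in $\mathbf{C}$ of $\varpi$-torsion-free adic formal $R$-schemes is determined by its generic fiber, because affine-locally it is a ring map $B\to A$ that embeds into $B[\varpi^{-1}]\to A[\varpi^{-1}]$, and consequently elements of $\Sigma$ are local on the target and glue; \emph{(iii)} an Ore-type condition: given $g\colon\mathfrak{X}'\to\mathfrak{Y}$ in $\mathbf{C}$ and $t\colon\mathfrak{Y}''\to\mathfrak{Y}$ in $\Sigma$, pulling the blow-up ideal of $t$ back along $g$, blowing up, passing to the $\varpi$-torsion-free part and normalizing produces $s\colon\mathfrak{X}''\to\mathfrak{X}'$ in $\Sigma$ together with a morphism $\mathfrak{X}''\to\mathfrak{Y}''$ over $g$; \emph{(iv)} the affine dictionary: for $\mathfrak{X}=\Spf(A)\in\mathbf{C}$, $X=\Spa(A[\varpi^{-1}],A)$, and a rational subset $U\subseteq X$, some admissible formal blow-up of $\Spf(A)$ in a finitely generated open ideal carries an open formal subscheme with generic fiber $U$ whose normalization over that open is $\Spf(\mathcal{O}_{X}^{+}(U))$. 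From \emph{(ii)}--\emph{(iv)} one deduces that every open cover of a qcqs $X=\mathfrak{X}_{\eta}^{\ad}$ is refined by the cover attached to some normalized formal blow-up of $\mathfrak{X}$, and that for fixed $\mathfrak{X}$ the normalized formal blow-ups form a cofiltered category (any two are dominated by the blow-up in the product of their ideals followed by normalization; parallel morphisms over $\mathfrak{X}$ coincide by \emph{(ii)}). Hence $\Sigma$ admits a calculus of fractions and $\Hom_{\mathbf{C}[\Sigma^{-1}]}(\mathfrak{X},\mathfrak{Y})=\varinjlim_{(\mathfrak{X}'\to\mathfrak{X})\in\Sigma}\Hom_{\mathbf{C}}(\mathfrak{X}',\mathfrak{Y})$.

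For essential surjectivity, given a uniform qcqs $X\to S$, choose a finite affinoid cover $X=\bigcup_{i}X_{i}$ with $X_{i}=\Spa(A_{i},A_{i}^{+})$; each $A_{i}^{+}$ is a $\varpi$-adically complete, $\varpi$-torsion-free ring of definition of $A_{i}=A_{i}^{+}[\varpi^{-1}]$, integrally closed in $A_{i}$, so $\Spf(A_{i}^{+})$ is a $\varpi$-torsion-free integrally closed formal $R$-model of $X_{i}$ lying in $\mathbf{C}$ (its local stable uniformity being inherited from that of $X$, which is where the sheafiness hypothesis on $S$ and the relevant earlier results of the paper enter). Covering the qcqs overlaps $X_{i}\cap X_{j}$ by affinoid subdomains that are rational in both $X_{i}$ and $X_{j}$, realizing these by open formal subschemes of normalized formal blow-ups of $\Spf(A_{i}^{+})$ and of $\Spf(A_{j}^{+})$ via \emph{(iv)}, and identifying the two sides canonically (both equal $\Spf(\mathcal{O}_{X}^{+}(\cdot))$), one glues along these identifications, using \emph{(ii)}, to an object $\mathfrak{X}\in\mathbf{C}$ with $\mathfrak{X}_{\eta}^{\ad}\cong X$.

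For faithfulness, two morphisms $\mathfrak{X}\to\mathfrak{Y}$ in $\mathbf{C}[\Sigma^{-1}]$ with equal generic fibers are represented, after passing to a common normalized formal blow-up of $\mathfrak{X}$ (cofilteredness), by $g_{1},g_{2}\colon\mathfrak{X}'\to\mathfrak{Y}$ in $\mathbf{C}$ with $g_{1\eta}=g_{2\eta}$; a further normalized formal blow-up of $\mathfrak{X}'$ makes a suitable affine open cover of the source map, under both $g_{k}$, into common affine opens of $\mathfrak{Y}$, and there the two ring maps agree because their common target is $\varpi$-torsion-free and they agree after inverting $\varpi$; hence $g_{1}$ and $g_{2}$ become equal. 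For fullness, given $f\colon X\to Y$ of uniform qcqs adic spaces over $S$ with models $\mathfrak{X},\mathfrak{Y}\in\mathbf{C}$, cover $\mathfrak{Y}$ by affines $\Spf(B_{j})$, realize the pulled-back cover $\{f^{-1}((\Spf B_{j})_{\eta})\}$ of $X$ by a normalized formal blow-up $s\colon\mathfrak{X}'\to\mathfrak{X}$ as in \emph{(iv)}; its component over $f^{-1}((\Spf B_{j})_{\eta})$ is $\Spf(A_{j})$ with $A_{j}=\mathcal{O}_{X}^{+}$ of that open, and, since a morphism of adic spaces carries $\mathcal{O}^{+}$ into $\mathcal{O}^{+}$, $f$ induces $B_{j}\to A_{j}$ and hence $\Spf(A_{j})\to\Spf(B_{j})$; by \emph{(ii)} these glue to $g\colon\mathfrak{X}'\to\mathfrak{Y}$ with $g_{\eta}=f$, so $gs^{-1}$ is the desired preimage of $f$. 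The genuinely delicate ingredient is thus the structural package \emph{(i)}--\emph{(iv)} for $\Sigma$, in particular the compatibility of $\Spf(\spc_{\ast}\mathcal{O}^{+})$ with admissible formal blow-ups and with passage to rational subdomains of the generic fiber, together with the bookkeeping of where \emph{stable} uniformity (rather than plain uniformity) and $\varpi$-torsion-freeness are needed to keep all the glued morphisms canonical.
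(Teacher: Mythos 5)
Your overall strategy is the same as the paper's: faithfulness via ``a morphism of models is determined by its generic fiber'', fullness via lifting a morphism of generic fibers after a normalized formal blow-up (using that an integrally closed model satisfies $\mathcal{O}_{\mathfrak{Z}}(\mathfrak{Z})=\mathcal{O}^{+}$, so a morphism of adic spaces restricts to a ring map of models), and essential surjectivity by gluing the affinoid models $\Spf(A_i^{+})$ along overlaps realized inside normalized formal blow-ups. However, there is a genuine gap in the ingredient on which everything else leans, namely your item \emph{(ii)}. The justification you give --- affine-locally the ring map $B\to A$ injects into $B[\varpi^{-1}]\to A[\varpi^{-1}]$ --- only settles the case where the target is affine (or where the two morphisms are already known to send a common cover of the source into common affine opens of the target). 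For a general target you must first show that two morphisms $f_{0},g_{0}$ with $f_{0\eta}=g_{0\eta}$ agree on the underlying topological spaces, and that is not a formal consequence of torsion-freeness. The paper gets this from the surjectivity of the specialization map onto any $\varpi$-torsion-free qcqs model (Corollary \ref{Center maps are surjective}), which in turn rests on the global limit description $\mathfrak{X}_{\eta}^{\ad}\cong\varprojlim_{\mathfrak{X}'\in\Bl(\mathfrak{X})}\mathfrak{X}'$ (Theorem \ref{Generic fiber via formal blow-ups}); given surjectivity, $f_{0}(x_{0})=\spc(f_{0\eta}(x))=\spc(g_{0\eta}(x))=g_{0}(x_{0})$ for any $x$ with $\spc(x)=x_{0}$, and only then does the torsion-free ring argument finish the proof (Lemma \ref{Faithfulness}). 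Your faithfulness paragraph has the same hidden dependence: arranging, after a further normalized blow-up, that ``a suitable affine open cover of the source maps under both $g_{1},g_{2}$ into common affine opens of $\mathfrak{Y}$'' requires comparing open formal subschemes of the model through their $\spc$-preimages, which again needs $\spc$ to be surjective (and closed). This surjectivity/limit input is entirely absent from your proposal and is not a routine verification; without it, step \emph{(ii)}, and with it your faithfulness, fullness gluing, and cocycle argument in essential surjectivity, do not go through as written.

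Two smaller remarks. First, your structural package \emph{(i)} and \emph{(iii)} (closure of normalized formal blow-ups under the stated compositions, and an Ore-type condition) is more than the paper proves or needs: the paper follows the Bosch--L\"utkebohmert pattern and deduces the equivalence directly from Lemmas \ref{Faithfulness} and \ref{Fullness} and Theorem \ref{Existence of formal models}, never establishing a calculus of fractions for normalized blow-ups; if you do want \emph{(i)}/\emph{(iii)}, be aware that descending a blow-up ideal through the (merely affine, not finite) normalization morphism back to $\mathfrak{X}$ is itself a nontrivial point you have only gestured at. Second, your one-step gluing for essential surjectivity (identifying overlap models canonically because both are $\Spf(\mathcal{O}_{X}^{+}(\cdot))$) is workable, but the cocycle condition again rests on uniqueness statements that reduce to \emph{(ii)}; the paper instead glues two pieces at a time by induction on the size of the affinoid cover, using Corollary \ref{Open covers and formal models 2}, Lemma \ref{Fullness} and Lemma \ref{Normalized formal blow-up and affine opens}, which avoids any separate cocycle verification.
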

On the other hand, if the structure morphism to $S$ is assumed to be of finite type, we obtain the following more immediate analog of Raynaud theory. 
\begin{thm}[Theorem \ref{Analog of Raynaud theory 2}]\label{Main theorem 2}Let $(R, \varpi)$ be as before and let $S=\Spa(R[\varpi^{-1}], \overline{R})$, where $\overline{R}$ denotes the integral closure of $R$ inside $R[\varpi^{-1}]$. The functor\begin{equation*}\mathfrak{X}\mapsto \mathfrak{X}_{\eta}^{\ad}\end{equation*}gives rise to an equivalence of categories between \begin{enumerate}[(1)]\item the category of locally rig-sheafy $\varpi$-torsion-free adic formal $R$-schemes topologically of finite type over $\Spf(R)$, localized by admissible formal blow-ups, and \item the category of adic spaces of finite type over $S$.\end{enumerate}\end{thm}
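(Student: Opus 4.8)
The plan is to run the classical Raynaud argument --- in the form of Raynaud \cite{RaynaudReport}, as developed by Bosch--L\"utkebohmert and recast by Fujiwara--Kato --- now relativized over the Tate affinoid base $S$, with $\varpi$-torsion-freeness playing the role that $\mathcal{O}_{K}$-flatness plays over a field; the properties of admissible formal blow-ups needed below (stability under composition and base change, cofinality among finitely generated open ideals, the explicit charts, and the fact that the blow-up is an isomorphism over a neighbourhood of each rig-point) are available in the required non-Noetherian generality from \cite{FK}, Ch.~II, \S1. The first step is to check that $\mathfrak{X}\mapsto\mathfrak{X}_{\eta}^{\ad}$ descends to the localized category: an admissible formal blow-up $\mathfrak{X}'\to\mathfrak{X}$ induces an isomorphism $(\mathfrak{X}')_{\eta}^{\ad}\xrightarrow{\sim}\mathfrak{X}_{\eta}^{\ad}$. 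This reduces to the affine case $\mathfrak{X}=\Spf A$ and the blow-up of a finitely generated open ideal $I=(f_{0},\dots,f_{n})$: its $\varpi$-torsion-free version is covered by the affines $\Spf(A\langle I/f_{i}\rangle^{\varpi\text{-tf}})$, whose generic fibers are the rational subsets $\{\,x:|f_{j}(x)|\le|f_{i}(x)|\neq 0\,\}$ of $X=\Spa(A[\varpi^{-1}],\overline{A})$, and these cover $X$ because $f_{0},\dots,f_{n}$ generate the unit ideal of $A[\varpi^{-1}]$ ($I$ being open). One also records that the functor lands in category (2) --- if $A$ is topologically of finite type over $R$ then $A[\varpi^{-1}]$ is topologically of finite type over $R[\varpi^{-1}]$ and $\overline{A}$ is the integral closure of the image of $R\langle T_{1},\dots,T_{m}\rangle$ in $A[\varpi^{-1}]$ --- and, conversely, that every finite-type affinoid $\Spa(B,B^{+})$ over $S$ has the affine formal model $\Spf A$, with $A$ the $\varpi$-torsion-free quotient of the corresponding topologically finite type $R$-algebra, for which $A[\varpi^{-1}]=B$, $\overline{A}=B^{+}$, and the ``locally rig-sheafy'' condition says exactly that $B$ is sheafy.

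Next comes essential surjectivity. For $X$ quasi-compact quasi-separated of finite type over $S$, pick a finite affinoid cover $X=\bigcup_{i}U_{i}$ with affine formal models $\mathfrak{U}_{i}$. Each overlap $U_{i}\cap U_{j}$ is a finite union of rational subsets of $U_{i}$, and --- by the relative analog of the classical fact, proved through the explicit blow-up charts above --- every quasi-compact open of $(\mathfrak{U}_{i})_{\eta}^{\ad}$ is the generic fiber of an open formal subscheme of an admissible formal blow-up of $\mathfrak{U}_{i}$. This yields admissible blow-ups $\mathfrak{V}_{ij}\to\mathfrak{U}_{i}$ and $\mathfrak{V}_{ji}\to\mathfrak{U}_{j}$ with common generic fiber $U_{i}\cap U_{j}$; the fullness and faithfulness statements below, applied to these formal models of $U_{i}\cap U_{j}$, identify them canonically, and the cocycle condition holds after passing to common refinements (admissible blow-ups of a fixed formal scheme being cofiltered). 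Gluing the $\mathfrak{U}_{i}$ along these identifications produces a locally rig-sheafy, $\varpi$-torsion-free, quasi-compact quasi-separated adic formal $R$-scheme $\mathfrak{X}$, topologically of finite type over $\Spf R$, with $\mathfrak{X}_{\eta}^{\ad}\cong X$.

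Fullness and faithfulness then reduce to patching plus one further input. For fullness, given $f\colon X\to Y$ with formal models $\mathfrak{X},\mathfrak{Y}$, cover $Y$ by affinoids $\Spa(B[\varpi^{-1}],\overline{B})$ with $\mathfrak{V}=\Spf B$ and $B=R\langle T_{1},\dots,T_{n}\rangle/\mathfrak{b}$ modulo $\varpi$-torsion, and $X$ by affinoids $\Spa(A[\varpi^{-1}],\overline{A})$ refining the preimages; then $f$ carries the images of the $T_{k}$ to elements $g_{k}\in\mathcal{O}_{X}^{+}\big(\Spa(A[\varpi^{-1}],\overline{A})\big)=\overline{A}$, which are integral over $A$, so the $\varpi$-torsion-free quotient $A'$ of the module-finite $A$-algebra $A[g_{1},\dots,g_{n}]$ satisfies $A'[\varpi^{-1}]=A[\varpi^{-1}]$ and $\overline{A'}=\overline{A}$ --- i.e.\ $\Spf A'\to\Spf A$ is a morphism of formal models of the same affinoid, hence, by the flattening input of the next paragraph, dominated by an admissible formal blow-up. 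Thus after an admissible blow-up of $\mathfrak{X}$ the $g_{k}$ become formal functions, and since $\mathcal{O}_{\mathfrak{X}'}(\mathfrak{X}')\hookrightarrow\mathcal{O}_{X}(X)$ for $\mathfrak{X}'$ $\varpi$-torsion-free quasi-compact quasi-separated --- $\varpi$-torsion-freeness giving $\mathcal{O}_{\mathfrak{X}'}(\mathfrak{X}')\hookrightarrow\mathcal{O}_{\mathfrak{X}'}(\mathfrak{X}')[\varpi^{-1}]$, and inverting $\varpi$ commuting with the finite-limit computation of global sections over a finite affine cover, so that $\mathcal{O}_{\mathfrak{X}'}(\mathfrak{X}')[\varpi^{-1}]=\mathcal{O}_{X}(X)$ by local rig-sheafiness --- the relations of $\mathfrak{b}$ are automatic and one obtains the lift $\mathfrak{X}'\to\mathfrak{V}$; patching over the covers of $Y$ and of $X$, reconciling the choices by cofiltration of blow-ups, yields a formal model of $f$. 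For faithfulness, if $f_{0},f_{0}'\colon\mathfrak{X}\to\mathfrak{Y}$ induce the same morphism of generic fibers, then after an admissible blow-up of $\mathfrak{X}$ the preimages of a finite affine cover of $\mathfrak{Y}$ coincide, and on each the two maps are continuous $R$-algebra maps into $\mathcal{O}(-)$ agreeing in $\mathcal{O}_{X}(-)$, hence equal by the same injectivity; so $f_{0}=f_{0}'$ after the blow-up. Together with essential surjectivity this gives the stated equivalence.

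The main obstacle is the one genuinely non-formal input used above: the flattening-by-blow-up statement that any morphism of $\varpi$-torsion-free quasi-compact quasi-separated adic formal $R$-schemes which is an isomorphism on adic analytic generic fibers is dominated by an admissible formal blow-up of the target. From it one extracts both the cofinality and cofiltration of admissible blow-ups of a fixed model among all finite-type formal $R$-models of a given $X$ (so that the roof calculus of fractions behaves as expected) and the colimit formula $\mathcal{O}_{X}^{+}(X)=\varinjlim_{\mathfrak{X}'\to\mathfrak{X}}\mathcal{O}_{\mathfrak{X}'}(\mathfrak{X}')$ over admissible blow-ups, as well as the realization of quasi-compact opens by blow-ups used in essential surjectivity. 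Over a field this is precisely where Raynaud--Gruson flattening enters the classical proof; in the present non-Noetherian relative setting it must be taken with $\varpi$-torsion-freeness in place of $\mathcal{O}_{K}$-flatness, and the necessary results on admissible formal blow-ups in this generality are provided by \cite{FK}, Ch.~II, \S1 (or established there by the same methods). Once this is in hand, the remainder is the bookkeeping sketched above, and the theorem follows.
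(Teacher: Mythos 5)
Your architecture is the same as the paper's: descend the generic-fiber functor through admissible formal blow-ups, prove faithfulness via the injection $\mathcal{O}_{\mathfrak{X}}(\mathfrak{X})\hookrightarrow\mathcal{O}_{X}(X)$ (the paper does this via surjectivity of the specialization map, Corollary \ref{Center maps are surjective}, which gives $f_{0}=g_{0}$ on the nose rather than only after a further blow-up), prove fullness by adjoining the images of topological generators of a finite-type ring of definition of the target to a ring of definition of the source, and get essential surjectivity by the Bosch--L\"utkebohmert gluing induction. Your fullness step is in substance identical to the paper's: there $A_{0}=R\langle f_{1},\dots,f_{n}\rangle$ is produced from \cite{Huber2}, Lemma 3.5(iii), and $C_{0}=B_{0}[\varphi(f_{1}),\dots,\varphi(f_{n})]$ is finite over $B_{0}$ because $\varphi(A_{0})\subseteq\overline{B_{0}}$, exactly as in your construction of $A'$.

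The genuine gap is the input you isolate in your last paragraph and then wave at. You invoke a general ``flattening-by-blow-up'' statement --- every formal modification of $\varpi$-torsion-free qcqs adic formal $R$-schemes is dominated by an admissible formal blow-up --- and attribute it to \cite{FK}, Ch.~II, \S1. That statement is not there in this generality, and the paper does not prove it in this generality either: Proposition \ref{Formal modifications vs. formal blow-ups} obtains it only for formal modifications topologically of finite type with strong generic fiber, via a nontrivial chain (universal closedness $\Rightarrow$ integral $\Rightarrow$ finite $\Rightarrow$ blow-up). What your fullness argument actually needs is only the finite case: the morphism $\Spf A'\to\Spf A$ you produce is module-finite, and the required fact is that a \emph{finite} formal modification is an admissible formal blow-up. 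This is Lemma \ref{Finite formal modifications are admissible formal blow-ups} of the paper, the non-Noetherian relative analogue of \cite{BL1}, Lemma 4.5: one blows up $(\varpi^{r},\varpi^{r}f_{1},\dots,\varpi^{r}f_{n})$ and uses an integral equation for each $f_{i}$, multiplied by $f_{i}^{-(N-1)}$, to show the chart where $\varpi^{r}$ generates the pulled-back ideal is all of the blow-up. This is the one step in the whole theorem that is not bookkeeping, and your proposal neither proves it nor cites a source that contains it; replacing the appeal to Raynaud--Gruson-type flattening by this concrete finite-case argument closes the gap, after which the rest of your outline goes through.
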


\subsection{Outline of the proof of Theorem \ref{Main theorem}}

Our proof of Theorem \ref{Main theorem} mostly follows the proof of Raynaud's classical theorem by Bosch and Lütkebohmert, see \cite{BL1}, proof of Theorem 4.1, replacing admissible formal blow-ups by normalized formal blow-ups. Thus, the proof consists in successively showing the following three statements: \begin{itemize}\item (Any morphism of formal models is determined by its generic fiber) Any two morphisms $f_{0}$, $g_{0}$ of $\varpi$-torsion-free, qcqs, locally stably uniform adic formal $R$-schemes with $f_{0\eta}=g_{0\eta}$ must be equal: This is Lemma \ref{Faithfulness} and is proved using the specialization map from the generic fiber, as in the classical case.
\item (Formal models of morphisms) For any two adic formal $R$-schemes $\mathfrak{Z}$, $\mathfrak{X}$ as in the theorem and any morphism \begin{equation*}f: \mathfrak{Z}_{\eta}^{\ad}\to \mathfrak{X}_{\eta}^{\ad}\end{equation*}between their generic fibers, there exists a normalized formal blow-up $\mathfrak{Z}'\to\mathfrak{Z}$ and a morphism $f_{0}: \mathfrak{Z}'\to\mathfrak{X}$ which is a formal $R$-model of $f$ and is an isomorphism if $f$ is an isomorphism: This is Lemma \ref{Fullness}; it is at this step that the use of normalized formal blow-ups instead of usual admissible formal blow-ups becomes necessary. \item (Existence of formal $R$-models for adic spaces) Every qcqs uniform adic space $X$ over $S$ has a formal $R$-model $\mathfrak{X}$, which is locally stably uniform and integrally closed in its generic fiber $X$: This appears as Theorem \ref{Existence of formal models}. The proof of this existence statement is analogous to the classical one; it uses induction on the size of an affinoid open cover (the affinoid case being obvious) and a gluing argument which relies on Lemma \ref{Fullness}. We note that this part of the theorem also follows from \cite{AGV}, Corollary 1.2.7; however, op.~cit.~does not explicitly spell out the birational gluing argument. \end{itemize}

\subsection{Outline of the paper}

In Section \ref{sec:formal schemes} we introduce the class of locally rig-sheafy formal schemes over $(R, \varpi)$ (in greater generality than in the introduction). In Section \ref{sec:generic fiber}, we construct the (adic analytic) generic fiber functor and the specialization map from the generic fiber to the special fiber following classical constructions of Raynaud \cite{RaynaudReport} and Berthelot \cite{Berthelot96}. In Section \ref{sec:formal blow-ups} we collect some facts on admissible formal blow-ups, prove a global analog of a theorem of Bhatt, \cite{BhattNotes}, Theorem 8.1.2, which describes the adic analytic generic fiber as the inverse limit of admissible formal blow-ups and deduce some basic properties of the specialization map. Section \ref{sec:normalized formal blow-ups} and Section \ref{sec:main results} form the heart of the paper. In Section \ref{sec:normalized formal blow-ups}, we establish Proposition-Definition~\ref{Normalization, introduction}, introduce the notion of normalized formal blow-ups and prove some useful properties of this class of morphisms. Finally, Section \ref{sec:main results} contains the proofs of our main results, Theorem \ref{Main theorem} and Theorem \ref{Main theorem 2}, as well as an additional statement comparing normalized formal blow-ups and admissible formal blow-ups (Proposition \ref{Formal modifications vs. formal blow-ups}) which was inspired by a result from birational algebraic geometry, \cite{Conrad07}, Theorem 2.11. 

\subsection{Acknowledgements}

I would like to express my sincerest gratitude to my advisor, Kiran Kedlaya, for his advice, encouragement and support. I would like to thank Ryo Ishizuka for comments on a preliminary version of this paper and Jack J Garzella for helpful conversations. 

\section{Locally rig-sheafy formal schemes over an adic ring}\label{sec:formal schemes}

Recall from loc.~cit., Ch. I, Definition 1.1.16, that a formal scheme $\mathfrak{X}$ is said to be adic (respectively, adic of finite ideal type) if it admits an open cover by affine formal schemes of the form $\Spf(A)$ such that $A$ is a complete adic ring (respectively, a complete adic ring with finitely generated ideal of definition). The following special case of this notion is the most important to us. 
\begin{mydef}\label{Adic formal scheme}Let $R$ be a complete adic ring with finitely generated ideal of definition. An adic formal $R$-scheme is a formal scheme $\mathfrak{X}$ together with an adic morphism $\mathfrak{X}\to \Spf(R)$.\end{mydef}
If $\mathfrak{X}$ is an adic formal scheme of finite ideal type, we denote by $\Aff_{\mathfrak{X}}$ the family of all affine open subsets of $\mathfrak{X}$ of the form $\Spf(A)$ for $A$ a complete adic ring with a finitely generated ideal of definition. By \cite{FK}, Ch.~I, Corollary 3.7.13, $\Aff_{\mathfrak{X}}$ coincides with the family of all affine open subsets of $\mathfrak{X}$.
\begin{lemma}\label{Restriction maps are submetric}Let $\mathcal{I}$ be an ideal of definition of finite type on an adic formal scheme of finite ideal type $\mathfrak{X}$. For every inclusion $\mathfrak{V}\subseteq\mathfrak{U}$ with $\mathfrak{U}, \mathfrak{V}\in\Aff_{\mathfrak{X}}$ we have \begin{equation*}\mathcal{I}(\mathfrak{U})\mathcal{O}_{\mathfrak{X}}(\mathfrak{V})=\mathcal{I}(\mathfrak{V}).\end{equation*}\end{lemma}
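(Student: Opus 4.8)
The inclusion $\mathcal{I}(\mathfrak{U})\mathcal{O}_{\mathfrak{X}}(\mathfrak{V})\subseteq\mathcal{I}(\mathfrak{V})$ is automatic, since $\mathcal{I}(\mathfrak{V})$ is an ideal of $\mathcal{O}_{\mathfrak{X}}(\mathfrak{V})$ containing the image of $\mathcal{I}(\mathfrak{U})$ under the restriction map, so the content is the reverse inclusion. Write $\mathfrak{U}=\Spf(A)$, $\mathfrak{V}=\Spf(B)$, $I=\mathcal{I}(\mathfrak{U})$ and $J=\mathcal{I}(\mathfrak{V})$; these are finitely generated ideals of definition of the respective complete adic rings, and the goal is $J=IB$. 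The plan is to use that $\mathcal{I}$, being of finite type, is adically quasi-coherent (\cite{FK}, Ch.~I), so that $\mathcal{I}|_{\mathfrak{U}}=I^{\Delta}$, $\mathcal{I}|_{\mathfrak{V}}=J^{\Delta}$, and the restriction $\mathcal{I}|_{\mathfrak{V}}$ is again adically quasi-coherent of finite type on $\Spf(B)$. The statement then becomes a comparison, for the sheaf $I^{\Delta}$, between its sections over $\mathfrak{V}$ and the extended ideal $IB$.

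First I would treat the case of a basic open $\mathfrak{W}=D(f)\subseteq\mathfrak{U}$, where $\mathcal{O}_{\mathfrak{X}}(\mathfrak{W})=A_{\{f\}}$ is the $I$-adic completion of $A[f^{-1}]$ and, by the explicit description of $M^{\Delta}$ on basic opens, $I^{\Delta}(\mathfrak{W})$ is the $I$-adic completion of the $A[f^{-1}]$-module $IA[f^{-1}]$. Since $I$ is finitely generated, applying completion to a surjection $A[f^{-1}]^{\oplus n}\twoheadrightarrow IA[f^{-1}]$ coming from a finite set of generators stays surjective (the relevant $\varprojlim^{1}$ vanishes, the transition maps on the kernels being surjective), so that completion is exactly the ideal $IA_{\{f\}}$; hence $\mathcal{I}(\mathfrak{W})=I\mathcal{O}_{\mathfrak{X}}(\mathfrak{W})$. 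The same argument applied to $(\Spf(B),J)$ shows $\mathcal{I}(\mathfrak{W}')=J\mathcal{O}_{\mathfrak{X}}(\mathfrak{W}')$ for basic opens $\mathfrak{W}'$ of $\mathfrak{V}$.

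To pass to a general affine open $\mathfrak{V}$, choose a cover of $\mathfrak{V}$ by basic opens $\mathfrak{W}_{i}=D(f_{i})$ of $\mathfrak{U}$, which are then also basic opens of $\mathfrak{V}$, so both the previous displays apply to each $\mathfrak{W}_{i}$. Set $N:=J/IB$, a finitely generated $B$-module (as $J$ is of finite type), with associated adically quasi-coherent sheaf $N^{\Delta}$ on $\Spf(B)$; it is the cokernel of the natural map $(IB)^{\Delta}\to J^{\Delta}=\mathcal{I}|_{\mathfrak{V}}$ of adically quasi-coherent sheaves of finite type. On each $\mathfrak{W}_{i}$ we have $I\mathcal{O}_{\mathfrak{X}}(\mathfrak{W}_{i})=\mathcal{I}(\mathfrak{W}_{i})=J\mathcal{O}_{\mathfrak{X}}(\mathfrak{W}_{i})$, so the map $J^{\Delta}(\mathfrak{W}_{i})\to N^{\Delta}(\mathfrak{W}_{i})$ is both surjective and zero, whence $N^{\Delta}(\mathfrak{W}_{i})=0$; since the $\mathfrak{W}_{i}$ cover $\mathfrak{V}$ and $N^{\Delta}$ is a sheaf, $N^{\Delta}=0$. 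Taking global sections gives $\widehat{N}=\Gamma(\Spf(B),N^{\Delta})=0$, hence $N=JN$, hence $N=0$ by topological Nakayama, $J$ being an ideal of definition of the complete ring $B$ and therefore contained in $\mathrm{Jac}(B)$. Thus $J=IB$, as claimed.

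The step I expect to be the main obstacle is exactly this bookkeeping: one has no Noetherian hypothesis, so one cannot naively write $\mathcal{I}(\mathfrak{V})=\mathcal{I}(\mathfrak{U})\otimes_{A}B$, and the identification of the formal‑scheme sheaf $\mathcal{I}$ with $I^{\Delta}$ together with its compatibility with restriction to arbitrary affine opens and its acyclicity on affines genuinely rests on the Fujiwara–Kato theory of adically quasi-coherent sheaves of finite type. It is precisely the finite-type hypothesis on $\mathcal{I}$ that makes the completions in the basic-open step collapse to the honest extension of ideals and that makes the concluding Nakayama argument available.
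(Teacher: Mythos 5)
Your argument is correct, but it does considerably more work than the paper, which disposes of the lemma in one line by citing Fujiwara--Kato (Ch.~I, Propositions 1.1.20 and 1.1.22): those results say precisely that for an ideal of definition of finite type one has $\mathcal{I}\vert_{\mathfrak{U}}=\mathcal{I}(\mathfrak{U})\mathcal{O}_{\mathfrak{U}}$ on any affine open $\mathfrak{U}$, and evaluating this extended ideal sheaf on $\mathfrak{V}$ gives the claim directly. What you have written is essentially a from-scratch reconstruction of that citation from the $(-)^{\Delta}$-formalism: you identify $\mathcal{I}\vert_{\mathfrak{U}}$ and $\mathcal{I}\vert_{\mathfrak{V}}$ with $I^{\Delta}$ and $J^{\Delta}$, compare them on a common basis of simultaneously basic opens (where finite generation of the ideals makes completion commute with extension of ideals), and then kill $N=J/IB$ by observing that $N^{\Delta}$ vanishes on a cover, that $\widehat{N}=\Gamma(N^{\Delta})=0$ forces $N=JN$, and that topological Nakayama applies since $J\subseteq\mathrm{Jac}(B)$ by completeness. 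Each of these steps is sound in the non-Noetherian setting (the $\varprojlim^{1}$ vanishing for surjections of finitely generated modules, the fact that a basic open of $\mathfrak{U}$ contained in $\mathfrak{V}$ is a basic open of $\mathfrak{V}$, and the surjection $\widehat{N}\twoheadrightarrow N/JN$), so the proof goes through. The trade-off is transparency versus economy: your version makes visible exactly where finite generation of $\mathcal{I}$ enters and is self-contained modulo the basic $M^{\Delta}$ machinery, whereas the paper's version leans on the fact that Fujiwara--Kato have already packaged this comparison for ideals of definition of finite type.
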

\begin{proof}By \cite{FK}, Ch.~I, Proposition 1.1.20 and Proposition 1.1.22, the ideal of definition of finite type $\mathcal{I}\vert_{\mathfrak{U}}$ of $\mathfrak{U}$ is given by $\mathcal{I}\vert_{\mathfrak{U}}=\mathcal{I}(\mathfrak{U})\mathcal{O}_{\mathfrak{U}}$. The assertion follows.\end{proof}  
Recall from \cite{Huber2}, \S4, that a sufficient condition for an adic formal scheme to define an associated adic space is that the formal scheme be locally Noetherian. This yields a class of examples for the following class of formal schemes.
\begin{mydef}We call an adic formal scheme \textit{locally sheafy} if it is of finite ideal type and has an open cover by formal spectra of sheafy complete adic rings (with finitely generated ideals of definition).\end{mydef}
More recently, Zavyalov proved a sheafiness result which incorporates both the case of Noetherian complete adic rings and the case of adic rings topologically of finite type over a not necessarily discrete valuation ring of rank $1$.
\begin{mydef}[Fujiwara-Kato \cite{FK}, Ch.~0, Def.~8.4.3, and Ch.~I, Def.~2.1.7; see also Zavyalov \cite{Zavyalov22}, Definition 2.8]Let $A_{0}$ be a complete adic ring which has a finitely generated ideal of definition $I$. Then $A_{0}$ (or the pair $(A_{0}, I)$) is said to be topologically universally rigid-Noetherian (t.~u.~rigid-Noetherian) if, for every integer $n\geq 0$, the quasi-compact scheme \begin{equation*}\Spec(A\langle X_1,\dots, X_n\rangle)\setminus\mathcal{V}(IA\langle X_1,\dots,X_n\rangle)\end{equation*}is Noetherian. An adic formal scheme of finite ideal type $\mathfrak{X}$ is called locally universally rigid-Noetherian if it has an open cover $(\mathfrak{U}_{i})_{i}$ by affine formal schemes $\mathfrak{U}_{i}=\Spf(A_{i})$, where each $A_{i}$ is a t.~u.~rigid-Noetherian complete adic ring.\end{mydef}
\begin{mydef}[\cite{Zavyalov22}, Definition 2.8]\label{Strongly rigid-Noetherian}A complete Huber ring $A$ is called strongly rigid-Noetherian if it has a t.~u.~rigid-Noetherian pair of definition.\end{mydef} 
\begin{thm}[Zavyalov \cite{Zavyalov22}, Theorem 1.1]\label{Zavyalov's theorem}Every strongly rigid-Noetherian complete Huber pair $(A, A^{+})$ is sheafy. Furthermore, $H^{i}(U, \mathcal{O}_{X})=0$ for every rational subset $U$ of $\Spa(A, A^{+})$ and every $i\geq1$.\end{thm}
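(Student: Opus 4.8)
The plan is to run the classical Tate acyclicity argument, using the rigid-Noetherian hypothesis in place of the Noetherianity of affinoid algebras over a nonarchimedean field. First I would invoke the reduction of sheafiness, and of the vanishing of higher cohomology on rational subsets, to the Tate acyclicity of rational coverings (the formalism of Buzzard--Verberkmoes and Mihara, \cite{BV}, \cite{Mihara}), and then Tate's classical reduction of an arbitrary rational covering first to a Laurent covering and then to an iterated \emph{simple} covering $X = X(f/1)\cup X(1/f)$. Thus it suffices to show that, for $X = \Spa(A, A^{+})$ and every $f\in A$, the augmented \v{C}ech complex of $\mathcal{O}_X$ for $\{X(f/1), X(1/f)\}$ is strictly exact (strictness being automatic, since the argument will proceed through reduction modulo powers of the ideal of definition). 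For this reduction to be legitimate one needs to know that rational localizations of $(A, A^{+})$ again satisfy the hypotheses of the theorem, so I would establish that first.

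Stability under rational localization is straightforward. Let $(A_{0}, I)$ be a t.~u.~rigid-Noetherian pair of definition of $A$. For a rational subset $U = X(\tfrac{f_{1},\dots,f_{r}}{g})$ the ring $\mathcal{O}_X(U) = A\langle T_{1},\dots,T_{r}\rangle/\overline{(gT_{1}-f_{1},\dots,gT_{r}-f_{r})}$ admits a pair of definition $(A_{0}', IA_{0}')$, where $A_{0}'$ is a quotient of $A_{0}\langle T_{1},\dots,T_{r}\rangle$ by a closed ideal. Since $A_{0}\langle T_{1},\dots,T_{r}\rangle\langle X_{1},\dots,X_{n}\rangle = A_{0}\langle T_{1},\dots,T_{r},X_{1},\dots,X_{n}\rangle$, the scheme $\Spec(A_{0}\langle T_{\bullet}\rangle\langle X_{\bullet}\rangle)\setminus\mathcal{V}(I)$ is Noetherian by hypothesis, so $A_{0}\langle T_{\bullet}\rangle$ is t.~u.~rigid-Noetherian; and $\Spec(A_{0}'\langle X_{\bullet}\rangle)\setminus\mathcal{V}(I)$ is a closed subscheme of it, hence again Noetherian, so $A_{0}'$ is t.~u.~rigid-Noetherian as well. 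Iterating, every ring occurring in a \v{C}ech complex of a rational covering of $X$ is strongly rigid-Noetherian, with data controlled by $(A_{0}, I)$.

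The heart of the matter is the acyclicity of a simple covering. Assume $I = (s)$ is principal (one reduces to this, or argues with a finite generating set) and write $f = f_{0}s^{-k}$ with $f_{0}\in A_{0}$. The simple covering $\{X(f/1), X(1/f)\} = \{X(\tfrac{f_{0}}{s^{k}}), X(\tfrac{s^{k}}{f_{0}})\}$ of $X$ is the adic analytic generic fibre of the admissible formal blow-up $\pi\colon\mathfrak{X}'\to\Spf(A_{0})$ along the open ideal $(f_{0}, s^{k})$, whose standard affine cover $\{\mathfrak{U}_{1},\mathfrak{U}_{2}\}$ has generic fibres the two rational subsets; the basic theory of such blow-ups, with no Noetherian hypotheses, is available from \cite{FK}, Ch.~II, \S1. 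One checks that the \v{C}ech complex of $\mathcal{O}_X$ for the simple covering is obtained by inverting $s$ from the \v{C}ech complex of $\mathcal{O}_{\mathfrak{X}'}$ for $\{\mathfrak{U}_{1},\mathfrak{U}_{2}\}$; since the $\mathfrak{U}_{i}$ and their intersections are affine formal schemes, that complex computes $H^{\bullet}(\mathfrak{X}',\mathcal{O}_{\mathfrak{X}'})$ (\cite{FK}, Ch.~I), so the problem reduces to showing $H^{i}(\mathfrak{X}',\mathcal{O}_{\mathfrak{X}'})[s^{-1}] = 0$ for $i\geq 1$ and $H^{0}(\mathfrak{X}',\mathcal{O}_{\mathfrak{X}'})[s^{-1}] = A$, for every admissible formal blow-up $\mathfrak{X}'$ of our rig-Noetherian affine formal scheme. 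Algebraically this is transparent: the blow-up of $(f_{0}, s^{k})$ becomes an isomorphism after inverting $s$ (as $s^{k}$ then generates the unit ideal), so the higher direct images $R^{i}\pi_{*}\mathcal{O}$, being supported over $\mathcal{V}(I)$, are $s$-power-torsion, and $\Spec A_{0}$ being affine gives $H^{i}(\Bl,\mathcal{O})[s^{-1}] = 0$ for $i\geq 1$ and $= A$ for $i = 0$. The statement for the \emph{formal} blow-up $\mathfrak{X}'$ then follows by comparing $\mathfrak{X}'$ with the algebraic blow-up modulo powers of $s$ --- a formal-functions-type argument --- and here the rigid-Noetherian hypothesis supplies the finiteness needed for that comparison.

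The hard part will be this last point. Since $A_{0}$ itself is not Noetherian --- only its rig-locus $\Spec(A_{0}\langle X_{\bullet}\rangle)\setminus\mathcal{V}(I)$ is --- the usual formal-functions theorem of EGA does not apply to $\pi\colon\mathfrak{X}'\to\Spf(A_{0})$ directly, and one must work throughout with the coherence and finiteness available only after inverting $s$, carefully controlling the inverse limits $\varprojlim_{n}H^{i}(\Bl_{n},\mathcal{O})$ and their relation to $H^{i}(\mathfrak{X}',\mathcal{O}_{\mathfrak{X}'})[s^{-1}]$; this is where essentially all the work lies, the reduction steps and the stability of the class under rational localization being comparatively routine. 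The additional assertion $H^{i}(U,\mathcal{O}_X) = 0$ for $U$ rational and $i\geq 1$ then drops out of the same analysis applied to the strongly rigid-Noetherian pairs that occur as rational localizations.
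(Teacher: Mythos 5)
First, a point of order: the paper does not prove this statement at all --- it is quoted verbatim from Zavyalov \cite{Zavyalov22}, Theorem 1.1, and used as a black box (its only role here is to guarantee that locally universally rigid-Noetherian adic formal schemes are locally sheafy, hence locally rig-sheafy). So there is no internal proof to compare yours against; what follows is an assessment of your sketch on its own terms.

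Your outline has the right overall shape, and it does track the strategy of the actual proof in the literature: reduce via Buzzard--Verberkmoes/Mihara and Tate's refinement argument to simple Laurent coverings, check that the class of strongly rigid-Noetherian pairs is stable under rational localization, and then identify the \v{C}ech complex of a simple covering with the (localized) cohomology of an admissible formal blow-up of a formal model. But it is not a proof, because the step you yourself flag as ``where essentially all the work lies'' is precisely the theorem. Concretely: you need $H^{i}(\mathfrak{X}',\mathcal{O}_{\mathfrak{X}'})[s^{-1}]=0$ for $i\geq 1$ and $=A$ for $i=0$, and you propose to get this by comparing the formal blow-up with the algebraic blow-up ``modulo powers of $s$''. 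Without Noetherianity of $A_{0}$ you have neither coherence of $R^{i}\pi_{\ast}\mathcal{O}$ nor Artin--Rees, so the two identifications this comparison needs --- commuting $\varprojlim_{n}$ past cohomology, and identifying $\varprojlim_{n}H^{i}(\Bl_{n},\mathcal{O})$ with the $I$-adic completion of $H^{i}(\Bl,\mathcal{O})$ --- both fail in general; establishing them under the t.~u.~rigid-Noetherian hypothesis is exactly the finiteness/GFGA theorem for proper morphisms of universally rigid-Noetherian formal schemes (Fujiwara--Kato), which is a substantial theorem and cannot be waved at as ``a formal-functions-type argument''. Two smaller gaps: your reduction assumes $I=(s)$ is principal, i.e.\ the Tate case, whereas the cited statement is for general Huber pairs (harmless for this paper's use of the theorem, but it should be said); and strict exactness of the augmented \v{C}ech complex is asserted to be ``automatic'', but the open-mapping-type input needed to upgrade algebraic exactness after inverting $s$ to topological exactness also requires the finiteness just mentioned. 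As submitted, the proposal is a correct roadmap with the destination missing.
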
    
In particular, every locally universally rigid-Noetherian adic formal scheme of finite ideal type is locally sheafy. 

The category of locally sheafy adic formal schemes embeds fully and faithfully into the category of adic spaces (the construction of \cite{Huber2}, \S4, extends verbatim to the case when $\mathfrak{X}$ is merely locally sheafy instead of locally Noetherian). For a locally sheafy formal scheme $\mathfrak{X}$ we denote by $\mathfrak{X}^{\ad}$ the associated adic space. We denote by \begin{equation*}\pi_{\mathfrak{X}}: \mathfrak{X}^{\ad}\to\mathfrak{X}\end{equation*}the canonical morphism of locally topologically ringed spaces. 
 
Our first order of business is to introduce a notion of analytic generic fiber for a reasonably broad class of adic formal schemes over $R$. Since we want the analytic generic fiber to be an adic space, the relevant class of formal schemes is specified by the following definition.
\begin{mydef}[Rig-sheafy complete adic ring]\label{Locally rig-sheafy formal schemes}Let $R$ be a complete adic ring which has an ideal of definition generated by a single element $\varpi$. We call a complete adic ring $A$ with a continuous (but not necessarily adic) ring map $R\to A$ \textit{rig-sheafy} over the pair $(R, \varpi)$ (or rig-sheafy over $R$, or just rig-sheafy, if $\varpi$ or $(R, \varpi)$ is understood from the context) if it has a finitely generated ideal of definition $I=(f_1,\dots, f_r)_{A}$ containing (the image of) $\varpi$ such that the Huber rings $B_{n,A}[\varpi^{-1}]$ with \begin{equation*}B_{n,A}=A\langle\frac{f_1^{n},\dots, f_r^{n}}{\varpi}\rangle,\end{equation*}are sheafy for all $n\geq1$. Here $B_{n, A}=A\langle\frac{f_1^{n},\dots,f_r^{n}}{\varpi}\rangle$ is the completion, for the $IA\langle\frac{f_1,\dots,f_r}{\varpi}\rangle$-adic topology, of the subring \begin{equation*}A[\frac{f_1^{n},\dots,f_r^{n}}{\varpi}]\end{equation*}of the localization $A[\varpi^{-1}]$.\end{mydef}
In the above definition we view the zero ring as a sheafy Huber ring (giving rise to the empty adic space), so an $R$-algebra $A$ which is $\varpi$-torsion is necessarily rig-sheafy. Note that if $A$ is not $\varpi$-torsion-free, the canonical map $A\to A[\frac{f_1^{n},\dots, f_r^{n}}{\varpi}]$ is not injective, for any $f_1,\dots, f_r$ and $n$. By continuity, the condition that the image of $\varpi$ is contained in $I$ can always be assumed to hold up to replacing $\varpi$ with a power of itself. However, the following lemma shows that there always exists a finitely generated ideal of definition of $A$ which contains $\varpi$ itself, not merely a power of $\varpi$.
\begin{lemma}\label{Ideals containing the pseudo-uniformizer}Let $R\to A$ be a continuous homomorphism of complete adic rings, where $R$ has a principal ideal of definition and $A$ has a finitely generated ideal of definition. Then for every $\varpi$ generating an ideal of definition of $R$ and for every finitely generated ideal of definition $I$ of $A$, the ideal $J=(I, \varpi)$ is an ideal of definition of $A$.\end{lemma}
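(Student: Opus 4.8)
The strategy is to verify the two defining conditions for $J=(I,\varpi)=I+\varpi A$ to be an ideal of definition of $A$, using the following elementary criterion: if $A$ is a complete adic ring whose topology is the $I$-adic one for a finitely generated ideal $I$, and $J\subseteq A$ is an ideal with $I^{a}\subseteq J$ and $J^{b}\subseteq I$ for some integers $a,b\geq 1$, then $J$ is again an ideal of definition of $A$. Indeed, $I^{a}\subseteq J$ gives $I^{ak}\subseteq J^{k}$ for all $k$, so every power of $J$ is open in the given topology and the $J$-adic topology is coarser; $J^{b}\subseteq I$ gives $J^{bk}\subseteq I^{k}$ for all $k$, so every power of $I$ is $J$-adically open and the $J$-adic topology is also finer. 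Hence the two topologies coincide, $A$ is $J$-adically complete, and $J$ is an ideal of definition.

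Granting this criterion, I would check the two inclusions for $J=I+\varpi A$. The inclusion $I\subseteq J$ is immediate, so one may take $a=1$. For the reverse inclusion, I would first invoke continuity of $R\to A$: since $(\varpi)$ is an ideal of definition of $R$, the powers $\varpi^{n}$ form a neighborhood basis of $0$ in $R$, so the image of $\varpi$ in $A$ is topologically nilpotent; as $I$ is open in $A$, there is $k\geq 1$ with $\varpi^{k}\in I$. Expanding $(I+\varpi A)^{k}$, every monomial either has a factor in $I$, hence lies in $I$, or is a product of $k$ elements of $\varpi A$, hence lies in $\varpi^{k}A\subseteq I$; therefore $J^{k}\subseteq I$, and one may take $b=k$. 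Equivalently, $\varpi\in\sqrt{I}$ and $I\subseteq\sqrt{I}$ force $J\subseteq\sqrt{I}$, and $J$ is finitely generated — being generated by the finitely many generators of $I$ together with $\varpi$ — so a power of $J$ lies in $I$.

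The lemma is essentially formal; the one place to be slightly careful is that the passage from $\varpi^{k}\in I$ to $J^{b}\subseteq I$ genuinely uses that $J$ is finitely generated, which is where the finite-type hypothesis on the ideal of definition of $A$ enters. No real obstacle arises. I would also note in passing that the resulting ideal of definition $J$ is again finitely generated, so $A$ remains of finite ideal type and, in the situation of Definition \ref{Locally rig-sheafy formal schemes}, one may always take the finitely generated ideal of definition to contain $\varpi$ itself rather than merely a power of it.
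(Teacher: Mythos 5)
Your proof is correct and follows essentially the same route as the paper's: since $I\subseteq J$ is automatic, one only needs a power of $J$ inside $I$, which both arguments get by using continuity of $R\to A$ to find $k$ with $\varpi^{k}\in I$. The only difference is the last containment: the paper fixes generators $f_1,\dots,f_r$ of $I$ and uses a pigeonhole bound $J^{(r+1)n}\subseteq (f_1^{n},\dots,f_r^{n},\varpi^{n})_{A}\subseteq I$, whereas you expand $(I+\varpi A)^{k}\subseteq I+\varpi^{k}A\subseteq I$, which is a touch cleaner and, contrary to your closing caveat, uses no finite generation at all --- the finiteness hypothesis is only needed in your alternative radical phrasing ($J\subseteq\sqrt{I}$ plus $J$ finitely generated), and more importantly to guarantee that the resulting ideal of definition $J$ is itself finitely generated, which is what the rest of the paper actually uses.
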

\begin{proof}Since $J$ contains $I$ by definition, we only have to prove that $I$ contains some power of $J$. By continuity of $R\to A$, there exists an integer $n\geq1$ such that $\varpi^{n}\in A$. Suppose that $I$ is generated by $r$ elements $f_1,\dots, f_r$. Then $J^{(r+1)n}\subseteq (f_1^{n},\dots, f_r^{n}, \varpi^{n})_{A}\subseteq I$.\end{proof}
The following elementary lemma provides a link between Definition \ref{Locally rig-sheafy formal schemes} and birational geometry.
\begin{lemma}\label{Affine blow-up algebras}Let $\varpi$ be a non-zero-divisor in a ring $A$ and let $f_1,\dots, f_r, g\in A$ be elements generating the unit ideal in $A[\varpi^{-1}]$. The subring $A[\frac{f_1,\dots,f_r}{g}]$ of $A[\varpi^{-1}]$ coincides with the affine blow-up algebra $A[\frac{(f_1,\dots, f_r)_{A}}{g}]$, cf. \cite{Stacks}, Tag 052P.\end{lemma}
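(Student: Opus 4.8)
The plan is to unwind the definition of the affine blow-up algebra recalled in \cite{Stacks}, Tag 052P, exhibit the tautological comparison homomorphism from it to a localization of $A$, and verify that this homomorphism is an isomorphism onto the subring appearing in the statement; the only step that is not routine bookkeeping with localizations will be the injectivity of that homomorphism.

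First I would fix conventions. Put $I=(f_1,\dots,f_r,g)_A$; since $f_1,\dots,f_r,g$ generate the unit ideal in $A[\varpi^{-1}]$, the ideal $I$ is open, i.e.\ $\varpi^N\in I$ for some $N\geq1$. I read $A[\frac{(f_1,\dots,f_r)_A}{g}]$ as the affine blow-up algebra $A[\frac{I}{g}]$ of Tag 052P --- adjoining $g$ to the list of numerators changes nothing, since $g/g=1$, and in the relevant applications $g$ already lies in $(f_1,\dots,f_r)_A$ (e.g.\ $g=\varpi$). Recall that $A[\frac{I}{g}]$ is by construction a quotient of a localization of the Rees algebra $\bigoplus_{n\geq0}I^n$, and that it comes with a canonical $A$-algebra map $c\colon A[\frac{I}{g}]\to A_g:=A[g^{-1}]$ whose image is precisely the $A$-subalgebra $A[\frac{f_1}{g},\dots,\frac{f_r}{g}]\subseteq A_g$. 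Hence the lemma reduces to two points: (i) $c$ is injective, and (ii) $A_g$ maps compatibly and injectively into $A[\varpi^{-1}]$.

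For (i) I would first note that $g$ is a non-zero-divisor of $A$: for $A[\frac{f_1,\dots,f_r}{g}]$ to be a subring of $A[\varpi^{-1}]$ at all, $g$ must be a unit of $A[\varpi^{-1}]$, and since $\varpi$ is a non-zero-divisor we have $A\hookrightarrow A[\varpi^{-1}]$, so a unit of $A[\varpi^{-1}]$ restricts to a non-zero-divisor of $A$. Granting that $g$ is a non-zero-divisor, Tag 052P, in the form valid for non-zero-divisor denominators, identifies $A[\frac{I}{g}]$ with $\bigcup_{n\geq0}g^{-n}I^n$ inside $A_g$, so $c$ is injective with image $A[\frac{f_1}{g},\dots,\frac{f_r}{g}]$. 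For (ii), the localization map $A\to A[\varpi^{-1}]$ factors uniquely through $A_g$ because $g$ is invertible in $A[\varpi^{-1}]$, and the induced map $A_g\to A[\varpi^{-1}]$ is injective since $\varpi$ remains a non-zero-divisor in $A_g$ (if $\varpi^m x=0$ in $A$ then $x=0$). Composing, $A[\frac{I}{g}]$ is identified with the $A$-subalgebra of $A[\varpi^{-1}]$ generated by $f_1/g,\dots,f_r/g$, which is exactly $A[\frac{f_1,\dots,f_r}{g}]$ as in the statement.

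The main obstacle is step (i): a priori the Rees-algebra construction behind Tag 052P could acquire a kernel when mapped to $A_g$, and excluding this is precisely the place where one invokes that $\varpi$, and hence $g$, is a non-zero-divisor; once that is in place, the remainder is routine manipulation of localizations, in the spirit of Lemma \ref{Ideals containing the pseudo-uniformizer}.
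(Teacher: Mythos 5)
Your proposal is correct, but it distributes the work differently from the paper. The paper's proof is a bare-hands double inclusion inside $A[\varpi^{-1}]$: it takes for granted that the affine blow-up algebra is the set of fractions $\tfrac{h}{g^{m}}$ with $h\in (f_1,\dots,f_r)_{A}^{m}$, and then verifies by expanding monomials that this set coincides with the $A$-subalgebra generated by $\tfrac{f_1}{g},\dots,\tfrac{f_r}{g}$; the injectivity questions you isolate are left implicit in the phrase ``both rings in question are subrings of $A[\varpi^{-1}]$.'' You do the opposite: you quote from \cite{Stacks} that (for a non-zero-divisor denominator) $A[\tfrac{I}{g}]$ is $\bigcup_{n}g^{-n}I^{n}\subseteq A_{g}$ with image the subalgebra generated by the $\tfrac{f_i}{g}$, and you spend the proof on the embedding chain $A[\tfrac{I}{g}]\hookrightarrow A_{g}\hookrightarrow A[\varpi^{-1}]$, deducing that $g$ is a non-zero-divisor from the hypotheses and the fact that the statement presupposes $g$ invertible in $A[\varpi^{-1}]$. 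Your route makes explicit exactly the points the paper glosses over (why the comparison map has no kernel, and why $g\in I$ can be arranged, as Tag 052P formally requires), while the paper's route is self-contained where you lean on a citation: the assertion that the image of $c$ is precisely $A[\tfrac{f_1}{g},\dots,\tfrac{f_r}{g}]$ is the entire computational content of the paper's proof, so you should either cite the presentation in \cite{Stacks}, Tag 098S (which the paper itself invokes in Remark \ref{Remark on rig-sheafiness}) or include the one-line monomial expansion $g^{-n}(J+(g))^{n}=\sum_{k\leq n}g^{-k}J^{k}$. One small inaccuracy, harmless to your argument since you work with $I=(f_1,\dots,f_r,g)_{A}$ anyway: in the paper's main application the numerator ideal is $(f_1^{n},\dots,f_r^{n})_{A}$ and $g=\varpi$ need not lie in it, so your parenthetical that $g$ already lies in $(f_1,\dots,f_r)_{A}$ in the relevant applications is not quite right.
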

\begin{proof}Both rings in question are subrings of $A[\varpi^{-1}]$. Let $f\in A[\frac{f_1,\dots, f_r}{g}]$. Then we can write $f$ as a sum of the form \begin{equation*}f=\sum_{i=1}^{r}a_{i}(\frac{f_i}{g})^{m_{i}}\end{equation*}for some coefficients $a_i\in A$ and exponents $m_i\in\mathbb{Z}_{\geq0}$. By definition of the affine blow-up algebra $A[\frac{(f_1,\dots, f_r)_{A}}{g}]$, each $(\frac{f_{i}}{g})^{m_i}$ belongs to $A[\frac{(f_1,\dots, f_r)_{A}}{g}]$, since $f_{i}^{m_{i}}\in (f_1,\dots, f_r)_{A}^{m_{i}}$. Since $A[\frac{(f_1,\dots, f_r)_{A}}{g}]$ is an $A$-algebra, $f\in A[\frac{(f_1,\dots, f_r)_{A}}{g}]$. This shows that \begin{equation*}A[\frac{f_1,\dots, f_r}{g}]\subseteq A[\frac{(f_1,\dots, f_r)_{A}}{g}].\end{equation*}

Conversely, let \begin{equation*}\frac{h}{g^{m}}\in A[\frac{(f_1,\dots, f_r)_{A}}{g}]\end{equation*}be an arbitrary element of the affine blow-up algebra, with $m\in\mathbb{Z}_{\geq0}$ and $h\in (f_1,\dots, f_r)_{A}^{m}$. Let $h_{j}=\sum_{i=1}^{r}a_{ij}f_{i}$ ($j=1,\dots, m$) be elements of $(f_1,\dots, f_r)_{A}$ with $h=h_{1}\cdot~\dots~\cdot h_{m}$. Then we can write $h$ as a finite sum \begin{equation*}h=\sum_{m_1+\dots+m_r=m}b_{m_1,\dots, m_r}f_{1}^{m_{1}}\cdot\dots\cdot f_r^{m_{r}}\end{equation*}with coefficients $b_{m_1,\dots, m_r}\in A$. Consequently, we can write $\frac{h}{g^{m}}$ as \begin{equation*}\frac{h}{\varpi^{m}}=\sum_{m_1+\dots+m_r=m}b_{m_1,\dots,m_r}(\frac{f_1}{g})^{m_1}\cdot\dots\cdot (\frac{f_r}{g})^{m_r}.\end{equation*}But $\frac{f_{i}^{m_{i}}}{g^{m_{i}}}\in A[\frac{f_1,\dots, f_r}{g}]$ for all $i=1,\dots, r$, so $\frac{h}{g^{m}}\in A[\frac{f_1,\dots, f_r}{g}]$. \end{proof}
To put the definition of rig-sheafy complete adic rings over $(R, \varpi)$ into context, we also need the following simple lemma.
\begin{lemma}\label{Completions are torsion-free}Let $A$ be a ring and let $\varpi$ be an element of $A$. Then the $\varpi$-adic completion of the $\varpi$-torsion-free quotient $A/A[\varpi^{\infty}]$ is given by $\widehat{A}/\widehat{A}[\varpi^{\infty}]$.\end{lemma}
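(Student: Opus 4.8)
The plan is to reduce the assertion to a statement modulo each power of $\varpi$, where it is controlled by the standard fact that $\widehat{A}/\varpi^{n}\widehat{A}=A/\varpi^{n}A$. Put $T=A[\varpi^{\infty}]$ and $\widehat{T}=\widehat{A}[\varpi^{\infty}]$, and let $C$ be the $\varpi$-adic completion of $A/T$; note $C$ is $\varpi$-torsion-free. Since $C$ is $\varpi$-torsion-free, the continuous composite $A\to A/T\to C$ extends uniquely to $\widehat{A}\to C$, and this extension annihilates $\widehat{T}$, because a $\varpi$-power-torsion element of $\widehat{A}$ is sent to a $\varpi$-power-torsion, hence zero, element of $C$. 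Thus it factors through a natural ring homomorphism
\[
\phi\colon\ \widehat{A}/\widehat{T}\ \longrightarrow\ C .
\]
I will show $\phi$ induces an isomorphism modulo $\varpi^{n}$ for all $n\ge 1$. Granting this, $\phi$ induces an isomorphism on $\varpi$-adic completions, so $C=\widehat{C}$ is the $\varpi$-adic completion of $\widehat{A}/\widehat{T}$; this is the claim, and $\phi$ itself is an isomorphism whenever $\widehat{T}$ is $\varpi$-adically closed in $\widehat{A}$ (for instance whenever $\widehat{A}$ is $\varpi$-torsion-free, which covers all applications of the lemma).

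For the reduction modulo $\varpi^{n}$: the source $\widehat{A}/\widehat{T}$ becomes $\widehat{A}/(\widehat{T}+\varpi^{n}\widehat{A})$ and the target $C$ becomes $C/\varpi^{n}C=A/(T+\varpi^{n}A)$, and both are quotients of $A/\varpi^{n}A=\widehat{A}/\varpi^{n}\widehat{A}$ compatibly with $\phi\bmod\varpi^{n}$; the kernel of $A/\varpi^{n}A\to A/(T+\varpi^{n}A)$ is the image of $T$, while the kernel of $A/\varpi^{n}A=\widehat{A}/\varpi^{n}\widehat{A}\to\widehat{A}/(\widehat{T}+\varpi^{n}\widehat{A})$ is the image of $\widehat{T}$. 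Hence $\phi\bmod\varpi^{n}$ is surjective, and it is injective exactly when $T$ and $\widehat{T}$ have the same image in $A/\varpi^{n}A$. One inclusion holds because the image of $T$ under $A\to\widehat{A}$ lies in $\widehat{T}$; for the reverse, let $t\in\widehat{A}$ satisfy $\varpi^{N}t=0$ and choose a lift $\tilde t\in A$ of the class of $t$ in $\widehat{A}/\varpi^{N+n}\widehat{A}=A/\varpi^{N+n}A$. Then $\varpi^{N}\tilde t=\varpi^{N}(\tilde t-t)\in\varpi^{N+n}\widehat{A}\cap A=\varpi^{N+n}A$ (the last equality because $A\to\widehat{A}\to A/\varpi^{N+n}A$ is the quotient map), so writing $\varpi^{N}\tilde t=\varpi^{N+n}a'$ with $a'\in A$ we get $a:=\tilde t-\varpi^{n}a'\in A[\varpi^{N}]\subseteq T$ with $a\equiv t\bmod\varpi^{n}\widehat{A}$. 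This establishes the missing inclusion, so $\phi\bmod\varpi^{n}$ is an isomorphism, as needed.

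The proof is almost entirely formal; the single point that deserves attention --- and the only real obstacle --- is a matter of interpretation rather than technique: $\widehat{A}[\varpi^{\infty}]$ need not be $\varpi$-adically closed in $\widehat{A}$ once $\widehat{A}$ has unbounded $\varpi$-torsion, so the quotient $\widehat{A}/\widehat{A}[\varpi^{\infty}]$ in the statement must be read in the category of complete adic rings, i.e.\ one should complete it. The computation above shows that, so understood, it coincides with the $\varpi$-adic completion of $A/A[\varpi^{\infty}]$; and when $\widehat{A}$ is $\varpi$-torsion-free the right-hand side is simply $\widehat{A}$, recovering the expected statement without any caveat.
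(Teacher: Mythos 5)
Your proof is correct, and it takes a genuinely different and more elementary route than the paper's. The paper observes that $A\to A[\varpi^{-1}]$ is strict for the $\varpi$-adic seminorm and its canonical extension and then invokes \cite{BGR}, Proposition 1.1.9/5, to identify the completion of the image $A/A[\varpi^{\infty}]$ with the image of the completed map; you instead reduce everything to the single identity
\begin{equation*}(A[\varpi^{\infty}]+\varpi^{n}A)/\varpi^{n}A=(\widehat{A}[\varpi^{\infty}]+\varpi^{n}\widehat{A})/\varpi^{n}\widehat{A}\ \ \text{inside}\ A/\varpi^{n}A=\widehat{A}/\varpi^{n}\widehat{A},\end{equation*}
proved by a direct lifting argument, and pass to the inverse limit. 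What your approach buys is precision: it shows that the completion of $A/A[\varpi^{\infty}]$ is $\widehat{A}/\overline{\widehat{A}[\varpi^{\infty}]}$, the quotient by the \emph{closure} of the torsion ideal, and it isolates exactly where the literal statement can fail, namely that $\widehat{A}[\varpi^{\infty}]$ need not be closed in $\widehat{A}$ when the $\varpi$-torsion of $A$ is unbounded.

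That caveat is not hypothetical. For $A=\mathbb{Z}_{p}[x_{1},x_{2},\dots]/(p^{n}x_{n}:n\geq1)$ and $\varpi=p$, the element $b=\sum_{n\geq1}p^{n}x_{2n}\in\widehat{A}$ lies in the closure of $\widehat{A}[p^{\infty}]$ but satisfies $p^{M}b\neq0$ for all $M$, so $\widehat{A}/\widehat{A}[p^{\infty}]$ is not $p$-adically separated, while the $p$-adic completion of $A/A[p^{\infty}]=\mathbb{Z}_{p}$ is $\mathbb{Z}_{p}$; the lemma as literally stated fails here, and the corresponding weak point in the paper's own proof is the identification $A[\varpi^{-1}]^{\wedge}=\widehat{A}[\varpi^{-1}]$ (in this example $A[p^{-1}]^{\wedge}=\mathbb{Q}_{p}$ while $\widehat{A}[p^{-1}]$ contains the nonzero image of $b$). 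Two small corrections to your write-up: the existence of the extension $\widehat{A}\to C$ uses that $C$ is $\varpi$-adically complete and separated (true because $(\varpi)$ is principal), not that it is torsion-free --- torsion-freeness is only what forces the extension to kill $\widehat{A}[\varpi^{\infty}]$; and your parenthetical that torsion-freeness of $\widehat{A}$ ``covers all applications'' is not accurate, since the second use of the lemma in Remark \ref{Remark on rig-sheafiness} applies it to $A[T_{1},\dots,T_{r}]/(f_{1}^{n}-\varpi T_{1},\dots,f_{r}^{n}-\varpi T_{r})$, which in general has $\varpi$-torsion; there the conclusion should likewise be read as a quotient by the closure of the $\varpi$-power-torsion ideal.
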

\begin{proof}The quotient $A/A[\varpi^{\infty}]$ is the image of the canonical map $A\to A[\varpi^{-1}]$. The map $A\to A[\varpi^{-1}]$ is strict when $A$ is endowed with the $\varpi$-adic seminorm and $A[\varpi^{-1}]$ is endowed with the canonical extension of the $\varpi$-adic seminorm on the image of $A$ in $A[\varpi^{-1}]$, by definition. Hence the $\varpi$-adic completion of the image $A/A[\varpi^{\infty}]$ of the map $A\to A[\varpi^{-1}]$ is equal to the image of the canonical map $\widehat{A}\to A[\varpi^{-1}]^{\wedge}=\widehat{A}[\varpi^{-1}]$, by \cite{BGR}, Proposition 1.1.9/5. \end{proof}      
\begin{rmk}\label{Remark on rig-sheafiness}Several remarks are in order with regard to Definition \ref{Locally rig-sheafy formal schemes}. Firstly, note that the complete adic ring $A\langle\frac{f_{1}^{n},\dots,f_{r}^{n}}{\varpi}\rangle$ is not the same as the synonymous rational localization of $A$ since $\varpi$ is not invertible in $A\langle\frac{f_{1}^{n},\dots,f_{r}^{n}}{\varpi}\rangle$. In fact, we have opted to denote the Huber ring of sections $\mathcal{O}_{\Spa(A, A)}(\Spa(A, A)(\frac{f_1^{n},\dots, f_r^{n}}{\varpi}))$ of the rational subset \begin{equation*}\Spa(A, A)(\frac{f_1^{n},\dots,f_r^{n}}{\varpi})=\{\, x\in \Spa(A, A)\mid \vert f_{i}^{n}(x)\vert\leq \vert\varpi(x)\vert\neq 0 \ \text{for} i=1,\dots, r\,\}\end{equation*}of the affinoid pre-adic space $\Spa(A, A)$ by $B_{n,A}[\varpi^{-1}]=A\langle\frac{f_1^{n},\dots, f_r^{n}}{\varpi}\rangle[\varpi^{-1}]$ to distinguish it from its ring of definition $B_{n,A}=A\langle\frac{f_{1}^{n},\dots,f_{r}^{n}}{\varpi}\rangle$. 

Secondly, by Lemma \ref{Affine blow-up algebras}, the ring $A[\frac{f_1^{n},\dots, f_r^{n}}{\varpi}]$ is equal to the affine blow-up algebra $A[\frac{(f_1^{n},\dots,f_r^{n})_{A}}{\varpi}]$ (with notation as in \cite{Stacks}, Tag 052P). It is readily seen that this ring is $\varpi$-torsion-free, satisfies\begin{equation*}(f_1^{n},\dots, f_r^{n})_{A}A[\frac{f_1^{n},\dots, f_r^{n}}{\varpi}]=\varpi A[\frac{f_1^{n},\dots, f_r^{n}}{\varpi}]\end{equation*}and is characterized by the following universal property: It is the initial object in the category of $\varpi$-torsion-free $A$-algebras $B$ which satisfy $(f_{1}^{n},\dots, f_{r}^{n})_{A}B\subseteq\varpi B$. In particular, we see that \begin{equation*}(f_{1}^{n},\dots, f_{r}^{n})_{A}A\langle\frac{f_1^{n},\dots, f_r^{n}}{\varpi}\rangle=\varpi A\langle\frac{f_1^{n},\dots,f_r^{n}}{\varpi}\rangle\end{equation*}and thus the Huber rings required to be sheafy in the above definition are actually Tate rings, for all $n\geq1$. 

By Lemma \ref{Completions are torsion-free}, the completed affine blow-up algebra $A\langle\frac{f_1^{n},\dots, f_r^{n}}{\varpi}\rangle$ is $\varpi$-torsion-free. We also observe that $A\langle\frac{f_1^{n},\dots,f_r^{n}}{\varpi}\rangle$ satisfies a universal property in the category of complete topological $A$-algebras, analogous to the universal property of affine blow-up algebras: It is the initial object in the category of $\varpi$-torsion-free complete topological $A$-algebras $B$ satisfying $(f_{1}^{n},\dots, f_{r}^{n})_{A}B\subseteq\varpi B$. To see this, let $B$ be a complete topological $A$-algebra as above; by the universal property of $A[\frac{f_1^{n},\dots, f_r^{n}}{\varpi}]$ the map $A\to B$ factors uniquely through the canonical map $A\to A[\frac{f_1^{n},\dots,f_r^{n}}{\varpi}]$. Since the map $A\to B$ is continuous, every open neighbourhood of $0$ in $B$ contains some power of $(f_1,\dots, f_n)_{A}B$ and hence also some power of $\varpi$. Since $\varpi$ generates an ideal of definition of $A[\frac{f_1^{n},\dots,f_r^{n}}{\varpi}]$, this entails that the map $A[\frac{f_1^{n},\dots,f_r^{n}}{\varpi}]\to B$ is continuous and thus factors through a continuous ring map $A\langle\frac{f_1^{n},\dots,f_r^{n}}{\varpi}\rangle\to B$, as desired.     

Finally, recall that the affine blow-up algebra $A[\frac{f_1^{n},\dots,f_r^{n}}{\varpi}]$ is canonically isomorphic to the quotient of \begin{equation*}A[T_1,\dots,T_r]/(f_1^{n}-\varpi T_1,\dots,f_r^{n}-\varpi T_r)_{A[T_1,\dots,T_r]}\end{equation*}by the ideal of $\varpi$-power-torsion elements (\cite{Stacks}, Tag 098S). Consequently, by Lemma \ref{Completions are torsion-free}, $A\langle\frac{f_1^{n},\dots,f_r^{n}}{\varpi}\rangle$ is the quotient of \begin{equation*}A\langle T_1,\dots,T_r\rangle/\overline{(f_1^{n}-\varpi T_1,\dots, f_r^{n}-\varpi T_n)}_{A\langle T_1,\dots,T_r\rangle}\end{equation*}by the ideal of $\varpi$-power-torsion elements, where the bar in the above equation denotes the topological closure of an ideal in its ambient topological ring. This shows that our rings $B_{n,A}$ coincide up to $\varpi$-power-torsion with the rings $B_{n}$ used by Berthelot (\cite{Berthelot96}, (0.2.6)) in his construction of the generic fiber of a formal scheme locally formally of finite type over a discrete valuation ring (note that in Berthelot's setting $A$ is a Noetherian adic ring, so all ideals in $A\langle T_1,\dots, T_r\rangle$ are closed).\end{rmk}
Let us now prove that the definition of rig-sheafy complete adic rings is independent of any choices.        
\begin{lemma}\label{Rig-sheafiness does not depend on generators}Let $R$ and $\varpi$ be as in Definition \ref{Locally rig-sheafy formal schemes}. The property of a complete adic ring $A$ with a continuous ring map $R\to A$ being rig-sheafy over $(R, \varpi)$ does not depend on the choice of the generators $f_1,\dots, f_r$ nor on the choice of the finitely generated ideal of definition $I$ of $A$ containing the image of $\varpi$. In particular, if $R\to A$ is adic (i.e., if $\varpi$ generates an ideal of definition in $A$), then $A$ is rig-sheafy if and only if the Tate ring $A[\varpi^{-1}]$ is sheafy.\end{lemma}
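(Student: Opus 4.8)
The plan is to reduce the choice-dependent condition in Definition~\ref{Locally rig-sheafy formal schemes} to the single, manifestly choice-free condition that the Tate ring $A[\varpi^{-1}]$, with pair of definition $(A,\varpi)$, be sheafy. Fix once and for all a finitely generated ideal of definition $I=(f_1,\dots,f_r)_{A}$ of $A$ containing $\varpi$ (such an $I$ exists by Lemma~\ref{Ideals containing the pseudo-uniformizer}), and write $X=\Spa(A[\varpi^{-1}],\overline{A})$, regarded as the spectral space it is, equipped with Huber's structure presheaf on rational subsets --- this much is defined for any complete Huber pair, independently of sheafiness. For each $n\geq 1$ consider
\begin{equation*}U_n=\{\,x\in X\mid |f_i^{n}(x)|\leq|\varpi(x)|,\ i=1,\dots,r\,\}.\end{equation*}
Since $\varpi\in(f_1,\dots,f_r)_{A}$ and $\varpi$ is invertible in $A[\varpi^{-1}]$, the elements $f_1^{n},\dots,f_r^{n},\varpi$ generate the unit ideal there, so $U_n$ is a rational subset of $X$ with $\mathcal{O}_X(U_n)=A[\varpi^{-1}]\langle\tfrac{f_1^{n},\dots,f_r^{n}}{\varpi}\rangle=B_{n,A}[\varpi^{-1}]$, the last identification being the one recorded in Remark~\ref{Remark on rig-sheafiness}.

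The geometric heart of the argument is the claim that $U_n\subseteq U_{n+1}$ for all $n$ and that $X=\bigcup_{n\geq 1}U_n$. The inclusion is immediate because $|f_i(x)|\leq 1$ on $X$ (as $f_i\in A\subseteq\overline{A}$), so $|f_i^{n+1}(x)|\leq|f_i^{n}(x)|$ everywhere. For the covering, let $x\in X$; the associated continuous valuation satisfies $|\varpi(x)|\neq 0$ because $\varpi$ is a unit, and, since each $f_i$ lies in the ideal of definition $I$ and is therefore topologically nilpotent, continuity of the valuation forces $|f_i^{n}(x)|<|\varpi(x)|$ for all sufficiently large $n$ (the open neighbourhood $\{\,|\cdot(x)|<|\varpi(x)|\,\}$ of $0$ contains some power of $I$, which in turn contains $f_i^{n}$ for $n$ large); choosing $n$ uniform in $i$ puts $x$ in $U_n$. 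As $X$ is quasi-compact and $\{U_n\}_n$ is an increasing open cover by quasi-compact opens, we get $X=U_N$ for some $N$, and therefore
\begin{equation*}A[\varpi^{-1}]=\mathcal{O}_X(X)=\mathcal{O}_X(U_N)=B_{N,A}[\varpi^{-1}]\end{equation*}
as topological rings, using that the ring attached to a rational subset depends only on the subset, not on the presentation exhibiting it.

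With this in hand the two implications are quick. If $A$ is rig-sheafy with respect to the chosen $I$ and $f_1,\dots,f_r$, then $B_{N,A}[\varpi^{-1}]$ is sheafy, hence so is $A[\varpi^{-1}]$ by the displayed isomorphism. Conversely, if $A[\varpi^{-1}]$ is sheafy then $X$ is an honest affinoid adic space, so each rational subset $U_n$ is again an affinoid adic space and $\mathcal{O}_X(U_n)=B_{n,A}[\varpi^{-1}]$ is sheafy, a rational localization of a sheafy complete Huber ring being sheafy; thus $A$ is rig-sheafy with respect to $I$ and $f_1,\dots,f_r$. Since sheafiness of $A[\varpi^{-1}]$ makes no reference to $I$ or to the generators, rig-sheafiness over $(R,\varpi)$ is independent of both choices; and when $R\to A$ is adic, $(A,\varpi)$ is a pair of definition of the Tate ring $A[\varpi^{-1}]$, so rig-sheafiness over $(R,\varpi)$ is precisely sheafiness of that Tate ring.

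The step I expect to require the most care is the covering statement $X=\bigcup_n U_n$ together with the ensuing identification $\mathcal{O}_X(U_N)=\mathcal{O}_X(X)$: one must argue with continuous valuations of arbitrary rank (so that ``$|f_i^{n}(x)|$ becomes $\leq|\varpi(x)|$'' is deduced from continuity and from the $f_i$ sitting in an ideal of definition, not from any rank-one intuition), and one must invoke the well-definedness of Huber's structure presheaf on rational subsets in the possibly non-sheafy setting. Everything else is bookkeeping with the completed affine blow-up algebras $B_{n,A}$ already carried out in Remark~\ref{Remark on rig-sheafiness} and Lemma~\ref{Affine blow-up algebras}.
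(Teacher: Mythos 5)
There is a genuine gap: your argument silently assumes that the topology of $A$ is the $\varpi$-adic one, whereas the whole point of the lemma is the generality of Definition \ref{Locally rig-sheafy formal schemes}, where $R\to A$ is only continuous and the ideal of definition $I=(f_1,\dots,f_r)$ may strictly dominate $(\varpi)$. In that generality the object $X=\Spa(A[\varpi^{-1}],\overline{A})$ you work with is not available: if you give $A[\varpi^{-1}]$ the topology with pair of definition $(A,\varpi)$, as you write, then the $f_i$ are in general \emph{not} topologically nilpotent for that topology, so the continuity argument behind the covering claim $X=\bigcup_n U_n$ breaks down; if instead you try to use the $I$-adic topology on $A$, then $A[\varpi^{-1}]$ is not even a topological (Huber) ring, since $I^m\subseteq\varpi^k A$ fails unless $(\varpi)$ is already open, i.e.\ unless $R\to A$ is adic. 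Concretely, for $A=K^{\circ}[[T]]$ with the $(\varpi,T)$-adic topology the subsets $U_n$ exhaust the open unit disc: the union is not quasi-compact, no single $U_N$ equals it, and there is no Tate ring ``$A[\varpi^{-1}]$'' whose sheafiness could serve as your ``manifestly choice-free condition''. So the key step ``$X$ quasi-compact $\Rightarrow X=U_N\Rightarrow A[\varpi^{-1}]=B_{N,A}[\varpi^{-1}]$'' is false precisely in the cases the lemma is designed to cover. What your argument does prove correctly is the final ``in particular'' clause, i.e.\ the adic case, where $I^m\subseteq(\varpi)$ forces $U_n=X$ for large $n$ and the reduction to sheafiness of $A[\varpi^{-1}]$ is legitimate.

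The paper avoids any ambient space and any quasi-compactness argument: given a second finitely generated ideal of definition $J=(g_1,\dots,g_s)$, choose $m$ with $I^{m}\subseteq J$; then $f_i^{msn}\in J^{sn}\subseteq(g_1^{n},\dots,g_s^{n})_A$, so each $\frac{f_i^{msn}}{\varpi}$ is power-bounded in $A\langle\frac{g_1^{n},\dots,g_s^{n}}{\varpi}\rangle[\varpi^{-1}]$, and the universal property of rational localizations exhibits $A\langle\frac{g_1^{n},\dots,g_s^{n}}{\varpi}\rangle[\varpi^{-1}]$ as a rational localization of the sheafy Tate ring $A\langle\frac{f_1^{msn},\dots,f_r^{msn}}{\varpi}\rangle[\varpi^{-1}]$, hence sheafy. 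If you want to salvage your geometric picture, replace $\Spa(A[\varpi^{-1}],\overline{A})$ by the analytic locus $\{\,\vert\varpi(x)\vert\neq0\,\}$ inside $\Spa(A,A)$ and argue, as the paper does at the level of rings, that every $U_n^{J}$ is a rational subset of some $U_{k}^{I}$ --- but you cannot collapse the increasing union to a single affinoid.
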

\begin{proof}Let $I$ be a finitely generated ideal of definition of $A$ containing $\varpi$ and let $f_1,\dots, f_r$ be generators of $I$ such that $A\langle\frac{f_1^{n},\dots,f_r^{n}}{\varpi}\rangle[\varpi^{-1}]$ is a sheafy Tate ring for all $n\geq1$. Let $J$ be another finitely generated ideal of definition, with generators $g_1,\dots, g_s$. We want to prove that the Tate ring \begin{equation*}A\langle\frac{g_1^{n},\dots,g_s^{n}}{\varpi}\rangle[\varpi^{-1}]\end{equation*}is sheafy for all $n\geq1$. Choose an integer $m\geq1$ such that $I^{m}\subseteq J$. In particular, $f_{i}^{m}\in J=(g_1,\dots, g_s)_{A}$ for all $i=1,\dots,r$ and thus\begin{equation*}f_{i}^{msn}\in J^{sn}\subseteq (g_1^{n},\dots, g_s^{n})_{A}\end{equation*}for all $i=1,\dots,r$. It follows that $\frac{f_{i}^{msn}}{\varpi}$ is a power-bounded element of $A\langle\frac{g_1^{n},\dots,g_s^{n}}{\varpi}\rangle[\varpi^{-1}]$ for every $i=1,\dots, r$ and every $n\geq1$. By the universal property of rational localizations (\cite{Huber2}, (1.2)), the canonical map $A\to A\langle\frac{g_1^{n},\dots,g_s^{n}}{\varpi}\rangle[\varpi^{-1}]$ (for every $n\geq1$) factors through the analogous canonical map $A\to A\langle\frac{f_1^{msn},\dots,f_r^{msn}}{\varpi}\rangle[\varpi^{-1}]$. It follows that the Tate ring $A\langle\frac{g_1^{n},\dots,g_s^{n}}{\varpi}\rangle[\varpi^{-1}]$ is a rational localization of the sheafy complete Tate ring $A\langle\frac{f_1^{msn},\dots,f_r^{msn}}{\varpi}\rangle[\varpi^{-1}]$ and thus is itself sheafy.\end{proof}
\begin{lemma}\label{Rig-sheafiness does not depend on the pseudo-uniformizer}Let $R, \varpi$ be as in Definition \ref{Locally rig-sheafy formal schemes} and let $R\to A$ be a continuous homomorphism of complete adic rings, where $A$ has a finitely generated ideal of definition. Then the property of $A$ being rig-sheafy over $(R, \varpi)$ depends only on the ideal of definition $(\varpi)_{R}$ of $R$ generated by $\varpi$ and not on the choice of a generator $\varpi$ of that ideal of definition.\end{lemma}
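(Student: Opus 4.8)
The plan is to reduce the statement, via Lemma~\ref{Rig-sheafiness does not depend on generators}, to comparing a \emph{single} pair of completed affine blow-up algebras, and then to recognize the two resulting Huber rings as the sections of the structure presheaf over one and the same rational subset of the affinoid pre-adic space $\Spa(A,A)$.

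In detail: let $\varpi'$ be a second generator of the ideal of definition $(\varpi)_{R}$ of $R$, so that $\varpi'=a\varpi$ and $\varpi=b\varpi'$ for some $a,b\in R$; passing to images in $A$ these two relations persist, and in particular $(\varpi)_{A}=(\varpi')_{A}$ (note that no non-zero-divisor hypothesis is invoked: the generators need not differ by a unit). First I would observe that any finitely generated ideal of definition $I$ of $A$ containing $\varpi$ automatically contains $\varpi'=a\varpi$, and symmetrically. Combined with Lemma~\ref{Rig-sheafiness does not depend on generators}, which says that rig-sheafiness over $(R,\varpi)$ is detected by, and then holds for, every finitely generated ideal of definition containing $\varpi$, this lets me fix once and for all a finitely generated ideal of definition $I=(f_{1},\dots,f_{r})_{A}$ of $A$ containing both $\varpi$ and $\varpi'$, and reduces the claim to showing that, for each $n\geq 1$, the Tate ring $A\langle\tfrac{f_{1}^{n},\dots,f_{r}^{n}}{\varpi}\rangle[\varpi^{-1}]$ is sheafy if and only if $A\langle\tfrac{f_{1}^{n},\dots,f_{r}^{n}}{\varpi'}\rangle[\varpi'^{-1}]$ is.

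For this last equivalence I would argue as follows. For every point $x\in\Spa(A,A)$ one has $\vert\varpi(x)\vert=\vert\varpi'(x)\vert$: from $\varpi'=a\varpi$ with $a\in A$ one gets $\vert\varpi'(x)\vert\leq\vert\varpi(x)\vert$, and from $\varpi=b\varpi'$ with $b\in A$ the reverse inequality. Moreover both $(f_{1}^{n},\dots,f_{r}^{n},\varpi)_{A}$ and $(f_{1}^{n},\dots,f_{r}^{n},\varpi')_{A}$ are open, since each contains $I^{rn}$. Hence the two sets of rational data $(\{f_{1}^{n},\dots,f_{r}^{n}\},\varpi)$ and $(\{f_{1}^{n},\dots,f_{r}^{n}\},\varpi')$ define the very same rational subset
\begin{equation*}
U=\{\,x\in\Spa(A,A)\mid \vert f_{i}^{n}(x)\vert\leq\vert\varpi(x)\vert\neq 0\ \text{for}\ i=1,\dots,r\,\}.
\end{equation*}
By Remark~\ref{Remark on rig-sheafiness}, the section ring $\mathcal{O}_{\Spa(A,A)}(U)$ computed from the first datum is exactly $A\langle\tfrac{f_{1}^{n},\dots,f_{r}^{n}}{\varpi}\rangle[\varpi^{-1}]$, and from the second datum it is $A\langle\tfrac{f_{1}^{n},\dots,f_{r}^{n}}{\varpi'}\rangle[\varpi'^{-1}]$; since the structure presheaf attaches to a rational subset a Huber ring that is canonically independent of the chosen rational data (\cite{Huber2}, (1.2)), these two Huber rings are canonically isomorphic as topological rings, and in particular one is sheafy precisely when the other is. If one prefers a presheaf-free formulation, the same conclusion follows from the universal property of completed affine blow-up algebras recorded in Remark~\ref{Remark on rig-sheafiness}: since $\varpi B=\varpi' B$ for every $A$-algebra $B$ and ``$\varpi$-torsion-free'' is equivalent to ``$\varpi'$-torsion-free'', the completed blow-up algebras for $\varpi$ and for $\varpi'$ coincide, and inverting $\varpi$ or $\varpi'$ in the result yields canonically isomorphic topological rings because $\varpi$ becomes a unit there exactly when $\varpi'$ does.

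I do not anticipate a serious obstacle. The two points requiring care are (i) checking that the reduction to a common ideal of definition is legitimate, which is handled verbatim by Lemma~\ref{Rig-sheafiness does not depend on generators} and needs no new input, and (ii) applying the elementary valuation-theoretic identity $\vert\varpi(x)\vert=\vert\varpi'(x)\vert$ on the correct space, namely the affinoid pre-adic space $\Spa(A,A)$ appearing in Remark~\ref{Remark on rig-sheafiness}, rather than on the generic fibre $X=\Spa(A[\varpi^{-1}],\overline{A})$.
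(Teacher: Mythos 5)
Your proof is correct, and your ``presheaf-free'' variant is in fact exactly the paper's own argument: the paper fixes, via Lemma \ref{Ideals containing the pseudo-uniformizer}, a finitely generated ideal of definition $I=(f_1,\dots,f_r)_A$ containing both $\varpi$ and $\varpi'$, observes that $\varpi$-power-torsion equals $\varpi'$-power-torsion and that $\varpi$ and $\varpi'$ generate the same ideal of $A$, and then uses the universal property of Remark \ref{Remark on rig-sheafiness} in both directions to conclude $A\langle\frac{f_1^{n},\dots,f_r^{n}}{\varpi}\rangle=A\langle\frac{f_1^{n},\dots,f_r^{n}}{\varpi'}\rangle$, whence the localized Tate rings agree. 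Your primary route is a mild variant: instead of comparing the completed blow-up algebras directly, you identify both Tate rings as the sections of one and the same rational subset of $\Spa(A,A)$ (using $\vert\varpi(x)\vert=\vert\varpi'(x)\vert$ on that space and independence of the rational presentation, \cite{Huber2}, (1.2)); this is equally valid, though it leans on the identification of $B_{n,A}[\varpi^{-1}]$ with a rational section ring recorded in Remark \ref{Remark on rig-sheafiness}, whereas the paper's argument works purely at the level of the universal property and so is a bit more self-contained. One small simplification on your side: noting that any finitely generated ideal of definition containing $\varpi$ automatically contains $\varpi'=a\varpi$ makes the appeal to Lemma \ref{Ideals containing the pseudo-uniformizer} essentially unnecessary, and your reduction via Lemma \ref{Rig-sheafiness does not depend on generators} is the same as the paper's implicit one.
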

\begin{proof}Let $\varpi'$ be another generator of $(\varpi)_{R}$ in $R$. By Lemma \ref{Ideals containing the pseudo-uniformizer}, there exists an ideal of definition $I=(f_1,\dots,f_r)_{A}$ of $A$ containing both $\varpi$ and $\varpi'$. Using the universal property from Remark \ref{Remark on rig-sheafiness}, we prove that $A\langle\frac{f_1^{n},\dots, f_r^{n}}{\varpi'}\rangle=A\langle\frac{f_1^{n},\dots, f_r^{n}}{\varpi}\rangle$ for all $n\geq1$, whence the assertion of the lemma follows. First observe that the submodule of $\varpi$-power-torsion elements of any $R$-module $M$ is equal to the submodule of $\varpi'$-power-torsion elements of $M$, since both are equal to the submodule of $(\varpi)_{R}$-power-torsion elements. In particular, $A\langle\frac{f_1^{n},\dots, f_r^{n}}{\varpi'}\rangle$ is a $\varpi$-torsion-free complete topological $A$-algebra satisfying \begin{equation*}(f_1^{n},\dots, f_r^{n})_{A}A\langle\frac{f_1^{n},\dots,f_r^{n}}{\varpi'}\rangle=\varpi'A\langle\frac{f_1^{n},\dots, f_r^{n}}{\varpi'}\rangle=\varpi A\langle\frac{f_1^{n},\dots, f_r^{n}}{\varpi'}\rangle,\end{equation*}so the map $A\to A\langle\frac{f_1^{n},\dots, f_r^{n}}{\varpi'}\rangle$ factors canonically through the map $A\to A\langle\frac{f_1^{n},\dots,f_r^{n}}{\varpi}\rangle$, by the universal property described in Remark \ref{Remark on rig-sheafiness}. By exchanging the roles of $\varpi$ and $\varpi'$, we see that $A\langle\frac{f_1^{n},\dots,f_{r}^{n}}{\varpi'}\rangle=A\langle\frac{f_1^{n},\dots,f_{r}^{n}}{\varpi}\rangle$, for all $n\geq1$.\end{proof}
Lemma \ref{Rig-sheafiness does not depend on generators} allows us to globalize the notion of a rig-sheafy complete adic ring over $(R, \varpi)$ as follows.
\begin{mydef}[Locally rig-sheafy adic formal scheme over $(R, \varpi)$]\label{Locally rig-sheafy formal schemes 2}Let $R$ and $\varpi$ be as in Definition \ref{Locally rig-sheafy formal schemes}. We call an affine adic formal scheme $\mathfrak{X}=\Spf(A)$ over $\Spf(R)$ rig-sheafy over $(R, \varpi)$ (or rig-sheafy over $R$, or just rig-sheafy, if $\varpi$ or $(R, \varpi)$ is understood from the context) if $A$ is a rig-sheafy complete adic ring over $(R, \varpi)$. We call a general adic formal scheme $\mathfrak{X}$ over $\Spf(R)$ \textit{locally rig-sheafy over $(R, \varpi)$} (or just locally rig-sheafy, if $(R, \varpi)$ is understood from the context) if it has an open cover by rig-sheafy affine formal subschemes.\end{mydef} 
\begin{lemma}\label{Locally sheafy implies locally rig-sheafy}If $\mathfrak{X}$ is a locally sheafy adic formal scheme over $R$, then $\mathfrak{X}$ is also locally rig-sheafy over $(R, \varpi)$, for every choice of an element $\varpi\in R$ generating an ideal of definition of $R$.\end{lemma}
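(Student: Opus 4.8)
The plan is to reduce at once to an affine chart and then to recognize each of the completed affine blow-up algebras appearing in Definition \ref{Locally rig-sheafy formal schemes} as the ring of sections of a rational subset of a sheafy adic space, after which rig-sheafiness is immediate from the stability of sheafiness under rational localization.

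First I would pass to an affine piece. By definition of \emph{locally sheafy}, $\mathfrak{X}$ has an open cover by affine formal schemes $\Spf(A)$ with $A$ a sheafy complete adic ring having a finitely generated ideal of definition, and restricting the structure morphism $\mathfrak{X}\to\Spf(R)$ equips each such $A$ with a continuous homomorphism $R\to A$. Since \emph{locally rig-sheafy over $(R,\varpi)$} only asks for the existence of such a cover by rig-sheafy affines (Definition \ref{Locally rig-sheafy formal schemes 2}), it suffices to prove that every such $A$ is rig-sheafy over $(R,\varpi)$. By Lemma \ref{Ideals containing the pseudo-uniformizer}, for any finitely generated ideal of definition $I_0$ of $A$ the ideal $I=(I_0,\varpi)$ is again a finitely generated ideal of definition, now containing the image of $\varpi$; fix generators $I=(f_1,\dots,f_r)_A$. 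It then remains only to check that the Huber rings $B_{n,A}[\varpi^{-1}]$, with $B_{n,A}=A\langle\frac{f_1^n,\dots,f_r^n}{\varpi}\rangle$, are sheafy for all $n\ge 1$.

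The heart of the argument is to exhibit $B_{n,A}[\varpi^{-1}]$ as a rational localization of a sheafy Huber pair. Since $A$ is a sheafy complete adic ring, $(A,A)$ is a sheafy Huber pair (note $A^{\circ}=A$ because every element of an adic ring is power-bounded), so $X=\Spa(A,A)$ is an adic space and $\mathcal{O}_X$ is a sheaf on the rational subsets of $X$. For each $n\ge 1$ the ideal $(f_1^n,\dots,f_r^n,\varpi)_A$ contains $I^{rn}$ and is therefore open, so
\[U_n=\{\,x\in X \mid |f_i^n(x)|\le|\varpi(x)|\ne 0 \text{ for } i=1,\dots,r\,\}\]
is a rational subset of $X$, and by Huber's computation of structure sheaves on rational subsets together with Remark \ref{Remark on rig-sheafiness}, the Huber pair attached to $U_n$ is $(B_{n,A}[\varpi^{-1}],\mathcal{O}_X^{+}(U_n))$, with $B_{n,A}[\varpi^{-1}]=\mathcal{O}_X(U_n)$ carrying the topology whose ring of definition is $B_{n,A}$. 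I would then invoke the standard fact that sheafiness passes to rational localizations: by transitivity of rational localization (\cite{Huber2}), every rational subset of $U_n$ is a rational subset of $X$ with the same ring of sections, so every cover of $U_n$ by rational subsets is a cover by rational subsets of $X$; the sheaf property of $\mathcal{O}_X$ on rational covers of $X$ therefore gives the sheaf property for the structure presheaf of $(B_{n,A}[\varpi^{-1}],\mathcal{O}_X^{+}(U_n))$. Hence this pair is sheafy, and as sheafiness of a complete Huber pair is independent of the choice of ring of integral elements, $B_{n,A}[\varpi^{-1}]$ is a sheafy Huber ring. Since $n$ was arbitrary, $A$ is rig-sheafy over $(R,\varpi)$, so $\mathfrak{X}$ is locally rig-sheafy over $(R,\varpi)$.

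The only non-formal ingredient — and hence the point to be most careful with — is the assertion that rational localizations of sheafy Huber pairs remain sheafy, together with the independence of sheafiness from the chosen ring of integral elements; both are classical, so the main task is to cite them accurately rather than to reprove them. Everything else (the reduction to affines, the openness of $(f_1^n,\dots,f_r^n,\varpi)_A$, and the identification of $B_{n,A}[\varpi^{-1}]$ with $\mathcal{O}_X(U_n)$) is routine and already essentially recorded in Lemma \ref{Ideals containing the pseudo-uniformizer} and Remark \ref{Remark on rig-sheafiness}.
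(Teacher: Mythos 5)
Your proposal is correct and follows essentially the same route as the paper: reduce to a sheafy affine chart, use Lemma \ref{Ideals containing the pseudo-uniformizer} to get an ideal of definition containing $\varpi$, identify $B_{n,A}[\varpi^{-1}]$ with $\mathcal{O}_{\Spa(A,A)}(U_n)$, and conclude sheafiness because rational subsets of the adic space $\Spa(A,A)$ are again adic spaces. The extra details you supply (openness of $(f_1^n,\dots,f_r^n,\varpi)_A$, independence of sheafiness from the ring of integral elements) are fine but not a different argument.
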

\begin{proof}It suffices to prove that a sheafy complete adic ring $A$ (with finitely generated ideal of definition) is rig-sheafy over $(R, \varpi)$ for every continuous ring map $R\to A$, where the pair $(R, \varpi)$ is as in Definition \ref{Locally rig-sheafy formal schemes}. Let $A$ be a sheafy complete adic ring over $R$ with finitely generated ideal of definition $I=(f_1,\dots, f_r)_{A}$ containing $\varpi$ (by Lemma \ref{Ideals containing the pseudo-uniformizer}, such an ideal of definition always exists). The Tate rings $A\langle \frac{f_1^{n},\dots, f_r^{n}}{\varpi}\rangle[\varpi^{-1}]$ are the rings of sections of the rational subsets\begin{equation*}U_{n}=\{\, x\in \Spa(A, A)\mid \vert f_{i}^{n}(x)\vert\leq \vert \varpi(x)\vert\neq 0\,\}.\end{equation*}But if the pre-adic space $\Spa(A, A)$ is an adic space, so are its rational subsets $U_{n}$. It follows that each $A\langle\frac{f_1^{n},\dots, f_r^{n}}{\varpi}\rangle[\varpi^{-1}]$ is sheafy, as claimed.\end{proof}
\begin{lemma}\label{Topology on a locally rig-sheafy formal scheme}Let $R, \varpi$ be as before and let $\mathfrak{X}$ be a locally rig-sheafy adic formal scheme over $(R, \varpi)$. The family of all rig-sheafy affine open formal subschemes of $\mathfrak{X}$ is a basis for the topology of $\mathfrak{X}$.\end{lemma}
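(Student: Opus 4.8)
The plan is to reduce at once to affine formal schemes, then to the standard basic opens $\mathfrak{D}(f)$ of an affine formal scheme, and to show that each such $\mathfrak{D}(f)$ is again rig-sheafy over $(R,\varpi)$.

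Since $\mathfrak{X}$ is locally rig-sheafy, it admits an open cover by rig-sheafy affine formal subschemes of the form $\Spf(A)$. As a family of open subsets that restricts to a basis of each set in an open cover is a basis of the whole space, it suffices to prove that, for a rig-sheafy complete adic ring $A$ over $(R,\varpi)$, the rig-sheafy affine open formal subschemes of $\Spf(A)$ form a basis for the topology of $\Spf(A)$. By the standard description of affine formal schemes (see \cite{FK}, Ch.~I, \S1, together with \cite{FK}, Ch.~I, Corollary 3.7.13), the open subsets $\mathfrak{D}(f)=\Spf(A_{\{f\}})$, for $f\in A$, form such a basis, where $A_{\{f\}}$ denotes the $I$-adic completion of the localization $A[1/f]$ for a finitely generated ideal of definition $I$ of $A$; note that $A_{\{f\}}$ is again a complete adic ring with finitely generated ideal of definition $IA_{\{f\}}$. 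So everything comes down to the claim: if $A$ is rig-sheafy over $(R,\varpi)$, then so is $A_{\{f\}}$.

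To prove the claim, fix a finitely generated ideal of definition $I=(f_1,\dots,f_r)_A$ of $A$ containing $\varpi$ and such that $B_n:=A\langle\frac{f_1^n,\dots,f_r^n}{\varpi}\rangle$ has $B_n[\varpi^{-1}]$ sheafy for every $n\geq1$; such an $I$ exists by the very definition of rig-sheafiness. Then $J:=IA_{\{f\}}$ is a finitely generated ideal of definition of $A_{\{f\}}$ which contains $\varpi$ and is generated by the images of $f_1,\dots,f_r$, so by Lemma~\ref{Rig-sheafiness does not depend on generators} it is enough to show that $A_{\{f\}}\langle\frac{f_1^n,\dots,f_r^n}{\varpi}\rangle[\varpi^{-1}]$ is sheafy for all $n\geq1$. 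The heart of the argument is the identification
\begin{equation*}A_{\{f\}}\langle\frac{f_1^n,\dots,f_r^n}{\varpi}\rangle\;\cong\;(B_n)_{\{f\}}:=(B_n[1/f])^{\wedge},\end{equation*}
i.e.~that forming completed affine blow-up algebras commutes with completed localization at an element of the base ring. I would prove this by comparing the two universal properties recorded in Remark~\ref{Remark on rig-sheafiness}: on the one hand $(B_n)_{\{f\}}$ is $\varpi$-torsion-free (by Lemma~\ref{Completions are torsion-free}, since $B_n$ is $\varpi$-torsion-free and a localization of a $\varpi$-torsion-free ring is again one), it is a complete topological $A_{\{f\}}$-algebra, and it satisfies $(f_1^n,\dots,f_r^n)(B_n)_{\{f\}}=\varpi(B_n)_{\{f\}}$ because this identity holds in $B_n$ and survives localization at $f$ and $\varpi$-adic completion; on the other hand $A_{\{f\}}\langle\frac{f_1^n,\dots,f_r^n}{\varpi}\rangle$ is a complete topological $B_n$-algebra in which the image of $f$ is invertible. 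Uniqueness in the two universal properties then forces the two evident comparison maps to be mutually inverse.

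Finally, inverting $\varpi$, I would recognize $(B_n)_{\{f\}}[\varpi^{-1}]$ as the rational localization $B_n[\varpi^{-1}]\langle\frac{1}{f}\rangle$ of $B_n[\varpi^{-1}]$ attached to the rational subset $\{\,x:|f(x)|\geq1\,\}$ of $\Spa(B_n[\varpi^{-1}],\overline{B_n})$: indeed $(B_n[1/f])^{\wedge}$ is exactly a ring of definition of this rational localization (Huber's construction, with $B_n$ a ring of definition of $B_n[\varpi^{-1}]$), and inverting $\varpi$ recovers the full Huber ring. Since $B_n[\varpi^{-1}]$ is sheafy by the choice of $I$, and rational localizations of sheafy Tate rings are sheafy, $A_{\{f\}}\langle\frac{f_1^n,\dots,f_r^n}{\varpi}\rangle[\varpi^{-1}]$ is sheafy for every $n\geq1$; thus $A_{\{f\}}$ is rig-sheafy and the lemma follows. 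The main obstacle is the commutation of completed affine blow-up with completed localization asserted in the displayed isomorphism; everything else is routine bookkeeping with the universal properties and sheafiness statements already available.
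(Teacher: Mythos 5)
Your proposal is correct and follows essentially the same route as the paper: reduce to a rig-sheafy affine $\Spf(A)$, then to the basic opens $\mathfrak{D}(f)$, and show that the completed localization $A\langle f^{-1}\rangle$ is again rig-sheafy because forming the completed affine blow-up algebras commutes with localization at $f$, after which $A\langle f^{-1}\rangle\langle\frac{f_1^{n},\dots,f_r^{n}}{\varpi}\rangle[\varpi^{-1}]$ is a rational localization of the sheafy Tate ring $A\langle\frac{f_1^{n},\dots,f_r^{n}}{\varpi}\rangle[\varpi^{-1}]$. The only (harmless) difference is that you justify the commutation by an integral-level comparison of the two universal properties from Remark \ref{Remark on rig-sheafiness} before inverting $\varpi$, whereas the paper verifies the resulting equality of Tate rings directly via the universal property of rational localizations.
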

\begin{proof}It suffices to prove that for every rig-sheafy complete adic ring $A$ over $(R, \varpi)$ the complete adic ring $A\langle g^-1\rangle$ is again rig-sheafy over $(R, \varpi)$ for every $g\in A$. But, choosing a finitely generated ideal of definition $I$ of $A$ containing $\varpi$ and generators $f_1,\dots, f_r$ of $I$, this follows from the equality \begin{equation*}A\langle g^{-1}\rangle\langle\frac{f_1^n,\dots, f_r^{n}}{\varpi}\rangle[\varpi^{-1}]=A\langle\frac{f_1^n,\dots, f_r^{n}}{\varpi}\rangle\langle g^{-1}\rangle[\varpi^{-1}]\end{equation*}which can be shown for all $n\geq1$ using the universal property of rational localizations.\end{proof} 
\begin{rmk}\label{Formal schemes as locally $v$-ringed spaces}In the following we endow each formal scheme $\mathfrak{X}$ with the structure of a locally v-ringed space by equipping each stalk $\mathcal{O}_{\mathfrak{X},x}$, $x\in\mathfrak{X}$, with the valuation \begin{equation*}v_{x}: \mathcal{O}_{\mathfrak{X},x}\to \{0,1\}\end{equation*}which takes the value $0$ on the maximal ideal and takes the value $1$ elsewhere. With this structure, every morphism of formal schemes $f: \mathfrak{X}\to\mathfrak{S}$ becomes a morphism of locally v-ringed spaces since the homomorphisms $\mathcal{O}_{\mathfrak{S},f(x)}\to\mathcal{O}_{\mathfrak{X},x}$ are local ring homomorphisms. This exhibits the category of formal schemes as a full subcategory of the category of locally v-ringed spaces.\end{rmk}

\section{Generic fiber and specialization map}\label{sec:generic fiber}

Following the classical idea of Berthelot (\cite{Berthelot96}, \S0.2.6), we can now define the adic analytic generic fiber $\mathfrak{X}_{\eta}^{\ad}$ of a locally rig-sheafy adic formal scheme over $R$. We begin with the case when $\mathfrak{X}$ is affine and rig-sheafy. For $R$ and $\varpi$ as before and for $\mathfrak{X}=\Spf(A)$ a rig-sheafy affine formal scheme over $(R, \varpi)$, where $A$ is a complete sheafy adic ring with finitely generated ideal of definition $I=(f_1,\dots, f_r)_{A}$ containing $\varpi$ (we use Lemma \ref{Ideals containing the pseudo-uniformizer} to ensure that such an ideal of definition exists), we again consider the completed affine blow-up algebras\begin{equation*}B_{n,A}=A\langle \frac{f_{1}^{n},\dots, f_{r}^{n}}{\varpi}\rangle, \ n\geq 1.\end{equation*}Recall from Remark \ref{Remark on rig-sheafiness} that these satisfy \begin{equation*}(f_1^{n},\dots,f_r^{n})_{A}B_{n,A}=\varpi B_{n,A}.\end{equation*}This shows that the topology on $B_{n,A}$ is the $\varpi$-adic topology, for all $n\geq1$, so each $\Spf(B_{n,A})$ is an adic formal $R$-scheme. We can describe the Tate ring $B_{n,A}[\varpi^{-1}]$ as \begin{equation*}B_{n,A}[\varpi^{-1}]=\mathcal{O}_{\Spa(A, A)}(U_{n})\end{equation*}where $U_{n}\subseteq \Spa(A, A)$ is the rational subset \begin{equation*}U_{n}=\{\, x\in \Spa(A, A) \mid \vert f_{i}^{n}(x)\vert\leq \vert\varpi(x)\vert\neq 0 \ \text{for all}\ i=1,\dots, r\,\}.\end{equation*}For $m\geq n$, we have \begin{equation*}(f_1^{m},\dots, f_r^{m})_{A}B_{n,A}\subseteq (f_1^{n},\dots, f_r^{n})_{A}B_{n,A}=\varpi B_{n,A},\end{equation*}so the canonical map $A\to B_{n,A}$ factors uniquely through a continuous $A$-algebra map $B_{m,A}\to B_{n,A}$, by the universal property of $B_{m,A}$ discussed in Remark \ref{Remark on rig-sheafiness}. Thus, for $m\geq n$, we have natural transition morphisms $B_{m,A}\to B_{n,A}$ and it is readily seen that these transition morphisms induce the natural inclusions of rational subsets $U_{n}\subseteq U_{m}$. Since $A$ is rig-sheafy, the pre-adic spaces \begin{equation*}U_{n}=\Spa(B_{n,A}[\varpi^{-1}], \overline{B_{n,A}})\end{equation*}for $n\geq1$ are adic spaces and thus the inclusion morphisms\begin{equation*}U_{n}\hookrightarrow U_{m}\end{equation*}for $m\geq n$ are open immersions of adic spaces. We define the adic analytic generic fiber $\mathfrak{X}_{\eta}^{\ad}=\mathfrak{X}_{\eta, (R, \varpi)}^{\ad}$ of $\mathfrak{X}$ over $(R, \varpi)$ to be the increasing union \begin{equation*}\mathfrak{X}_{\eta}^{\ad}:=\bigcup_{n>0}U_{n}=\bigcup_{n>0}\Spa(B_{n,A}[\varpi^{-1}], \overline{B_{n,A}}).\end{equation*}
\begin{lemma}\label{Ideals of definition and generic fiber}For any rig-sheafy affine adic formal scheme $\mathfrak{X}=\Spf(A)$ over $(R, \varpi)$, the adic space $\mathfrak{X}_{\eta}^{\ad}$ does not depend on the choice of a finitely generated ideal of definition $I$ of $A$ containing $\varpi$ and of its generators $f_1,\dots, f_r$.\end{lemma}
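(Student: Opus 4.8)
The plan is to reduce the independence statement to the fact, already established in Lemma~\ref{Rig-sheafiness does not depend on generators} and Lemma~\ref{Rig-sheafiness does not depend on the pseudo-uniformizer}, that rig-sheafiness is intrinsic, and then to show that the two increasing unions $\bigcup_{n>0}U_n$ defined by two choices $(I, f_1,\dots,f_r)$ and $(J, g_1,\dots,g_s)$ are cofinal inside $\Spa(A,A)$ and hence define the same open subspace. First I would fix the two rational subsets
\begin{equation*}
U_n = \{\, x\in\Spa(A,A)\mid \vert f_i^n(x)\vert\leq\vert\varpi(x)\vert\neq 0\ \text{for all }i\,\},\quad
V_m = \{\, x\in\Spa(A,A)\mid \vert g_j^m(x)\vert\leq\vert\varpi(x)\vert\neq 0\ \text{for all }j\,\},
\end{equation*}
and observe that since both $(f_1,\dots,f_r)$ and $(g_1,\dots,g_s)$ generate ideals of definition, each $f_i$ lies in some power of $(g_1,\dots,g_s)_A$ and vice versa. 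Concretely, choosing $k\geq 1$ with $I^k\subseteq J$ gives $f_i^k\in J=(g_1,\dots,g_s)_A$, so $f_i^{ksm}\in J^{sm}\subseteq (g_1^m,\dots,g_s^m)_A$ for every $i$, exactly as in the proof of Lemma~\ref{Rig-sheafiness does not depend on generators}. On a point $x\in V_m$ this forces $\vert f_i^{ksm}(x)\vert\leq\vert\varpi(x)\vert$ for all $i$, i.e.\ $V_m\subseteq U_{ksm}$; the symmetric argument gives $U_n\subseteq V_{\ell n}$ for a suitable $\ell$. Hence the two chains of rational subsets are mutually cofinal.

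The second step is to upgrade this cofinality of underlying rational subsets to an identification of adic spaces. Since $A$ is rig-sheafy over $(R,\varpi)$ (a property independent of the choices, by the cited lemmas), each $U_n$ and each $V_m$ is an adic space, being the adic spectrum of the sheafy Tate ring $B_{n,A}[\varpi^{-1}]=\mathcal{O}_{\Spa(A,A)}(U_n)$, respectively $\mathcal{O}_{\Spa(A,A)}(V_m)$. The inclusions $U_n\hookrightarrow V_{\ell n}$ and $V_m\hookrightarrow U_{ksm}$ are therefore open immersions of adic spaces, and by construction they are compatible with the transition maps in each chain (all of these being the canonical open immersions of rational subsets, which are determined on the level of the presheaf $\mathcal{O}_{\Spa(A,A)}$). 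Passing to the colimit along a cofinal subsequence does not change it, so
\begin{equation*}
\bigcup_{n>0}U_n \;=\; \bigcup_{n>0}V_n
\end{equation*}
as open subspaces of the locally ringed (indeed, locally v-ringed) space $\Spa(A,A)$, with matching structure sheaves. This is the desired independence; the independence from the choice of generator $\varpi$ of the ideal $(\varpi)_R$ then follows immediately from Lemma~\ref{Rig-sheafiness does not depend on the pseudo-uniformizer}, which identifies the rings $B_{n,A}$ for $\varpi$ and $\varpi'$ outright.

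I expect the only genuinely delicate point to be the bookkeeping in the second step: namely, making precise that the colimit of a diagram of open immersions of adic spaces indexed by $\mathbb{Z}_{>0}$ is unchanged under passing to a cofinal subsequence, and that two such colimits whose index chains interleave (via maps compatible with all transition morphisms) are canonically the same open subspace of $\Spa(A,A)$. This is a formal statement about filtered colimits of open subobjects and carries no real content beyond checking that the interleaving maps $U_n\hookrightarrow V_{\ell n}\hookrightarrow U_{ksm}$ compose to the standard inclusion $U_n\hookrightarrow U_{ksm}$ — which holds because all maps in sight are the unique open immersions of the corresponding rational subsets and the structure sheaf $\mathcal{O}_{\Spa(A,A)}$ is a presheaf on the poset of rational subsets. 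Everything else is a direct application of the intrinsic characterization of rig-sheafiness recalled above.
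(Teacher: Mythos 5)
Your proposal is correct and follows essentially the same route as the paper: using $I^k\subseteq J$ and $J^l\subseteq I$ to get $f_i^{ksm}\in(g_1^m,\dots,g_s^m)_A$ and $g_j^{lrn}\in(f_1^n,\dots,f_r^n)_A$, hence mutual cofinality of the two chains of rational subsets of $\Spa(A,A)$, so the two increasing unions coincide. The extra bookkeeping you flag (cofinal colimits of open immersions) and the remark about the choice of $\varpi$ are fine but not needed beyond what the paper's argument already contains.
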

\begin{proof}If $J$ is another finitely generated ideal of definition of $A$, with generators $g_1,\dots, g_s$, then there exist positive integers $m, l$ such that $I^{m}\subseteq J$ and $J^{l}\subseteq I$. Then for every $n\geq1$ we have $f_{i}^{msn}\in J^{sn}\subseteq (g_1^{n},\dots, g_s^{n})_{A}$ for all $i=1,\dots, r$ and $g_{j}^{lrn}\in I^{rn}\subseteq (f_1^{n},\dots, f_r^{n})_{A}$. Consequently, \begin{align*}\{\, x\in \Spa(A, A) \mid \vert f_{i}^{msn}(x)\vert\leq \vert\varpi(x)\vert\neq 0,  i=1,\dots, r\,\} \\ \supseteq \{\, x\in \Spa(A, A) \mid \vert g_{j}^{n}(x)\vert\leq \vert\varpi(x)\vert\neq 0, j=1,\dots, s\,\}\end{align*}and \begin{align*}\{\, x\in \Spa(A, A) \mid \vert g_{j}^{lrn}(x)\vert\leq \vert\varpi(x)\vert\neq 0, j=1,\dots, s\,\} \\ \supseteq \{\, x\in \Spa(A, A) \mid \vert f_{i}^{n}(x)\vert\leq \vert\varpi(x)\vert\neq 0, i=1,\dots, r\,\},\end{align*}whence the assertion.\end{proof} 
Observe that every adic morphism of rig-sheafy affine adic formal schemes $f_{0}: \mathfrak{X}\to \mathfrak{S}$ over $(R, \varpi)$ corresponding to an adic ring homomorphism \begin{equation*}\varphi: \mathcal{O}_{\mathfrak{S}}(\mathfrak{S})\to\mathcal{O}_{\mathfrak{X}}(\mathfrak{X})\end{equation*}gives rise to morphisms\begin{equation*}\Spf(\mathcal{O}_{\mathfrak{X}}(\mathfrak{X})\langle\frac{\varphi(f_1)^{n},\dots,\varphi(f_r)^{n}}{\varpi}\rangle)\to \Spf(\mathcal{O}_{\mathfrak{S}}(\mathfrak{S})\langle\frac{f_1^{n},\dots,f_r^{n}}{\varpi}\rangle)\end{equation*}for any finitely generated ideal of definition $I=(f_1,\dots,f_r)_{\mathcal{O}_{\mathfrak{S}}(\mathfrak{S})}$ of $\mathcal{O}_{\mathfrak{S}}(\mathfrak{S})$. Since $\varphi$ is an adic homomorphism, $\varphi(I)\mathcal{O}_{\mathfrak{X}}(\mathfrak{X})$ is a finitely generated ideal of definition of $\mathcal{O}_{\mathfrak{X}}(\mathfrak{X})$. The homomorphisms $\mathcal{O}_{\mathfrak{S}}(\mathfrak{S})\langle\frac{f_1^{n},\dots,f_r^{n}}{\varpi}\rangle \to \mathcal{O}_{\mathfrak{X}}(\mathfrak{X})\langle\frac{\varphi(f_1)^{n},\dots,\varphi(f_r)^{n}}{\varpi}\rangle$ are compatible for varying $n\geq1$, so we obtain a morphism of adic spaces $\mathfrak{X}_{\eta}^{\ad}\to \mathfrak{S}_{\eta}^{\ad}$. In this way we see that the assignment $\mathfrak{X}\mapsto \mathfrak{X}_{\eta, (R, \varpi)}^{\ad}$ is a functor from the category of rig-sheafy affine adic formal schemes over $\Spf(R)$ and adic morphisms to the category of analytic adic spaces.
\begin{example}[Open unit disk and perfectoid open unit disk]\label{Open unit disk}One of the most basic examples of the generic fiber construction is the open unit disk over a nonarchimedean field $K$ which is given as the adic analytic generic fiber $\mathfrak{X}_{\eta}^{\ad}$ over $\Spa(K, K^{\circ})$ of \begin{equation*}\mathfrak{X}=\Spf(K^{\circ}[[T]]),\end{equation*}where the formal power series ring $K^{\circ}[[T]]$ is equipped with the $(\varpi, T)$-adic topology for a pseudo-uniformizer $\varpi$ of $K$. For an example which does not belong to the realm of classical rigid geometry, consider the perfectoid open unit disk over a perfectoid field $K$, which can be described as the generic fiber (in the above sense) over $\Spa(K, K^{\circ})$ of the rig-sheafy affine formal scheme $\Spf(K^{\circ}[[T^{1/p^{\infty}}]])$. Here, the affine formal scheme is rig-sheafy since for every $n\geq1$ the Tate ring $K^{\circ}[[T^{1/p^{\infty}}]]\langle\frac{T^{n}}{\varpi}\rangle[\varpi^{-1}]$ can be identified with the rational localization $K\langle T^{1/p^{\infty}}\rangle\langle\frac{T^{n}}{\varpi}\rangle$ of the perfectoid Tate algebra $K\langle T^{1/p^{\infty}}\rangle$: Indeed, $\Spa(K^{\circ}\langle T^{1/p^{\infty}}\rangle, K^{\circ}\langle T^{1/p^{\infty}}\rangle)$ is the subspace of $\Spa(K^{\circ}[[T^{1/p^{\infty}}]], K^{\circ}[[T^{1/p^{\infty}}]])$ defined by the inequality $\vert T(x)\vert \leq1$, so the map \begin{equation*}K^{\circ}[[T^{1/p^{\infty}}]]\to K^{\circ}[[T^{1/p^{\infty}}]]\langle\frac{T^{n}}{\varpi}\rangle\end{equation*}factors through $K^{\circ}\langle T^{1/p^{\infty}}\rangle$ and thus also through $K\langle T^{1/p^{\infty}}\rangle\langle\frac{T^{n}}{\varpi}\rangle$, by the universal property of completed affine blow-up algebras discussed in Remark \ref{Remark on rig-sheafiness}.\end{example}
\begin{example}[Fargues-Fontaine curves]\label{Fargues-Fontaine curves}A somewhat less standard example of the generic fiber construction can be described as follows. Let $(R, R^{+})$ be a perfectoid Tate Huber pair. Let $\varpi\in R^{+}$ be a pseudo-uniformizer of $R$ contained in $R^{+}$ such that $\varpi^{p}$ divides $p$ in $R^{+}$. By \cite{BMS}, Lemma 3.8, we may assume that $\varpi$ admits a compatible system of $p$-power roots in $R^{+}$, up to multiplying $\varpi$ by a unit in $R^{+}$. Let $\varpi^{\flat}$ be an element of the tilt \begin{equation*}R^{\flat+}=R^{+\flat}=\varprojlim_{f\mapsto f^{p}}R^{+}\end{equation*}of $R^{+}$ which corresponds to such a compatible system of $p$-power roots. Denote by $\mathbb{A}_{\inf}(R^{+})$ the ring of Witt vectors $W(R^{\flat+})$ considered as a complete adic ring with the $([\varpi^{\flat}], p)$-adic topology. By \cite{Kedlaya17}, Remark 3.1.7, $A_{\inf}(R^{+})$ is a sheafy Huber ring. In particular, it is rig-sheafy over the pair $(W(R^{\flat+}), p[\varpi^{\flat}])$. It is then readily seen, either by inspection or by using Lemma \ref{Generic fibers of sheafy formal schemes} below, that the adic analytic generic fiber of $\Spf(\mathbb{A}_{\inf}(R^{+}))$ over $(W(R^{\flat+}, p[\varpi^{\flat}])$ is equal to the analytic adic space \begin{equation*}Y_{\Spa(R, R^+)}=\Spa(\mathbb{A}_{\inf}(R^{+}), \mathbb{A}_{\inf}(R^{+}))\setminus\{\, p[\varpi^{\flat}]=0\,\}\end{equation*}whose quotient $X_{\Spa(R, R^+)}=Y_{\Spa(R, R^+)}/\varphi^{\mathbb{Z}}$ by the Frobenius action is the (relative) Fargues-Fontaine curve over $\Spa(R, R^{+})$. \end{example}
For any $(R, \varpi)$ and $\mathfrak{X}$, the analytic adic space $\mathfrak{X}_{\eta}^{\ad}=\mathfrak{X}_{\eta, (R, \varpi)}^{\ad}$ also comes equipped with a specialization map $\mathfrak{X}_{\eta}^{\ad}\to\mathfrak{X}$ defined as follows.
\begin{mydef}[Specialization map in the affine case]Let $\mathfrak{X}=\Spf(A)$ be a rig-sheafy affine adic formal scheme over $(R, \varpi)$ with adic analytic generic fiber $X=\mathfrak{X}_{\eta}^{\ad}=\mathfrak{X}_{\eta, (R, \varpi)}^{\ad}$ over $(R, \varpi)$. We define a map \begin{equation*}\spc_{X,\mathfrak{X}}: \vert X\vert\to \vert\mathfrak{X}\vert\end{equation*}by viewing every $x\in X$ as a continuous valuation on \begin{equation*}\mathcal{O}_{X}(U_{n})=B_{n,A}[\varpi^{-1}]\end{equation*}for some $n\geq1$ (and, in particular, as a continuous valuation on the subring $A$ of $B_{n,A}[\varpi^{-1}]$) and setting \begin{equation*}\spc_{X,\mathfrak{X}}(x)=\{\, f\in A\mid \vert f(x)\vert<1\,\}.\end{equation*}\end{mydef}Equivalently, the map $\spc_{X,\mathfrak{X}}$ can be defined as follows: For $x\in X$ choose $n\geq1$ such that $x\in U_{n}$. Then the morphism \begin{equation*}\Spa(k(x), k(x)^{+})\to X\end{equation*}determined by $x$ factors through $U_{n}$. The restriction to $A$ of the corresponding homomorphism \begin{equation*}\mathcal{O}_{X}^{+}(U_{n})\to k(x)^{+}\end{equation*}induces a morphism of formal schemes \begin{equation*}\Spf(k(x)^{+})\to \mathfrak{X}=\Spf(A)\end{equation*}and then $\spc_{X,\mathfrak{X}}(x)$ is the image of the closed point of $\Spf(k(x)^{+})$ under this morphism $\Spf(k(x)^{+})\to \mathfrak{X}$.

For $g\in A$, the pre-image of the basic open subset $D(g)\subseteq \Spf(A)$ is the increasing union \begin{equation*}\spc_{X,\mathfrak{X}}^{-1}(D(g))=\bigcup_{n>0}U_{n}(\frac{1}{g})=X(\frac{1}{g}).\end{equation*}In particular, the map $\spc_{X,\mathfrak{X}}$ is continuous. Moreover, for every $g\in A$ and any $m\geq n$, we have a commutative triangle of continuous ring maps \begin{center}\begin{tikzcd}\mathcal{O}_{\mathfrak{X}}(D(g))=A\langle g^{-1}\rangle \arrow{r} \arrow{dr} & B_{m,A}\langle g^{-1}\rangle[\varpi^{-1}]=\mathcal{O}_{X}(U_{m}(\frac{1}{g})) \arrow{d} \\ & B_{n,A}\langle g^{-1}\rangle[\varpi^{-1}], \end{tikzcd}\end{center}whence a canonical continuous map \begin{equation*}\mathcal{O}_{\mathfrak{X}}(D(g))\to \varprojlim_{m}\mathcal{O}_{X}(U_{m}(\frac{1}{g}))=\mathcal{O}_{X}(\spc_{X,\mathfrak{X}}^{-1}(D(g))).\end{equation*}These continuous ring homomorphisms define a morphism of sheaves of complete topological rings $\mathcal{O}_{\mathfrak{X}}\to \spc_{X,\mathfrak{X}\ast}\mathcal{O}_{X}$. When the formal scheme $\mathfrak{X}$ is equipped with the structure of a locally v-ringed space as per Remark \ref{Formal schemes as locally $v$-ringed spaces}, this morphism is seen to be compatible with the valuations on the stalks. Thus we obtain a morphism of locally v-ringed spaces $X\to\mathfrak{X}$ which we again denote by $\spc_{X,\mathfrak{X}}$ and call it the \textit{specialization morphism} from $X$ to $\mathfrak{X}$.
\begin{example}In the situation of Example \ref{Fargues-Fontaine curves}, the specialization map is a morphism of locally v-ringed spaces \begin{equation*}Y_{\Spa(R, R^{+})}=\Spa(\mathbb{A}_{\inf}(R^{+}), \mathbb{A}_{\inf}(R^{+}))\setminus\{\, p[\varpi^{\flat}]=0\,\}\to \Spf(\mathbb{A}_{\inf}(R^{+})).\end{equation*}In particular, its underlying map of topological spaces is a continuous map \begin{equation*}\vert Y_{\Spa(R, R^{+})}\vert\to \vert \Spec(R^{\flat+}/(\varpi^{\flat}))\vert=\vert\Spec(R^{+}/(\varpi))\vert.\end{equation*}\end{example}
\begin{lemma}\label{Generic fibers and open immersions}Let $\mathfrak{U}=\Spf(A)$ be a rig-sheafy affine adic formal scheme over $(R, \varpi)$ and $\mathfrak{V}=\Spf(B)$ a rig-sheafy affine formal open subscheme. Then the morphism of adic spaces $V=\mathfrak{V}_{\eta}^{\ad}\to U=\mathfrak{U}_{\eta}^{\ad}$ induced by the inclusion map $\mathfrak{V}\hookrightarrow \mathfrak{U}$ is an open immersion of adic spaces and the diagram of locally v-ringed spaces \begin{center}\begin{tikzcd}V\arrow{r} \arrow{d}{\spc_{V,\mathfrak{V}}} & U \arrow{d}{\spc_{U,\mathfrak{U}}} \\ \mathfrak{V} \arrow[hook]{r} & \mathfrak{U} \end{tikzcd}\end{center}gives rise to an isomorphism \begin{equation*}V\tilde{\rightarrow}\spc_{U,\mathfrak{U}}^{-1}(\mathfrak{V}).\end{equation*}\end{lemma}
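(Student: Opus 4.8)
The plan is to reduce the statement to the case of a basic open $\mathfrak{V}=D(g)$, $g\in A$, to settle that case by a direct computation with completed affine blow-up algebras, and then to glue. Throughout I fix a finitely generated ideal of definition $I=(f_1,\dots,f_r)_A$ of $A$ containing $\varpi$ and write $U=\mathfrak{U}_\eta^{\ad}=\bigcup_{n>0}U_n$, where the $U_n$ are the rational subsets of $\Spa(A,A)$ appearing in the construction of the generic fiber. For the basic-open case, take $\mathfrak{V}=D(g)=\Spf(B)$ with $B=A\langle g^{-1}\rangle$ the $I$-adic completion of $A[g^{-1}]$. By Lemma \ref{Restriction maps are submetric} the inclusion $\mathfrak{V}\hookrightarrow\mathfrak{U}$ is adic and $IB=(f_1,\dots,f_r)_B$ is a finitely generated ideal of definition of $B$ containing $\varpi$, so by Lemma \ref{Ideals of definition and generic fiber} I may compute $\mathfrak{V}_\eta^{\ad}$ with these data, giving $\mathfrak{V}_\eta^{\ad}=\bigcup_{n>0}\Spa(B\langle\tfrac{f_1^n,\dots,f_r^n}{\varpi}\rangle[\varpi^{-1}],\overline{B\langle\tfrac{f_1^n,\dots,f_r^n}{\varpi}\rangle})$. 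For each $n$, the identity established in the proof of Lemma \ref{Topology on a locally rig-sheafy formal scheme}---sharpened, via the universal property of completed affine blow-up algebras from Remark \ref{Remark on rig-sheafiness}, to an identification of Huber pairs---yields
\[
B\langle\tfrac{f_1^n,\dots,f_r^n}{\varpi}\rangle[\varpi^{-1}]=A\langle\tfrac{f_1^n,\dots,f_r^n}{\varpi}\rangle\langle g^{-1}\rangle[\varpi^{-1}]=B_{n,A}\langle g^{-1}\rangle[\varpi^{-1}]=\mathcal{O}_U(U_n(\tfrac1g)),
\]
compatibly with the transition maps in $n$. Since $A$ is rig-sheafy, each $U_n$ is an adic space and $U_n(\tfrac1g)$ a rational subset of it, so $\mathfrak{V}_\eta^{\ad}=\bigcup_{n>0}U_n(\tfrac1g)$ is an open subspace of $U$, and under these identifications the morphism $\mathfrak{V}_\eta^{\ad}\to U$ induced by $\mathfrak{V}\hookrightarrow\mathfrak{U}$ becomes exactly this open immersion (on the defining Tate rings it is given by the rational-localization maps $B_{n,A}[\varpi^{-1}]\to B_{n,A}\langle g^{-1}\rangle[\varpi^{-1}]$). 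By the computation of $\spc_{U,\mathfrak{U}}^{-1}(D(g))$ given above, $\bigcup_{n>0}U_n(\tfrac1g)=\spc_{U,\mathfrak{U}}^{-1}(D(g))$, and checking on points (for a point $x$, both $\spc_{U,\mathfrak{U}}(x)$ and the image of $\spc_{\mathfrak{V}_\eta^{\ad},\mathfrak{V}}(x)$ in $\mathfrak{U}$ equal $\{f\in A:\vert f(x)\vert<1\}$) and on structure sheaves shows that the square of locally v-ringed spaces commutes and realizes the asserted isomorphism $\mathfrak{V}_\eta^{\ad}\xrightarrow{\sim}\spc_{U,\mathfrak{U}}^{-1}(D(g))$.

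For an arbitrary affine open $\mathfrak{V}=\Spf(B)$ I would use that the basic opens $D(g)=\Spf(A\langle g^{-1}\rangle)$, $g\in A$, form a basis for the topology of $\mathfrak{U}$ (cf.\ the proof of Lemma \ref{Topology on a locally rig-sheafy formal scheme}), together with quasi-compactness of $\mathfrak{V}$, to write $\mathfrak{V}=\bigcup_{i=1}^N D(g_i)$ with $g_i\in A$ and $D(g_i)\subseteq\mathfrak{V}$; since $\mathcal{O}_{\mathfrak{U}}\vert_{\mathfrak{V}}=\mathcal{O}_{\mathfrak{V}}$, the formal scheme $\mathfrak{W}_i:=D(g_i)$ is the same whether formed inside $\mathfrak{U}$ or inside $\mathfrak{V}$, and it is rig-sheafy by Lemma \ref{Topology on a locally rig-sheafy formal scheme}. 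Applying the basic-open case to $\mathfrak{V}$ and its basic opens $\mathfrak{W}_i$ exhibits the $(\mathfrak{W}_i)_\eta^{\ad}\xrightarrow{\sim}\spc_{V,\mathfrak{V}}^{-1}(D(g_i))$ as an open cover of $V=\mathfrak{V}_\eta^{\ad}$ (because $\bigcup_iD(g_i)=\mathfrak{V}$ forces $\bigcup_i\spc_{V,\mathfrak{V}}^{-1}(D(g_i))=\spc_{V,\mathfrak{V}}^{-1}(\mathfrak{V})=V$), with pairwise intersections $(\mathfrak{W}_i\cap\mathfrak{W}_j)_\eta^{\ad}$. Applying the basic-open case to $\mathfrak{U}$ and the same $\mathfrak{W}_i$ gives open immersions $(\mathfrak{W}_i)_\eta^{\ad}\xrightarrow{\sim}\spc_{U,\mathfrak{U}}^{-1}(D(g_i))\subseteq U$ realizing the morphisms induced by $\mathfrak{W}_i\hookrightarrow\mathfrak{U}$; by functoriality of the generic fiber these agree on overlaps (both restrict, along the two factorizations of $\mathfrak{W}_i\cap\mathfrak{W}_j\hookrightarrow\mathfrak{U}$, to the morphism induced by $\mathfrak{W}_i\cap\mathfrak{W}_j\hookrightarrow\mathfrak{U}$), so they glue to a morphism $V\to U$ which, again by functoriality, coincides with the one induced by $\mathfrak{V}\hookrightarrow\mathfrak{U}$. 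This morphism is, over the cover $\{(\mathfrak{W}_i)_\eta^{\ad}\}$ of $V$, the open immersion onto the members of the cover $\{\spc_{U,\mathfrak{U}}^{-1}(D(g_i))\}$ of $\spc_{U,\mathfrak{U}}^{-1}(\mathfrak{V})=\bigcup_i\spc_{U,\mathfrak{U}}^{-1}(D(g_i))$, compatibly on overlaps; a standard gluing argument then shows it is an open immersion inducing the isomorphism $V\xrightarrow{\sim}\spc_{U,\mathfrak{U}}^{-1}(\mathfrak{V})$, and the commutative square of locally v-ringed spaces follows since it holds over each $(\mathfrak{W}_i)_\eta^{\ad}$.

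The genuinely analytic content---interchanging the rational localization $\langle g^{-1}\rangle$ with the completed blow-up $\langle\tfrac{f_1^n,\dots,f_r^n}{\varpi}\rangle$ after inverting $\varpi$---is already available from Lemma \ref{Topology on a locally rig-sheafy formal scheme}, so the bulk of the work (and the main obstacle I anticipate) is bookkeeping: verifying that the three kinds of data in play---the inclusions of formal schemes, the functorially induced morphisms of generic fibers, and the explicit descriptions of the pieces as rational subsets---are mutually compatible, so that the local isomorphisms glue to a global one sitting in the commutative square. Two subsidiary points require care: that an affine open $\mathfrak{V}=\Spf(B)$ carries an adic morphism to $\mathfrak{U}$, so that $\mathfrak{V}_\eta^{\ad}$ is even defined (this is where Lemma \ref{Restriction maps are submetric} enters), and that $\mathcal{O}_{\mathfrak{U}}(D(g))=\mathcal{O}_{\mathfrak{V}}(D(g))$ whenever $D(g)\subseteq\mathfrak{V}$, which is exactly what makes the two applications of the basic-open case---relative to $\mathfrak{U}$ and relative to $\mathfrak{V}$---refer to the same adic space $(\mathfrak{W}_i)_\eta^{\ad}$.
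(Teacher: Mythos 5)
Your proposal is correct and follows essentially the same route as the paper: reduce to the basic open $\mathfrak{V}=D(g)$, then use the identity $A\langle g^{-1}\rangle\langle\frac{f_1^{n},\dots,f_r^{n}}{\varpi}\rangle[\varpi^{-1}]=A\langle\frac{f_1^{n},\dots,f_r^{n}}{\varpi}\rangle[\varpi^{-1}]\langle g^{-1}\rangle$ together with the equality $\spc_{U,\mathfrak{U}}^{-1}(D(g))=U(\frac{1}{g})$. The only difference is that you spell out the covering-and-gluing reduction from a general affine open to basic opens, which the paper compresses into the single phrase "this reduces to the case $\mathfrak{V}=D(g)$".
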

\begin{proof}This reduces to the case when $\mathfrak{V}$ is the basic open subset $D(g)$ for some $g\in \mathcal{O}_{\mathfrak{U}}(\mathfrak{U})$. In this case the assertion follows from the equality \begin{equation*}\mathcal{O}_{\mathfrak{U}}(\mathfrak{U})\langle g^{-1}\rangle\langle\frac{f_1^{n},\dots, f_r^{n}}{\varpi}\rangle[\varpi^{-1}]=\mathcal{O}_{\mathfrak{U}}(\mathfrak{U})\langle\frac{f_1^{n},\dots, f_r^{n}}{\varpi}\rangle[\varpi^{-1}]\langle g^{-1}\rangle,\end{equation*}which holds for every $n\geq 1$ and every finite family of elements $f_1,\dots, f_r\in \mathcal{O}_{\mathfrak{X}}(\mathfrak{U})$ generating an ideal of definition of $\mathcal{O}_{\mathfrak{U}}(\mathfrak{U})$, and from the previously observed equality of sets $\spc_{U,\mathfrak{U}}^{-1}(D(g))=U(\frac{1}{g})$.\end{proof}This lemma allows us to globalize the construction of the adic analytic generic fiber by gluing.
\begin{lemma}\label{Relative gluing}Let $S$ be a locally v-ringed space. Let $\mathcal{B}$ be a basis for the topology on $S$. Suppose we are given the following data: \begin{enumerate}[(1)] \item For every $U \in\mathcal{B}$ a morphism of locally v-ringed spaces $f_{U}: X_{U} \to U$. \item For $U, V \in \mathcal{B}$ with $V\subset U$ a morphism $\rho_{V}^{U}: X_{V} \to X_{U}$ over $U$. \end{enumerate}Assume that \begin{enumerate}[(i)]\item each $\rho_{V}^{U}$ induces an isomorphism $X_{V} \to f_{U}^{-1}(V)$ of locally v-ringed spaces over $V$. \item whenever $W, V, U \in \mathcal{B}$ with $W \subset V \subset U$ we have $\rho_{W}^{U} = \rho_{V}^{U}\circ \rho_{W}^{V}$. \end{enumerate}Then there exists a morphism $f: X \to S$ of locally v-ringed spaces and isomorphisms $i_{U}: f^{-1}(U) \to X_{U}$ over $U \in \mathcal{B}$ such that for $V, U \in \mathcal{B}$ and $V \subset U$ the composition \begin{center} \begin{tikzcd} X_{V} \arrow{r}{i_{V}^{-1}} & f^{-1}(V) \arrow[hook]{r} & f^{-1}(U) \arrow{r}{i_{U}} & X_{U} \end{tikzcd} \end{center}is the morphism $\rho_{V}^{U}$. The locally v-ringed space $X$ is unique up to unique isomorphism over $S$. Moreover, if all $X_{U}$ (for $U \in \mathcal{B}$) are adic spaces, so is $X$, and if also $S$ is an adic space and the morphisms $f_{U}$ are adic, then $f: X\to S$ is adic.\end{lemma}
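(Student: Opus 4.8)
The plan is to run the standard gluing construction: first assemble the underlying topological space of $X$, then glue the structure sheaves (with the stalk valuations coming along automatically), then glue the maps $f_U$ to a morphism $f\colon X\to S$, and finally read off the statements about adic spaces from the fact that ``being an adic space'' and ``being an adic morphism'' are local conditions. This is a relative version of the classical gluing of locally ringed spaces.

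For the topological space: since $\mathcal{B}$ is merely a basis and need not be stable under intersection, I would encode the identifications as an equivalence relation $\sim$ on the disjoint union $\coprod_{U\in\mathcal{B}}\lvert X_U\rvert$, declaring $x\in\lvert X_U\rvert$ equivalent to $y\in\lvert X_{U'}\rvert$ if $f_U(x)=f_{U'}(y)=:s$ and there exist $W\in\mathcal{B}$ with $s\in W\subseteq U\cap U'$ and $z\in\lvert X_W\rvert$ with $\rho_W^U(z)=x$ and $\rho_W^{U'}(z)=y$. Reflexivity follows from hypothesis (i) (that $\rho_W^U$ maps onto $f_U^{-1}(W)$), symmetry is immediate, and transitivity is the one point that requires care: given chains through $W_{12}$ and $W_{23}$, one picks $W\in\mathcal{B}$ with $s\in W\subseteq W_{12}\cap W_{23}$, lifts the intermediate point into $X_W$ using hypothesis (i), and then hypothesis (ii) together with the injectivity of the $\rho$'s forces the lifted point to witness $x\sim y$. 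Let $X$ be the quotient with the quotient topology; the composites $\lvert X_U\rvert\to X$ are then injective (again by (i)) and open, so $X$ is covered by the opens $\lvert X_U\rvert$, and for $V\subseteq U$ in $\mathcal{B}$ the image of $\lvert X_V\rvert$ inside $\lvert X_U\rvert$ is precisely $f_U^{-1}(V)$.

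Next I would glue the structure sheaves: the isomorphisms of locally v-ringed spaces $\rho_V^U\colon X_V\xrightarrow{\ \sim\ }f_U^{-1}(V)$ form a compatible system on the open cover $\{\lvert X_U\rvert\}$ -- compatibility being exactly hypothesis (ii) -- hence descend to a sheaf of rings $\mathcal{O}_X$ with $\mathcal{O}_X\vert_{\lvert X_U\rvert}\cong\mathcal{O}_{X_U}$; its stalks are stalks of the $\mathcal{O}_{X_U}$ and therefore carry the valuations $v_x$, well defined because the gluing maps are morphisms of locally v-ringed spaces. The composites $X_U\xrightarrow{f_U}U\hookrightarrow S$ agree on overlaps since each $\rho_V^U$ lies over $V$, so they glue to $f\colon X\to S$ in the category of locally v-ringed spaces. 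Using (i) and that $\mathcal{B}$ is a basis one checks that the image of $\lvert X_U\rvert$ in $X$ is exactly $f^{-1}(U)$ for $U\in\mathcal{B}$, which produces the isomorphisms $i_U\colon f^{-1}(U)\xrightarrow{\ \sim\ }X_U$ over $U$, and the compatibility $i_U\circ(\text{inclusion})\circ i_V^{-1}=\rho_V^U$ holds by construction. For uniqueness, given another solution $(f'\colon X'\to S,\,i'_U)$, the isomorphisms $(i'_U)^{-1}\circ i_U$ on the $f^{-1}(U)$ agree on overlaps by the compatibility with the $\rho_V^U$ and glue to the unique isomorphism $X\to X'$ over $S$.

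Finally, the adic-space assertions are purely local. If every $X_U$ is an adic space, then $X$ has an open cover by adic spaces and hence is an adic space. If in addition $S$ is an adic space and each $f_U$ is adic, then each composite $X_U\xrightarrow{f_U}U\hookrightarrow S$ is adic, since open immersions are adic and adic morphisms are stable under composition; as adicness of a morphism may be verified on an open cover of the source, $f$ is adic. I expect the only genuine obstacle to be the topological gluing -- concretely, the transitivity of $\sim$, where the hypothesis that $\mathcal{B}$ is only a basis forces one to pass to a common basis element and invoke the cocycle condition (ii); everything downstream is the routine transport of the classical gluing of locally ringed spaces to the present setting.
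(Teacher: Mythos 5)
Your proposal is correct and follows essentially the same route as the paper: the classical relative gluing of (locally) ringed spaces over a basis, with the topological ring structure and the stalk valuations transported along the gluing isomorphisms, and the adic assertions deduced from locality. The only difference is presentational: the paper simply invokes the known locally-ringed-space statement (\cite{Stacks}, Tag 01LH) and then adds the v-structure, whereas you re-execute that construction by hand (quotient space, transitivity of the identification via a smaller basis element and the cocycle condition (ii), sheaf gluing), which is fine but not a different argument.
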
\begin{proof}The analogue of the assertion for locally ringed spaces is well-known (cf. \cite{Stacks}, Tag 01LH, for the case of schemes). Let $X$ be the locally ringed space obtained in this way. Since the subsets $f^{-1}(U)$, where $U$ ranges over open sets $U \in \mathcal{B}$, form an open cover of $X$, we can endow the corresponding sheaf $\mathcal{O}_{X}$ on $X$ with the structure of a sheaf of complete topological rings by transporting the topological structure on the sheaves $\mathcal{O}_{X_{U}}$ along the isomorphisms of locally ringed spaces $i_{U}: f^{-1}(U) \to X_{U}$. Let $j_{U}$ be the inverse of $i_{U}$. For $x \in X_{U}$ consider the corresponding map on the stalks $j_{U, x}: \mathcal{O}_{X, j_{U}(x)} = \mathcal{O}_{f^{-1}(U), j_{U}(x)} \to \mathcal{O}_{X_{U}, x}$. If $v_{x}$ is the valuation on $\mathcal{O}_{X_{U}, x}$ which is part of the structure of locally v-ringed space carried by $X_{U}$, then $v_{j_{U}(x)}(f) = v_{x}(j_{U, x}(f))$ (for $f \in \mathcal{O}_{X, j_{U}(x)}$) defines a valuation on $\mathcal{O}_{X, j_{U}(x)}$. In this fashion we obtain a structure of locally $v$-ringed space on $X$ and it is routine to check that this locally v-ringed space has the desired properties.\end{proof}          
\begin{lemma}\label{Generic fiber via gluing}Let $R$ be a complete adic ring with principal ideal of definition and let $\varpi$ be an element generating an ideal of definition of $R$. For every locally rig-sheafy adic formal scheme $\mathfrak{X}$ over $(R, \varpi)$, there exist a locally v-ringed ringed space over $\mathfrak{X}$ \begin{equation*}\spc_{\mathfrak{X}_{\eta}^{\ad},\mathfrak{X}}: \mathfrak{X}_{\eta}^{\ad}=\mathfrak{X}_{\eta, (R, \varpi)}^{\ad}\to \mathfrak{X}\end{equation*}and, for every rig-sheafy affine open formal subscheme $\mathfrak{U}$ of $\mathfrak{X}$, an isomorphism \begin{equation*}i_{\mathfrak{U}}: \spc_{\mathfrak{X}_{\eta}^{\ad},\mathfrak{X}}^{-1}(\mathfrak{U})\tilde{\to}\mathfrak{U}_{\eta}^{\ad}=\mathfrak{U}_{\eta, (R, \varpi)}^{\ad}\end{equation*}of locally v-ringed spaces over $\mathfrak{U}$ such that for every inclusion $\mathfrak{V}\subseteq \mathfrak{U}$ of rig-sheafy affine open formal subschemes of $\mathfrak{X}$ the composition \begin{center}\begin{tikzcd}\mathfrak{V}_{\eta}^{\ad}\arrow{r}{i_{\mathfrak{V}}^{-1}} & \spc_{\mathfrak{X}_{\eta}^{\ad},\mathfrak{X}}^{-1}(\mathfrak{V})\arrow[hook]{r} & \spc_{\mathfrak{X}_{\eta}^{\ad},\mathfrak{X}}^{-1}(\mathfrak{U}) \arrow{r}{i_{\mathfrak{U}}} & \mathfrak{U}_{\eta}^{\ad}\end{tikzcd}\end{center}is equal to the canonical morphism $\mathfrak{V}_{\eta}^{\ad}\hookrightarrow \mathfrak{U}_{\eta}^{\ad}$ induced by the inclusion $\mathfrak{V}\subseteq \mathfrak{U}$. The locally v-ringed space $\spc_{\mathfrak{X}_{\eta}^{\ad},\mathfrak{X}}: \mathfrak{X}_{\eta}^{\ad}\to \mathfrak{X}$ is the unique, up to unique isomorphism over $\mathfrak{X}$, locally v-ringed space over $\mathfrak{X}$ with the above property. Moreover, the locally v-ringed space $\mathfrak{X}_{\eta, (R, \varpi)}^{\ad}$ is an analytic adic space. If $\varpi$ is a non-zero-divisor in $R$ and the Tate ring $R[\varpi^{-1}]$ is sheafy, then $\mathfrak{X}_{\eta, (R, \varpi)}^{\ad}$ is an adic space over $\Spa(R[\varpi^{-1}], \overline{R})$.\end{lemma}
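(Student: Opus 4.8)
\emph{Strategy.} The plan is to build $\mathfrak{X}_{\eta}^{\ad}\to\mathfrak{X}$ by invoking the gluing statement of Lemma \ref{Relative gluing}. I would apply that lemma with base $S=\mathfrak{X}$, viewed as a locally v-ringed space via Remark \ref{Formal schemes as locally $v$-ringed spaces}, and with $\mathcal{B}$ the family of rig-sheafy affine open formal subschemes of $\mathfrak{X}$, which is a basis for the topology of $\mathfrak{X}$ by Lemma \ref{Topology on a locally rig-sheafy formal scheme}. For $\mathfrak{U}=\Spf(A)\in\mathcal{B}$ I take $f_{\mathfrak{U}}$ to be the affine specialization morphism $\spc_{\mathfrak{U}_{\eta}^{\ad},\mathfrak{U}}\colon\mathfrak{U}_{\eta}^{\ad}\to\mathfrak{U}$ constructed above, and for $\mathfrak{V}\subseteq\mathfrak{U}$ in $\mathcal{B}$ I take $\rho_{\mathfrak{V}}^{\mathfrak{U}}\colon\mathfrak{V}_{\eta}^{\ad}\to\mathfrak{U}_{\eta}^{\ad}$ to be the morphism of adic analytic generic fibers induced by the open immersion $\mathfrak{V}\hookrightarrow\mathfrak{U}$ (an open immersion is adic, so the affine generic fiber functor from the discussion preceding Example \ref{Open unit disk} applies).

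The two hypotheses of Lemma \ref{Relative gluing} then have to be checked. Hypothesis (i), that $\rho_{\mathfrak{V}}^{\mathfrak{U}}$ identifies $\mathfrak{V}_{\eta}^{\ad}$ with $f_{\mathfrak{U}}^{-1}(\mathfrak{V})$ over $\mathfrak{V}$, is precisely Lemma \ref{Generic fibers and open immersions}, since a member $\mathfrak{V}$ of $\mathcal{B}$ contained in $\mathfrak{U}$ is in particular a rig-sheafy affine open formal subscheme of $\mathfrak{U}$. Hypothesis (ii), the cocycle identity $\rho_{\mathfrak{W}}^{\mathfrak{U}}=\rho_{\mathfrak{V}}^{\mathfrak{U}}\circ\rho_{\mathfrak{W}}^{\mathfrak{V}}$ for $\mathfrak{W}\subseteq\mathfrak{V}\subseteq\mathfrak{U}$, holds on the nose because $\mathfrak{W}\hookrightarrow\mathfrak{U}$ is the composite $\mathfrak{W}\hookrightarrow\mathfrak{V}\hookrightarrow\mathfrak{U}$ and the affine generic fiber is a genuine functor on rig-sheafy affine adic formal schemes and adic morphisms. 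Lemma \ref{Relative gluing} then delivers $\spc_{\mathfrak{X}_{\eta}^{\ad},\mathfrak{X}}\colon\mathfrak{X}_{\eta}^{\ad}\to\mathfrak{X}$ together with the isomorphisms $i_{\mathfrak{U}}$ and the asserted compatibility, and its uniqueness clause gives the uniqueness of $\mathfrak{X}_{\eta}^{\ad}$ up to unique isomorphism over $\mathfrak{X}$.

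For the remaining assertions I would argue as follows. Each $\mathfrak{U}_{\eta}^{\ad}=\bigcup_{n>0}\Spa(B_{n,A}[\varpi^{-1}],\overline{B_{n,A}})$ is an adic space by the rig-sheafiness of $A$ (this is how the affine generic fiber was defined), and it is analytic because $\varpi$ is a topologically nilpotent unit in each $B_{n,A}[\varpi^{-1}]$, so these rings are Tate; see Remark \ref{Remark on rig-sheafiness}. By the last clause of Lemma \ref{Relative gluing}, $\mathfrak{X}_{\eta}^{\ad}$ is therefore an adic space, and it is analytic because analyticity can be checked locally. Finally, suppose $\varpi$ is a non-zero-divisor in $R$ and $R[\varpi^{-1}]$ is sheafy. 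Then, $R$ being (trivially) adic over itself, Lemma \ref{Rig-sheafiness does not depend on generators} shows $R$ is rig-sheafy over $(R,\varpi)$, and using the ideal of definition $(\varpi)_{R}$ with its single generator $\varpi$ one computes $B_{n,R}=R\langle\frac{\varpi^{n}}{\varpi}\rangle=R$ for every $n\geq1$, whence $\Spf(R)_{\eta}^{\ad}=\Spa(R[\varpi^{-1}],\overline{R})=S$. Applying the generic fiber construction to the structure morphism $\mathfrak{X}\to\Spf(R)$ — equivalently, gluing over a cover of $\mathfrak{X}$ by rig-sheafy affine opens $\Spf(A)$ the morphisms $\Spa(B_{n,A}[\varpi^{-1}],\overline{B_{n,A}})\to\Spa(R[\varpi^{-1}],\overline{R})$ induced by $R\to A\to B_{n,A}$ — produces a morphism $\mathfrak{X}_{\eta}^{\ad}\to S$ realizing $\mathfrak{X}_{\eta}^{\ad}$ as an adic space over $S$.

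I expect essentially all of this to be formal bookkeeping, the substantive inputs being Lemma \ref{Relative gluing} and Lemma \ref{Generic fibers and open immersions}. The one point that deserves genuine care is the compatibility bookkeeping: that the transition maps $\rho_{\mathfrak{V}}^{\mathfrak{U}}$ satisfy the cocycle condition (ii) \emph{on the nose}, and that the affine morphisms to $S$ patch into a single morphism $\mathfrak{X}_{\eta}^{\ad}\to S$. Both reduce to the strict functoriality encoded in the universal property of completed affine blow-up algebras recorded in Remark \ref{Remark on rig-sheafiness}, together with the independence-of-choices results of Lemma \ref{Ideals of definition and generic fiber} and Lemma \ref{Rig-sheafiness does not depend on generators}.
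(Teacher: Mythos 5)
Your proposal is correct and follows exactly the paper's route: the paper's proof of this lemma is simply the combination of Lemma \ref{Generic fibers and open immersions} (hypothesis (i) of the gluing lemma) and Lemma \ref{Relative gluing} applied over the base $\mathfrak{X}$ with the basis of rig-sheafy affine opens from Lemma \ref{Topology on a locally rig-sheafy formal scheme}. Your additional bookkeeping (the cocycle identity via functoriality, analyticity of the $B_{n,A}[\varpi^{-1}]$ being Tate, and the glued structure morphism to $\Spa(R[\varpi^{-1}],\overline{R})$) just spells out what the paper leaves implicit.
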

\begin{proof}Follows from Lemma \ref{Generic fibers and open immersions} and Lemma \ref{Relative gluing}.\end{proof}
\begin{mydef}[Generic fiber and specialization morphism, general case]\label{Generic fiber, general case}For any complete adic ring $R$ with an element $\varpi\in R$ generating an ideal of definition of $R$ and for any locally rig-sheafy adic formal scheme $\mathfrak{X}$ over $(R, \varpi)$, we call the analytic adic space $\mathfrak{X}_{\eta}^{\ad}=\mathfrak{X}_{\eta, (R, \varpi)}^{\ad}$ given by Lemma \ref{Generic fiber via gluing} the (adic analytic) generic fiber of $\mathfrak{X}$ over $(R, \varpi)$. We call the morphism of locally $v$-ringed spaces \begin{equation*}\spc_{\mathfrak{X}_{\eta}^{\ad},\mathfrak{X}}: \mathfrak{X}_{\eta}^{\ad}\to\mathfrak{X}\end{equation*}given by that lemma the specialization morphism from $\mathfrak{X}_{\eta}^{\ad}$ to $\mathfrak{X}$. If $\varpi$ is a non-zero-divisor of $R$ and the complete Tate ring $R[\varpi^{-1}]$ is sheafy, then we also call $\mathfrak{X}_{\eta, (R, \varpi)}^{\ad}$ the (adic analytic) generic fiber of $\mathfrak{X}$ over $\Spa(R[\varpi^{-1}], \overline{R})$.\end{mydef}
Note that the assignment $\mathfrak{X}\mapsto\mathfrak{X}_{\eta, (R, \varpi)}^{\ad}$ in the above definition is functorial in $\mathfrak{X}$: This was already observed in the affine, rig-sheafy case in the paragraph following the proof of Lemma \ref{Ideals of definition and generic fiber}, and the general case follows from this by means of gluing. For ease of notation, we usually denote the image under the functor $\mathfrak{X}\mapsto \mathfrak{X}_{\eta}^{\ad}$ of a morphism of locally rig-sheafy adic formal schemes over $(R, \varpi)$ \begin{equation*}f_{0}: \mathfrak{X}\to\mathfrak{S}\end{equation*}by the symbol $f_{0\eta}$ (as opposed to the more cumbersome $f_{0\eta}^{\ad}$).  
\begin{rmk}\label{Generic fiber and Huber's analytic adic space}Note that the above definition includes the 'absolute' case of locally rig-sheafy adic formal schemes $\mathfrak{X}$ over $\mathbb{Z}[[T]]$, in which case we obtain a generic fiber of $\mathfrak{X}$ over $\Spa(\mathbb{Z}((T)), \mathbb{Z}[[T]])$). We also note that for a locally universally rigid-Noetherian adic formal $R$-scheme $\mathfrak{X}$ the adic analytic generic fiber $\mathfrak{X}_{\eta}^{\ad}$ of $\mathfrak{X}$ need not coincide with the analytic adic space $t(\mathfrak{X})_{a}$ associated to $\mathfrak{X}$ by Huber in \cite{Huber3}, \S1.9. In fact, $\mathfrak{X}_{\eta}^{\ad}$ is the open subspace of $t(\mathfrak{X})_{a}$ defined by the condition $\vert\varpi(x)\vert\neq 0$.\end{rmk}   
If the locally rig-sheafy formal scheme $\mathfrak{X}$ in question is actually locally sheafy, $\mathfrak{X}_{\eta}^{\ad}$ can be described as a fiber product in the category of adic spaces.   
\begin{lemma}\label{Generic fibers of sheafy formal schemes}Let $R$ be a complete adic ring with ideal of definition generated by a non-zero-divisor $\varpi$. Suppose that the complete Tate ring $R[\varpi^{-1}]$ is sheafy. Let $\mathfrak{X}$ be a locally sheafy adic formal scheme over $R$. Then the fiber product \begin{equation*}\mathfrak{X}^{\ad}\times_{\Spa(R, R)}\Spa(R[\varpi^{-1}], \overline{R})\end{equation*}exists in the category of adic spaces and is equal to $\mathfrak{X}_{\eta, (R, \varpi)}^{\ad}$. Furthermore, the specialization morphism $\spc_{\mathfrak{X}_{\eta}^{\ad},\mathfrak{X}}$ coincides with the composition of the open immersion \begin{equation*}\mathfrak{X}^{\ad}\times_{\Spa(R, R)}\Spa(R[\varpi^{-1}], \overline{R})=\mathfrak{X}^{\ad}\setminus \{\varpi=0\}\hookrightarrow \mathfrak{X}^{\ad}\end{equation*}and the canonical morphism of locally topologically ringed spaces $\pi_{\mathfrak{X}}: \mathfrak{X}^{\ad}\to\mathfrak{X}$.\end{lemma}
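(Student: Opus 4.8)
The plan is to reduce at once to the affine case and then simply match up open subspaces. First I would observe that the formation of $\mathfrak{X}^{\ad}$, of its open subspace $\mathfrak{X}^{\ad}\setminus\{\varpi=0\}$, of the generic fibre $\mathfrak{X}_{\eta}^{\ad}$ together with its specialization morphism (Lemma~\ref{Generic fiber via gluing}), and of the pullback of the open immersion $\Spa(R[\varpi^{-1}], \overline{R})\hookrightarrow\Spa(R, R)$, are all compatible with restriction to affine open formal subschemes of $\mathfrak{X}$. Hence it suffices to treat the case $\mathfrak{X}=\Spf(A)$ with $A$ a sheafy complete adic ring having a finitely generated ideal of definition $I=(f_1,\dots,f_r)_A$ containing $\varpi$; the general statement then follows by gluing.

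In the affine case I would argue as follows. By construction $\mathfrak{X}^{\ad}=\Spa(A, A)$ and $\mathfrak{X}_{\eta}^{\ad}=\bigcup_{n\ge1}U_n$, where $U_n=\{\,x\in\Spa(A, A)\mid \vert f_i^{n}(x)\vert\le\vert\varpi(x)\vert\ne0,\ i=1,\dots,r\,\}$ is a rational subset of $\Spa(A, A)$, and on $U_n$ (and on every rational subset of it) the structure sheaf of $\mathfrak{X}_{\eta}^{\ad}$ agrees with that of $\Spa(A, A)$ since $\mathcal{O}_{\mathfrak{X}_{\eta}^{\ad}}(U_n)=B_{n,A}[\varpi^{-1}]=\mathcal{O}_{\Spa(A, A)}(U_n)$. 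Thus $\mathfrak{X}_{\eta}^{\ad}$ is exactly the open subspace of $\mathfrak{X}^{\ad}$ carried by the open set $\bigcup_n U_n$, and the one thing left to check is the set-theoretic equality $\bigcup_n U_n=\{\,x\in\Spa(A, A)\mid\vert\varpi(x)\vert\ne0\,\}=\mathfrak{X}^{\ad}\setminus\{\varpi=0\}$. The inclusion ``$\subseteq$'' is immediate from the definition of $U_n$; for ``$\supseteq$'' I would use continuity of the valuation $x$ on the adic ring $A$: if $\vert\varpi(x)\vert\ne0$, then $\{\,a\in A\mid\vert a(x)\vert<\vert\varpi(x)\vert\,\}$ is open in $A$, hence contains $I^{n}$ for some $n$, hence $\vert f_i^{n}(x)\vert<\vert\varpi(x)\vert$ for all $i$ and $x\in U_n$.

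For the fibre-product assertion I would recall that the morphism of Huber pairs $(R, R)\to(R[\varpi^{-1}], \overline{R})$ exhibits $S:=\Spa(R[\varpi^{-1}], \overline{R})$ as the open subspace $\{\,\vert\varpi\vert\ne0\,\}$ of $\Spa(R, R)$ — the standard relation between an adic ring and its associated analytic adic space, which is itself the case $\mathfrak{X}=\Spf(R)$ of the identification just established. Since one leg of the fibre product is then an open immersion, the fibre product $\mathfrak{X}^{\ad}\times_{\Spa(R, R)}S$ exists in the category of adic spaces and equals the open subspace of $\mathfrak{X}^{\ad}$ over which the canonical morphism $\mathfrak{X}^{\ad}\to\Spa(R, R)$ factors through $S$, namely the locus $\{\,x\in\mathfrak{X}^{\ad}\mid\vert\varpi(x)\vert\ne0\,\}$ (the element $\varpi$ being pulled back to $\varpi$); by the previous step this is precisely $\mathfrak{X}_{\eta}^{\ad}$. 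Finally, I would match the specialization morphism with the composite $\mathfrak{X}^{\ad}\setminus\{\varpi=0\}\hookrightarrow\mathfrak{X}^{\ad}\xrightarrow{\pi_{\mathfrak{X}}}\mathfrak{X}$ by unwinding definitions: on points, $\spc_{\mathfrak{X}_{\eta}^{\ad},\mathfrak{X}}(x)=\{\,f\in A\mid\vert f(x)\vert<1\,\}$ is by definition the open prime of $A$ that $\pi_{\mathfrak{X}}$ attaches to the valuation $x\in\Spa(A, A)$ (it contains $I$, so it is a point of $\Spf(A)$), and the induced maps on structure sheaves coincide by construction; one then glues over an affine open cover of $\mathfrak{X}$. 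I do not expect any genuine obstacle: the only points needing care are the continuity argument of the second paragraph (topological nilpotence of the $f_i$ alone would not suffice for a general value group) and the bookkeeping that $\Spa(R[\varpi^{-1}], \overline{R})$ is indeed the analytic locus $\{\,\vert\varpi\vert\ne0\,\}$ inside $\Spa(R, R)$.
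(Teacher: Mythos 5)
Your proposal is correct, and it reaches the conclusion by a cleaner route than the paper's proof, though the key input is ultimately the same. The paper, after reducing to the affine case, argues in two steps: when the topology on $A$ is $\varpi$-adic it identifies the fiber product directly with $\Spa(\mathcal{O}_{\mathfrak{X}^{\ad}}(\mathfrak{X}^{\ad})[\varpi^{-1}],\overline{\mathcal{O}_{\mathfrak{X}^{\ad}}(\mathfrak{X}^{\ad})})$, and in the general (non-adic) case it verifies the universal property by hand, exhausting a test object by the preimages of the loci $U_{n}^{\circ}=\{\vert f_{i}^{n}\vert\le\vert\varpi\vert\}$ and gluing the resulting maps. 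You instead prove once and for all, with no case distinction, the identification $\mathfrak{X}_{\eta}^{\ad}=\bigcup_{n}U_{n}=\{x\in\mathfrak{X}^{\ad}\mid\vert\varpi(x)\vert\neq0\}$ as open subspaces of $\mathfrak{X}^{\ad}$ — and your continuity argument against the specific value $\vert\varpi(x)\vert$ is exactly the right one (as you note, topological nilpotence of the $f_{i}$ alone would not suffice for higher-rank points); this statement is recorded in the paper only without proof, in Remark \ref{Generic fiber and Huber's analytic adic space}, and it is also the implicit justification for the paper's claim that the test object is exhausted by the $f^{-1}(U_{n}^{\circ})$. Having observed that $\Spa(R[\varpi^{-1}],\overline{R})$ is the open subspace $\{\vert\varpi\vert\neq0\}$ of $\Spa(R,R)$, you then invoke the standard fact that a fiber product along an open immersion exists and is the preimage open subspace, which replaces the paper's explicit universal-property verification; both treatments dispose of the assertion about $\spc_{\mathfrak{X}_{\eta}^{\ad},\mathfrak{X}}$ versus $\pi_{\mathfrak{X}}$ by unwinding Huber's definition of $\pi_{\mathfrak{X}}$ on points and sections, exactly as you do. What your approach buys is uniformity (no adic/non-adic split) and an explicit description of $\mathfrak{X}_{\eta}^{\ad}$ inside $\mathfrak{X}^{\ad}$; what the paper's buys is that it never needs to invoke the abstract base-change-along-open-immersions fact, checking the universal property directly in the only case where it is not formal.
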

\begin{proof}This reduces to the case when $\mathfrak{X}$ is affine and rig-sheafy. Suppose first that $\mathfrak{X}$ is moreover adic over $\Spf(R)$, i.e., the adic topology on $\mathcal{O}_{\mathfrak{X}}(\mathfrak{X})$ is defined by the principal ideal generated by $\varpi$. In this case, consider the Tate ring\begin{equation*}\mathcal{O}_{\mathfrak{X}^{\ad}}(\mathfrak{X}^{\ad})\widehat{\otimes}_{R}R[\varpi^{-1}]=\mathcal{O}_{\mathfrak{X}^{\ad}}(\mathfrak{X}^{\ad})[\varpi^{-1}].\end{equation*}The pre-adic space\begin{equation*}\mathfrak{X}^{\ad}=\Spa(\mathcal{O}_{\mathfrak{X}^{\ad}}(\mathfrak{X}^{\ad}), \mathcal{O}_{\mathfrak{X}^{\ad}}(\mathfrak{X}^{\ad}))\end{equation*}being an adic space implies that its open subspace $\Spa(\mathcal{O}_{\mathfrak{X}^{\ad}}(\mathfrak{X}^{\ad})[\varpi^{-1}], \overline{\mathcal{O}_{\mathfrak{X}^{\ad}}(\mathfrak{X}^{\ad})})$, where the bar on the right denotes integral closure inside $\mathcal{O}_{\mathfrak{X}^{\ad}}(\mathfrak{X}^{\ad})[\varpi^{-1}]$, is an adic space and then it is readily seen that $\Spa(\mathcal{O}_{\mathfrak{X}^{\ad}}(\mathfrak{X}^{\ad})[\varpi^{-1}], \overline{\mathcal{O}_{\mathfrak{X}^{\ad}}(\mathfrak{X}^{\ad})})$ is the desired fiber product in the category of adic spaces.

Let us now turn to the general case, when the topology on $\mathcal{O}_{\mathfrak{X}}(\mathfrak{X})$ is not necessarily the $\varpi$-adic topology. We have to verify that $\mathfrak{X}_{\eta}^{\ad}$ satisfies the universal property of the fiber product. To this end, let \begin{center}\begin{tikzcd}X\arrow{d}{f} \arrow{r} & \Spa(R[\varpi^{-1}], \overline{R}) \arrow[hook]{d} \\ \mathfrak{X}^{\ad} \arrow{r} & \Spa(R, R) \end{tikzcd}\end{center}be a commutative square in the category of adic spaces. Let $f_1,\dots, f_r$ be generators of a finitely generated ideal of definition of $R$ containing $\varpi$. We can write $X$ as an increasing union of the open subspaces $f^{-1}(U_{n}^{\circ})$, where we let \begin{equation*}U_{n}^{\circ}=\{\, x\in \mathfrak{X}^{\ad}\mid \vert f_{i}^{n}(x)\vert\leq\vert\varpi(x)\vert, i=1,\dots,r\,\}.\end{equation*}By the universal property of the fiber product $U_{n}^{\circ}\times_{\Spa(R, R)}\Spa(R[\varpi^{-1}], \overline{R})$, there is, for every $n$, a unique morphism \begin{equation*}f^{-1}(U_{n}^{\circ})\to U_{n}^{\circ}\times_{\Spa(R, R)}\Spa(R[\varpi^{-1}], \overline{R})=U_{n}^{\circ}\setminus\{\varpi=0\}=U_{n},\end{equation*}compatible with the morphism $f\vert_{f^{-1}(U_{n}^{\circ})}: f^{-1}(U_{n}^{\circ})\to U_{n}^{\circ}$ and the structure morphism to $\Spa(R[\varpi^{-1}], \overline{R})$. Since for all $m\geq n$ the diagram \begin{center}\begin{tikzcd}f^{-1}(U_{n}^{\circ})\arrow[hook]{r} \arrow{d} & f^{-1}(U_{m}^{\circ}) \arrow{d} \\ U_{n}^{\circ} \arrow[hook]{r} & U_{m}^{\circ}\end{tikzcd}\end{center}is commutative, so is the diagram \begin{center}\begin{tikzcd}f^{-1}(U_{n}^{\circ})\arrow[hook]{r} \arrow{d} & f^{-1}(U_{m}^{\circ}) \arrow{d} \\ U_{n}=U_{n}^{\circ}\setminus\{\varpi=0\} \arrow{r} & U_{m}=U_{m}^{\circ}\setminus\{\varpi=0\}.\end{tikzcd}\end{center}In particular, we obtain a compatible system of morphisms $f^{-1}(U_{n}^{\circ})\to \mathfrak{X}_{\eta}^{\ad}$, all compatible with the morphisms to $\mathfrak{X}^{\ad}$ and to $\Spa(R[\varpi^{-1}], \overline{R})$. By gluing, we obtain a unique morphism \begin{equation*}X=\bigcup_{n}f^{-1}(U_{n}^{\circ})\to \mathfrak{X}_{\eta}^{\ad}\end{equation*}compatible with the maps $\mathfrak{X}_{\eta}^{\ad}\to \mathfrak{X}^{\ad}$ and $\mathfrak{X}_{\eta}^{\ad}\to \Spa(R[\varpi^{-1}], \overline{R})$. This shows that $\mathfrak{X}_{\eta}^{\ad}$ is indeed the fiber product of $\mathfrak{X}^{\ad}$ and $\Spa(R[\varpi^{-1}], \overline{R})$ over $\Spa(R, R)$. The last assertion concerning the specialization map $\spc_{\mathfrak{X}_{\eta}^{\ad},\mathfrak{X}}$ follows from that map's definition and the definition of the morphism of locally topologically ringed spaces $\pi_{\mathfrak{X}}: \mathfrak{X}^{\ad}\to \mathfrak{X}$ (see \cite{Huber2}, proof of Proposition 4.1).\end{proof}
\begin{lemma}\label{Completion along an ideal}Let $A$ be a complete adic ring with finitely generated ideal of definition $I$ and let $J$ be a finitely generated ideal of $A$ containing $I$, with generators $g_1,\dots,g_s$. Let $\varpi\in I$ be some element. Then the canonical continuous map \begin{equation*}A\langle\frac{g_1^{n},\dots,g_s^{n}}{\varpi}\rangle\to \widehat{A}_{J}\langle\frac{g_1^{n},\dots,g_s^{n}}{\varpi}\rangle\end{equation*}induced by the continuous map $A\to \widehat{A}_{J}$ from $A$ to the $J$-adic completion $\widehat{A}_{J}$ of $A$ is a topological isomorphism.\end{lemma}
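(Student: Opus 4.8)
The plan is to reduce everything to a statement about the (uncompleted) affine blow-up algebras and then invoke Lemma \ref{Completions are torsion-free} together with the universal property recorded in Remark \ref{Remark on rig-sheafiness}. Concretely, write $B = A[\frac{g_1^n,\dots,g_s^n}{\varpi}]$ for the affine blow-up algebra of $A$ and $B' = \widehat{A}_J[\frac{g_1^n,\dots,g_s^n}{\varpi}]$ for the corresponding affine blow-up algebra of the $J$-adic completion $\widehat{A}_J$. Since $J$ is finitely generated, $\widehat{A}_J$ is again a complete adic ring with ideal of definition $J\widehat{A}_J$, and the map $A\to\widehat{A}_J$ is continuous (in fact flat and an isomorphism modulo every power of $J$). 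I would first show that $B\to B'$ induces an isomorphism after $\varpi$-adic (equivalently $J$-adic, since on these blow-up algebras the ideals $(g_1^n,\dots,g_s^n)$ and $(\varpi)$ coincide up to radical by Remark \ref{Remark on rig-sheafiness}) completion: the point is that $\widehat{A}_J = \varprojlim A/J^k$ and affine blow-up algebras commute with the relevant base change in the sense that $B/J^k B$ and $B'/J^k B'$ agree, because both are the affine blow-up algebra construction applied to $A/J^k = \widehat{A}_J/J^k\widehat{A}_J$ (here one uses the explicit presentation $A[T_1,\dots,T_s]/(g_i^n - \varpi T_i)$ modulo $\varpi$-power torsion from \cite{Stacks}, Tag 098S, and the fact that reduction mod $J^k$ is compatible with forming this quotient).

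The cleaner route, which I would actually take, uses the universal property directly. By Remark \ref{Remark on rig-sheafiness}, $A\langle\frac{g_1^n,\dots,g_s^n}{\varpi}\rangle$ is the initial object among $\varpi$-torsion-free complete topological $A$-algebras $C$ satisfying $(g_1^n,\dots,g_s^n)_A C \subseteq \varpi C$, and likewise $\widehat{A}_J\langle\frac{g_1^n,\dots,g_s^n}{\varpi}\rangle$ is initial among $\varpi$-torsion-free complete topological $\widehat{A}_J$-algebras with the same property. Now $\widehat{A}_J\langle\frac{g_1^n,\dots,g_s^n}{\varpi}\rangle$, viewed as an $A$-algebra via $A\to\widehat{A}_J$, is a $\varpi$-torsion-free complete topological $A$-algebra into which the $g_i^n/\varpi$ map, so by the universal property on the $A$-side there is a canonical continuous $A$-algebra map $A\langle\frac{g_1^n,\dots,g_s^n}{\varpi}\rangle\to\widehat{A}_J\langle\frac{g_1^n,\dots,g_s^n}{\varpi}\rangle$; this is the map in the statement. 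For the reverse direction one observes that $A\langle\frac{g_1^n,\dots,g_s^n}{\varpi}\rangle$ is \emph{already} $J$-adically complete, hence is canonically a $\widehat{A}_J$-algebra: indeed its topology is the $\varpi$-adic one, and since $\varpi\in I\subseteq J$ and $J^{(r+1)n}\subseteq (g_1^n,\dots,g_s^n,\varpi^n)\subseteq\varpi A\langle\cdots\rangle$ for the appropriate exponent (as in the proof of Lemma \ref{Ideals containing the pseudo-uniformizer}), the $J$-adic and $\varpi$-adic topologies coincide on $A\langle\frac{g_1^n,\dots,g_s^n}{\varpi}\rangle$, so completeness for one implies completeness for the other and the $A$-algebra structure extends uniquely to a continuous $\widehat{A}_J$-algebra structure. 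Then the universal property on the $\widehat{A}_J$-side produces a continuous $\widehat{A}_J$-algebra map $\widehat{A}_J\langle\frac{g_1^n,\dots,g_s^n}{\varpi}\rangle\to A\langle\frac{g_1^n,\dots,g_s^n}{\varpi}\rangle$.

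Finally I would check that the two composites are the identity. Each composite is a continuous endomorphism (of $A$-algebras, resp.\ $\widehat{A}_J$-algebras) of an object that is initial in its category, hence equals the identity by the uniqueness clause of the universal property; for the composite on $A\langle\frac{g_1^n,\dots,g_s^n}{\varpi}\rangle$ one uses that it is an $A$-algebra endomorphism of an initial $A$-algebra, and for the other composite that it is an $\widehat{A}_J$-algebra endomorphism of an initial $\widehat{A}_J$-algebra. This gives the desired topological isomorphism.

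\textbf{Main obstacle.} The only genuinely nontrivial point is verifying that $A\langle\frac{g_1^n,\dots,g_s^n}{\varpi}\rangle$ carries a canonical \emph{continuous} $\widehat{A}_J$-algebra structure, i.e.\ that its adic topology coincides with the $J$-adic topology and that the structure map $A\to A\langle\frac{g_1^n,\dots,g_s^n}{\varpi}\rangle$ extends continuously over $\widehat{A}_J$; once this is in place, both maps and the fact that they are mutually inverse fall out formally from the universal property in Remark \ref{Remark on rig-sheafiness}. This comparison of topologies is exactly the content of the argument in Lemma \ref{Ideals containing the pseudo-uniformizer}, applied now inside the blow-up algebra, and I do not expect any further difficulty.
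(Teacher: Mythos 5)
Your ``cleaner route'' is correct and is essentially the paper's own proof: one observes that the $J$-adic and $\varpi$-adic topologies coincide on $A\langle\frac{g_1^{n},\dots,g_s^{n}}{\varpi}\rangle$ (since $\varpi\in J$ and $J^{sn}A\langle\frac{g_1^{n},\dots,g_s^{n}}{\varpi}\rangle\subseteq\varpi A\langle\frac{g_1^{n},\dots,g_s^{n}}{\varpi}\rangle$), so that it is $J$-adically complete and the map from $A$ factors through $\widehat{A}_{J}$, and then one concludes with the universal property of Remark \ref{Remark on rig-sheafiness}; the paper leaves the check that the two maps are mutually inverse implicit, which you make explicit via initiality. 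Your abandoned first sketch (commuting the affine blow-up construction with reduction mod $J^{k}$) would need more care, since killing $\varpi$-power torsion need not commute with such reductions, but it plays no role in the argument you actually give.
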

\begin{proof}The ring $A\langle\frac{g_1^{n},\dots,g_s^{n}}{\varpi}\rangle$ is $J$-adically complete since it is $\varpi$-adically complete and \begin{equation*}J^{sn}A\langle\frac{g_1^{n},\dots,g_s^{n}}{\varpi}\rangle\subseteq (g_1^{n},\dots,g_s^{n})_{A}A\langle\frac{g_1^{n},\dots,g_s^{n}}{\varpi}\rangle=\varpi A\langle\frac{g_1^{n},\dots,g_s^{n}}{\varpi}\rangle.\end{equation*}Thus the canonical map $A\to A\langle\frac{g_1^{n},\dots,g_s^{n}}{\varpi}\rangle$ factors through $A\to \widehat{A}_{J}$ and hence, by the universal property in Remark \ref{Remark on rig-sheafiness}, also through $A\to \widehat{A}_{J}\langle\frac{g_1^{n},\dots,g_s^{n}}{\varpi}\rangle$.\end{proof}
\begin{lemma}\label{Completion along a closed subscheme}Let $R$ be a complete adic ring admitting a principal ideal of definition and let $\varpi$ be an element generating an ideal of definition of $R$. Let $\mathfrak{X}$ be a locally rig-sheafy $\varpi$-torsion-free adic formal scheme over $(R, \varpi)$ (not necessarily an adic formal $R$-scheme) with an ideal of definition of finite type $\mathcal{I}$ containing $\varpi\mathcal{O}_{\mathfrak{X}}$. Let $Z\subset \mathfrak{X}_{0}$ be a finitely presented closed subscheme. The completion $\widehat{\mathfrak{X}}_{Z}$ of $\mathfrak{X}_{0}$ along $Z$ is again locally rig-sheafy over $(R, \varpi)$.\end{lemma}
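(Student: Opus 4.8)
The plan is to reduce to the affine case and then transfer rig-sheafiness from the original ring to its completion by combining Lemma~\ref{Completion along an ideal} with the fact that the rational subsets $U_{n}$ entering the construction of the generic fiber are affinoid adic spaces.

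First I would reduce to the affine case. Formal completion along a closed subscheme is local on the base, so $\widehat{\mathfrak{X}}_{Z}$ is covered by the formal completions $\widehat{\mathfrak{U}}_{Z\cap\mathfrak{U}_{0}}$ as $\mathfrak{U}=\Spf(A)$ ranges over the rig-sheafy affine open formal subschemes of $\mathfrak{X}$; these form an open cover of $\widehat{\mathfrak{X}}_{Z}$ because rig-sheafy affine opens form a basis of $\mathfrak{X}$ (Lemma~\ref{Topology on a locally rig-sheafy formal scheme}) and $\vert\widehat{\mathfrak{X}}_{Z}\vert=\vert Z\vert\subseteq\vert\mathfrak{X}\vert$. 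By Lemma~\ref{Restriction maps are submetric} and the hypothesis $\varpi\mathcal{O}_{\mathfrak{X}}\subseteq\mathcal{I}$, the ideal $I:=\mathcal{I}(\mathfrak{U})$ is a finitely generated ideal of definition of $A$ containing $\varpi$; fix generators $I=(f_{1},\dots,f_{r})_{A}$. The finitely presented closed subscheme $Z\cap\mathfrak{U}_{0}$ of $\mathfrak{U}_{0}=\Spec(A/I)$ is cut out by a finitely generated ideal of $A/I$ and hence corresponds to a finitely generated ideal $J=(g_{1},\dots,g_{s})_{A}$ of $A$ with $I\subseteq J$, so that $\widehat{\mathfrak{U}}_{Z\cap\mathfrak{U}_{0}}=\Spf(\widehat{A}_{J})$, where $\widehat{A}_{J}$ denotes the $J$-adic completion of $A$. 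Since $J\widehat{A}_{J}$ is a finitely generated ideal of definition of $\widehat{A}_{J}$ containing $\varpi$, Lemma~\ref{Rig-sheafiness does not depend on generators} reduces the claim to showing that the Tate ring $\widehat{A}_{J}\langle\frac{g_{1}^{n},\dots,g_{s}^{n}}{\varpi}\rangle[\varpi^{-1}]$ is sheafy for every $n\geq 1$.

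The main step is then as follows. By Lemma~\ref{Completion along an ideal} the canonical map induces a topological isomorphism $A\langle\frac{g_{1}^{n},\dots,g_{s}^{n}}{\varpi}\rangle\cong\widehat{A}_{J}\langle\frac{g_{1}^{n},\dots,g_{s}^{n}}{\varpi}\rangle$, so after inverting $\varpi$ it suffices to prove that $A\langle\frac{g_{1}^{n},\dots,g_{s}^{n}}{\varpi}\rangle[\varpi^{-1}]$ is sheafy. By the same construction that produces the $U_{n}$, this ring is the ring of sections $\mathcal{O}_{\Spa(A,A)}(V_{n})$ of the rational subset $V_{n}=\{\,x\in\Spa(A,A)\mid \vert g_{j}^{n}(x)\vert\leq\vert\varpi(x)\vert\neq 0,\ j=1,\dots,s\,\}$ of $\Spa(A,A)$ (its defining datum generates an open ideal because $(g_{1}^{n},\dots,g_{s}^{n})_{A}$ contains $J^{sn}$, hence the open ideal $I^{sn}$). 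The crux is the inclusion $V_{n}\subseteq U_{n}$, where $U_{n}=\{\,x\in\Spa(A,A)\mid \vert f_{i}^{n}(x)\vert\leq\vert\varpi(x)\vert\neq 0,\ i=1,\dots,r\,\}$: writing $f_{i}=\sum_{j}a_{ij}g_{j}$ with $a_{ij}\in A$ (possible since $f_{i}\in I\subseteq J$) and using that $\vert a_{ij}(x)\vert\leq 1$ for every $x\in\Spa(A,A)$, one finds for $x\in V_{n}$ that $\vert\varpi(x)\vert\neq 0$ and $\vert f_{i}^{n}(x)\vert=\vert f_{i}(x)\vert^{n}\leq\bigl(\max_{j}\vert g_{j}(x)\vert\bigr)^{n}=\max_{j}\vert g_{j}^{n}(x)\vert\leq\vert\varpi(x)\vert$, so that $x\in U_{n}$. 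The rig-sheafiness of $A$ says precisely that $U_{n}=\Spa(B_{n,A}[\varpi^{-1}],\overline{B_{n,A}})$ is an affinoid adic space; hence $V_{n}$, being a rational subset of $\Spa(A,A)$ contained in $U_{n}$, is a rational subset of the affinoid adic space $U_{n}$, and so $\mathcal{O}(V_{n})=A\langle\frac{g_{1}^{n},\dots,g_{s}^{n}}{\varpi}\rangle[\varpi^{-1}]$ is sheafy, being a rational localization of the sheafy Tate ring $B_{n,A}[\varpi^{-1}]$. Gluing the affine statements completes the proof.

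The hard point is the inclusion $V_{n}\subseteq U_{n}$, by which the rig-sheafiness of $A$ is transported to $\widehat{A}_{J}$ through a rational-localization argument of the same flavour as in the proof of Lemma~\ref{Rig-sheafiness does not depend on generators}; everything else is routine bookkeeping about formal completions together with the input of Lemma~\ref{Completion along an ideal}. I note in passing that the argument does not actually use the $\varpi$-torsion-freeness of $\mathfrak{X}$ (which is carried along only for compatibility with the applications), and that, in contrast with the locally formally of finite type setting, the generic fiber $(\widehat{\mathfrak{X}}_{Z})_{\eta}^{\ad}=\bigcup_{n}V_{n}$ need not coincide with the full tube $\spc_{\mathfrak{X}_{\eta}^{\ad},\mathfrak{X}}^{-1}(Z)$; this is not needed here.
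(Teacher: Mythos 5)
Your proposal is correct and takes essentially the same route as the paper: reduce to the rig-sheafy affine case, use Lemma \ref{Completion along an ideal} to identify $\widehat{A}_{J}\langle\frac{g_1^{n},\dots,g_s^{n}}{\varpi}\rangle$ with $A\langle\frac{g_1^{n},\dots,g_s^{n}}{\varpi}\rangle$, and exhibit $A\langle\frac{g_1^{n},\dots,g_s^{n}}{\varpi}\rangle[\varpi^{-1}]$ as a rational localization of one of the sheafy Tate rings furnished by the rig-sheafiness of $A$. The only (harmless) difference is in how the containment is produced: the paper uses the ideal inclusion $(f_1^{sn},\dots,f_r^{sn})_{A}\subseteq(g_1^{n},\dots,g_s^{n})_{A}$ to factor through $B_{sn,A}$, i.e.\ it works inside $U_{sn}$, whereas you obtain $V_{n}\subseteq U_{n}$ directly by a pointwise valuation estimate.
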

\begin{proof}Let $(\mathfrak{U}_{i})_{i}=(\Spf(A_{i}))_{i}$ be an open cover by rig-sheafy affine adic formal schemes over $\Spf(R)$. Let $\mathcal{J}$ be an adically quasi-coherent ideal sheaf of finite type such that $\mathcal{J}\supseteq\mathcal{I}$ and $Z=\mathcal{V}(\mathcal{J})$. For every $i$, let $J_{i}=\mathcal{J}(\mathfrak{U}_{i})$. Then an affine open cover of $\widehat{\mathfrak{X}}_{Z}$ is given by $(\Spf(\widehat{A_{i}}_{J_{i}}))_{i}$, where $\widehat{A_{i}}_{J_{i}}$ is the $J_{i}$-adic completion of $A_{i}$. Hence it suffices to prove that for every rig-sheafy complete adic topological $R$-algebra $A$ with finitely generated ideal of definition $I=(f_1,\dots,f_r)_{A}$ containing $\varpi$ and for every finitely generated ideal $J=(g_1,\dots,g_s)_{A}\subsetneq A$ containing $I$ the $J$-adic completion $\widehat{A}_{J}$ of $A$ is rig-sheafy. But the inclusion $I\subseteq J=(g_1,\dots,g_s)_{A}$ entails that \begin{equation*}(f_1^{sn},\dots, f_r^{sn})_{A}\subseteq I^{sn}\subseteq J^{sn}\subseteq (g_1^{n},\dots, g_s^{n})_{A}\end{equation*}for every $n\geq1$, whence we see that, for every $n\geq1$, the canonical map $A\to A\langle\frac{g_1^{n},\dots,g_s^{n}}{\varpi}\rangle$ factors through $A\to A\langle\frac{f_1^{sn},\dots,f_r^{sn}}{\varpi}\rangle$. Thus, for every $n\geq1$, the Tate ring $A\langle\frac{g_1^{n},\dots,g_s^{n}}{\varpi}\rangle[\varpi^{-1}]$ is equal to the rational localization\begin{equation*}A\langle\frac{f_1^{sn},\dots,f_r^{sn}}{\varpi}\rangle\langle\frac{g_1^{n},\dots,g_s^{n}}{\varpi}\rangle[\varpi^{-1}]=A\langle\frac{f_1^{sn},\dots,f_r^{sn}}{\varpi}\rangle[\varpi^{-1}]\langle\frac{g_1^{n},\dots,g_s^{n}}{\varpi}\rangle,\end{equation*}of the sheafy Tate ring $A\langle\frac{f_1^{sn},\dots,f_r^{sn}}{\varpi}\rangle[\varpi^{-1}]$. It follows that $A\langle\frac{g_1^{n},\dots,g_s^{n}}{\varpi}\rangle[\varpi^{-1}]$ is a sheafy Tate ring for every $n\geq1$. We conclude by Lemma \ref{Completion along an ideal} that $\widehat{A}_{J}$ is rig-sheafy.\end{proof}
The following proposition generalizes \cite{Berthelot96}, Proposition 0.2.7, to our situation.
\begin{prop}\label{Berthelot's proposition}Let $R$ and $\varpi$ be as before and let $\mathfrak{X}$ be a locally rig-sheafy adic formal scheme over $\Spf(R)$ (not necessarily an adic formal $R$-scheme) with an ideal of definition of finite type $\mathcal{I}$ containing $\varpi\mathcal{O}_{\mathfrak{X}}$. Let $Z$ be a finitely presented closed subscheme of $\mathfrak{X}_{0}$. The morphism of analytic adic generic fibers over $(R, \varpi)$ \begin{equation*}(\widehat{\mathfrak{X}}_{Z})_{\eta}^{\ad}\to \mathfrak{X}_{\eta}^{\ad}=X\end{equation*}arising from the canonical morphism of locally rig-sheafy formal schemes $\widehat{\mathfrak{X}}_{Z}\to\mathfrak{X}$ over $\Spf(R)$ induces an isomorphism of analytic adic spaces \begin{equation*}(\widehat{\mathfrak{X}}_{Z})_{\eta}^{\ad}\tilde{\rightarrow} \spc_{X,\mathfrak{X}}^{-1}(Z).\end{equation*}\end{prop}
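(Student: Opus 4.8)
\emph{Proof proposal (plan).} The plan is to reduce to the affine case and then describe both $(\widehat{\mathfrak{X}}_{Z})_{\eta}^{\ad}$ and $\spc_{X,\mathfrak{X}}^{-1}(Z)$ explicitly as increasing unions of rational subsets of $X$. All three ingredients are local on $\mathfrak{X}$: the generic fiber and the specialization morphism are glued from their affine constructions (Lemma \ref{Generic fiber via gluing}, Lemma \ref{Generic fibers and open immersions}), $\widehat{\mathfrak{X}}_{Z}$ is computed on an affine cover (as in the proof of Lemma \ref{Completion along a closed subscheme}), and for rig-sheafy affine $\mathfrak{U}\subseteq\mathfrak{X}$ one has $\spc_{X,\mathfrak{X}}^{-1}(Z)\cap\spc_{X,\mathfrak{X}}^{-1}(\mathfrak{U})=\spc_{\mathfrak{U}_{\eta}^{\ad},\mathfrak{U}}^{-1}(Z\cap\mathfrak{U})$ by Lemma \ref{Generic fibers and open immersions}. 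So I may assume $\mathfrak{X}=\Spf(A)$ with $A$ rig-sheafy over $(R,\varpi)$, with a finitely generated ideal of definition $I=(f_1,\dots,f_r)_{A}$ containing $\varpi$, and $Z=\mathcal{V}(J)$ for a finitely generated ideal $J\supseteq I$ of $A$; enlarging $J$ if needed I take generators $J=(h_1,\dots,h_t)_{A}$ with $\{f_1,\dots,f_r\}\subseteq\{h_1,\dots,h_t\}$. By Lemma \ref{Completion along a closed subscheme}, $\widehat{\mathfrak{X}}_{Z}=\Spf(\widehat{A}_{J})$ is rig-sheafy, and $J\widehat{A}_{J}$ is a finitely generated ideal of definition of $\widehat{A}_{J}$ containing $\varpi$.

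By the construction of the generic fiber together with Lemma \ref{Completion along an ideal} (which gives $\widehat{A}_{J}\langle\frac{h_1^{n},\dots,h_t^{n}}{\varpi}\rangle=A\langle\frac{h_1^{n},\dots,h_t^{n}}{\varpi}\rangle$), we get $(\widehat{\mathfrak{X}}_{Z})_{\eta}^{\ad}=\bigcup_{n>0}\Spa\bigl(A\langle\tfrac{h_1^{n},\dots,h_t^{n}}{\varpi}\rangle[\varpi^{-1}],\,\overline{A\langle\tfrac{h_1^{n},\dots,h_t^{n}}{\varpi}\rangle}\bigr)$: the generic fiber of $\widehat{\mathfrak{X}}_{Z}$ is built from the same completed affine blow-up algebras as that of $\mathfrak{X}$, but with $J$ in place of $I$. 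The key point is that, through the canonical morphism $(\widehat{\mathfrak{X}}_{Z})_{\eta}^{\ad}\to X$, the $n$-th of these affinoids is an open immersion onto the rational subset $W_{n}:=\{\,x\in X\mid \vert h_{i}^{n}(x)\vert\leq\vert\varpi(x)\vert,\ i=1,\dots,t\,\}$ of $X$, compatibly with the inclusions $W_{n}\subseteq W_{n+1}$. Indeed, since each $f_{j}\in J$ we have $f_{j}^{tn}\in J^{tn}\subseteq(h_1^{n},\dots,h_t^{n})_{A}$, so $W_{n}$ is contained in the affinoid $U_{tn}$ occurring in the definition of $X=\mathfrak{X}_{\eta}^{\ad}$; and $W_{n}$ is the rational subset $U_{tn}\bigl(\tfrac{h_1^{n},\dots,h_t^{n}}{\varpi}\bigr)$ of $U_{tn}$, whose ring of sections is $\mathcal{O}_{X}(U_{tn})\langle\tfrac{h_1^{n},\dots,h_t^{n}}{\varpi}\rangle=A\langle\tfrac{h_1^{n},\dots,h_t^{n}}{\varpi}\rangle[\varpi^{-1}]$ by the universal properties of rational localizations and of completed affine blow-up algebras (Remark \ref{Remark on rig-sheafiness}), exactly as in the proofs of Lemma \ref{Generic fibers and open immersions} and Lemma \ref{Completion along a closed subscheme}; the rings of integral elements match as well. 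Hence $(\widehat{\mathfrak{X}}_{Z})_{\eta}^{\ad}\to X$ is an open immersion with image the open subspace $\bigcup_{n>0}W_{n}$ of $X$.

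It remains to identify $\bigcup_{n>0}W_{n}$ with $\spc_{X,\mathfrak{X}}^{-1}(Z)$. One inclusion is immediate: $\varpi$ is topologically nilpotent, so $\vert\varpi(x)\vert<1$ for all $x\in X$, and thus $x\in W_{n}$ gives $\vert h_{i}(x)\vert^{n}\leq\vert\varpi(x)\vert<1$, hence $\vert h_{i}(x)\vert<1$ for all $i$, i.e., $J\subseteq\spc_{X,\mathfrak{X}}(x)$, i.e., $\spc_{X,\mathfrak{X}}(x)\in\mathcal{V}(J)=Z$. For the converse, let $x\in\spc_{X,\mathfrak{X}}^{-1}(Z)$, so $\vert h_{i}(x)\vert<1$ for all $i$; fixing $m$ with $x\in U_{m}$, I must produce $n$ with $\vert h_{i}(x)\vert^{n}\leq\vert\varpi(x)\vert$ for all $i$, or equivalently --- via the description of $\spc_{X,\mathfrak{X}}(x)$ through the morphism $\Spf(k(x)^{+})\to\mathfrak{X}$ --- I must promote the observation that $A\to k(x)^{+}$ sends each $h_{i}$ into the maximal ideal of the valuation ring $k(x)^{+}$ to the continuity of $A\to k(x)^{+}$ for the $J$-adic topology on $A$, so that this map factors through $\widehat{A}_{J}$ and then through some $A\langle\tfrac{h_1^{n},\dots,h_t^{n}}{\varpi}\rangle$, placing $x$ in $W_{n}$. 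I expect this converse inclusion --- ``a point of $X$ specializing into $Z$ lies in the tube $\bigcup_{n}W_{n}$'' --- to be the main obstacle; it is the adic counterpart of the classical fact that the tube of a closed subscheme is open, and it is here that one must use both the continuity of $x$ as a valuation on the Tate ring $B_{m,A}[\varpi^{-1}]$ with pseudo-uniformizer $\varpi$ and the finite presentation of $Z$ in order to convert the condition $\vert h_{i}(x)\vert<1$ into the quantitative bound $\vert h_{i}(x)\vert^{n}\leq\vert\varpi(x)\vert$. With both inclusions established, the open immersion of the previous paragraph is an isomorphism onto $\spc_{X,\mathfrak{X}}^{-1}(Z)$, and gluing back over the rig-sheafy affine open cover of $\mathfrak{X}$ concludes the proof.
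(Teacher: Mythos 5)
Your overall plan coincides with the paper's proof: reduce to the rig-sheafy affine case, use Lemma \ref{Completion along an ideal} to identify $(\widehat{\mathfrak{X}}_{Z})_{\eta}^{\ad}$ with the increasing union of the rational subsets $W_{n}=\{x\in X\mid \vert h_{i}^{n}(x)\vert\leq\vert\varpi(x)\vert\neq0\}$ (these are exactly the paper's $\Lambda_{n,sn}$), and then compare this union with $\spc_{X,\mathfrak{X}}^{-1}(Z)$. The easy inclusion $\bigcup_{n}W_{n}\subseteq\spc_{X,\mathfrak{X}}^{-1}(Z)$ you prove just as the paper does. But the converse inclusion, which you yourself flag as ``the main obstacle,'' is not proved in your proposal: you only express the expectation that continuity of $A\to k(x)^{+}$ for the $J$-adic topology will upgrade $\vert h_{i}(x)\vert<1$ to a bound $\vert h_{i}(x)\vert^{n}\leq\vert\varpi(x)\vert$. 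That is a genuine gap, and the route you sketch cannot be completed as stated: $\vert h_{i}(x)\vert<1$ only says that $h_{i}$ lands in the maximal ideal of $k(x)^{+}$, and when the valuation $x$ has rank $\geq2$ this does not make $h_{i}$ topologically nilpotent in $k(x)^{+}$, so $A\to k(x)^{+}$ need not be $J$-adically continuous and need not factor through $\widehat{A}_{J}$.

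In fact the converse inclusion itself fails at higher-rank points, so no argument can close the gap without reinterpreting the statement. Take $R=\mathbb{Z}_{p}$, $\varpi=p$, $A=\mathbb{Z}_{p}\langle T\rangle$, $Z=\mathcal{V}(p,T)$, so that $\widehat{\mathfrak{X}}_{Z}=\Spf(\mathbb{Z}_{p}[[T]])$ and $(\widehat{\mathfrak{X}}_{Z})_{\eta}^{\ad}$ is the open unit disc inside $X=\Spa(\mathbb{Q}_{p}\langle T\rangle,\mathbb{Z}_{p}\langle T\rangle)$. Let $x$ be the rank-$2$ point obtained by composing the Gauss point with the order-of-vanishing at $T=0$ on its residue field; then $\vert T(x)\vert<1$ but $\vert T(x)\vert^{n}>\vert p(x)\vert$ for every $n$, so $\spc_{X,\mathfrak{X}}(x)=(p,T)\in Z$ while $x\notin\bigcup_{n}W_{n}$. (This is also visible topologically: $\spc_{X,\mathfrak{X}}^{-1}(Z)$ is closed in $X$, being the preimage of a closed set under a continuous map, whereas the tube $\bigcup_{n}W_{n}$ is open.) Be aware that the paper's own proof passes over exactly this point in one word --- the sentence ``Hence we can write $\spc_{X,\mathfrak{X}}^{-1}(Z)$ as a union of the open subspaces $\Lambda_{n,m}$'' implicitly uses that $\vert g_{j}(x)\vert<1$ forces $\vert g_{j}(x)\vert^{n}\leq\vert\varpi(x)\vert$ for some $n$, which is valid at rank-one points (Berthelot's classical setting) but not at all points of the adic space. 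So your instinct about where the difficulty lies was exactly right, but to obtain a correct and complete argument one must either restrict to rank-one (analytic/classical) points, replace $\spc_{X,\mathfrak{X}}^{-1}(Z)$ by its interior (the open tube), or add hypotheses excluding such boundary points; the quantitative bound is simply not a consequence of $\vert h_{i}(x)\vert<1$ together with continuity of the valuation.
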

\begin{proof}Since $\spc_{X,\mathfrak{X}}^{-1}(\mathfrak{V})=\mathfrak{V}_{\eta}^{\ad}$ for every rig-sheafy affine open $\mathfrak{V}\subseteq \mathfrak{X}$, it suffices to treat the case when $\mathfrak{X}$ is affine and rig-sheafy. Let $A=\mathcal{O}_{\mathfrak{X}}(\mathfrak{X})$, let $I=(f_1,\dots,f_r)_{A}$ be a finitely generated ideal of definition containing $\varpi$ and let $J=(g_1,\dots,g_s)_{A}$ be a finitely generated ideal containing $I$ defining the finitely presented closed subscheme $Z$. Let $U_{n}$, $n\geq1$, be the rational subsets of $\Spa(A, A)$ used in the definition of $\mathfrak{X}_{\eta}^{\ad}$. By definition of the specialization map, a point \begin{equation*}x\in U_{n}=\Spa(A\langle\frac{f_1^{n},\dots,f_r^{n}}{\varpi}\rangle[\varpi^{-1}], A\langle\frac{f_1^{n},\dots,f_r^{n}}{\varpi}\rangle[\varpi^{-1}]^{+})\subseteq \mathfrak{X}_{\eta}^{\ad}\end{equation*}belongs to $\spc_{X,\mathfrak{X}}^{-1}(Z)$ if and only if $\vert g_{j}(x)\vert<1$ for all $j=1,\dots, s$. Hence we can write $\spc_{X,\mathfrak{X}}^{-1}(Z)$ as a union of the open subspaces\begin{equation*}\Lambda_{n,m}=\{\, x\in U_{m}\mid \vert g_{j}^{n}(x)\vert\leq\vert\varpi(x)\vert\neq 0 \ \text{for all} j=1,\dots, s\,\}\end{equation*}of $X$, for $n, m\geq1$. On the other hand, the inclusion $I\subseteq J$ implies that \begin{equation*}f_{i}^{sn}\in J^{sn}\subseteq (g_1^{n},\dots,g_s^{n})_{A}\end{equation*}for all $i=1,\dots, r$ and hence that the rational subset \begin{equation*}\{\, x\in \Spa(A, A)\mid \vert g_{j}^{n}(x)\vert\leq\vert\varpi(x)\vert\neq 0\ \text{for} \ j=1,\dots,s\,\}\end{equation*}is contained in $U_{sn}$ for all $n\geq1$. In this way we see that $\spc_{X,\mathfrak{X}}^{-1}(Z)$ is the union of the open subspaces $\Lambda_{n,sn}$ of $X$ and that \begin{align*}\Lambda_{n,sn}=\{\, x\in \Spa(A, A)\mid \vert g_{j}^{n}(x)\vert\leq\vert\varpi(x)\vert\neq 0\ \text{for} \ j=1,\dots,s\,\} \\=\Spa(A\langle\frac{g_1^{n},\dots,g_s^{n}}{\varpi}\rangle[\varpi^{-1}], A\langle\frac{g_1^{n},\dots,g_s^{n}}{\varpi}\rangle[\varpi^{-1}]^{+})\end{align*}for all $n\geq1$. By Lemma \ref{Completion along an ideal} this means that \begin{equation*}\Lambda_{n,sn}=\Spa(\widehat{A}_{J}\langle\frac{g_1^{n},\dots,g_s^{n}}{\varpi}\rangle[\varpi^{-1}], \widehat{A}_{J}\langle\frac{g_1^{n},\dots,g_s^{n}}{\varpi}\rangle[\varpi^{-1}]^{+})\end{equation*}and \begin{equation*}\spc_{X,\mathfrak{X}}^{-1}(Z)=\bigcup_{n\geq1}\Spa(\widehat{A}_{J}\langle\frac{g_1^{n},\dots,g_s^{n}}{\varpi}\rangle[\varpi^{-1}], \widehat{A}_{J}\langle\frac{g_1^{n},\dots,g_s^{n}}{\varpi}\rangle[\varpi^{-1}]^{+})=(\widehat{\mathfrak{X}}_{Z})_{\eta}^{\ad}.\end{equation*}\end{proof}
With the notion of adic analytic generic fiber in hand, we can define what we mean by a formal model over $R$ (respectively, a formal $R$-model) of an adic space $X$ over $\Spa(R[\varpi^{-1}], \overline{R})$, where $R$ is a complete adic ring with a non-zero-divisor $\varpi$ such that $R[\varpi^{-1}]$ is a sheafy Tate ring. 
\begin{mydef}[Formal models of an adic space]\label{Definition of formal models}Fix a ring $R$ which is $\varpi$-adically complete and $\varpi$-torsion-free for some $\varpi\in R$ and let $\overline{R}$ be the integral closure of $R$ in the Tate ring $R[\varpi^{-1}]$. Suppose that the Tate ring $R[\varpi^{-1}]$ is sheafy. For an adic space $X$ over $\Spa(R[\varpi^{-1}], \overline{R})$, a formal model of $X$ over $R$ (respectively, a formal $R$-model of $X$) is a locally rig-sheafy adic formal scheme $\mathfrak{X}$ over $(R, \varpi)$ (respectively, a locally rig-sheafy adic formal $R$-scheme $\mathfrak{X}$) such that $\mathfrak{X}_{\eta, (R, \varpi)}^{\ad}\cong X$. For a morphism of adic spaces $f: X'\to X$ over $\Spa(R[\varpi^{-1}], \overline{R})$, a formal model of $f$ over $R$ (respectively, a formal $R$-model of $f$) is a morphism of locally rig-sheafy formal adic formal schemes over $(R, \varpi)$ (respectively, of locally rig-sheafy adic formal $R$-schemes) $f_{0}: \mathfrak{X}\to\mathfrak{X}'$ with $f_{0\eta}=f$.  \end{mydef}
In particular, we obtain a notion of formal models over $\mathbb{Z}[[T]]$ (respectively, of formal $\mathbb{Z}[[T]]$-models) for every adic space $X$ equipped with a global pseudo-uniformizer $\varpi$, which we call a Tate adic space, viewed as an adic space over $\Spa(\mathbb{Z}((T)), \mathbb{Z}[[T]])$ as in Remark \ref{Tate adic spaces as relative adic spaces} below. 
\begin{mydef}[Tate adic spaces]\label{Tate adic spaces}A global pseudo-uniformizer of an adic space $X$ is an element $\varpi_{X}\in \mathcal{O}_{X}^{+}(X)$ such that for any affinoid open subspace $U$ of $X$ the image of $\varpi_{X}$ in $\mathcal{O}_{X}(U)$ is a topologically nilpotent unit of $\mathcal{O}_{X}(U)$. A Tate adic space is a pair $(X, \varpi_{X})$, where $X$ is a (necessarily analytic) adic space and $\varpi_{X}\in \mathcal{O}_{X}^{+}(X)$ is a global pseudo-uniformizer of $X$. A Tate adic space $(X, \varpi_{X})$ is said to be locally Noetherian (respectively, uniform, respectively, perfectoid, respectively, sousperfectoid, respectively, diamantine) if the underlying adic space $X$ has this property. 

A morphism $f: (Y, \varpi_{Y})\to (X, \varpi_{X})$ of Tate adic spaces is a morphism of adic spaces $f: Y\to X$ such that the image of $\varpi_{X}$ under the map $\mathcal{O}_{X}^{+}(X)\to \mathcal{O}_{Y}^{+}(Y)$ is equal to $\varpi_{Y}$.\end{mydef}
\begin{rmk}[Tate adic spaces as relative adic spaces]\label{Tate adic spaces as relative adic spaces}For every sheafy Tate Huber pair $(A, A^{+})$ with pseudo-uniformizer $\varpi\in A^{+}$ of $A$ the assignment $X\mapsto (X, \varpi)$ defines a fully faithful functor from the category of adic spaces over $S=\Spa(A, A^{+})$ to the category of Tate adic spaces. In particular, the category of rigid-analytic varieties over any nonarchimedean field $K$ embeds fully and faithfully into the category of Tate adic spaces. Conversely, the category of Tate adic spaces is the same as the category of adic spaces over the affinoid adic space\begin{equation*}S=\Spa(\mathbb{Z}((T)), \mathbb{Z}[[T]]),\end{equation*}where $\mathbb{Z}[[T]]$ is endowed with the $T$-adic topology, where $\mathbb{Z}((T))=\mathbb{Z}[[T]][T^{-1}]$ and where for a Tate adic space $(X, \varpi)$ and $U\subseteq X$ affinoid open the morphism of Huber pairs\begin{equation*}(\mathbb{Z}((T)), \mathbb{Z}[[T]])\to (\mathcal{O}_{X}(U), \mathcal{O}_{X}^{+}(U))\end{equation*}is the map sending the variable $T$ to $\varpi_{X}$.\end{rmk}
The above remark allows us to talk about formal $\mathbb{Z}[[T]]$-models of any Tate adic space $X$ and thus gives us a notion of "absolute" formal models for Tate adic spaces.

\section{Admissible formal blow-ups and generic fiber}\label{sec:formal blow-ups}

Recall the notion of an admissible formal blow-up of an adic formal scheme $\mathfrak{X}$ of finite ideal type from \cite{FK}, Ch.~II, Definition 1.1.1. For an adic formal scheme of finite ideal type $\mathfrak{X}$, an admissible ideal $\mathcal{J}$ of $\mathfrak{X}$ is an adically quasi-coherent ideal sheaf of finite type on $\mathfrak{X}$ such that $\mathcal{J}$ contains locally on $\mathfrak{X}$ an ideal of definition (\cite{FK}, Ch.~I, Definition 3.7.4). In other words, $\mathcal{J}$ is an adically quasi-coherent ideal sheaf of finite type such that every $x\in \mathfrak{X}$ has an affine open neighbourhood $\mathfrak{U}$ with the property that $\mathcal{J}(\mathfrak{U})\subseteq \mathcal{O}_{\mathfrak{U}}(\mathfrak{U})$ is an open ideal. If $\mathfrak{X}$ has an ideal of definition of finite type $\mathcal{I}$ and if $\mathcal{J}\subseteq \mathcal{O}_{\mathfrak{X}}$ is an admissible ideal, consider for $k\geq 0$ the projective $\mathfrak{X}_{k}$-schemes\begin{equation*}\mathfrak{X}_{k}'=\Proj(\bigoplus_{n\geq 0}\mathcal{J}^{n}\otimes \mathcal{O}_{\mathfrak{X}_{k}})\to \mathfrak{X}_{k}.\end{equation*}These form an inductive system of schemes $(\mathfrak{X}'_{k})_{k}$ whose transition morphisms are closed immersions and this inductive system satisfies the conditions of \cite{EGAIa}, (10.6.3) and (10.6.4). Thus, by loc.~cit., the inductive limit of $(\mathfrak{X}_{k}')_{k}$ is an adic formal scheme of finite ideal type $\mathfrak{X}'$, endowed with a canonical proper adic morphism \begin{center}\begin{tikzcd}(\ast) & \mathfrak{X}'=\varinjlim_{k\geq 0}\Proj(\bigoplus_{n\geq 0}\mathcal{J}^{n}\otimes \mathcal{O}_{\mathfrak{X}_{k}})\arrow{r} & \mathfrak{X}.\end{tikzcd}\end{center}
\begin{mydef}[Fujiwara-Kato \cite{FK}, Definition 1.1.1]\label{Admissible formal blow-up}Let $\mathfrak{X}$ be an adic formal scheme with an ideal of definition of finite type and let $\mathcal{J}\subseteq \mathcal{O}_{\mathfrak{X}}$ be an admissible ideal. An adic morphism $\mathfrak{X}'\to \mathfrak{X}$ of adic formal schemes of finite ideal type is said to be the admissible formal blow-up along (or in) $\mathcal{J}$ if it is locally isomorphic to a morphism of the form $(\ast)$. We also sometimes call the formal scheme $\mathfrak{X}'$ itself the admissible formal blow-up along (or in) $\mathcal{J}$. 

If $\mathfrak{X}$ is an arbitrary adic formal scheme of finite ideal type with ideal of definition $\mathcal{I}$ not necessarily of finite type, cover $\mathfrak{X}$ by open formal subschemes $\mathfrak{U}_{i}$ such that $\mathcal{I}\vert_{\mathfrak{U}_{i}}$ is of finite type on $\mathfrak{U}_{i}$ and define the admissible formal blow-up $\mathfrak{X}'\to\mathfrak{X}$ of $\mathfrak{X}$ in $\mathcal{J}$ by gluing the admissible formal blow-ups $\mathfrak{U}'_{i}\to \mathfrak{U}_{i}$ of $\mathfrak{U}_{i}$ in $\mathcal{J}\vert_{\mathfrak{U}_{i}}$.

If $\mathfrak{X}$ is an affine formal scheme, where $A$ is a complete adic ring with finitely generated ideal of definition $I$ and the admissible ideal sheaf $\mathcal{J}$ corresponds to the finitely generated open ideal $J=\mathcal{J}(\mathfrak{X})$ of $A$, then we sometimes also call $\mathfrak{X}'\to \mathfrak{X}$ (or $\mathfrak{X}'$) the admissible formal blow-up of $\mathfrak{X}$ in the finitely generated open ideal $J$ of $A$. \end{mydef}
\begin{example}\label{Admissible formal blow-up in the affine case}If $\mathfrak{X}=\Spf(A)$ for a complete adic ring $A$ with finitely generated ideal of definition $I$, then, by construction, the admissible formal blow-up of $\mathfrak{X}$ in a finitely generated open ideal $J$ of $A$ is the formal completion of the usual (scheme-theoretic) blow-up \begin{equation*}\Proj(\bigoplus_{n\geq0}J^{n})\to \Spec(A)\end{equation*}of $\Spec(A)$ in $J$.\end{example}
The admissible formal blow-up of an affine adic formal scheme along an admissible ideal $\mathcal{J}$ admits the following explicit local description, which is a variant of \cite{FK}, Ch.~II, \S1.1(b), and a generalization of \cite{BL1}, Lemma 2.2.
\begin{lemma}\label{Local explicit description}Let $\mathfrak{X}=\Spf(A)$ be an affine formal scheme, where $A$ is a complete adic ring with finitely generated ideal of definition $I$. Let $f_1,\dots, f_r$ be elements of $A$ generating an open ideal $J$ of $A$ (i.e., the ideal $(f_1,\dots, f_r)_{A}$ contains some power of $I$). Then the admissible formal blow-up $\mathfrak{X}'\to \mathfrak{X}$ along (the ideal sheaf defined by) $J=(f_1,\dots, f_{r})_{A}$ has an open cover consisting of the affine formal schemes $\Spf(B_{i})$, where\begin{equation*}B_{i}=A\langle \frac{f_1,\dots, f_r}{f_i}\rangle, \ i=1,\dots, r,\end{equation*}is the $I$-adic completion of the affine blow-up algebra $A[\frac{J}{f_{i}}]$. Moreover, for every $i$, the open subset $\Spf(B_{i})$ of $\mathfrak{X}'$ is the open subset where the stalks of the ideal sheaf $J\mathcal{O}_{\mathfrak{X}'}$ are generated by $f_i$.\end{lemma}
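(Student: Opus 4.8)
The plan is to reduce to the scheme-theoretic blow-up, use its standard affine charts, and then pass to the $I$-adic formal completion.

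First I would unwind the construction $(\ast)$ in the affine case. With $\mathfrak{X}=\Spf(A)$, ideal of definition sheaf $\mathcal{I}=\widetilde{I}$ and admissible ideal $\mathcal{J}=\widetilde{J}$, one has $\mathfrak{X}_{k}'=\Proj(\bigoplus_{n\geq 0}J^{n}\otimes_{A}A/I^{k+1})$, which is the base change along $\Spec(A/I^{k+1})\to\Spec(A)$ of the ordinary blow-up $\pi\colon\Bl\to\Spec(A)$, where $\Bl:=\Proj(\bigoplus_{n\geq 0}J^{n})$. Hence $\mathfrak{X}'=\varinjlim_{k}\mathfrak{X}_{k}'$ is the formal completion of $\Bl$ along the closed subscheme $\pi^{-1}(V(I))=V(I\mathcal{O}_{\Bl})$ (recovering Example \ref{Admissible formal blow-up in the affine case}). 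Since $f_{1},\dots,f_{r}$ generate $J$, which is the degree-one piece of $\bigoplus_{n}J^{n}$, the blow-up $\Bl$ is covered by the standard affine opens $D_{+}(f_{i})=\Spec(A[\tfrac{J}{f_{i}}])$, $i=1,\dots,r$, where $A[\tfrac{J}{f_{i}}]$ is the affine blow-up algebra of \cite{Stacks}, Tag 052P, generated over $A$ by the $f_{l}/f_{i}$. Formal completion is local on the base, and the completion of $\Spec(A[\tfrac{J}{f_{i}}])$ along $V(I\cdot A[\tfrac{J}{f_{i}}])$ is $\Spf$ of the $I$-adic completion $\widehat{A[\tfrac{J}{f_{i}}]}$, which is exactly $A\langle\tfrac{f_{1},\dots,f_{r}}{f_{i}}\rangle=B_{i}$ (the $I\cdot A[\tfrac{J}{f_{i}}]$-adic and the $I$-adic topologies on the $A$-algebra $A[\tfrac{J}{f_{i}}]$ coincide, so the two completions agree). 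Since the $D_{+}(f_{i})$ cover $\Bl$, the opens $\Spf(B_{i})\subseteq\mathfrak{X}'$ — whose underlying spaces are $V(I\mathcal{O}_{\Bl})\cap D_{+}(f_{i})$ — cover $\mathfrak{X}'$, which is the first assertion.

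For the ``moreover'' part I would invoke that $J\mathcal{O}_{\mathfrak{X}'}$ is an invertible ideal sheaf on $\mathfrak{X}'$ (\cite{FK}, Ch.~II, \S1.1) with $JB_{i}=f_{i}B_{i}$ on each chart; in particular $f_{i}$ is a non-zero-divisor in $B_{i}$, so $f_{i}$ generates $(J\mathcal{O}_{\mathfrak{X}'})_{x}$ for every $x\in\Spf(B_{i})$, and $(J\mathcal{O}_{\mathfrak{X}'})_{x}\cong\mathcal{O}_{\mathfrak{X}',x}$ is nonzero for every $x\in\mathfrak{X}'$. For the reverse inclusion, take $x\in\mathfrak{X}'$ with $(J\mathcal{O}_{\mathfrak{X}'})_{x}=f_{i}\mathcal{O}_{\mathfrak{X}',x}$ and choose $j$ with $x\in\Spf(B_{j})$, so that $(J\mathcal{O}_{\mathfrak{X}'})_{x}=f_{j}\mathcal{O}_{\mathfrak{X}',x}$ as well. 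Writing $t=f_{i}/f_{j}\in B_{j}$, so $f_{i}=tf_{j}$, the overlap $\Spf(B_{i})\cap\Spf(B_{j})$ is the formal completion of $D_{+}(f_{i})\cap D_{+}(f_{j})=D(t)\subseteq D_{+}(f_{j})$, i.e. the locus in $\Spf(B_{j})$ where $t$ becomes invertible. Multiplication by $t$ is surjective on the nonzero finitely generated $\mathcal{O}_{\mathfrak{X}',x}$-module $(J\mathcal{O}_{\mathfrak{X}'})_{x}=f_{j}\mathcal{O}_{\mathfrak{X}',x}$ (because $f_{i}\mathcal{O}_{\mathfrak{X}',x}=f_{j}\mathcal{O}_{\mathfrak{X}',x}$), so Nakayama's lemma in the local ring $\mathcal{O}_{\mathfrak{X}',x}$ forces $t\in\mathcal{O}_{\mathfrak{X}',x}^{\times}$; thus $x\in\Spf(B_{i})\cap\Spf(B_{j})\subseteq\Spf(B_{i})$, which would complete the proof.

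I expect the routine $\Proj$/blow-up bookkeeping to be unproblematic; the point needing care is that all of this is done with no Noetherian hypothesis on $A$, so one must check that $I$-adic completion commutes with the localizations appearing (reduce modulo $I^{k}$), that $\Spf(B_{i})\cap\Spf(B_{j})$ really is the completion of $D_{+}(f_{i})\cap D_{+}(f_{j})$, and — this being the technical heart — that $J\mathcal{O}_{\mathfrak{X}'}$ is invertible (equivalently, that $f_{i}$ stays a non-zero-divisor after $I$-adic completion) and that the stalks $\mathcal{O}_{\mathfrak{X}',x}$ are local rings in which the Nakayama step is legitimate. All of these are furnished by the theory of adic formal schemes of finite ideal type in \cite{FK}, Ch.~I and Ch.~II, \S1, so the argument is a matter of assembling those inputs rather than overcoming a genuinely hard estimate.
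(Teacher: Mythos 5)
Your proof is correct and follows essentially the same route as the paper: identify the admissible formal blow-up with the $I$-adic completion of the scheme-theoretic blow-up of $\Spec(A)$ in $J$ and complete its standard affine charts $\Spec(A[\frac{J}{f_i}])$ (the paper cites \cite{Stacks}, Tag 0804, at this point). The only difference is that you also spell out the ``moreover'' clause, via the invertibility of $J\mathcal{O}_{\mathfrak{X}'}$ from \cite{FK}, Ch.~II, \S1.1 together with a Nakayama argument in the local stalks, a point the paper leaves implicit in its citations; this added detail is sound (only the parenthetical claim that invertibility is \emph{equivalent} to $f_i$ remaining a non-zero-divisor in $B_i$ is slightly loose, but your argument never uses it).
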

\begin{rmk}\label{Remark on affine blow-up algebra}We note that, just as in Definition \ref{Locally rig-sheafy formal schemes} and Remark \ref{Remark on rig-sheafiness}, the completed affine blow-up algebra $B_{i}=A\langle\frac{f_1,\dots,f_r}{f_{i}}\rangle$ is in general not equal to the rational localization of $A$ usually denoted in the same way (since $f_{i}$ is in general not invertible in $B_{i}$) but is a ring of definition of the said rational localization.\end{rmk} 
\begin{proof}[Proof of Lemma \ref{Local explicit description}] By Example \ref{Admissible formal blow-up in the affine case} the admissible formal blow-up $\mathfrak{X}'\to \mathfrak{X}$ is the $I$-adic completion of the scheme-theoretic blow-up $X_{0}'\to X_{0}$ of $X_{0}=\Spec(A)$ in the ideal $J=(f_1,\dots, f_r)_{A}$. Recall (for example, from \cite{Stacks}, Tag 0804) that the scheme-theoretic blow-up has an open affine covering by spectra of the affine blow-up algebras $A[\frac{J}{f_{i}}]$, for $i=1,\dots, r$. Thus $\mathfrak{X}'$ has an open cover by formal spectra of the $I$-adic completions of $A[\frac{J}{f_{i}}]$, for $i=1,\dots, r$.\end{proof}
We also need the following immediate generalization of \cite{BL1}, Lemma 2.6.
\begin{lemma}\label{Admissible formal blow-up and affine opens}Let $\mathfrak{X}$ be quasi-compact quasi-separated adic formal scheme of finite ideal type and let $(\mathfrak{U}_{i})_{i}$ be a finite family of quasi-compact open formal subschemes. For each $i$ consider an admissible ideal $\mathcal{J}_{i}\subseteq \mathcal{O}_{\mathfrak{U}_{i}}$ of finite type and let $\varphi_{i}: \mathfrak{U}_{i}'\to \mathfrak{U}_{i}$ be the admissible formal blow-up along $\mathcal{J}_{i}$. Then \begin{enumerate}[(1)]\item Each $\mathcal{J}_{i}$ extends to an admissible ideal $\mathcal{J}_{i}'\subseteq \mathcal{O}_{\mathfrak{X}}$ and the admissible formal blow-up $\psi_{i}: \mathfrak{X}_{i}'\to \mathfrak{X}$ of $\mathfrak{X}$ in $\mathcal{J}_{i}'$ extends $\varphi_{i}$. \item There exists an admissible formal blow-up $\psi: \mathfrak{X}'\to \mathfrak{X}$ which factors as \begin{center}\begin{tikzcd}\mathfrak{X}'\arrow{r} & \mathfrak{X}_{i}'\arrow{r}{\psi_{i}} & \mathfrak{X},\end{tikzcd}\end{center}with $\mathfrak{X}'\to \mathfrak{X}_{i}'$ an admissible formal blow-up, for every $i$.\end{enumerate}\end{lemma}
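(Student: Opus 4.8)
The plan is to adapt the proof of \cite{BL1}, Lemma~2.6, using Fujiwara--Kato's theory of admissible formal blow-ups and adically quasi-coherent sheaves (\cite{FK}, Ch.~I, §3.7 and Ch.~II, §1.1) in place of the Noetherian inputs used there. As a preliminary reduction, exactly as in Definition~\ref{Admissible formal blow-up}, I may assume $\mathfrak{X}$ carries an ideal of definition $\mathcal{I}$ of finite type, and I recall that the admissible formal blow-up in an admissible ideal $\mathcal{J}$ is the inductive limit over $k$ of the \emph{scheme-theoretic} blow-ups $\mathfrak{X}_{k}'=\Proj(\bigoplus_{n\geq0}\mathcal{J}^{n}\otimes\mathcal{O}_{\mathfrak{X}_{k}})$ of the qcqs schemes $\mathfrak{X}_{k}=(\mathfrak{X},\mathcal{O}_{\mathfrak{X}}/\mathcal{I}^{k+1})$, i.e.\ the construction $(\ast)$. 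Thus every assertion below will be checked at the level of the schemes $\mathfrak{X}_{k}$ and then assembled by passing to the limit; the compatibilities with the transition immersions $\mathfrak{X}_{k}\hookrightarrow\mathfrak{X}_{k+1}$ needed for this are part of the formalism of adically quasi-coherent sheaves.

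For (1): since $\mathfrak{U}_{i}$ is quasi-compact and $\mathcal{J}_{i}$ is admissible, $\mathcal{J}_{i}$ contains some power $\mathcal{I}^{k_{i}}\mathcal{O}_{\mathfrak{U}_{i}}$ of the ideal of definition, so $\mathcal{O}_{\mathfrak{U}_{i}}/\mathcal{J}_{i}$ is a finite-type quasi-coherent quotient of $\mathcal{O}_{(\mathfrak{U}_{i})_{k_{i}-1}}$, where $(\mathfrak{U}_{i})_{k_{i}-1}$ is a quasi-compact open subscheme of the qcqs scheme $\mathfrak{X}_{k_{i}-1}$. By the standard extension of a finite-type quasi-coherent ideal sheaf from a quasi-compact open of a qcqs scheme — push the structure sheaf of the complementary closed subscheme forward along the (quasi-compact, by quasi-separatedness) open immersion, take the kernel of the resulting map from $\mathcal{O}_{\mathfrak{X}_{k_{i}-1}}$, which is quasi-coherent, and then a finite-type subsheaf restricting to the image $\overline{\mathcal{J}_{i}}$ of $\mathcal{J}_{i}$, which is automatically an ideal sheaf; cf.\ \cite{EGAIa} — and pulling the result back to $\mathfrak{X}$, I obtain a finite-type ideal $\mathcal{J}_{i}'\subseteq\mathcal{O}_{\mathfrak{X}}$ containing $\mathcal{I}^{k_{i}}\mathcal{O}_{\mathfrak{X}}$ with $\mathcal{J}_{i}'|_{\mathfrak{U}_{i}}=\mathcal{J}_{i}$; being of finite type and containing a power of the ideal of definition, $\mathcal{J}_{i}'$ is an admissible ideal in the sense of \cite{FK}, Ch.~I, Def.~3.7.4. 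That the admissible formal blow-up $\psi_{i}\colon\mathfrak{X}_{i}'\to\mathfrak{X}$ in $\mathcal{J}_{i}'$ extends $\varphi_{i}$ is then the compatibility of admissible formal blow-ups with restriction to open formal subschemes (built into \cite{FK}, Ch.~II, §1.1, and used already in Definition~\ref{Admissible formal blow-up}): $\psi_{i}^{-1}(\mathfrak{U}_{i})\to\mathfrak{U}_{i}$ is the admissible formal blow-up of $\mathfrak{U}_{i}$ in $\mathcal{J}_{i}'|_{\mathfrak{U}_{i}}=\mathcal{J}_{i}$, i.e.\ $\varphi_{i}$.

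For (2): set $\mathcal{J}'=\mathcal{J}_{1}'\cdots\mathcal{J}_{n}'$, a product of finitely many admissible ideals of finite type, hence of finite type, and containing $\mathcal{I}^{k_{1}+\dots+k_{n}}\mathcal{O}_{\mathfrak{X}}$, hence admissible. Let $\psi\colon\mathfrak{X}'\to\mathfrak{X}$ be the admissible formal blow-up in $\mathcal{J}'$. Fix $i$. By the standard behaviour of blow-ups under products of ideals (applied on each $\mathfrak{X}_{k}$ and passed to the limit; cf.\ \cite{FK}, Ch.~II, §1.1), the blow-up of $\mathfrak{X}$ in $\mathcal{J}'$ is canonically the blow-up of $\mathfrak{X}_{i}'$ in the inverse-image ideal $\mathcal{K}_{i}:=\bigl(\prod_{j\neq i}\mathcal{J}_{j}'\bigr)\mathcal{O}_{\mathfrak{X}_{i}'}$. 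Since $\psi_{i}$ is adic, $\mathcal{I}\mathcal{O}_{\mathfrak{X}_{i}'}$ is an ideal of definition of $\mathfrak{X}_{i}'$, and $\mathcal{K}_{i}$ contains $\bigl(\mathcal{I}\mathcal{O}_{\mathfrak{X}_{i}'}\bigr)^{\sum_{j\neq i}k_{j}}$, so $\mathcal{K}_{i}$ is again an admissible ideal of finite type; thus $\mathfrak{X}'\to\mathfrak{X}_{i}'$ is an admissible formal blow-up, and composing it with $\psi_{i}$ recovers $\psi$. As $i$ was arbitrary, $\psi$ factors through each $\psi_{i}$ by an admissible formal blow-up, as required.

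The only genuine difficulty is bookkeeping. One must check that the level-wise scheme-theoretic constructions — the extension of ideal sheaves, the $\Proj$-constructions, and the product-of-ideals identity — are compatible with the transition immersions $\mathfrak{X}_{k}\hookrightarrow\mathfrak{X}_{k+1}$ and so glue to statements about the formal schemes $\mathfrak{X}'$ and $\mathfrak{X}_{i}'$, and that each ideal sheaf produced along the way stays of finite type and admissible (equivalently, adically quasi-coherent of finite type and open). All of this is already encoded in the foundations of \cite{FK}, Ch.~I–II, which is precisely why the statement is an immediate generalization of its classical counterpart: the absence of Noetherian hypotheses, the only new feature relative to \cite{BL1}, is absorbed by the Fujiwara--Kato machinery.
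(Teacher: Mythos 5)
Your proposal is correct and follows essentially the same route as the paper: extend each $\mathcal{J}_i$ at the level of a scheme truncation $\mathfrak{X}_k$ using the EGA extension of finite-type quasi-coherent ideal sheaves on a qcqs scheme, lift back to an admissible ideal on $\mathfrak{X}$ via the Fujiwara--Kato correspondence (the paper cites \cite{FK}, Ch.~I, Cor.~3.7.3 for the step you describe as ``pulling back''), and then blow up the product ideal $\prod_i\mathcal{J}_i'$, with the factorization through each $\psi_i$ supplied by the blow-up-of-a-product identity (\cite{FK}, Ch.~II, Prop.~1.1.10 and Exercise II.1.1). The only differences are cosmetic (a per-index $k_i$ instead of a single $k$, and a spelled-out admissibility check for the pulled-back ideal $\mathcal{K}_i$ that the paper delegates to the cited results).
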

\begin{proof}Fix an ideal of definition $\mathcal{I}$ of finite type of $\mathfrak{X}$ (the existence of $\mathcal{I}$ is guaranteed by the assumption that $\mathfrak{X}$ is qcqs and \cite{FK}, Ch.~I, Corollary 3.7.12). Choose an integer $k\geq 0$ such that \begin{equation*}\mathcal{J}_{i}\supseteq \mathcal{I}^{k+1}\vert_{\mathfrak{U}_{i}}\end{equation*}for all $i$. Hence $\mathcal{J}_{i}$ defines an ideal sheaf $\overline{\mathcal{J}_{i}}$ on the quasi-compact open subscheme \begin{equation*}\mathfrak{U}_{i,k}=\Spec(\mathcal{O}_{\mathfrak{U}_{i}}(\mathfrak{U}_{i})/\mathcal{I}(\mathfrak{U}_{i})^{k+1})\end{equation*}of the quasi-compact quasi-separated scheme $\mathfrak{X}_{k}=(\mathfrak{X}, \mathcal{O}_{\mathfrak{X}}/\mathcal{I}^{k+1}\mathcal{O}_{\mathfrak{X}})$. By \cite{EGAIb}, \S6.9, every quasi-coherent sheaf on $\mathfrak{X}_{k}$ is a union of its quasi-coherent sub-sheaves of finite type. By \cite{EGAIa}, Cor.~9.4.3, this implies that the quasi-coherent ideal sheaf of finite type $\overline{\mathcal{J}_{i}}$ on $\mathfrak{U}_{i,k}$ can be extended to a quasi-coherent ideal sheaf of finite type $\overline{\mathcal{J}_{i}'}$ on all of $\mathfrak{X}_{k}$, for every $i$. Using \cite{FK}, Ch.~I, Corollary 3.7.3, we obtain admissible ideal sheaves $\mathcal{J}_{i}'$ on $\mathfrak{X}$ such that $\mathcal{J}_{i}'\vert_{\mathfrak{U}_{i}}=\mathcal{J}_{i}$ for every $i$. The admissible formal blow-up of $\mathfrak{X}$ in $\mathcal{J}_{i}'$ extends $\mathfrak{U}_{i}'\to \mathfrak{U}_{i}$, giving the desired $\psi_{i}$. 

Finally, by \cite{FK}, Proposition II.1.1.10 and Exercise II.1.1, the admissible formal blow-up $\psi: \mathfrak{X}'\to\mathfrak{X}$ of $\mathfrak{X}$ in the product ideal sheaf $\mathcal{J}=\prod_{i}\mathcal{J}_{i}'$ factors as\begin{center}\begin{tikzcd}\mathfrak{X}'\arrow{r} & \mathfrak{X}_{i}'\arrow{r}{\psi_{i}} & \mathfrak{X},\end{tikzcd}\end{center}where $\mathfrak{X}'\to \mathfrak{X}_{i}'$ is an admissible formal blow-up, for every $i$.\end{proof}
Admissible formal blow-ups also satisfy a universal property similar to (and derived from) the universal property of the usual, scheme-theoretic blow-ups, see \cite{FK}, Ch.~II, Proposition 1.1.4(3). This universal property can sometimes be used as a replacement for the valuative criterion of properness in the setting of formal schemes. The following definition is inspired by the formulation of \cite{BhattNotes}, Theorem 8.1.2. 
\begin{mydef}[Specialization to an admissible formal blow-up]\label{Specialization to an admissible formal blow-up}Let $R$ be a complete adic ring with ideal of definition generated by a single non-zero-divisor $\varpi$ and suppose that the Tate ring $R[\varpi^{-1}]$ is sheafy. Let $\mathfrak{X}$ be an adic formal scheme over $R$ which is locally rig-sheafy over $(R, \varpi)$, with adic analytic generic fiber $X=\mathfrak{X}_{\eta}^{\ad}$ over $(R, \varpi)$, and let $\mathfrak{X}'\to \mathfrak{X}$ be the admissible formal blow-up of $\mathfrak{X}$ along an admissible ideal sheaf $\mathcal{J}$. We define a map \begin{equation*}\spc_{X,\mathfrak{X}'}: \vert X\vert\to \vert\mathfrak{X}'\vert\end{equation*}as follows. 

For $x\in X$ choose a rig-sheafy affine open neighbourhood $\mathfrak{U}=\Spf(A)$ of the point $\spc_{X,\mathfrak{X}}(x)$ in $\mathfrak{X}$. Then the morphism of adic spaces $\Spa(k(x), k(x)^{+})\to X$ determined by $x$ factors through $\spc_{X,\mathfrak{X}}^{-1}(\mathfrak{U})=\mathfrak{U}_{\eta}^{\ad}$ and hence factors through an affinoid open subspace of the form \begin{equation*}\Spa(A\langle\frac{f_1^{n},\dots, f_r^{n}}{\varpi}\rangle[\varpi^{-1}], A\langle\frac{f_1^{n},\dots,f_r^{n}}{\varpi}\rangle[\varpi^{-1}]^{+})\end{equation*}for some elements $f_1,\dots, f_r$ generating an ideal of definition of $A$ containing $\varpi$ and some integer $n\geq1$. The restriction to $A$ of the continuous homomorphism \begin{equation*}A\langle\frac{f_1^{n},\dots,f_r^{n}}{\varpi}\rangle[\varpi^{-1}]^{+}\to k(x)^{+}\end{equation*}then defines a morphism of formal schemes $\Spf(k(x)^{+})\to \mathfrak{X}$ (which factors through $\mathfrak{U}$) and such that $\spc_{X,\mathfrak{X}}(x)$ is the image under this morphism of the closed point of $\Spf(k(x)^{+})$. 

It is readily seen that the morphism $\Spf(k(x)^{+})\to \mathfrak{X}$ constructed in this way does not depend on the chosen rig-sheafy affine open neighbourhood $\mathfrak{U}$ of $\spc_{X,\mathfrak{X}}(x)$. Since the ideal sheaf of finite type $\mathcal{J}\mathcal{O}_{\Spf(k(x)^{+})}$ on $\Spf(k(x)^{+})$ is invertible, this morphism $\Spf(k(x)^{+})\to \mathfrak{X}$ lifts uniquely to a morphism of formal schemes \begin{equation*}\Spf(k(x)^{+})\to \mathfrak{X}',\end{equation*}by the universal property of admissible formal blow-ups (\cite{FK}, Ch.~II, Prop.~1.1.4(3)). We then define \begin{equation*}\spc_{X,\mathfrak{X}'}(x)\in\mathfrak{X}'\end{equation*}to be the image of the closed point of $\Spf(k(x)^{+})$ under this morphism $\Spf(k(x)^{+})\to \mathfrak{X}'$.\end{mydef}
Similarly to the case $\mathfrak{X}'=\mathfrak{X}$, we have an explicit description of inverse images of rig-sheafy affine open subsets under the map $\spc_{X,\mathfrak{X}'}$.
\begin{lemma}\label{Description of inverse images}For a locally rig-sheafy adic formal scheme $\mathfrak{X}$ over $(R, \varpi)$ with generic fiber $X=\mathfrak{X}_{\eta}^{\ad}$ and an admissible formal blow-up $\mathfrak{X}'\to \mathfrak{X}$, the pre-image $\spc_{X,\mathfrak{X}'}^{-1}(\mathfrak{V})$ of any rig-sheafy affine open subset $\mathfrak{V}\subseteq \mathfrak{X}'$ coincides with the open subspace $V=\mathfrak{V}_{\eta}^{\ad}$ of $X$. Moreover, $\spc_{X,\mathfrak{X}'}\vert_{V}=\spc_{V,\mathfrak{V}}$.\end{lemma}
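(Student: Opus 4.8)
The plan is to imitate the proof of the case $\mathfrak{X}'=\mathfrak{X}$ (Lemma \ref{Generic fibers and open immersions}), exploiting that the point-set map $\spc_{X,\mathfrak{X}'}$ of Definition \ref{Specialization to an admissible formal blow-up} is, by its very construction, governed by morphisms out of the ``local'' formal schemes $\Spf(k(x)^{+})$, and that admissible formal blow-ups are local on the base. Concretely the proof splits into a reduction to the affine chart situation, a point-set computation identifying $\spc_{X,\mathfrak{X}'}^{-1}(\mathfrak{V})$, and a compatibility-of-maps step, the last two resting on the interplay between the valuation ring $k(x)^{+}$ and the universal property of admissible formal blow-ups.

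For the reduction I would first use that admissible formal blow-ups are compatible with passage to an open formal subscheme of the base (Definition \ref{Admissible formal blow-up}) and that, for a rig-sheafy affine open $\mathfrak{U}\subseteq\mathfrak{X}$, the morphism $\Spf(k(x)^{+})\to\mathfrak{X}'$ attached in Definition \ref{Specialization to an admissible formal blow-up} to a point $x\in\spc_{X,\mathfrak{X}}^{-1}(\mathfrak{U})$ depends only on a neighbourhood of $\spc_{X,\mathfrak{X}}(x)$; hence I may assume $\mathfrak{X}=\Spf(A)$ with $A$ a rig-sheafy complete adic ring and $\mathfrak{X}'\to\mathfrak{X}$ the admissible formal blow-up of an admissible ideal $J=(f_{1},\dots,f_{r})_{A}$. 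By Lemma \ref{Local explicit description} the formal scheme $\mathfrak{X}'$ is covered by the charts $\Spf(B_{i})$, $B_{i}=A\langle\frac{f_{1},\dots,f_{r}}{f_{i}}\rangle$, and by Lemma \ref{Topology on a locally rig-sheafy formal scheme} any rig-sheafy affine open $\mathfrak{V}\subseteq\mathfrak{X}'$ is covered by rig-sheafy affine opens each contained in some $\Spf(B_{i})$. Since $\spc_{X,\mathfrak{X}'}^{-1}(-)$, the formation of the generic fibre as an open subspace of $X$, and both specialization maps are all compatible with such covers, it suffices to prove the statement for a rig-sheafy affine open $\mathfrak{W}\subseteq\Spf(B_{i})$. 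For such a $\mathfrak{W}$ the generic fibre $\mathfrak{W}_{\eta}^{\ad}$ is tautologically an open subspace of $\Spf(B_{i})_{\eta}^{\ad}$, and $\Spf(B_{i})_{\eta}^{\ad}$ is identified with the rational subset $X(\frac{f_{1},\dots,f_{r}}{f_{i}})=\{\,x\in X\mid \lvert f_{j}(x)\rvert\le\lvert f_{i}(x)\rvert,\ j=1,\dots,r\,\}$ of $X$ directly from the definition of the generic fibre and the description of $B_{i}$ as a completed affine blow-up algebra (note $\lvert f_{i}(x)\rvert\ge\lvert\varpi(x)\rvert^{m}>0$ on this set, since $\varpi^{m}\in J$ for some $m$); these rational subsets cover $X$, which is the familiar fact that an admissible formal blow-up is an isomorphism on adic analytic generic fibres.

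For the computation, fix $x\in X$ and let $g_{x}\colon\Spf(k(x)^{+})\to\mathfrak{X}$ be the morphism attached to $x$ in Definition \ref{Specialization to an admissible formal blow-up}, with its unique lift $h_{x}\colon\Spf(k(x)^{+})\to\mathfrak{X}'$ along the blow-up (existing and unique because $J\mathcal{O}_{\Spf(k(x)^{+})}$ is invertible, as recorded in loc.~cit., by \cite{FK}, Ch.~II, Prop.~1.1.4(3)). Since $k(x)^{+}$ is a local ring, $\Spf(k(x)^{+})$ has a unique closed point which is a specialization of every point, so $h_{x}$ carries that closed point into an open $\mathfrak{W}\subseteq\mathfrak{X}'$ exactly when $h_{x}$ factors through $\mathfrak{W}$; that is, $x\in\spc_{X,\mathfrak{X}'}^{-1}(\mathfrak{W})$ iff $h_{x}$ factors through $\mathfrak{W}$. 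As $k(x)^{+}$ is a valuation ring, the ideal $Jk(x)^{+}$ is generated by the image of $f_{i}$ precisely when $\lvert f_{j}(x)\rvert\le\lvert f_{i}(x)\rvert$ for all $j$; since $\Spf(B_{i})$ is the locus of $\mathfrak{X}'$ where $J\mathcal{O}_{\mathfrak{X}'}$ is generated by $f_{i}$ (Lemma \ref{Local explicit description}), this says $h_{x}$ factors through $\Spf(B_{i})$ iff $x\in\Spf(B_{i})_{\eta}^{\ad}$; and then, applying the case $\mathfrak{X}'=\mathfrak{X}$ to $\Spf(B_{i})$ (Lemmas \ref{Generic fibers and open immersions} and \ref{Generic fiber via gluing}), $h_{x}$ factors further through $\mathfrak{W}$ iff $x\in\mathfrak{W}_{\eta}^{\ad}$. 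This gives $\spc_{X,\mathfrak{X}'}^{-1}(\mathfrak{W})=\mathfrak{W}_{\eta}^{\ad}$. For the equality of the maps, the factored morphism $\Spf(k(x)^{+})\to\mathfrak{W}$ coming from $h_{x}$ and the morphism $\Spf(k(x)^{+})\to\mathfrak{W}$ defining $\spc_{\mathfrak{W}_{\eta}^{\ad},\mathfrak{W}}(x)$ both become $g_{x}$ after composition with $\mathfrak{W}\hookrightarrow\mathfrak{X}'\to\mathfrak{X}$ — the former by construction of $h_{x}$, the latter by functoriality of the generic fibre together with the description of $g_{x}$ as the restriction of $x$ to $A$ — so by the uniqueness half of the universal property of the blow-up they coincide, whence $\spc_{X,\mathfrak{X}'}(x)=\spc_{\mathfrak{W}_{\eta}^{\ad},\mathfrak{W}}(x)$.

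The genuinely delicate part is not the valuation-theoretic core, which is short and formal, but the bookkeeping of the two reductions together with the identification of the blow-up charts' generic fibres with rational subsets of $X$: one must check that $\spc_{X,\mathfrak{X}'}^{-1}(-)$, the generic-fibre functor, and both specialization maps are simultaneously compatible with restriction to rig-sheafy affine opens and with the chart decomposition of the blow-up, and give meaning to ``$\mathfrak{V}_{\eta}^{\ad}$ as an open subspace of $X$'' in the first place — which is precisely where one invokes that admissible formal blow-ups induce isomorphisms on adic analytic generic fibres.
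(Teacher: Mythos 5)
Your argument is correct in substance and rests on the same two pillars as the paper's own proof: the uniqueness of the lift $h_{x}\colon\Spf(k(x)^{+})\to\mathfrak{X}'$ supplied by the universal property of admissible formal blow-ups, and the compatibility, under restriction to smaller neighbourhoods of $x$, of the various maps $\mathcal{O}^{+}\to k(x)^{+}$ attached to a point. The decomposition, however, is genuinely different. The paper never passes through the blow-up charts: it takes an arbitrary rig-sheafy affine $\mathfrak{V}=\Spf(B)\subseteq\mathfrak{X}'$, chooses generators $f_{1},\dots,f_{r}$ of an ideal of definition of $A$ (not of the blown-up ideal $J$), and proves both inclusions by the universal property of the completed algebras $B\langle\frac{f_1^{n},\dots,f_r^{n}}{\varpi}\rangle$ from Remark \ref{Remark on rig-sheafiness}. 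You instead localize to a chart $\Spf(B_{i})$ of Lemma \ref{Local explicit description}, decide which chart $h_{x}$ lands in by the valuation-ring criterion ($Jk(x)^{+}=(f_{i})$ iff $\lvert f_{j}(x)\rvert\le\lvert f_{i}(x)\rvert$ for all $j$), and then quote the already-proved case $\mathfrak{X}'=\mathfrak{X}$ (Lemma \ref{Generic fibers and open immersions}) inside the chart. Your route gives a more explicit picture of where the lift goes and recycles earlier lemmas; the paper's route avoids the extra covering bookkeeping and the chart-membership verification. (On the latter: the implication from ``$Jk(x)^{+}$ generated by $f_{i}$'' to ``the stalk of $J\mathcal{O}_{\mathfrak{X}'}$ at $h_{x}$ of the closed point is generated by $f_{i}$'' needs the one-line observation that the stalk is generated by some $f_{k}$ and that $f_{i}/f_{k}$, being a unit in $k(x)^{+}$, is a unit upstairs because the map on stalks is local; you elide this.)

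Two points to repair in the write-up. First, do not invoke ``admissible formal blow-ups induce isomorphisms on adic analytic generic fibres'': in this paper that is Lemma \ref{Admissible formal blow-ups and generic fiber}, which is proved after, and by means of, the present lemma, so the appeal is circular in the paper's logical order. All you actually need is the direct affinoid computation (the same one carried out inside the proof of that later lemma) identifying $\Spf(B_{i})_{\eta}^{\ad}$ with the subset $\{\,x\in X\mid \lvert f_{j}(x)\rvert\le\lvert f_{i}(x)\rvert \text{ for all } j\,\}$ of $X$ via the universal property of completed affine blow-up algebras; that computation is independent of Lemma \ref{Admissible formal blow-ups and generic fiber}. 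Second, your middle equivalence ``$h_{x}$ factors through $\mathfrak{W}$ iff $x\in\mathfrak{W}_{\eta}^{\ad}$'' already presupposes that $h_{x}$, restricted to $\Spf(B_{i})$, is the morphism defining $\spc_{\Spf(B_{i})_{\eta}^{\ad},\Spf(B_{i})}(x)$ — which is exactly the ``equality of maps'' you only establish at the end. Reorder: first show, by uniqueness of the lift, that for $x$ in the chart's generic fibre the canonical specialization-defining morphism composed with the open immersion into $\mathfrak{X}'$ is a lift of $g_{x}$ and hence equals $h_{x}$ (this yields one inclusion and the ``moreover'' statement at once), and then use the chart criterion for the reverse inclusion. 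With these adjustments the proof is complete.
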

\begin{proof}The definition of an admissible formal blow-up is of local nature, so, for every affine open $\mathfrak{U}\subseteq\mathfrak{X}$, the restricted morphism $\mathfrak{U}'\to \mathfrak{U}$, where $\mathfrak{U}'$ is the pre-image of $\mathfrak{U}$ in $\mathfrak{X}'$, is an admissible formal blow-up. On the other hand, the definition of the (adic analytic) generic fiber of $\mathfrak{X}$ over $(R, \varpi)$ is also local in nature. Hence, to prove the equality $\mathfrak{V}_{\eta}^{\ad}=\spc_{X,\mathfrak{X}'}^{-1}(\mathfrak{V})$ for every rig-sheafy affine open subset $\mathfrak{V}\subseteq\mathfrak{X}'$, we may assume that $\mathfrak{X}=\Spf(A)$ is itself a rig-sheafy affine formal scheme. 

Let $f_1,\dots, f_r\in A$ be elements generating an ideal of definition of $A$ as in the definition of a rig-sheafy adic ring over $(R, \varpi)$. Let $\mathfrak{V}=\Spf(B)$ be a rig-sheafy affine open subset of the admissible formal blow-up $\mathfrak{X}'$. By Lemma \ref{Restriction maps are submetric}, $(f_1,\dots, f_r)_{B}$ is an ideal of definition of $B$ (by abuse of notation, we identify $f_1,\dots, f_r$ with their images in $B$). Let $x\in X$ and fix some $n\geq1$ such that \begin{equation*}x\in \{\, x\in X\mid \vert f_{i}^{n}(x)\vert\leq\vert\varpi(x)\vert\neq 0\,\}=\Spa(A\langle\frac{f_1^{n},\dots, f_r^{n}}{\varpi}\rangle[\varpi^{-1}], A\langle\frac{f_1^{n},\dots, f_r^{n}}{\varpi}\rangle[\varpi^{-1}]^{+}).\end{equation*}The open subspace $V=\mathfrak{V}_{\eta}^{\ad}$ of $X$ is given by the increasing union of open affinoid subspaces \begin{equation*}\Spa(B\langle\frac{f_1^{m},\dots, f_r^{m}}{\varpi}\rangle[\varpi^{-1}], B\langle\frac{f_1^{m},\dots, f_r^{m}}{\varpi}\rangle[\varpi^{-1}]^{+}), \ m\geq1.\end{equation*}If $x\in V$, then, since $\vert f_{i}^{n}(x)\vert\leq \vert\varpi(x)\vert\neq 0$, the point $x$ belongs to the affinoid open subspace $\Spa(B\langle\frac{f_1^{n},\dots, f_r^{n}}{\varpi}\rangle[\varpi^{-1}], B\langle\frac{f_1^{n},\dots, f_r^{n}}{\varpi}\rangle[\varpi^{-1}]^{+})$, which means that the continuous map \begin{equation*}A\langle\frac{f_1^{n},\dots, f_r^{n}}{\varpi}\rangle[\varpi^{-1}]^{+}\to k(x)^{+}\end{equation*}factors through the canonical map \begin{equation*}A\langle\frac{f_1^{n},\dots, f_r^{n}}{\varpi}\rangle[\varpi^{-1}]^{+}\to B\langle\frac{f_1^{n},\dots, f_r^{n}}{\varpi}\rangle[\varpi^{-1}]^{+}.\end{equation*}In particular, the restriction of $A\langle\frac{f_1^{n},\dots, f_r^{n}}{\varpi}\rangle[\varpi^{-1}]^{+}\to k(x)^{+}$ to $A$ factors through the continuous map $A\to B$ induced by $\mathfrak{X}'\to \mathfrak{X}$. We obtain a commutative triangle of morphisms of formal schemes \begin{center}\begin{tikzcd}\Spf(k(x)^{+})\arrow{r} \arrow{dr} & \Spf(B)=\mathfrak{V}\subseteq\mathfrak{X}' \arrow{d} \\ & \Spf(A)=\mathfrak{X}.\end{tikzcd}\end{center}Since, by the universal property of admissible formal blow-ups, the lift of $\Spf(k(x)^{+})\to \mathfrak{X}$ to a morphism $\Spf(k(x)^{+})\to \mathfrak{X}'$ is unique, we conclude that this lift factors through $\mathfrak{V}$; in particular, $\spc_{X,\mathfrak{X}'}(x)\in\mathfrak{V}$. This shows that $V\subseteq \spc_{X,\mathfrak{X}'}^{-1}(\mathfrak{V})$.

Conversely, fix $n\geq1$ as before and assume that $\spc_{X,\mathfrak{X}'}(x)\in \mathfrak{V}$. Unraveling the definitions, this means that the image of the closed point of $\Spf(k(x)^{+})$ under the unique lift $\Spf(k(x)^{+})\to \mathfrak{X}'$ of the morphism $\Spf(k(x)^{+})\to \mathfrak{X}$ which corresponds to $\Spa(k(x), k(x)^{+})\to X$ lies in $\mathfrak{V}$. Since $\mathfrak{V}$ is open in $\mathfrak{X}'$ this entails that $\Spf(k(x)^{+})\to \mathfrak{X}'$ factors through the open formal subscheme $\mathfrak{V}$. This, in turn, implies that the continuous ring homomorphism $A=\mathcal{O}_{\mathfrak{X}}(\mathfrak{X})\to k(x)^{+}$ corresponding to $\Spf(k(x)^{+})\to \mathfrak{X}$ factors through $B$. But, by definition of the morphism $\Spf(k(x)^{+})\to \mathfrak{X}$, this homomorphism $A\to k(x)^{+}$ is the restriction to $A$ of the homomorphism $A\langle\frac{f_1^{n},\dots,f_r^{n}}{\varpi}\rangle[\varpi^{-1}]^{+}\to k(x)^{+}$ which arises from the morphism of adic spaces \begin{equation*}\Spa(k(x), k(x)^{+})\to X\end{equation*}determined by the point $x\in X$. Since $(f_1^{n},\dots, f_{r}^{n})_{k(x)^{+}}=\varpi k(x)^{+}$, the map $B\to k(x)^{+}$ factors through $B\to B\langle\frac{f_1^{n},\dots,f_r^{n}}{\varpi}\rangle$, by Remark \ref{Remark on rig-sheafiness}. Consequently, \begin{equation*}x\in\Spa(B\langle\frac{f_1^{n},\dots,f_r^{n}}{\varpi}\rangle[\varpi^{-1}], B\langle\frac{f_1^{n},\dots,f_r^{n}}{\varpi}\rangle[\varpi^{-1}]^{+})\subseteq V.\end{equation*}We conclude that $\spc_{X,\mathfrak{X}'}^{-1}(\mathfrak{V})\subseteq V$, as desired.

As to the last assertion of the lemma, it suffices to prove that, for any point $x\in V$, the morphism of formal schemes $\Spf(k(x)^{+})\to \mathfrak{V}$ which corresponds to the morphism of adic spaces $\Spa(k(x), k(x)^{+})\to V$ is a lift of the analogous morphism $\Spf(k(x)^{+})\to \mathfrak{X}$ (the desired equality $\spc_{X,\mathfrak{X}'}(x)=\spc_{V,\mathfrak{V}}(x)$ then follows from the uniqueness of the lift $\Spf(k(x)^{+})\to\mathfrak{X}'$ of $\Spf(k(x)^{+})\to \mathfrak{X}$). To this end, let $x\in V$ and choose a rig-sheafy affine open subset $\mathfrak{U}$ of $\mathfrak{X}$ such that $\spc_{V,\mathfrak{V}}(x)\in\mathfrak{V}\cap\mathfrak{U}'$, where $\mathfrak{U}'$ is the pre-image of $\mathfrak{U}$ in $\mathfrak{X}'$. Let $\mathfrak{W}$ be a rig-sheafy affine open neighbourhood of $\spc_{V,\mathfrak{V}}(x)$ in $\mathfrak{V}\cap\mathfrak{U}$ and let $U=\mathfrak{U}_{\eta}^{\ad}=\spc_{X,\mathfrak{X}}^{-1}(\mathfrak{U})$, $W=\mathfrak{W}_{\eta}^{\ad}=\spc_{V,\mathfrak{V}}^{-1}(\mathfrak{W})$. Since $x\in W\subseteq V$, the canonical morphism of adic spaces $\Spa(k(x), k(x)^{+})\to V$ factors through $W$. This implies that the map $\Spf(k(x)^{+})\to \mathfrak{V}$ factors through $\mathfrak{W}$. Thus it coincides with the analogously defined map $\Spf(k(x)^{+})\to \mathfrak{W}$. In particular, it suffices to prove that $\Spf(k(x)^{+})\to \mathfrak{W}\subseteq\mathfrak{X}'$ is a lift of $\Spf(k(x)^{+})\to \mathfrak{X}$. But the map $\Spf(k(x)^{+})\to \mathfrak{X}$ coincides with the map $\Spf(k(x)^{+})\to \mathfrak{U}$ corresponding to the morphism of adic spaces $\Spa(k(x), k(x)^{+})\to U$, since the morphism $\Spa(k(x), k(x)^{+})\to X$ factors through $U$. Thus we may replace $\mathfrak{X}$ with $\mathfrak{U}$, replace $\mathfrak{V}$ with $\mathfrak{W}$ and assume that $\mathfrak{X}$ is affine and rig-sheafy. Write $\mathfrak{X}=\Spf(A)$ and $\mathfrak{V}=\Spf(B)$ for rig-sheafy complete adic rings $A$ and $B$ over $(R, \varpi)$. Choose elements $f_1,\dots, f_r\in A$ generating an ideal of definition of $A$ which contains $\varpi$, and choose some $n\geq1$ such that $\vert f_{i}^{n}(x)\vert\leq \vert\varpi(x)\vert\neq0$ for all $i=1,\dots, r$. The resulting continuous $A$-algebra map \begin{equation*}B\langle\frac{f_1^{n},\dots, f_r^{n}}{\varpi}\rangle[\varpi^{-1}]^{+}\to k(x)^{+}\end{equation*}extends the continuous $A$-algebra map $A\langle\frac{f_1^{n},\dots, f_r^{n}}{\varpi}\rangle[\varpi^{-1}]^{+}\to k(x)^{+}$. In particular, the restricted continuous map $B\to k(x)^{+}$ extends the analogous map $A\to k(x)^{+}$. This shows that the morphism $\Spf(k(x)^{+})\to \mathfrak{V}=\Spf(B)$ lifts the analogous morphism $\Spf(k(x)^{+})\to \mathfrak{X}=\Spf(A)$, as desired.\end{proof}
The last part of the above lemma in particular endows $\spc_{X,\mathfrak{X}'}$ with the structure of a morphism of locally v-ringed spaces (since the maps $\spc_{\mathfrak{V}_{\eta}^{\ad},\mathfrak{V}}$ for rig-sheafy affine open subsets $\mathfrak{V}\subseteq \mathfrak{X}'$ are morphisms of locally v-ringed spaces). In this way, every admissible formal blow-up $\mathfrak{X}'\to\mathfrak{X}$ of a locally rig-sheafy adic formal scheme $\mathfrak{X}$ over $R$ gives rise to a commutative triangle of morphisms of locally v-ringed spaces over $\Spf(R)$: \begin{center}\begin{tikzcd}X=\mathfrak{X}_{\eta}^{\ad} \arrow{r}{\spc_{X,\mathfrak{X}'}} \arrow{dr}{\spc_{X,\mathfrak{X}}} & \mathfrak{X}' \arrow{d} \\ & \mathfrak{X}.\end{tikzcd}\end{center}

As is the case in classical rigid geometry of Tate-Raynaud, one of the useful features of admissible formal blow-ups is that they provide a way to build new formal models of an analytic space from old ones. 
\begin{lemma}\label{Admissible formal blow-ups and generic fiber}Let $R$ be a complete adic ring with ideal of definition generated by a single non-zero-divisor $\varpi$ and suppose that the Tate ring $R[\varpi^{-1}]$ is sheafy. Let $\mathfrak{X}$ be a locally rig-sheafy adic formal scheme over $R$. If $\mathfrak{X}'\to \mathfrak{X}$ is an admissible formal blow-up, the induced morphism ${\mathfrak{X}'}_{\eta}^{\ad}\to \mathfrak{X}_{\eta}^{\ad}$ of adic spaces over $\Spa(R[\varpi^{-1}], \overline{R})$ is an isomorphism.\end{lemma}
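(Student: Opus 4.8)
The plan is to reduce to the affine case and then match the affine charts of the admissible formal blow-up with rational subdomains on the generic fiber, exactly as in the classical argument of Bosch--Lütkebohmert. Since the formation of the adic analytic generic fiber (Lemma~\ref{Generic fiber via gluing}) and the notion of admissible formal blow-up (Definition~\ref{Admissible formal blow-up}) are both local on $\mathfrak{X}$, and since $\spc_{X,\mathfrak{X}}^{-1}(\mathfrak{U})=\mathfrak{U}_{\eta}^{\ad}$ for every rig-sheafy affine open $\mathfrak{U}\subseteq\mathfrak{X}$ (Lemma~\ref{Generic fibers and open immersions}), I would first reduce to the case $\mathfrak{X}=\Spf(A)$ with $A$ a rig-sheafy complete adic ring over $(R,\varpi)$ and the admissible ideal $\mathcal{J}$ corresponding to a finitely generated open ideal $J=(g_1,\dots,g_t)_{A}$; using Lemma~\ref{Ideals containing the pseudo-uniformizer} I fix a finitely generated ideal of definition $I=(f_1,\dots,f_r)_{A}$ of $A$ containing $\varpi$. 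By Lemma~\ref{Local explicit description} the blow-up is then $\mathfrak{X}'=\bigcup_{j=1}^{t}\Spf(B_j)$ with $B_j=A\langle\frac{g_1,\dots,g_t}{g_j}\rangle$ the $I$-adic completion of the affine blow-up algebra $A[\frac{J}{g_j}]$, and on $\Spf(B_j)$ the ideal $J\mathcal{O}_{\mathfrak{X}'}$ is generated by $g_j$.

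The heart of the matter is the following claim: for each $j$ the ring $B_j$ is rig-sheafy over $(R,\varpi)$ (so that $\mathfrak{X}'$ is locally rig-sheafy and $\mathfrak{X}'^{\ad}_{\eta}$ is defined) and the morphism $(\Spf B_j)_{\eta}^{\ad}\to X:=\mathfrak{X}_{\eta}^{\ad}$ coming from $A\to B_j$ is an open immersion identifying its source with the rational subset $X(\frac{g_1,\dots,g_t}{g_j})=\{\,x\in X : |g_i(x)|\le|g_j(x)|\text{ for all }i\,\}$ of $X$. To prove this I would observe that $IB_j=(f_1,\dots,f_r)B_j$ is an ideal of definition of $B_j$ containing $\varpi$, and then, for each $n\ge1$, identify the Tate ring $B_j\langle\frac{f_1^{n},\dots,f_r^{n}}{\varpi}\rangle[\varpi^{-1}]$ with the rational localization $\mathcal{O}_X(U_n)\langle\frac{g_1,\dots,g_t}{g_j}\rangle$ of $\mathcal{O}_X(U_n)=A\langle\frac{f_1^{n},\dots,f_r^{n}}{\varpi}\rangle[\varpi^{-1}]$, where $U_n$ is the rational subset used to build $\mathfrak{X}_{\eta}^{\ad}$. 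This identification is extracted from the universal properties of completed affine blow-up algebras (Remark~\ref{Remark on rig-sheafiness}, Remark~\ref{Remark on affine blow-up algebra}) and of rational localizations, using that $g_1,\dots,g_t$ generate the unit ideal of $\mathcal{O}_X(U_n)$ (since $\varpi$ is a unit there and $\varpi^{k}\in I^{k}\subseteq J$ for suitable $k$), together with Lemma~\ref{Completion along an ideal}. It then simultaneously shows that $B_j$ is rig-sheafy (a rational localization of the sheafy ring $\mathcal{O}_X(U_n)$ is sheafy) and, passing to the increasing union over $n$, that $(\Spf B_j)_{\eta}^{\ad}=\bigcup_n U_n(\frac{g_1,\dots,g_t}{g_j})=X(\frac{g_1,\dots,g_t}{g_j})$.

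To finish, I would assemble these charts. The relation $\varpi^{k}\in J$ together with the invertibility of $\varpi$ on $X$ gives $J\mathcal{O}_X=\mathcal{O}_X$, so at every $x\in X$ not all $|g_i(x)|$ vanish and hence the rational subsets $X(\frac{g_1,\dots,g_t}{g_j})$, $j=1,\dots,t$, form a (rational) open cover of $X$. By Lemma~\ref{Generic fiber via gluing}, $\mathfrak{X}'^{\ad}_{\eta}$ is obtained by gluing the $(\Spf B_j)_{\eta}^{\ad}$ along the open formal subschemes $\Spf(B_j)\cap\Spf(B_{j'})$ of $\mathfrak{X}'$, and the morphism $\mathfrak{X}'^{\ad}_{\eta}\to X$ restricts on each chart to the open immersion onto $X(\frac{g_1,\dots,g_t}{g_j})$ just constructed; on overlaps these restrictions agree because they are all computed from the single morphism $\mathfrak{X}'\to\mathfrak{X}$. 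Hence the gluing data of $\mathfrak{X}'^{\ad}_{\eta}$ matches that of the rational cover $X=\bigcup_j X(\frac{g_1,\dots,g_t}{g_j})$, and $\mathfrak{X}'^{\ad}_{\eta}\to X$ is an isomorphism. The step I expect to be most delicate is the identification $B_j\langle\frac{f_1^{n},\dots,f_r^{n}}{\varpi}\rangle[\varpi^{-1}]\cong\mathcal{O}_X(U_n)\langle\frac{g_1,\dots,g_t}{g_j}\rangle$: one must check that forming completed affine blow-up algebras in the two relevant directions (in $\varpi$ over the $f_i^{n}$, and in $g_j$ over $J$) commutes up to $\varpi$-torsion and $I$-adic completion, which is precisely what the universal properties recalled in Remark~\ref{Remark on rig-sheafiness} are designed to handle but still requires some care.
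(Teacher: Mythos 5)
Your proposal is correct, and its core coincides with the paper's: after the same reduction to $\mathfrak{X}=\Spf(A)$ rig-sheafy affine, both arguments rest on the identification, via the universal properties of Remark \ref{Remark on rig-sheafiness}, of the iterated completed blow-up algebras with rational localizations of $\mathcal{O}_X(U_n)$ (which simultaneously shows the charts $B_j=A\langle\frac{g_1,\dots,g_t}{g_j}\rangle$ are rig-sheafy). Where you diverge is the assembly step. The paper does not exhibit the charts as explicit rational subsets of $X$ and match gluing data; instead it uses the specialization map $\spc_{X,\mathfrak{X}'}\colon X\to\mathfrak{X}'$ of Definition \ref{Specialization to an admissible formal blow-up} (built from the universal property of admissible formal blow-ups) together with Lemma \ref{Description of inverse images}, so that $X$ and $\mathfrak{X}'^{\ad}_{\eta}$ are both presented as the coequalizer of $\coprod_{i,j,k}(\mathfrak{V}_{ijk})_{\eta}^{\ad}\rightrightarrows\coprod_i(\mathfrak{U}_i)_{\eta}^{\ad}$ for a common affine cover and cover of overlaps. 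Your route is the more hands-on Bosch--L\"utkebohmert one: it avoids the machinery of $\spc_{X,\mathfrak{X}'}$ entirely, at the price of having to verify the gluing by hand. The paper's route buys a cleaner treatment of overlaps (they are handled automatically because everything is a preimage under one map $\spc_{X,\mathfrak{X}'}$), and it sets up tools reused later (Theorem \ref{Generic fiber via formal blow-ups}); your route is more self-contained and makes the covering of $X$ by $X(\frac{g_1,\dots,g_t}{g_j})$ explicit. One point in your write-up is thinner than it should be: saying the restrictions "agree on overlaps because they come from one morphism" gives compatibility of the maps but not yet that the gluing data match, for which you must also check that the generic fiber of a chart intersection $\Spf(B_j)\cap\Spf(B_{j'})$ maps \emph{onto} the full intersection $X(\frac{g_1,\dots,g_t}{g_j})\cap X(\frac{g_1,\dots,g_t}{g_{j'}})$; this does hold and is easy with your identifications, since by Lemma \ref{Local explicit description} that intersection is the basic open $D(g_{j'}/g_j)\subseteq\Spf(B_j)$, whose generic fiber is the locus $\vert (g_{j'}/g_j)(x)\vert\geq 1$, i.e.\ $\vert g_j(x)\vert\leq\vert g_{j'}(x)\vert$ inside $X(\frac{g_1,\dots,g_t}{g_j})$, which is exactly the displayed intersection. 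With that check added, your proof is complete.
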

\begin{proof}It suffices to prove this when $\mathfrak{X}$ is affine and rig-sheafy. Write $\mathfrak{X}=\Spf(A)$, where $A$ is a rig-sheafy complete adic ring over $R$. By Lemma \ref{Local explicit description}, $\mathfrak{X}'$ has an affine open cover $(\mathfrak{U}_{i})_{i=1,\dots,r}$ where $\mathfrak{U}_{i}$ is the formal spectrum of the completed affine blow-up algebra $A\langle\frac{f_1,\dots, f_r}{f_i}\rangle$ for some family of elements $f_1,\dots, f_r\in A$ generating an open ideal of the adic ring $A$. Let $I$ be a finitely generated ideal of definition of $A$ and assume that $I$ contains (the image of) $\varpi$. Let $g_1,\dots, g_s$ be generators of $I$. Note that, for every $i=1,\dots, r$ and every $n\geq 1$, the Tate ring \begin{equation*}A\langle\frac{f_1,\dots, f_r}{f_{i}}\rangle\langle\frac{g_1^{n},\dots, g_s^{n}}{\varpi}\rangle[\varpi^{-1}]\end{equation*}is equal to the Tate ring \begin{equation*}A\langle\frac{g_1^{n},\dots, g_s^{n}}{\varpi}\rangle\langle\frac{f_1,\dots, f_r}{f_{i}}\rangle[\varpi^{-1}].\end{equation*}Indeed, we can use the assumption that $f_1,\dots, f_r$ generate an open ideal of $A$ to see that $f_{i}$ is invertible in both of these Tate rings, then conclude that both Tate rings satisfy the universal property of the rational localization corresponding to the rational subset \begin{equation*}\{\, x\in X\mid \vert f_{j}(x)\vert\leq \vert f_{i}(x)\vert\neq 0, \ \vert g_k^{n}(x)\vert\leq\varpi(x)\vert\neq 0\ \text{for} \ j=1,\dots, r, k=1,\dots, s\,\}\end{equation*}of $X$. In particular, we see that the adic rings $A\langle\frac{f_1,\dots, f_r}{f_{i}}\rangle$, for $i=1,\dots, r$, are rig-sheafy.

The pre-images $\spc_{\mathfrak{X}_{\eta}^{\ad},\mathfrak{X}'}^{-1}(\mathfrak{U}_{i})$ form an open cover of $\mathfrak{X}_{\eta}^{\ad}$. Cover each intersection $\mathfrak{U}_{i}\cap \mathfrak{U}_{j}$ by rig-sheafy affine open subsets $\mathfrak{V}_{ijk}$. Using Lemma \ref{Description of inverse images} and the result of the previous paragraph, we have \begin{equation*}\spc_{\mathfrak{X}_{\eta}^{\ad},\mathfrak{X}'}^{-1}(\mathfrak{U}_{i})=(\mathfrak{U}_{i})_{\eta}^{\ad}\end{equation*}and \begin{equation*}\spc_{\mathfrak{X}_{\eta}^{\ad},\mathfrak{X}'}^{-1}(\mathfrak{V}_{ijk})=(\mathfrak{V}_{ijk})_{\eta}^{\ad}\end{equation*}for all $i, j, k$. Hence the generic fiber $\mathfrak{X}_{\eta}^{\ad}$ of $\mathfrak{X}$ can be described as the coequalizer (in the category of adic spaces) of the morphisms\begin{equation*}\coprod_{i,j,k}(\mathfrak{V}_{ijk})_{\eta}^{\ad}\rightrightarrows \coprod_{i}(\mathfrak{U}_{i})_{\eta}^{\ad}\end{equation*}induced by the canonical morphisms\begin{equation*}\coprod_{i,j,k}\mathfrak{V}_{ijk}\rightrightarrows \coprod_{i}\mathfrak{U}_{i}.\end{equation*}But the generic fiber $\mathfrak{X}_{\eta}^{'\ad}$ of $\mathfrak{X}'$ has the same description as a coequalizer in the category of adic spaces.\end{proof} 
\begin{rmk}We see from the definitions that, for every admissible formal blow-up $\mathfrak{X}'\to \mathfrak{X}$ of a $\varpi$-torsion-free locally rig-sheafy adic formal scheme $\mathfrak{X}$ over $R$, the morphism of locally v-ringed spaces $\spc_{\mathfrak{X}_{\eta}^{\ad},\mathfrak{X}'}: \mathfrak{X}_{\eta}^{\ad}\to \mathfrak{X}'$ factors as the composition of the canonical isomorphism \begin{equation*}\mathfrak{X}_{\eta}^{\ad}\cong \mathfrak{X}_{\eta}^{'\ad}\end{equation*}supplied by Lemma \ref{Admissible formal blow-ups and generic fiber} and $\spc_{\mathfrak{X}_{\eta}^{'\ad},\mathfrak{X}'}$.\end{rmk}
\begin{rmk}Note that the above lemma generalizes assertion (a) in the proof of \cite{BL1}, Theorem 4.1.\end{rmk} 
In the special case when $\mathfrak{X}$ is adic over $\Spf(R)$ we also obtain a generalization of \cite{BL1}, Lemma 4.4.
\begin{lemma}\label{Open covers and formal models}Let $R$ be a complete adic ring with ideal of definition generated by a single non-zero-divisor $\varpi$ and suppose that the Tate ring $R[\varpi^{-1}]$ is sheafy. Let $\mathfrak{X}$ be a quasi-compact quasi-separated locally rig-sheafy adic formal $R$-scheme and let $(U_{i})_{i}$ be a finite family of quasi-compact open subsets of the generic fiber $X=\mathfrak{X}_{\eta}^{\ad}$ of $\mathfrak{X}$. Then there exists an admissible formal blow-up $\mathfrak{X}'\to \mathfrak{X}$ of $\mathfrak{X}$ with $\mathfrak{X}'$ being $\varpi$-torsion-free and a family of quasi-compact open formal subschemes $(\mathfrak{U}_{i})_{i}$ of $\mathfrak{X}'$ such that $U_{i}=\spc_{X,\mathfrak{X}'}^{-1}(\mathfrak{U}_{i})$. If all $U_{i}$ are affinoid, the $\mathfrak{U}_{i}$ can be chosen to be affine.\end{lemma}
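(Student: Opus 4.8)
The plan is to mimic the proof of \cite{BL1}, Lemma 4.4, replacing the classical inputs by the facts on admissible formal blow-ups and on the specialization map assembled above. First I would reduce to the case that $\mathfrak{X}=\Spf(A)$ is affine and rig-sheafy. Cover the quasi-compact $\mathfrak{X}$ by finitely many affine opens $\mathfrak{U}^{(l)}=\Spf(A_l)$ (each $A_l$ being $\varpi$-adically complete since $\mathfrak{X}$ is adic over $\Spf(R)$), so that $(\mathfrak{U}^{(l)})_\eta^{\ad}=\Spa(A_l[\varpi^{-1}],\overline{A_l})=\spc_{X,\mathfrak{X}}^{-1}(\mathfrak{U}^{(l)})$; by quasi-separatedness each $U_i\cap(\mathfrak{U}^{(l)})_\eta^{\ad}$ is a quasi-compact open, hence a finite union of rational subsets. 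Having solved the problem over each $\mathfrak{U}^{(l)}$, Lemma \ref{Admissible formal blow-up and affine opens} lets me extend the resulting blow-ups to admissible formal blow-ups of $\mathfrak{X}$ and then dominate all of them by a single admissible formal blow-up $\mathfrak{X}_0'\to\mathfrak{X}$; since the specialization maps are compatible with admissible formal blow-ups (Definition \ref{Specialization to an admissible formal blow-up}) and inverse images of affine opens under them are computed by Lemma \ref{Description of inverse images}, the opens over the various $\mathfrak{U}^{(l)}$ pull back to $\mathfrak{X}_0'$ and glue to quasi-compact opens $\mathfrak{U}_i^0\subseteq\mathfrak{X}_0'$ with $\spc_{X,\mathfrak{X}_0'}^{-1}(\mathfrak{U}_i^0)=U_i$, affine ones gluing to affine ones.

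In the affine case, write $U_i=\bigcup_j V_{ij}$ with $V_{ij}$ rational subsets of $X=\Spa(A[\varpi^{-1}],\overline{A})$; each $V_{ij}=X(\tfrac{h_{ij1},\dots,h_{ijn_{ij}}}{h_{ij0}})$ for finitely many elements of $A[\varpi^{-1}]$ generating the unit ideal, and after clearing denominators we may assume $h_{ijk}\in A$, so that $I_{ij}:=(h_{ij0},\dots,h_{ijn_{ij}})_A$ is an open ideal of $A$. Let $\mathfrak{X}_0'\to\mathfrak{X}$ be the admissible formal blow-up along the product ideal $\prod_{i,j}I_{ij}$; by Lemma \ref{Admissible formal blow-up and affine opens} it factors through the admissible formal blow-up $\mathfrak{X}_0^{ij}\to\mathfrak{X}$ along $I_{ij}$ via an admissible formal blow-up $g_{ij}\colon\mathfrak{X}_0'\to\mathfrak{X}_0^{ij}$. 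By Lemma \ref{Local explicit description} (and the argument used in the proof of Lemma \ref{Admissible formal blow-ups and generic fiber}) the chart $\Spf(A\langle\tfrac{I_{ij}}{h_{ij0}}\rangle)\subseteq\mathfrak{X}_0^{ij}$ is rig-sheafy, with generic fiber $V_{ij}$ by Remark \ref{Remark on affine blow-up algebra}. Setting $\mathfrak{V}_{ij}:=g_{ij}^{-1}(\Spf(A\langle\tfrac{I_{ij}}{h_{ij0}}\rangle))$ and $\mathfrak{U}_i^0:=\bigcup_j\mathfrak{V}_{ij}$, the compatibility of the specialization maps with the blow-ups $g_{ij}$ (Definition \ref{Specialization to an admissible formal blow-up}, Lemma \ref{Admissible formal blow-ups and generic fiber}) together with Lemma \ref{Description of inverse images} gives $\spc_{X,\mathfrak{X}_0'}^{-1}(\mathfrak{V}_{ij})=\spc_{X,\mathfrak{X}_0^{ij}}^{-1}(\Spf(A\langle\tfrac{I_{ij}}{h_{ij0}}\rangle))=V_{ij}$, hence $\spc_{X,\mathfrak{X}_0'}^{-1}(\mathfrak{U}_i^0)=U_i$. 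To make the blow-up $\varpi$-torsion-free I would further blow up $\mathfrak{X}_0'$ along the admissible ideal $\varpi\mathcal{O}_{\mathfrak{X}_0'}$: on an affine open $\Spf(D)$ this blow-up is, by Example \ref{Admissible formal blow-up in the affine case} and the description of the blow-up of $\Spec(D)$ in a principal ideal, the formal spectrum of the $\varpi$-adic completion of $D/D[\varpi^\infty]$, which is $\varpi$-torsion-free by Lemma \ref{Completions are torsion-free}; the composite $\mathfrak{X}'\to\mathfrak{X}_0'\to\mathfrak{X}$ is again an admissible formal blow-up, its generic fiber is unchanged (Lemma \ref{Admissible formal blow-ups and generic fiber}), and pulling back the $\mathfrak{U}_i^0$ yields the desired $\mathfrak{U}_i\subseteq\mathfrak{X}'$. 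This settles the first assertion.

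For the affinoid case it suffices, working one index at a time, to arrange a single such $\mathfrak{U}_i$ to be affine, i.e.\ to exhibit the affinoid $U_i\subseteq X$ as the generic fiber of an affine open formal subscheme of some admissible formal blow-up of $\mathfrak{X}$. Following \cite{BL1}, the idea is to replace the union of charts $\mathfrak{U}_i^0=\bigcup_j\mathfrak{V}_{ij}$ by a single affine formal model of $U_i$: one takes a $\varpi$-adically complete, $\varpi$-torsion-free, rig-sheafy ring of definition $C_i$ of $\mathcal{O}_X(U_i)$ containing the image of $A$ with $C_i[\varpi^{-1}]=\mathcal{O}_X(U_i)$ (for instance $C_i=\mathcal{O}_X^+(U_i)$), which gives a morphism $\Spf(C_i)\to\mathfrak{X}$ whose generic fiber is the open immersion $U_i\hookrightarrow X$, and then shows that this morphism factors through some admissible formal blow-up of $\mathfrak{X}$ as an open immersion onto the sought-after affine open $\mathfrak{U}_i$. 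One finally combines this with the first part by passing to a common dominating admissible formal blow-up (Lemma \ref{Admissible formal blow-up and affine opens}).

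I expect the affinoid refinement to be the main obstacle. A rational subset of $X$ is, essentially by construction, a chart of an admissible formal blow-up in an open ideal, so the first, non-affinoid, statement comes almost for free from the material above; by contrast, an affinoid subdomain of $X$ is \emph{not} such a chart, and comparing the affine formal model $\Spf(C_i)$ with the a priori non-affine formal model $\mathfrak{U}_i^0$ — equivalently, showing that $\Spf(C_i)\to\mathfrak{X}$ is an open immersion up to an admissible formal blow-up of the target — has to be carried out by hand (in \cite{BL1} this is handled by direct manipulation with affinoid algebras, where $\varpi$-torsion-freeness is moreover automatic). By comparison, achieving $\varpi$-torsion-freeness in our generality costs only the harmless extra blow-up along $\varpi\mathcal{O}_{\mathfrak{X}_0'}$ described above.
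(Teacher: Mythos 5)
Your treatment of the first assertion is correct and is essentially the paper's own argument: reduce to $\mathfrak{X}=\Spf(A)$ affine via Lemma \ref{Admissible formal blow-up and affine opens}, write each quasi-compact $U_i$ as a finite union of rational subsets, clear denominators to get open ideals, blow up in each ideal and pass to the blow-up in the product, identify each rational subset as the $\spc$-preimage of the corresponding completed affine blow-up chart via Lemma \ref{Local explicit description}, Remark \ref{Remark on affine blow-up algebra} and Lemma \ref{Description of inverse images}, and gain $\varpi$-torsion-freeness by one further blow-up in $\varpi\mathcal{O}$, which is harmless (indeed affine).

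The genuine gap is the last assertion of the lemma: you never prove that the $\mathfrak{U}_i$ can be chosen affine when the $U_i$ are affinoid; you explicitly defer it (``has to be carried out by hand''), and your passing claim in the reduction step that ``affine ones glue to affine ones'' is false as stated (preimages of affine opens under a dominating admissible blow-up, and unions of pieces lying over different affine charts of $\mathfrak{X}$, are not affine). Your instinct that this is the delicate point is not unreasonable, but the route you sketch both diverges from the paper and runs into concrete problems in this generality: $C_i=\mathcal{O}_X^{+}(U_i)$ is a ring of definition of $\mathcal{O}_X(U_i)$ only when that Tate ring is uniform, which is not assumed here (the lemma is applied, e.g., in the proof of Theorem \ref{Generic fiber via formal blow-ups} to arbitrary locally rig-sheafy $\mathfrak{X}$), and even granting a suitable $\varpi$-adic $A$-algebra of definition $C_i$, the assertion that $\Spf(C_i)\to\mathfrak{X}$ factors through some admissible formal blow-up as an open immersion is essentially the content of Lemma \ref{Fullness}/Theorem \ref{Analog of Raynaud theory 2}, which in the paper are proved \emph{later} using Corollary \ref{Open covers and formal models 2}, i.e., using the present lemma; executed with the paper's toolkit your plan would be circular. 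The paper does not introduce any auxiliary affine model of $U_i$ at all: it stays inside the blow-up picture, reducing to the case where the $U_i$ are rational subsets, for which the affine charts $\Spf(A\langle\frac{f_{1,i},\dots,f_{n_i,i}}{g_i}\rangle)$ of the blow-ups themselves serve as the affine $\mathfrak{U}_i$. So for the affinoid refinement you need either to supply the missing argument along the paper's lines or to give an independent proof of the factorization claim that does not rely on the downstream results.
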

\begin{proof}For every admissible formal blow-up $\mathfrak{X}'\to\mathfrak{X}$ of $\mathfrak{X}$ there exists an admissible formal blow-up $\mathfrak{X}''\to \mathfrak{X}'$ such that $\mathfrak{X}''$ is $\varpi$-torsion-free: Indeed, we can take $\mathfrak{X}''\to\mathfrak{X}'$ to be the admissible formal blow-up of $\mathfrak{X}'$ in the admissible ideal sheaf $\varpi\mathcal{O}_{\mathfrak{X}'}$. By \cite{FK}, Ch.~II, Proposition 1.1.10, the composition $\mathfrak{X}''\to\mathfrak{X}$ is again an admissible formal blow-up of $\mathfrak{X}$. 

Thus, in view of the commutative triangle displayed above Lemma \ref{Admissible formal blow-ups and generic fiber}, it suffices to find any admissible formal blow-up $\mathfrak{X}'\to\mathfrak{X}$ such that each $U_{i}$ is the pre-image of a quasi-compact open formal subscheme $\mathfrak{U}_{i}$ of $\mathfrak{X}'$ and such that $\mathfrak{U}_{i}$ can be taken to be affine if $U_i$ is affinoid. Using Lemma \ref{Admissible formal blow-up and affine opens}, we reduce to proving the assertion in the case when $\mathfrak{X}=\Spf(A)$ is affine, where $A$ a $\varpi$-adically complete $R$-algebra, endowed with the $\varpi$-adic topology. In this case $X=\mathfrak{X}_{\eta}^{\ad}$ is an affinoid adic space. Since every quasi-compact open subset of $X$ is a finite union of rational subsets, we may assume that all $U_{i}$ are rational subsets of $X$. For every index $i$, write $U_{i}=X(\frac{f_{1,i},\dots, f_{n_{i}, i}}{g_{i}})$ for some family $f_{1,i}, \dots, f_{n_{i},i}, g_{i}$ of elements of $A[\varpi^{-1}]=\mathcal{O}_{X}(X)$ generating the unit ideal of $A[\varpi^{-1}]$. Up to multiplying all these elements by a power of $\varpi$, we may assume that $f_{1,i}, \dots, f_{n_{i},i}, g_{i}\in A$ for all $i$. For each $i$ set $J_{i}=(f_{1,i}, \dots, f_{n_{i},i}, g_{i})_{A}$, which is an open ideal of $A$. Let $\mathfrak{X}_{i}\to \mathfrak{X}$ be the admissible formal blow-up of $\mathfrak{X}$ in the ideal sheaf $\mathcal{J}_{i}$ defined by $J_{i}$ and let $\mathfrak{U}_{i}$ be the open affine formal subscheme \begin{equation*}\Spf(A\langle\frac{f_{1,i},\dots, f_{n_{i},i}}{g_{i}}\rangle)\end{equation*}of $\mathfrak{X}_{i}$ (see Lemma \ref{Local explicit description}), where, as in Lemma \ref{Local explicit description}, the notation \begin{equation*}A\langle\frac{f_{1,i},\dots, f_{n_{i},i}}{g_{i}}\rangle\end{equation*}means the $\varpi$-adic completion of the affine blow-up algebra $A[\frac{J_{i}}{g_{i}}]$ (and not the synonymous rational localization of $A$). Since this completed affine blow-up algebra, with its $\varpi$-adic topology, is a ring of definition of the rational localization \begin{equation*}\mathcal{O}_{X}(U_{i})=A[\varpi^{-1}]\langle\frac{f_{1,i},\dots, f_{n_{i},i}}{g_{i}}\rangle\end{equation*}of $A[\varpi^{-1}]$, we see, using Lemma \ref{Description of inverse images}, that $U_{i}=\spc_{X,\mathfrak{X}_{i}}^{-1}(\mathfrak{U}_{i})$ for every $i$. On the other hand, blowing up in the product ideal sheaf $\mathcal{J}=\prod_{i}\mathcal{J}_{i}$ yields an admissible formal blow-up $\mathfrak{X}'\to\mathfrak{X}$ which factors through an admissible formal blow-up $\pi_{i}: \mathfrak{X}'\to \mathfrak{X}_{i}$ for each $i$ (see \cite{FK}, Exercise II.1.1). Then, for each index $i$, the open subset $U_{i}\subseteq X$ is the pre-image of $\pi_{i}^{-1}(\mathfrak{U}_{i})$ under the specialization morphism $\spc_{X,\mathfrak{X}'}: X\to \mathfrak{X}'$.\end{proof}
There is also the following useful consequence of Lemma \ref{Admissible formal blow-ups and generic fiber}.
\begin{lemma}\label{Empty generic fiber}Let $R$ be a complete adic ring with ideal of definition generated by a single non-zero-divisor $\varpi$ and suppose that the Tate ring $R[\varpi^{-1}]$ is sheafy. Let $\mathfrak{X}$ be a locally rig-sheafy adic formal $R$-scheme. The following are equivalent: 
\begin{enumerate}[(1)]\item The generic fiber $X$ of $\mathfrak{X}$ over $\Spa(R[\varpi^{-1}], \overline{R})$ is empty. \item There exists an integer $n\geq1$ such that $\varpi^{n}\mathcal{O}_{\mathfrak{X}}=0$. \item For some admissible formal blow-up $\mathfrak{X}'\to\mathfrak{X}$ of $\mathfrak{X}$ there exists an integer $n\geq1$ such that $\varpi^{n}\mathcal{O}_{\mathfrak{X}'}=0$. \item For every admissible formal blow-up $\mathfrak{X}'\to\mathfrak{X}$ of $\mathfrak{X}$ there exists an integer $n\geq1$ such that $\varpi^{n}\mathcal{O}_{\mathfrak{X}'}=0$. \end{enumerate}\end{lemma}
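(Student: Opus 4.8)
The plan is to prove the chain of implications $(2)\Rightarrow(1)$, $(1)\Rightarrow(2)$, $(2)\Rightarrow(4)$, $(4)\Rightarrow(3)$, $(3)\Rightarrow(1)$, which together give all four equivalences. Each of these is of a local nature in $\mathfrak{X}$ — for the statements involving admissible formal blow-ups one uses that such blow-ups are defined locally and are compatible with passing to open formal subschemes — so one reduces throughout to the case $\mathfrak{X}=\Spf(A)$ affine and rig-sheafy over $(R,\varpi)$. In that case $\varpi$ generates an ideal of definition of $A$, the generic fiber is the affinoid adic space $X=\mathfrak{X}_{\eta}^{\ad}=\Spa(A[\varpi^{-1}],\overline{A})$, and $A[\varpi^{-1}]=0$ if and only if $\varpi^{m}A=0$ for some $m\geq1$. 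To pass from the affine case to a general \emph{quasi-compact} $\mathfrak{X}$ in the statements $(2)$, $(3)$, $(4)$ — which assert the existence of a single exponent $n$ valid on all of $\mathfrak{X}$ — one covers $\mathfrak{X}$ by finitely many affine opens and takes the maximum of the exponents produced on the pieces.

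First, $(2)\Rightarrow(1)$ (which we record for an arbitrary locally rig-sheafy adic formal $R$-scheme, a form we reuse below): if $\varpi^{n}\mathcal{O}_{\mathfrak{X}}=0$ then on every affine open $\Spf(A)$ one has $A[\varpi^{-1}]=0$, so every chart of $\mathfrak{X}_{\eta}^{\ad}$ is empty and hence $X=\emptyset$. Next, $(1)\Rightarrow(2)$: working in the affine case, if $A[\varpi^{-1}]\neq0$ I would invoke the standard fact that the adic spectrum of a nonzero complete Huber pair is nonempty, so $X=\Spa(A[\varpi^{-1}],\overline{A})\neq\emptyset$. This is the one genuinely nontrivial input; everything else is formal.

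For $(2)\Rightarrow(4)$: if $\varpi^{n}\mathcal{O}_{\mathfrak{X}}=0$ then $\mathcal{I}=\varpi\mathcal{O}_{\mathfrak{X}}$ is an ideal of definition with $\mathcal{I}^{n}=0$, so the schemes $\mathfrak{X}_{k}$, and hence the schemes $\mathfrak{X}'_{k}=\Proj(\bigoplus_{m\geq0}\mathcal{J}^{m}\otimes\mathcal{O}_{\mathfrak{X}_{k}})$, are independent of $k$ for $k\geq n-1$; thus for any admissible ideal $\mathcal{J}$ the admissible formal blow-up $\mathfrak{X}'\to\mathfrak{X}$ along $\mathcal{J}$ is an honest scheme over $\mathfrak{X}$, and $\varpi^{n}\mathcal{O}_{\mathfrak{X}'}=0$ (locally $\mathfrak{X}'$ is the formal spectrum of a completed affine blow-up algebra $A\langle\frac{f_1,\dots,f_r}{f_i}\rangle$, which is an $A$-algebra). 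The implication $(4)\Rightarrow(3)$ is trivial, since there is at least one admissible formal blow-up of $\mathfrak{X}$, e.g. $\mathrm{id}_{\mathfrak{X}}$, which is the blow-up along the unit ideal $\mathcal{O}_{\mathfrak{X}}$.

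Finally, $(3)\Rightarrow(1)$: let $\mathfrak{X}'\to\mathfrak{X}$ be an admissible formal blow-up with $\varpi^{n}\mathcal{O}_{\mathfrak{X}'}=0$. The blow-up $\mathfrak{X}'$ is again a locally rig-sheafy adic formal $R$-scheme — this was verified inside the proof of Lemma~\ref{Admissible formal blow-ups and generic fiber} — so the already-established implication $(2)\Rightarrow(1)$, applied to $\mathfrak{X}'$, gives ${\mathfrak{X}'}_{\eta}^{\ad}=\emptyset$; but Lemma~\ref{Admissible formal blow-ups and generic fiber} identifies ${\mathfrak{X}'}_{\eta}^{\ad}$ with $X=\mathfrak{X}_{\eta}^{\ad}$, so $X=\emptyset$. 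I expect the only step needing something beyond the definitions and Lemma~\ref{Admissible formal blow-ups and generic fiber} to be the non-emptiness of the adic spectrum of a nonzero complete Huber pair used in $(1)\Rightarrow(2)$; a secondary point to keep in mind is that the reduction to the affine case for $(2)$, $(3)$, $(4)$ requires $\mathfrak{X}$ to be quasi-compact (without it, $\coprod_{n\geq1}\Spf(R/\varpi^{n})$ would be a counterexample).
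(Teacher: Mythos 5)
Your proof is correct and follows essentially the same route as the paper: reduce to the affine rig-sheafy case, where $X=\Spa(A[\varpi^{-1}],\overline{A})$ is empty iff $A[\varpi^{-1}]=0$ iff $\varpi^{n}A=0$ for some $n$ (nonemptiness of the adic spectrum of a nonzero complete Huber pair being the only substantive input), and dispose of (3) and (4) via Lemma \ref{Admissible formal blow-ups and generic fiber}, the paper simply applying the equivalence (1)$\Leftrightarrow$(2) to $\mathfrak{X}'$ where you instead prove (2)$\Rightarrow$(4) directly by pushing $\varpi^{n}=0$ along the structure map of the blow-up. Your closing caveat is a fair catch: globalizing the exponent in (2)--(4) does require quasi-compactness (your $\coprod_{n\geq1}\Spf(R/\varpi^{n})$ example shows the statement fails without it), an assumption the paper's ``we may work locally'' leaves implicit but which holds where the lemma is used, e.g.\ in Theorem \ref{Generic fiber via formal blow-ups}.
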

\begin{proof}By Lemma \ref{Admissible formal blow-ups and generic fiber}, every admissible formal blow-up $\mathfrak{X}'\to\mathfrak{X}$ satisfies $\mathfrak{X}_{\eta}^{'\ad}\cong X$, so it suffices to prove the equivalence of (1) and (2). For this we may work locally and thus we may assume that $\mathfrak{X}$ is affine and rig-sheafy. But then $X=\Spa(\mathcal{O}_{\mathfrak{X}}(\mathfrak{X})[\varpi^{-1}], \overline{\mathcal{O}_{\mathfrak{X}}(\mathfrak{X})})$, where $\overline{\mathcal{O}_{\mathfrak{X}}(\mathfrak{X})}$ is the integral closure in $\mathcal{O}_{\mathfrak{X}}(\mathfrak{X})[\varpi^{-1}]$ of the image of $\mathcal{O}_{\mathfrak{X}}(\mathfrak{X})$ in $\mathcal{O}_{\mathfrak{X}}(\mathfrak{X})[\varpi^{-1}]$. Therefore, $X$ is empty if and only if $\mathcal{O}_{\mathfrak{X}}(\mathfrak{X})[\varpi^{-1}]=0$, which is the case if and only if $\varpi^{n}\mathcal{O}_{\mathfrak{X}}=0$ for some $n$. \end{proof}
By \cite{FK}, Ch.~II, Proposition 1.1.10, the composition of two admissible formal blow-ups between quasi-compact quasi-separated adic formal schemes of finite ideal type is an admissible formal blow-up. By \cite{FK}, Ch.~II, Corollary 1.3.2, the category $\Bl(\mathfrak{X})$ of admissible formal blow-ups of a quasi-compact quasi-separated $\mathfrak{X}$ is cofiltered and essentially small. In particular, we obtain a morphism of locally v-ringed spaces \begin{equation*}\spc_{X}: (\mathfrak{X}_{\eta}^{\ad}, \mathcal{O}_{\mathfrak{X}_{\eta}^{\ad}}^{+})\to\varprojlim_{\mathfrak{X}'\in \Bl(\mathfrak{X})} (\mathfrak{X}', \mathcal{O}_{\mathfrak{X}'})\end{equation*}given by \begin{equation*}\spc_{X}(x)=(\spc_{X,\mathfrak{X}'}(x))_{\mathfrak{X}'\in\Bl(\mathfrak{X})}.\end{equation*}In fact, we can prove the following global variant of a theorem of Bhatt (\cite{BhattNotes}, Theorem 8.1.2); for the classical case of a quasi-compact quasi-separated adic space of finite type over a nonarchimedean field, see, for example, \cite{Scholze}, Theorem 2.22. Since our setting is slightly different from that in \cite{BhattNotes} (if $\mathfrak{X}=\Spf(A^{+})$ for some Tate Huber pair $(A, A^{+})$, we work with admissible formal blow-ups of $\mathfrak{X}$ instead of $\Spec(A)$-modifications of $\Spec(A^{+})$), we spell out the proof in some detail.
\begin{thm}\label{Generic fiber via formal blow-ups}Let $R$ be a complete adic ring with an ideal of definition generated by a single non-zero-divisor $\varpi\in R$ and suppose that the Tate ring $R[\varpi^{-1}]$ is sheafy. For any locally rig-sheafy, quasi-compact quasi-separated adic formal $R$-scheme $\mathfrak{X}$, the above canonical morphism is an isomorphism of locally v-ringed spaces\begin{equation*}\spc_{\mathfrak{X}_{\eta}^{\ad}}: (\mathfrak{X}_{\eta}^{\ad}, \mathcal{O}_{\mathfrak{X}_{\eta}^{\ad}}^{+})\tilde{\rightarrow}\varprojlim_{\mathfrak{X}'\in \Bl(\mathfrak{X})} (\mathfrak{X}', \mathcal{O}_{\mathfrak{X}'}).\end{equation*}In particular, for every $\mathfrak{X}'\in\Bl(\mathfrak{X})$, the specialization map $\spc_{\mathfrak{X}_{\eta}^{\ad},\mathfrak{X}}: \vert\mathfrak{X}_{\eta}^{\ad}\vert\to \vert\mathfrak{X}'\vert$ is a spectral map of spectral topological spaces.\end{thm}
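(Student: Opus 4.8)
The plan is to reduce to the affine case and then adapt the argument of \cite{BhattNotes}, Theorem 8.1.2, to the present formal-scheme-theoretic framework. Since being an isomorphism of locally v-ringed spaces may be checked locally on the target, and since for a rig-sheafy affine open $\mathfrak{U}\subseteq\mathfrak{X}$ the restriction functor $\Bl(\mathfrak{X})\to\Bl(\mathfrak{U})$ is cofinal (Lemma \ref{Admissible formal blow-up and affine opens}) while $\spc_{X,\mathfrak{X}'}^{-1}(\mathfrak{U}')=\mathfrak{U}_{\eta}^{\ad}$ for the preimage $\mathfrak{U}'$ of $\mathfrak{U}$ in any $\mathfrak{X}'\in\Bl(\mathfrak{X})$ (Lemma \ref{Description of inverse images}), I would first pass to the case $\mathfrak{X}=\Spf(A)$ with $A$ a complete adic ring carrying the $\varpi$-adic topology and with $A[\varpi^{-1}]$ sheafy, so that $X=\mathfrak{X}_{\eta}^{\ad}=\Spa(A[\varpi^{-1}], \overline{A})$ and $\Bl(\mathfrak{X})$ is the cofiltered category of formal completions of blow-ups of $\Spec(A)$ in finitely generated open ideals.

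Next I would treat the underlying topological spaces. Both sides are spectral: the left because $X$ is a quasi-compact quasi-separated analytic adic space, and the right because each $|\mathfrak{X}'|$ is the spectral space underlying the qcqs scheme $\mathfrak{X}'_{0}$, the transition morphisms of $\Bl(\mathfrak{X})$ are proper, hence of finite type, hence spectral on underlying spaces, so the cofiltered limit is a spectral space with spectral projections (\cite{Stacks}). The map $\spc_{X}$ is continuous, and it is spectral because the preimage under $\spc_{X,\mathfrak{X}'}$ of a quasi-compact open of $\mathfrak{X}'$ is a finite union of generic fibers of rig-sheafy affine opens, hence quasi-compact (Lemma \ref{Description of inverse images}), while every quasi-compact open of the limit is pulled back from a finite level. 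It is injective: given $x\neq y$ in $X$, choose a rational subset $U$ with $x\in U$ and $y\notin U$; by Lemma \ref{Open covers and formal models} there is an $\mathfrak{X}'\in\Bl(\mathfrak{X})$ and a quasi-compact open $\mathfrak{U}'\subseteq\mathfrak{X}'$ with $U=\spc_{X,\mathfrak{X}'}^{-1}(\mathfrak{U}')$, so $\spc_{X,\mathfrak{X}'}(x)\neq\spc_{X,\mathfrak{X}'}(y)$.

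The heart of the argument, and the step I expect to be the main obstacle, is surjectivity onto $\varprojlim_{\mathfrak{X}'}|\mathfrak{X}'|$. Given a compatible system $\xi=(x_{\mathfrak{X}'})_{\mathfrak{X}'}$, I would form the filtered colimit of local rings $\mathcal{O}_{\xi}=\varinjlim_{\mathfrak{X}'}\mathcal{O}_{\mathfrak{X}',x_{\mathfrak{X}'}}$; it is a local ring in which (the image of) $\varpi$ lies in the maximal ideal. Using that the $\varpi$-torsion-free admissible formal blow-ups are cofinal in $\Bl(\mathfrak{X})$ (blow up $\varpi\mathcal{O}_{\mathfrak{X}'}$; cf.\ the proof of Lemma \ref{Open covers and formal models}), the ring $\mathcal{O}_{\xi}$ is $\varpi$-torsion-free; and since blowing up an admissible ideal renders it invertible, for any finite family $a_1,\dots,a_n\in\mathcal{O}_{\xi}$ the ideal $(a_1,\dots,a_n,\varpi)\mathcal{O}_{\xi}$ becomes principal after passing far enough in the system. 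These two properties are exactly what is needed to see that, dividing by a minimal prime (which does not contain $\varpi$, as $\varpi$ is a non-zero-divisor of $\mathcal{O}_{\xi}$), one obtains a valuation ring $V$ together with a local homomorphism $A\to V$ in which $\varpi$ is a nonzero non-unit, i.e.\ a continuous valuation on $A[\varpi^{-1}]$ whose valuation ring contains the image of $\overline{A}$; this gives a point $x\in X$, and the universal property of admissible formal blow-ups (\cite{FK}, Ch.~II, Prop.~1.1.4(3)), used as in Definition \ref{Specialization to an admissible formal blow-up}, then yields $\spc_{X,\mathfrak{X}'}(x)=x_{\mathfrak{X}'}$ for every $\mathfrak{X}'$. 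Making precise that the limiting local ring $\mathcal{O}_{\xi}$ genuinely produces a valuation on $A[\varpi^{-1}]$ recovering $\xi$ — this is where the separatedness of admissible formal blow-ups is really used — is the delicate point.

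Granting the homeomorphism, I would finish by identifying the structure sheaves and the valuations on stalks. On the basis of $X$ consisting of the generic fibers $V=\mathfrak{V}_{\eta}^{\ad}$ of rig-sheafy affine opens $\mathfrak{V}$ of the various $\mathfrak{X}'\in\Bl(\mathfrak{X})$, one has $\mathcal{O}_{X}^{+}(V)$ equal to the integral closure of $\mathcal{O}_{\mathfrak{V}}(\mathfrak{V})$ in $\mathcal{O}_{\mathfrak{V}}(\mathfrak{V})[\varpi^{-1}]$, and passing to the colimit over further admissible formal blow-ups of $\mathfrak{V}$ (using Lemma \ref{Description of inverse images} to track preimages) identifies this with the corresponding value of $\varinjlim_{\mathfrak{X}''}\spc_{X,\mathfrak{X}''}^{-1}\mathcal{O}_{\mathfrak{X}''}$; on stalks this is again the colimit of local rings $\mathcal{O}_{\xi}$ from the previous paragraph, which carries the valuation making $\spc_{X}$ a morphism of locally v-ringed spaces by construction. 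Hence $\spc_{\mathfrak{X}_{\eta}^{\ad}}$ is an isomorphism of locally v-ringed spaces. The last assertion is then immediate: composing the homeomorphism $\spc_{\mathfrak{X}_{\eta}^{\ad}}$ with the spectral projection $\varprojlim_{\mathfrak{X}'}|\mathfrak{X}'|\to|\mathfrak{X}'|$ shows that each $\spc_{\mathfrak{X}_{\eta}^{\ad},\mathfrak{X}'}$ is a spectral map of spectral spaces.
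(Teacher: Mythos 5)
Your outline follows the same route as the paper (reduce to the rig-sheafy affine case, then run Bhatt's argument on the colimit of local rings along $\Bl(\mathfrak{X})$), but the three places you pass over quickly are exactly where the content of the theorem lies, and as written they are gaps. First, in the surjectivity step you assert that quotienting $\mathcal{O}_{\xi}$ by a minimal prime produces a valuation ring; this is not established (and you yourself flag the passage from $\mathcal{O}_{\xi}$ to a valuation as "the delicate point" without carrying it out). The paper instead passes to the $\varpi$-adically separated quotient $\mathcal{O}_{\xi}/\bigcap_{n}\varpi^{n}\mathcal{O}_{\xi}$ and proves it is a local B\'ezout domain: given $f_1,\dots,f_r$ not all in $\varpi^{n}\mathcal{O}_{\xi}$, one blows up the open ideal $(f_1,\dots,f_r,\varpi^{n})$, uses compatibility of the system $\xi$ to see that the lifted point lies in a chart where this ideal is generated by some $f_i$ (the $\varpi^{n}$-chart being excluded by the choice of $n$), so that $(f_1,\dots,f_r)$ becomes principal and, crucially, $\varpi^{n}\in(f_i)$; the latter divisibility is what yields the domain property and that $\mathcal{O}_{\xi}[\varpi^{-1}]$ is the fraction field. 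None of this is replaced by "divide by a minimal prime", and the difficulty has nothing to do with separatedness of blow-ups. Second, even granting bijectivity, a continuous spectral bijection of spectral spaces need not be a homeomorphism (consider the identity from a two-point discrete space to the Sierpi\'{n}ski space); writing "granting the homeomorphism" skips a needed step. The paper proves that $\spc_{X}$ is generalizing, using Lemma \ref{Open covers and formal models}, and then invokes \cite{Stacks}, Tag 09XU.

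Third, the sheaf comparison is asserted rather than proved. The stalk of $\varinjlim_{\mathfrak{X}'}\mathcal{O}_{\mathfrak{X}'}$ at $\spc_{X}(x)$ is $A_{\spc_{X}(x)}=\varinjlim_{\mathfrak{X}'}\mathcal{O}_{\mathfrak{X}',\spc_{X,\mathfrak{X}'}(x)}$, a colimit of local rings of formal models which are \emph{not} integrally closed in their generic fibers (admissible formal blow-ups do not normalize), so identifying it with $\mathcal{O}_{X,x}^{+}$ is not a formal consequence of "passing to the colimit over further blow-ups": one must show this colimit is integrally closed in $A_{\spc_{X}(x)}[\varpi^{-1}]=\mathcal{O}_{X,x}$. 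The paper does this by showing $A_{\spc_{X}(x)}$ is a $\varpi$-valuative local ring and then running the integrality argument of \cite{FK}, Ch.~0, Proposition 8.7.5(2) (the manipulation with a generator $d$ of $(\varpi^{m},c)$), before concluding via the identification of the image in $\kappa(x)$ with $\kappa(x)^{+}$. Without this, the map $\spc_{X}^{\ast}(\varinjlim_{\mathfrak{X}'}\mathcal{O}_{\mathfrak{X}'})\to\mathcal{O}_{X}^{+}$ is only known to be injective with integral "cokernel", not an isomorphism. Supplying these three arguments (and the trivial degenerate case $\varpi^{n}\mathcal{O}_{\mathfrak{X}}=0$) would complete your proof along essentially the paper's lines.
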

\begin{proof}The last assertion about the maps $\spc_{\mathfrak{X}_{\eta}^{\ad},\mathfrak{X}'}$, for $\mathfrak{X}'\in\Bl(\mathfrak{X})$, being spectral follows from the rest of the theorem by \cite{Stacks}, Tag 0A2Z. Thus we concentrate on proving that $\spc_{\mathfrak{X}_{\eta}^{\ad}}$ is an isomorphism.

If $\varpi^{n}\mathcal{O}_{\mathfrak{X}}=0$ for some integer $n\geq1$, then both the source and the target of $\spc_{\mathfrak{X}_{\eta}^{\ad}}$ are empty (indeed, in this case the admissible formal blow-up of $\mathfrak{X}$ in the admissible ideal $\varpi\mathcal{O}_{\mathfrak{X}}$ is the empty space), so there is nothing to prove in this case. Hence we may assume that $\varpi^{n}\mathcal{O}_{\mathfrak{X}}\neq 0$ for all $n\geq1$. Note that, by Lemma \ref{Empty generic fiber}, under this assumption every admissible formal blow-up $\mathfrak{X}'$ of $\mathfrak{X}$ also satisfies $\varpi^{n}\mathcal{O}_{\mathfrak{X}'}\neq 0$ for all $n\geq1$.    

By Lemma \ref{Admissible formal blow-up and affine opens}, every admissible formal blow-up $\mathfrak{U}'\to\mathfrak{U}$ of a rig-sheafy affine open subset $\mathfrak{U}$ of $\mathfrak{X}$ extends to an admissible formal blow-up $\mathfrak{X}'\to\mathfrak{X}$ of $\mathfrak{X}$. By combining Lemma \ref{Description of inverse images} and Lemma \ref{Admissible formal blow-ups and generic fiber} we see that this admissible formal blow-up satisfies\begin{equation*}\spc_{\mathfrak{X}_{\eta}^{\ad},\mathfrak{X}'}\vert_{\mathfrak{U}_{\eta}^{\ad}}=\spc_{\mathfrak{X}_{\eta}^{\ad},\mathfrak{X}'}\vert_{\mathfrak{U}_{\eta}^{'\ad}}=\spc_{\mathfrak{U}_{\eta}^{'\ad},\mathfrak{U}'}=\spc_{\mathfrak{U}_{\eta}^{\ad},\mathfrak{U}'}.\end{equation*}Hence, for any rig-sheafy affine open subset $\mathfrak{U}\subseteq \mathfrak{X}$, we have a commutative diagram of morphisms of locally v-ringed spaces \begin{center}\begin{tikzcd} (\mathfrak{X}_{\eta}^{\ad}, \mathcal{O}_{\mathfrak{X}_{\eta}^{\ad}}^{+}) \arrow{r}{\spc_{\mathfrak{X}_{\eta}^{\ad}}} & \varprojlim_{\mathfrak{X}'\in\Bl(\mathfrak{X})}(\mathfrak{X}', \mathcal{O}_{\mathfrak{X}'}) \\ (\mathfrak{U}_{\eta}^{\ad}, \mathcal{O}_{\mathfrak{U}_{\eta}^{\ad}}^{+}) \arrow[hook]{u} \arrow{r}{\spc_{\mathfrak{U}_{\eta}^{\ad}}} & \varprojlim_{\mathfrak{U}'\in\Bl(\mathfrak{U})}(\mathfrak{U}', \mathcal{O}_{\mathfrak{U}'}). \arrow[hook]{u} \end{tikzcd}\end{center}Consequently, it suffices to treat the case when $\mathfrak{X}=\Spf(A)$ is rig-sheafy and affine. 

In this situation the adic analytic generic fiber $X$ of $\mathfrak{X}$ has the form $X=\Spa(A[\varpi^{-1}], \overline{A})$, where, as usual, the bar denotes integral closure inside the Tate ring $A[\varpi^{-1}]$ of the image of $A\to A[\varpi^{-1}]$. We want to define an inverse $\underline{x}\mapsto v_{\underline{x}}$ to the map $\spc_{X}: \vert X\vert\to \varprojlim_{\mathfrak{X}'\in\Bl(\mathfrak{X})}\vert\mathfrak{X}'\vert$. To this end, we start with an arbitrary element \begin{equation*}\underline{x}=(x_{\mathfrak{X}'})_{\mathfrak{X}'\in\Bl(\mathfrak{X})}\in \varprojlim_{\mathfrak{X}'\in\Bl(\mathfrak{X})}\vert \mathfrak{X}'\vert \end{equation*}and consider the local ring \begin{equation*}A_{\underline{x}}=\varinjlim_{\mathfrak{X}'\in\Bl(\mathfrak{X})}\mathcal{O}_{\mathfrak{X}', x_{\mathfrak{X}'}}.\end{equation*}This ring is non-zero by the assumption that the generic fiber $\mathfrak{X}_{\eta}^{\ad}$ is not empty and Lemma \ref{Admissible formal blow-ups and generic fiber}. Note that this local ring is $\varpi$-torsion-free. Indeed, for any $\mathfrak{X}'\in\Bl(\mathfrak{X})$ there exists an admissible formal blow-up $\mathfrak{X}''\to \mathfrak{X}'$ such that $\mathfrak{X}''$ is $\varpi$-torsion-free: Just let $\mathfrak{X}''\to\mathfrak{X}'$ be the admissible formal blow-up of $\mathfrak{X}'$ in the admissible ideal $\varpi\mathcal{O}_{\mathfrak{X}'}$ of $\mathfrak{X}'$. Note also that $\varpi$ is not a unit in the above local ring, since, if it was, then $\varpi$ would also be a unit in $\mathcal{O}_{\mathfrak{X}'}(\mathfrak{U})$ for some $\mathfrak{X}'\in\Bl(\mathfrak{X})$ and some $\mathfrak{U}\in\Aff_{\mathfrak{X}'}$, which is impossible due to $\varpi$-adic completeness.   

Following an argument of Bhatt (\cite{Bhatt}, Proposition 8.1.3, and its proof), we first prove that the $\varpi$-adic Hausdorff quotient \begin{equation*}\mathcal{O}_{\underline{x}}=A_{\underline{x}}/\bigcap_{n}\varpi^{n}A_{\underline{x}}\end{equation*}is a valuation ring (note that $\mathcal{O}_{\underline{x}}$ is not the zero ring since otherwise $\varpi$ would be a unit in $A_{\underline{x}}$). Since valuation rings are the same as local Bézout domains, we split the proof of this assertion into two steps: Showing that all finitely generated ideals of $\mathcal{O}_{\underline{x}}$ are principal (i.e., that $\mathcal{O}_{\underline{x}}$ is a Bézout ring), and showing that $\mathcal{O}_{\underline{x}}$ is an integral domain. 

To see that $\mathcal{O}_{\underline{x}}$ is Bézout, let $\overline{f_1},\dots, \overline{f_r}$ be arbitrary non-zero elements of $\mathcal{O}_{\underline{x}}$. Let $f_1,\dots, f_r$ be elements of $A_{\underline{x}}$ lifting $\overline{f_1},\dots, \overline{f_r}$. Then there exists an integer $n\geq 1$ such that \begin{equation*}f_1,\dots, f_r\not\in \varpi^{n}A_{\underline{x}}.\end{equation*}We can choose an admissible formal blow-up $\mathfrak{X}'\to\mathfrak{X}$ and an affine open neighbourhood $\mathfrak{U}\subseteq \mathfrak{X}'$ such that (by the usual abuse of notation) $f_1,\dots, f_r\in\mathcal{O}_{\mathfrak{X}'}(\mathfrak{U})$. Consider the open ideal \begin{equation*}J_{n}=(f_1,\dots, f_r, \varpi^{n})_{\mathcal{O}_{\mathfrak{X}'}(\mathfrak{U})}.\end{equation*}Let $\mathfrak{U}''\to \mathfrak{U}$ be the admissible formal blow-up of $\mathfrak{U}$ in the admissible ideal sheaf defined by $J_{n}$. By Lemma \ref{Admissible formal blow-up and affine opens} we can extend $J_{n}$ to an admissible ideal sheaf on $\mathfrak{X}'$ and the corresponding admissible formal blow-up $\mathfrak{X}''\to\mathfrak{X}'$ extends $\mathfrak{U}''\to \mathfrak{U}$. Consider $f_1,\dots, f_r, \varpi^{n}$ as elements of $\mathcal{O}_{\mathfrak{X}''}(\mathfrak{U}'')$. By \cite{FK}, Ch.~II, Proposition 1.1.8, $\mathfrak{U}''$ is precisely the pre-image of $\mathfrak{U}$ under $\mathfrak{X}''\to \mathfrak{X}'$. In particular, the lift $x_{\mathfrak{X}''}$ of $x_{\mathfrak{X}'}$ lies in $\mathfrak{U}''$. By Lemma \ref{Local explicit description}, the formal scheme $\mathfrak{U}''$ has an affine open cover $(\Spf(B_{i}))_{i=0,\dots, r}$ by formal spectra of the completed affine blow-up algebras\begin{equation*}B_{i}=\mathcal{O}_{\mathfrak{X}'}(\mathfrak{U})\langle\frac{f_1,\dots, f_r, \varpi^{n}}{f_i}\rangle, \ i=1,\dots, r,\end{equation*}and \begin{equation*}B_{0}=\mathcal{O}_{\mathfrak{X}'}(\mathfrak{U})\langle\frac{f_1,\dots, f_r,\varpi^{n}}{\varpi^{n}}\rangle.\end{equation*}Choose an index $i$ such that \begin{equation*}x_{\mathfrak{X}''}\in \Spf(B_{i}).\end{equation*}If $i=0$, then $(f_1,\dots, f_r,\varpi^{n})_{B_{i}}=(\varpi^{n})_{B_{i}}$ and, a fortiori, $f_1,\dots, f_r\in \varpi^{n}A_{\underline{x}}$, a contradiction. Hence $i\geq 1$ and $(f_1,\dots, f_r,\varpi^{n})_{B_{i}}=(f_{i})_{B_{i}}$ for this $i$. In particular, $(f_1,\dots, f_r)_{B_{i}}=(f_{i})_{B_{i}}$ for this $i$ (for later use, we remark that in this case $f_{i}$ is not a zero-divisor in $A_{\underline{x}}=\varinjlim_{\mathfrak{X'}\in\Bl(\mathfrak{X})}\mathcal{O}_{\mathfrak{X}',x_{\mathfrak{X}'}}$, since $\varpi^{n}\in (f_{i})_{B_{i}}$ and $A_{\underline{x}}$ is $\varpi$-torsion-free). We conclude that the ideal of $\mathcal{O}_{\underline{x}}$ generated by the images $\overline{f_1},\dots, \overline{f_r}$ of $f_1,\dots, f_r$ is a principal ideal; since $\overline{f_1},\dots, \overline{f_r}$ were arbitrary, this proves that $\mathcal{O}_{\underline{x}}$ is a Bézout ring. 

On the other hand, we see that in the situation of the above paragraph $\varpi^{n}$ belongs to the ideal of $B_{i}$ generated by $f_1,\dots, f_r$ and hence that $\varpi^{n}$ belongs to the ideal of $\mathcal{O}_{\underline{x}}$ generated by $\overline{f_1},\dots, \overline{f_r}$. In particular, if $\overline{f}$ is an arbitrary element of $\mathcal{O}_{\underline{x}}$ such that $\overline{f}\not\in \varpi^{n}\mathcal{O}_{\underline{x}}$, then \begin{equation*}\varpi^{n}\in (\overline{f})_{\mathcal{O}_{\underline{x}}}.\end{equation*}Thus for every $\overline{f}\in \mathcal{O}_{\underline{x}}$, we can find an integer $n\geq1$ with $\varpi^{n}\in (\overline{f})_{\mathcal{O}_{\underline{x}}}$. This allows us to prove that $\underline{\mathcal{O}}_{\underline{x}}$ is an integral domain as follows. Suppose that $\overline{f}$ and $\overline{g}$ are non-zero elements of $\mathcal{O}_{\underline{x}}$ whose product is zero. Choosing integers $n, m\geq 1$ such that $\varpi^{n}\in (\overline{f})_{\mathcal{O}_{\underline{x}}}$ and $\varpi^{m}\in (\overline{g})_{\mathcal{O}_{\underline{x}}}$, we see that $\varpi^{n+m}\in (\overline{f}\overline{g})_{\mathcal{O}_{\underline{x}}}=0$. Since $\mathcal{O}_{\underline{x}}$ is non-zero, this implies that the local ring $A_{\underline{x}}$ has $\varpi$-torsion, in contradiction to what we have seen above.

Thus we have proved that $\mathcal{O}_{\underline{x}}$ is a valuation ring, and the above argument also showed that every non-zero element of $\mathcal{O}_{\underline{x}}$ divides some power $\varpi^{n}$ of $\varpi$. Therefore, the Tate ring $K_{\underline{x}}=\mathcal{O}_{\underline{x}}[\varpi^{-1}]$ is equal to the fraction field of $\mathcal{O}_{\underline{x}}$ and $\mathcal{O}_{\underline{x}}$ determines a valuation on the field $K_{\underline{x}}$. Note also that we have a canonical continuous map of Tate rings $A[\varpi^{-1}]\to K_{\underline{x}}$ induced by the canonical map \begin{equation*}A\to \varinjlim_{\mathfrak{X}'\in\Bl(\mathfrak{X})}\mathcal{O}_{\mathfrak{X}', x_{\mathfrak{X}'}}\to \mathcal{O}_{\underline{x}}.\end{equation*}Hence we can define an element $v_{\underline{x}}\in X=\Spa(A[\varpi^{-1}], \overline{A})$ from $\underline{x}$ by letting $v_{\underline{x}}$ be the pullback along $A[\varpi^{-1}]\to K_{\underline{x}}$ of the valuation determined by the valuation ring $\mathcal{O}_{\underline{x}}$. 

We want to show that the maps $\spc_{X}$ and $\underline{x}\mapsto v_{\underline{x}}$ are inverse to each other. To this end, let $\underline{x}=(x_{\mathfrak{X}'})_{\mathfrak{X}'}\in \varprojlim_{\mathfrak{X}'}\vert\mathfrak{X}'\vert$ be arbitrary. Since, by definition of $\mathcal{O}_{\underline{x}}$, for every admissible formal blow-up $\mathfrak{X}'\in\Bl(\mathfrak{X})$ the natural map $\mathcal{O}_{\mathfrak{X}',x_{\mathfrak{X}'}}\to \mathcal{O}_{\underline{x}}$ is a local homomorphism of local rings and since the maximal ideal of the valuation ring $\mathcal{O}_{\underline{x}}$ is the subset where the valuation is $<1$, the image $\spc_{X,\mathfrak{X}'}(v_{\underline{x}})$ of $v_{\underline{x}}$ in $\mathfrak{X}'\in\Bl(\mathfrak{X})$ is precisely $x_{\mathfrak{X}'}$. To prove the converse, let $x\in X$ and consider the corresponding images $\spc_{X,\mathfrak{X}'}(x)$ for $\mathfrak{X}'\in\Bl(\mathfrak{X})$. Note that the local ring $A_{\spc_{X}(x)}$ can be written as\begin{equation*}A_{\spc_{X}(x)}=\varinjlim_{\mathfrak{X}'\in\Bl(\mathfrak{X})}\varinjlim_{\spc_{X,\mathfrak{X}'}(x)\in \mathfrak{U}}\mathcal{O}_{\mathfrak{X}'}(\mathfrak{U}),\end{equation*}where the second inductive limit is taken over quasi-compact open neighbourhoods $\mathfrak{U}$ of $\spc_{X,\mathfrak{X}'}(x)$ in $\mathfrak{X}'$. On the other hand, using Lemma \ref{Description of inverse images} and Lemma \ref{Open covers and formal models}, we see that for every quasi-compact open neighbourhood $U$ of $x$ in $X$ there exists an admissible formal blow-up $\mathfrak{X}'\in\Bl(\mathfrak{X})$ and a quasi-compact open neighbourhood $\mathfrak{U}$ of $\spc_{X,\mathfrak{X}'}(x)$ such that $U=\spc_{X,\mathfrak{X}'}^{-1}(\mathfrak{U})=\mathfrak{U}_{\eta}^{\ad}$. In particular, \begin{equation*}\mathcal{O}_{X}(U)=\mathcal{O}_{U}(U)=\mathcal{O}_{\mathfrak{U}_{\eta}^{\ad}}(\mathfrak{U}_{\eta}^{\ad})=\mathcal{O}_{\mathfrak{U}}(\mathfrak{U})[\varpi^{-1}]=\mathcal{O}_{\mathfrak{X}'}(\mathfrak{U})[\varpi^{-1}].\end{equation*}Conversely, for every quasi-compact open neighbourhood $\mathfrak{U}$ of $\spc_{X,\mathfrak{X}'}(x)$ in $\mathfrak{X}'\in\Bl(\mathfrak{X})$ the pre-image $\mathfrak{U}_{\eta}^{\ad}$ of $\mathfrak{U}$ in $X$ is a quasi-compact open neighbourhood of $x$ in $X$. Therefore,\begin{equation*}A_{\spc_{X}(x)}[\varpi^{-1}]=(\varinjlim_{\mathfrak{X}'\in\Bl(\mathfrak{X})}\varinjlim_{\spc_{X,\mathfrak{X}'}(x)\in\mathfrak{U}}\mathcal{O}_{\mathfrak{X}'}(\mathfrak{U}))[\varpi^{-1}]=\varinjlim_{x\in U}\mathcal{O}_{X}(U)=\mathcal{O}_{X,x}.\end{equation*}In this way we see that the canonical surjective ring map $A_{\spc_{X}(x)}\to \mathcal{O}_{\spc_{X}(x)}$ induces a surjective ring map $\mathcal{O}_{X,x}\to K_{\spc_{X}(x)}$, so the field $K_{\spc_{X}(x)}$ is nothing else but the residue field $\kappa(x)$ of the local ring $\mathcal{O}_{X,x}$ of $X$ in $x$. By the same argument, we have \begin{equation*}\varinjlim_{\mathfrak{X}'\in \Bl(\mathfrak{X})}\varinjlim_{\spc_{X,\mathfrak{X}'}(x)\in\mathfrak{U}}\overline{\mathcal{O}_{\mathfrak{X}'}(\mathfrak{U})}=\varinjlim_{\mathfrak{X}'\in\Bl(\mathfrak{X})}\varinjlim_{\spc_{X,\mathfrak{X}'}(x)\in\mathfrak{U}}\mathcal{O}_{X}^{+}(\mathfrak{U}_{\eta}^{\ad})=\varinjlim_{x\in U}\mathcal{O}_{X}^{+}(U)=\mathcal{O}_{X,x}^{+}.\end{equation*}We obtain an integral ring map \begin{equation*}A_{\spc_{X}(x)}=\varinjlim_{\mathfrak{X}'\in\Bl(\mathfrak{X})}\varinjlim_{\spc_{X,\mathfrak{X}'}(x)\in\mathfrak{U}}\mathcal{O}_{\mathfrak{X}'}(\mathfrak{U})\to \mathcal{O}_{X,x}^{+}\end{equation*}which induces an isomorphism upon inverting $\varpi$, and this integral map is injective since its source is $\varpi$-torsion-free. 

We claim that $A_{\spc_{X}(x)}$ is actually integrally closed in $\mathcal{O}_{X,x}=A_{\spc_{X}(x)}[\varpi^{-1}]$ and thus \begin{equation*}A_{\spc_{X}(x)}=\mathcal{O}_{X,x}^{+}\end{equation*}as subrings of $\mathcal{O}_{X,x}$. For this we proceed as in the proof of \cite{FK}, Ch.~0, Proposition 8.7.5(2). First observe that, while proving that $\mathcal{O}_{\underline{x}}$ is a Bézout ring, we actually showed that the stalk \begin{equation*}A_{\spc_{X}(x)}=(\varinjlim_{\mathfrak{X}'\in\Bl(\mathfrak{X})}\mathcal{O}_{\mathfrak{X}'})_{\spc_{X}(x)}=\varinjlim_{\mathfrak{X}'\in\Bl(\mathfrak{X})}\mathcal{O}_{\mathfrak{X}',\spc_{X,\mathfrak{X}'}(x)}\end{equation*}is a $\varpi$-valuative local ring in the sense of Fujiwara and Kato (\cite{FK}, Ch.~0, Def.~8.7.1), i.e., every finitely generated ideal of $A_{\spc_{X}(x)}$ which contains a power of $\varpi$ is generated by a single non-zero-divisor (the same assertion was proved, in slightly different language, by Fujiwara and Kato, see \cite{FK}, Ch.~II, Proposition 3.2.6). Let $c\in A_{\spc_{X}(x)}$ be an element such that $t=\frac{c}{\varpi^{m}}\in\mathcal{O}_{X,x}$ is integral over $A_{\spc_{X}(x)}$ for some integer $m\geq1$. Consider an equation of integral dependence\begin{equation*}f(t)=t^{n}+a_{1}t^{n-1}+\dots+a_{n-1}t+a_{n}=0\end{equation*}of $t$ over $A_{\spc_{X}(x)}$. Since $A_{\spc_{X}(x)}$ is a $\varpi$-valuative local ring, the ideal $(\varpi^{m}, c)_{A_{\spc_{X}(x)}}$ is generated by a single non-zero-divisor $d\in A_{\spc_{X}(x)}$. Since $d$ is a non-zero-divisor, there exists unique elements $\frac{\varpi^{m}}{d}$, $\frac{c}{d}$ of $A_{\spc_{X}(x)}$ such that $d\frac{\varpi^{m}}{d}=\varpi^{m}$ and $d\frac{c}{d}=c$. Then \begin{equation*}(\frac{\varpi^{m}}{d}, \frac{c}{d})_{A_{\spc_{X}(x)}}=(1).\end{equation*}We claim that $\frac{\varpi^{m}}{d}$ is invertible in $A_{\spc_{X}(x)}$, in which case, viewing $t$ as an element of the localization $\mathcal{O}_{X,x}[\frac{1}{d}]$, \begin{equation*}t=\frac{c/d}{\varpi^{m}/d}=(\frac{\varpi^{m}}{d})^{-1}\frac{c}{d}\in A_{\spc_{X}(x)}\end{equation*}and we are done. By way of contradiction, assume that $\frac{\varpi^{m}}{d}$ belongs to the maximal ideal of $A_{\spc_{X}(x)}$. Then the relation $1\in (\frac{\varpi^{m}}{d}, \frac{c}{d})_{A_{\spc_{X}(x)}}$ entails that $\frac{c}{d}$ is a unit in $A_{\spc_{X}(x)}$. But, on the other hand, \begin{equation*}(\frac{\varpi^{m}}{d})^{n}f(\frac{c/d}{\varpi^{m}/d})=(\frac{\varpi^{m}}{d})^{n}f(t)=0,\end{equation*}so $\frac{c^{n}}{d^{n}}\in (\frac{\varpi^{m}}{d})_{A_{\spc_{X}(x)}}$ and thus $\frac{c}{d}$ cannot be a unit, a contradiction. This concludes the proof of the claim that $A_{\spc_{X}(x)}=\mathcal{O}_{X,x}^{+}$.   

Now, $\mathcal{O}_{\spc_{X}(x)}$ is equal to the image of $A_{\spc_{X}(x)}=\mathcal{O}_{X,x}^{+}\subseteq \mathcal{O}_{X,x}$ inside the residue field $K_{\spc_{X}(x)}=\kappa(x)$ of $X$ in $x$. By \cite{ConradNotes}, Proposition 14.3.1, this image is exactly the valuation ring $\kappa(x)^{+}$. Putting everything together we obtain \begin{equation*}\mathcal{O}_{\spc_{X}(x)}=\kappa(x)^{+}.\end{equation*}Consequently, $v_{\spc_{X}(x)}=x$ as valuations on $A[\varpi^{-1}]$. This proves that the maps $\spc_{X}$ and $\underline{x}\mapsto v_{\underline{x}}$ are indeed inverse to each other. 

We now prove that the map $\spc_{X}$ is generalizing. Given $x\in X$ let $\underline{y}$ be a generization of $\spc_{X}(x)$ in $\varprojlim_{\mathfrak{X}'\in\Bl(\mathfrak{X})}\vert\mathfrak{X}'\vert$. Since we have shown that $\spc_{X}$ is bijective, there exists $y\in X$ such that \begin{equation*}\underline{y}=\spc_{X}(y).\end{equation*}Suppose that $U$ is a quasi-compact open subset of $X$ which contains $x$. By Lemma \ref{Open covers and formal models} we find an admissible formal blow-up $\mathfrak{X}'\in \Bl(\mathfrak{X})$ and an open subset $\mathfrak{U}$ of $\mathfrak{X}'$ such that $U$ is the pre-image of $\mathfrak{U}$ in $X$. Since $U$ contains $x$, the open subset $\mathfrak{U}\subseteq \mathfrak{X}'$ contains $\spc_{X,\mathfrak{X}'}(x)$ and the pre-image of $\mathfrak{U}$ in $\varprojlim_{\mathfrak{X}'\in\Bl(\mathfrak{X})}\vert\mathfrak{X}'\vert$ contains $\spc_{X}(x)$. Since $\spc_{X}(y)$ is a generization of $\spc_{X}(x)$, this pre-image also contains $\spc_{X}(y)$. In particular, $\spc_{X,\mathfrak{X}'}(y)\in \mathfrak{U}$ and, consequently, $y\in U$. This shows that $\spc_{X}$ is a generalizing map. Since transition morphisms in $\Bl(\mathfrak{X})$ are morphisms of quasi-compact quasi-separated formal schemes, the underlying continuous maps of topological spaces are spectral maps between spectral spaces. Hence the inverse limit $\varprojlim_{\mathfrak{X}'\in\Bl(\mathfrak{X})}\vert\mathfrak{X}'\vert$ is a spectral space, by \cite{Stacks}, Tag 0A2Z. On the other hand, by \cite{Stacks}, Tag 09XU, any continuous and generalizing bijective map between spectral spaces is a homeomorphism. It follows that the map $\spc_{X}$ is a homeomorphism on the underlying topological spaces. 

It only remains to prove that the morphism of sheaves of rings on $X$\begin{equation*}\spc_{X}^{\ast}(\varinjlim_{\mathfrak{X}'\in\Bl(\mathfrak{X})}\mathcal{O}_{\mathfrak{X}'})\to \mathcal{O}_{X}^{+}\end{equation*}induced by the map $\spc_{X}$ is an isomorphism. For this it suffices to prove that the morphism of sheaves induces isomorphisms on stalks. But we have already seen that, for every $x\in X$, the stalk\begin{equation*}(\varinjlim_{\mathfrak{X}'\in\Bl(\mathfrak{X})}\mathcal{O}_{\mathfrak{X}'})_{\spc_{X}(x)}=\varinjlim_{\mathfrak{X}'\in\Bl(\mathfrak{X})}\mathcal{O}_{\mathfrak{X}',\spc_{X,\mathfrak{X}'}(x)}=A_{\spc_{X}(x)}\end{equation*}is equal to $\mathcal{O}_{X,x}^{+}$.\end{proof}
\begin{rmk}\label{Generic fiber in the non-adic case}Let us caution the reader that the above theorem does not generalize to the case when $\mathfrak{X}$ is not adic over $\Spf(R)$. For a simple counterexample, consider the open unit disk over $\mathbb{Q}_{p}$, viewed as the adic analytic generic fiber $\mathfrak{X}_{\eta}^{\ad}$ (over $\Spa(\mathbb{Q}_{p}, \mathbb{Z}_{p})$) of the rig-sheafy affine adic formal scheme $\mathfrak{X}=\Spf(\mathbb{Z}_{p}[[T]])$ over $\Spf(\mathbb{Z}_{p})$, where $\mathbb{Z}_{p}[[T]]$ is equipped with the $(p, T)$-adic topology. In this case, $\mathfrak{X}_{\eta}^{\ad}$ is not quasi-compact while the inverse limit $\varprojlim_{\mathfrak{X}'\in\Bl(\mathfrak{X})}\vert\mathfrak{X}'\vert$ is a spectral space, by \cite{Stacks}, Tag 0A2Z.\end{rmk}
\begin{rmk}\label{Relation to Fujiwara-Kato}In the work \cite{FK} of Fujiwara and Kato coherent (formal) rigid spaces are defined as objects of the category of qcqs adic formal schemes of finite ideal type localized by admissible formal blow-ups. In particular, every adic formal scheme $\mathfrak{X}$ satisfying the assumptions of Theorem \ref{Generic fiber via formal blow-ups} gives rise to a coherent rigid space $\mathscr{X}$. The locally ringed space\begin{equation*}\varprojlim_{\mathfrak{X}'\in\Bl(\mathfrak{X})}\mathfrak{X}'\end{equation*}appearing on the right-hand side of the isomorphism in Theorem \ref{Generic fiber via formal blow-ups} is studied in Ch.~II.3 of loc.~cit. and is denoted there by $(\langle\mathscr{X}\rangle, \mathcal{O}_{\mathscr{X}}^{\textrm{int}})$, where $\mathscr{X}$ is the coherent (formal) rigid space defined by $\mathfrak{X}$ and where the topological space $\langle\mathscr{X}\rangle$ is called the Zariski-Riemann space of the rigid space $\mathscr{X}$. The sheaf $\mathcal{O}_{\mathscr{X}}^{\textrm{int}}$ is called the integral structure sheaf (\cite{FK}, Ch.~II, Proposition 3.2.15). Note that a key part of the proof of Theorem \ref{Generic fiber via formal blow-ups} was to show (following an idea of Bhatt from \cite{BhattNotes}) that, for every $\underline{x}\in\langle\mathscr{X}\rangle$, the local ring $\mathcal{O}_{\mathscr{X},\underline{x}}^{\textrm{int}}$ is a $\varpi$-valuative ring in the sense of \cite{FK}, Ch.~0, Definition 8.7.1. This is also proved by Fujiwara and Kato, for any coherent rigid space $\mathscr{X}$, in \cite{FK}, Ch.~II, Proposition 3.2.6. 

Fujiwara and Kato also define a so-called rigid structure sheaf $\mathcal{O}_{\mathscr{X}}$ on the Zariski-Riemann space $\langle\mathscr{X}\rangle$ which in our case is just given by $\mathcal{O}_{\mathscr{X}}^{\textrm{int}}[\varpi^{-1}]$, i.e., it corresponds via the isomorphism in Theorem \ref{Generic fiber via formal blow-ups} to the usual structure sheaf of the adic space $\mathfrak{X}_{\eta}^{\ad}$. Fujiwara and Kato call the triple $(\langle\mathscr{X}\rangle, \mathcal{O}_{\mathscr{X}}^{\textrm{int}}, \mathcal{O}_{\mathscr{X}})$ the Zariski-Riemann triple associated with the coherent rigid space $\mathscr{X}$. When the rigid space $\mathscr{X}$ is locally universally Noetherian (a condition analogous to our locally Noetherian condition for adic spaces), it was already observed in Appendix A to loc.~cit., Ch.~II, that the pair $(\mathscr{X}, \mathcal{O}_{\mathscr{X}})$ is an adic space (see \cite{FK}, Ch.~II, Theorem A.5.1). Theorem \ref{Generic fiber via formal blow-ups} can be thought of as extending this result to coherent rigid spaces $\mathscr{X}$ which arise from a locally rig-sheafy qcqs adic formal $R$-scheme (those qcqs adic formal $R$-schemes whose associated rigid spaces satisfy the locally universally Noetherian assumption of Fujiwara and Kato are automatically locally rig-sheafy, by the result of Zavyalov discussed at the beginning of Section 7). As another consequence of Theorem \ref{Generic fiber via formal blow-ups}, our specialization map $\spc_{X,\mathfrak{X}}$ coincides with the specialization map $\spc_{\mathfrak{X}}: \langle\mathscr{X}\rangle\to\mathfrak{X}$ on the Zariski-Riemann space $\langle\mathscr{X}\rangle$ which is defined in \cite{FK}, Ch.~II, 3.1(a). This means, in particular, that the topological results of \cite{FK}, Ch.~II.4, apply to the underlying topological space of any adic space over $\Spa(R[\varpi^{-1}], \overline{R})$ which admits a qcqs formal $R$-model.\end{rmk}   
We record the following immediate consequence of Theorem \ref{Generic fiber via formal blow-ups}.
\begin{cor}\label{Center maps are surjective}Fix a complete adic ring $R$ with ideal of definition generated by a single non-zero-divisor $\varpi$ and suppose that the Tate ring $R[\varpi^{-1}]$ is sheafy. For any locally rig-sheafy, quasi-compact quasi-separated adic formal $R$-scheme $\mathfrak{X}$ and any $\varpi$-torsion-free admissible formal blow-up $\mathfrak{X}'\in\Bl(\mathfrak{X})$, the specialization map $\spc_{X_{\eta}^{\ad},\mathfrak{X}'}: \mathfrak{X}_{\eta}^{\ad}\to \mathfrak{X}'$ is surjective.\end{cor}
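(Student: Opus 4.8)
The plan is to eliminate the blow-up, reduce to an affine statement about the ordinary specialization map, and then construct an explicit preimage of each point of the special fibre by a valuation-theoretic domination argument.

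First I would use Lemma \ref{Admissible formal blow-ups and generic fiber}: the canonical morphism $\mathfrak{X}_{\eta}^{\ad}\to\mathfrak{X}_{\eta}'^{\ad}$ is an isomorphism, and (by the remark following that lemma) under it $\spc_{\mathfrak{X}_{\eta}^{\ad},\mathfrak{X}'}$ becomes $\spc_{\mathfrak{X}_{\eta}'^{\ad},\mathfrak{X}'}$. Since $\mathfrak{X}'$ is itself a $\varpi$-torsion-free, qcqs, locally rig-sheafy adic formal $R$-scheme, it suffices to prove: for every such $\mathfrak{Y}$, the morphism $\spc_{\mathfrak{Y}_{\eta}^{\ad},\mathfrak{Y}}\colon\mathfrak{Y}_{\eta}^{\ad}\to\mathfrak{Y}$ is surjective. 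By Lemma \ref{Generic fibers and open immersions} one has $\spc_{\mathfrak{Y}_{\eta}^{\ad},\mathfrak{Y}}^{-1}(\mathfrak{V})=\mathfrak{V}_{\eta}^{\ad}$ and $\spc_{\mathfrak{Y}_{\eta}^{\ad},\mathfrak{Y}}\vert_{\mathfrak{V}_{\eta}^{\ad}}=\spc_{\mathfrak{V}_{\eta}^{\ad},\mathfrak{V}}$ for rig-sheafy affine open $\mathfrak{V}\subseteq\mathfrak{Y}$, so I may take $\mathfrak{Y}=\Spf(A)$ with $A$ a $\varpi$-torsion-free rig-sheafy complete adic $R$-algebra; being adic over $\Spf(R)$, $A$ carries the $\varpi$-adic topology, whence $\mathfrak{Y}_{\eta}^{\ad}=\Spa(A[\varpi^{-1}],\overline{A})$ while $\vert\mathfrak{Y}\vert$ is the subspace $\mathcal{V}(\varpi)\subseteq\Spec(A)$. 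It remains to show that every prime $\mathfrak{p}\subseteq A$ containing $\varpi$ is the center of some point of $\Spa(A[\varpi^{-1}],\overline{A})$.

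To build such a point: since $A$ is $\varpi$-torsion-free, $\varpi$ is a nonzero-divisor in the nonzero local ring $A_{\mathfrak{p}}$, so $A_{\mathfrak{p}}[\varpi^{-1}]\neq 0$ and there is a minimal prime $\mathfrak{q}\subseteq A_{\mathfrak{p}}$ with $\varpi\notin\mathfrak{q}$; thus $A_{\mathfrak{p}}/\mathfrak{q}$ is a local domain with $\varpi$ nonzero in it. Dominate it by a valuation ring $V$ of its fraction field and pass to $\overline{V}=V/\bigcap_{n\geq1}\varpi^{n}V$; a short check gives that $\bigcap_{n\geq1}\varpi^{n}V$ is a prime of $V$ not containing $\varpi$, that $\overline{V}$ is again a valuation ring, that it is $\varpi$-adically separated, and that $\varpi$ stays nonzero in $\overline{V}$. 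The composite $A\to A_{\mathfrak{p}}\to A_{\mathfrak{p}}/\mathfrak{q}\to V\to\overline{V}$ is a local homomorphism into a valuation ring, hence a valuation $v$ on $A$ with $v\leq1$ on $A$, with $v(\varpi)\neq0$, and with center exactly $\mathfrak{p}$ (the composite factors through $A_{\mathfrak{p}}$, so units of $A_{\mathfrak{p}}$ go to units of $\overline{V}$ and $\mathfrak{m}_{A_{\mathfrak{p}}}$ goes into $\mathfrak{m}_{\overline{V}}$); $\varpi$-adic separatedness of $\overline{V}$ makes $v$ continuous for the $\varpi$-adic topology on $A$. Extending $v$ to $A[\varpi^{-1}]$ (possible as $v(\varpi)\neq0$) and observing that $v\leq1$ on $A$ forces $v\leq1$ on the integral closure $\overline{A}$, we get $x\in\Spa(A[\varpi^{-1}],\overline{A})=\mathfrak{Y}_{\eta}^{\ad}$ with $\spc_{\mathfrak{Y}_{\eta}^{\ad},\mathfrak{Y}}(x)=\{a\in A:\vert a(x)\vert<1\}=\mathfrak{p}$, as required.

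The main obstacle is the continuity of $v$: a valuation ring merely dominating $A_{\mathfrak{p}}/\mathfrak{q}$ can have $v(\varpi)$ fail to be topologically nilpotent in its value group, so $v$ need not be continuous for the $\varpi$-adic topology; passing to $\overline{V}=V/\bigcap_{n}\varpi^{n}V$ is exactly the fix, and the subtle points are checking that this quotient is still a valuation ring, is $\varpi$-adically separated, keeps $\varpi$ nonzero, and that the quotient map stays local (so the center remains $\mathfrak{p}$ and not a larger prime). An alternative that sidesteps this local construction goes through Theorem \ref{Generic fiber via formal blow-ups}: $\spc_{\mathfrak{X}_{\eta}^{\ad},\mathfrak{X}'}$ is the composite of the homeomorphism $\spc_{\mathfrak{X}_{\eta}^{\ad}}$ with the projection $\varprojlim_{\mathfrak{Y}\in\Bl(\mathfrak{X})}\vert\mathfrak{Y}\vert\to\vert\mathfrak{X}'\vert$; restricting to the cofinal subsystem of $\varpi$-torsion-free admissible blow-ups dominating $\mathfrak{X}'$ (so $\mathfrak{X}'$ is terminal) and using that such a blow-up is surjective on underlying spaces — being proper with image containing the dense open locus where $\varpi$ is invertible — together with surjectivity of projections in cofiltered limits of nonempty spectral spaces along surjective spectral maps, one concludes again.
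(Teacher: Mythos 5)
Your main argument is correct, but it takes a genuinely different route from the paper. The paper's proof runs through Theorem \ref{Generic fiber via formal blow-ups}: since $\mathfrak{X}_{\eta}^{\ad}\cong\varprojlim_{\mathfrak{X}''\in\Bl(\mathfrak{X})}\vert\mathfrak{X}''\vert$ and $\varpi$-torsion-free blow-ups are cofinal, it suffices to show that any admissible formal blow-up of a $\varpi$-torsion-free, qcqs, locally rig-sheafy adic formal $R$-scheme is surjective, which is done by passing to the scheme-theoretic blow-up, observing it is proper with dense image (it is an isomorphism over $\Spec(A[\varpi^{-1}])$, which is dense by torsion-freeness), hence surjective, and then reducing mod $\varpi$ -- in other words, exactly your ``alternative'' sketch, with the cofiltered-limit-of-spectral-spaces bookkeeping left implicit. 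Your primary argument instead bypasses Theorem \ref{Generic fiber via formal blow-ups} entirely: after identifying $\spc_{\mathfrak{X}_{\eta}^{\ad},\mathfrak{X}'}$ with $\spc_{\mathfrak{X}_{\eta}'^{\ad},\mathfrak{X}'}$ and localizing, you construct a continuous valuation centered at a given open prime $\mathfrak{p}\supseteq(\varpi)$ by dominating $A_{\mathfrak{p}}/\mathfrak{q}$ by a valuation ring $V$ and passing to $V/\bigcap_{n}\varpi^{n}V$; the deferred checks (that $\bigcap_{n}\varpi^{n}V$ is a prime not containing $\varpi$, that the quotient of a valuation ring by a prime is again a valuation ring with local quotient map, and that $\varpi$-adic separatedness is equivalent to cofinality of $v(\varpi)$ in the value group, which is what continuity on the $\varpi$-adic ring $A$ and on $A[\varpi^{-1}]$ actually requires) are all standard and go through. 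This is more elementary and self-contained, and is close in spirit to the affinoid-case arguments the author surveys in Remark \ref{History of the specialization map}, whereas the paper's route leans on the global limit description (and gets the spectrality statements as a by-product). One small citation point: the remark following Lemma \ref{Admissible formal blow-ups and generic fiber} is stated under a $\varpi$-torsion-freeness hypothesis on $\mathfrak{X}$, which the corollary does not impose; but the identification you need follows in general from the last assertion of Lemma \ref{Description of inverse images} applied over a rig-sheafy affine cover of $\mathfrak{X}'$ (only $\mathfrak{X}'$ needs to be $\varpi$-torsion-free, matching the corollary's hypotheses), so your reduction stands as written.
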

\begin{proof}Note that $\varpi$-torsion-free formal schemes in $\Bl(\mathfrak{X})$ form a cofinal system in the cofiltered category $\Bl(\mathfrak{X})$. In view of Theorem \ref{Generic fiber via formal blow-ups}, it suffices to prove that every admissible formal blow-up $\mathfrak{X}'\to \mathfrak{X}$ of a locally rig-sheafy, quasi-compact quasi-separated, $\varpi$-torsion-free adic formal $R$-scheme $\mathfrak{X}$ is surjective. For this we may assume that $\mathfrak{X}=\Spf(A)$ for some rig-sheafy $\varpi$-torsion-free and $\varpi$-adically complete $R$-algebra $A$. In this case the admissible formal blow-up $\mathfrak{X}'\to \mathfrak{X}$ in a finitely generated open ideal $J$ of $A$ is the completion of the scheme-theoretic blow-up $\pi_{J}: X'\to \Spec(A)$ of $\Spec(A)$ in $J$. The blow-up $\pi_{J}$ restricts to an isomorphism of schemes $X'\setminus\mathcal{V}(J\mathcal{O}_{X'})\tilde{\to}\Spec(A)\setminus\mathcal{V}(J)$. In particular, $\pi_{J}$ contains the basic open subset $\Spec(A[\varpi^{-1}])=\Spec(A)\setminus\mathcal{V}(\varpi)$ in its image. Since $A$ is $\varpi$-torsion-free, $\Spec(A[\varpi^{-1}])$ is dense in $\Spec(A)$, so $\pi_{J}$ has dense image. Since $\pi_{J}$ is proper, this means that $\pi_{J}$ is surjective. But then also the base change $\mathfrak{X}'_{0}=X'\times_{\Spec(A)}\Spec(A/(\varpi))\to \Spec(A/(\varpi))=\mathfrak{X}_{0}$ is surjective, so $\mathfrak{X}'\to \mathfrak{X}$ is surjective.\end{proof}
\begin{rmk}\label{History of the specialization map}The above corollary is not essentially new, with variants and special cases scattered across the literature on nonarchimedean analytic geometry. In the special case $\mathfrak{X}=\Spf(A^{\circ})$, where $A$ is an affinoid algebra in the sense of Tate over a nonarchimedean field $K$, it was already proved by Tate in \cite{Tate}, Theorem 6.4, that the restriction of $\spc_{X,\mathfrak{X}}$ to the set of rigid points $\Sp(A)$ of $\Spa(A, A^{\circ})$ contains every closed point of $\Spf(A^{\circ})$ in its image. In \cite{Berkovich}, Proposition 2.4.4, Berkovich proved that in this case also every generic point of $\mathfrak{X}=\Spf(A^{\circ})$ is in the image of $\spc_{X,\mathfrak{X}}$ (more precisely, is the image under $\spc_{X,\mathfrak{X}}$ of a rank $1$ point of $X$). For $X$ a not necessarily affinoid rigid space over $K$ and $\mathfrak{X}$ a flat formal $K^{\circ}$-model topologically of finite type (equivalently, topologically of finite presentation), surjectivity of $\spc_{X,\mathfrak{X}}$ onto the set of closed points of $\mathfrak{X}$ appears as Proposition 3.5 in \cite{BL1} and as Proposition 1.1.5 in Berthelot's work \cite{Berthelot96}. Finally, when $X=\Spa(A, A^{+})$ and $\mathfrak{X}'=\mathfrak{X}=\Spf(A^{+})$ for some Tate Huber pair $(A, A^{+})$ the corollary is an immediate consequence of \cite{BhattNotes}, Theorem 8.1.2 (of which our Theorem \ref{Generic fiber via formal blow-ups} is a global variant), as was observed, for example, in \cite{Gleason22}, Proposition 4.2, and in Proposition 2.13.6 of the lecture notes \cite{Fargues22} of Fargues. The general statement of Corollary \ref{Center maps are surjective} can be deduced from this affinoid case by using Lemma \ref{Admissible formal blow-up and affine opens} and Lemma \ref{Description of inverse images} to reduce to the rig-sheafy, affine case and then using the fact that $\Spf(\overline{\mathcal{O}_{\mathfrak{X}}(\mathfrak{X})})\to \Spf(\mathcal{O}_{\mathfrak{X}}(\mathfrak{X}))$ is surjective and closed, since $\mathcal{O}_{\mathfrak{X}}(\mathfrak{X})\hookrightarrow \overline{\mathcal{O}_{\mathfrak{X}}(\mathfrak{X})}$ is integral.\end{rmk}
Combining Corollary \ref{Center maps are surjective} with Lemma \ref{Open covers and formal models} we obtain the following result.
\begin{cor}\label{Open covers and formal models 2}Let $R$ be a complete adic ring with ideal of definition generated by a single non-zero-divisor $\varpi$ and suppose that $R[\varpi^{-1}]$ is a sheafy Tate ring. Let $\mathfrak{X}$ be a locally rig-sheafy quasi-compact quasi-separated adic formal $R$-scheme and let $(U_{i})_{i}$ be a finite open cover of $X=\mathfrak{X}_{\eta}^{\ad}$ by quasi-compact open subsets. Then there exists a $\varpi$-torsion-free admissible formal blow-up $\mathfrak{X}'\to\mathfrak{X}$ of $\mathfrak{X}$ and a finite open cover $(\mathfrak{U}_{i})_{i}$ of $\mathfrak{X}'$ by quasi-compact open formal subschemes such that $\spc_{X,\mathfrak{X}'}^{-1}(\mathfrak{U}_{i})=U_{i}$ for all $i$. If all $U_{i}$ are affinoid, the $\mathfrak{U}_{i}$ can be chosen to be affine and rig-sheafy.\end{cor}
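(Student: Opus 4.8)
The plan is to combine Lemma~\ref{Open covers and formal models} with the surjectivity statement of Corollary~\ref{Center maps are surjective}; no new idea is needed. First I would apply Lemma~\ref{Open covers and formal models} to the finite family $(U_i)_i$ of quasi-compact open subsets of $X=\mathfrak{X}_\eta^{\ad}$. This produces a $\varpi$-torsion-free admissible formal blow-up $\mathfrak{X}'\to\mathfrak{X}$ together with quasi-compact open formal subschemes $\mathfrak{U}_i\subseteq\mathfrak{X}'$ with $\spc_{X,\mathfrak{X}'}^{-1}(\mathfrak{U}_i)=U_i$ for every $i$, and with each $\mathfrak{U}_i$ affine whenever the corresponding $U_i$ is affinoid. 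Two points then remain: that the family $(\mathfrak{U}_i)_i$ actually covers $\mathfrak{X}'$, and that in the affinoid case the $\mathfrak{U}_i$ may be taken rig-sheafy as well as affine.

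For the covering property, observe that since $(U_i)_i$ covers $X$,
\[
\bigcup_i\mathfrak{U}_i\ \supseteq\ \bigcup_i\spc_{X,\mathfrak{X}'}\big(\spc_{X,\mathfrak{X}'}^{-1}(\mathfrak{U}_i)\big)\ =\ \bigcup_i\spc_{X,\mathfrak{X}'}(U_i)\ =\ \spc_{X,\mathfrak{X}'}\Big(\bigcup_i U_i\Big)\ =\ \spc_{X,\mathfrak{X}'}(X),
\]
so the open subset $\bigcup_i\mathfrak{U}_i$ of $\mathfrak{X}'$ contains the whole image of the specialization map $\spc_{X,\mathfrak{X}'}\colon X\to\mathfrak{X}'$. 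Since $\mathfrak{X}'$ is $\varpi$-torsion-free, Corollary~\ref{Center maps are surjective} gives that $\spc_{X,\mathfrak{X}'}$ is surjective; hence $\bigcup_i\mathfrak{U}_i=\mathfrak{X}'$, i.e.\ $(\mathfrak{U}_i)_i$ is a finite open cover of $\mathfrak{X}'$ by quasi-compact open formal subschemes with $\spc_{X,\mathfrak{X}'}^{-1}(\mathfrak{U}_i)=U_i$.

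For the last assertion I would use that every admissible formal blow-up of a locally rig-sheafy adic formal $R$-scheme is again locally rig-sheafy — this is extracted from the proof of Lemma~\ref{Admissible formal blow-ups and generic fiber}, where the affine charts $A\langle\frac{f_1,\dots,f_r}{f_i}\rangle$ of an admissible formal blow-up of a rig-sheafy $\Spf(A)$ were shown to be rig-sheafy over $(R,\varpi)$. In particular $\mathfrak{X}'$ is locally rig-sheafy, and the affine open subschemes $\mathfrak{U}_i=\Spf(B_i)$ produced by Lemma~\ref{Open covers and formal models} in the affinoid case are, up to the bookkeeping of loc.\ cit., formal spectra of exactly such completed affine blow-up algebras; alternatively one notes that $\mathfrak{U}_i$ is adic over $R$ (being open in the adic formal $R$-scheme $\mathfrak{X}'$) with $\mathcal{O}_X(U_i)=B_i[\varpi^{-1}]$ sheafy, so $B_i$ is rig-sheafy by the last sentence of Lemma~\ref{Rig-sheafiness does not depend on generators}. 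This finishes the proof.

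The argument is essentially bookkeeping, so there is no real obstacle; the one substantive input beyond Lemma~\ref{Open covers and formal models} is the surjectivity of the specialization map onto a $\varpi$-torsion-free admissible formal blow-up, which is the content of Corollary~\ref{Center maps are surjective} and ultimately rests on Theorem~\ref{Generic fiber via formal blow-ups}.
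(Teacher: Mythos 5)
Your proposal is correct and is essentially the paper's own argument: the paper obtains this corollary precisely by combining Lemma \ref{Open covers and formal models} with the surjectivity of $\spc_{X,\mathfrak{X}'}$ from Corollary \ref{Center maps are surjective}, exactly as you do. Your extra remark justifying rig-sheafiness of the affine pieces (e.g.\ via $\mathcal{O}_X(U_i)=B_i[\varpi^{-1}]$ being sheafy and the last sentence of Lemma \ref{Rig-sheafiness does not depend on generators}) is a harmless elaboration of what the paper leaves implicit.
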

\begin{rmk}We observe that the above corollary (along with Lemma \ref{Local explicit description}) also yields a new proof of \cite{Kedlaya-Liu}, Lemma 2.4.19, for sheafy complete Tate rings.\end{rmk}
Let us also record a few other topological consequences of Theorem \ref{Generic fiber via formal blow-ups}.
\begin{cor}\label{The specialization map is closed}For every quasi-compact quasi-separated locally rig-sheafy adic formal $R$-scheme $\mathfrak{X}$ with adic analytic generic fiber $X$ over $(R, \varpi)$ the specialization map $\spc_{X,\mathfrak{X}}: \vert X\vert\to \vert\mathfrak{X}\vert$ is closed.\end{cor}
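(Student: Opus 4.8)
The plan is to deduce this from Theorem \ref{Generic fiber via formal blow-ups} together with a compactness argument for cofiltered limits. First I would invoke two standard facts about spectral spaces: a spectral map $f\colon X'\to Y'$ between spectral spaces is closed if and only if it is \emph{specializing}, i.e.\ for every $x'\in X'$ and every $y'\in\overline{\{f(x')\}}$ there is $x''\in\overline{\{x'\}}$ with $f(x'')=y'$; the point is that the image of a pro-constructible set under a spectral map is pro-constructible and that a pro-constructible subset of a spectral space is closed exactly when it is stable under specialization. Since $\mathfrak{X}$ is qcqs, $|\mathfrak{X}|$ is spectral, and by the last assertion of Theorem \ref{Generic fiber via formal blow-ups} the map $\spc_{X,\mathfrak{X}}$ is a spectral map of spectral spaces; so it is enough to check that $\spc_{X,\mathfrak{X}}$ is specializing. (If $\varpi^{n}\mathcal{O}_{\mathfrak{X}}=0$ for some $n$, then $|X|=\emptyset$ by Lemma \ref{Empty generic fiber} and there is nothing to prove, so assume otherwise.)

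Next I would pass to the inverse-limit description. By Theorem \ref{Generic fiber via formal blow-ups}, $\spc_{X,\mathfrak{X}}$ is the composition of the homeomorphism $\spc_{X}\colon|X|\xrightarrow{\sim}\varprojlim_{\mathfrak{X}'\in\Bl(\mathfrak{X})}|\mathfrak{X}'|$ with the projection $\pi_{\mathfrak{X}}$ to the component indexed by the terminal object $\mathfrak{X}=(\id\colon\mathfrak{X}\to\mathfrak{X})$ of $\Bl(\mathfrak{X})$; since $\spc_{X}$ is a homeomorphism, it suffices to show $\pi_{\mathfrak{X}}$ is specializing. Here I would record that every admissible formal blow-up $\mathfrak{X}'\to\mathfrak{X}$, being a proper adic morphism, induces a \emph{closed} (and spectral) map $|\mathfrak{X}'|\to|\mathfrak{X}|$, because this map coincides with the underlying map of the proper morphism of schemes $\mathfrak{X}'_{0}\to\mathfrak{X}_{0}$; the same holds for every transition morphism in $\Bl(\mathfrak{X})$, since a composition of admissible formal blow-ups of a qcqs formal scheme is again one (\cite{FK}, Ch.~II, Proposition 1.1.10).

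To see $\pi_{\mathfrak{X}}$ is specializing, fix $\underline{x}=(x_{\mathfrak{X}'})_{\mathfrak{X}'}\in\varprojlim_{\mathfrak{X}'}|\mathfrak{X}'|$ and $\mathfrak{y}\in\overline{\{x_{\mathfrak{X}}\}}$, and write $f_{\mathfrak{X}'}\colon|\mathfrak{X}'|\to|\mathfrak{X}|$ for the blow-up map. For each $\mathfrak{X}'$ put $T_{\mathfrak{X}'}=\overline{\{x_{\mathfrak{X}'}\}}\cap f_{\mathfrak{X}'}^{-1}(\mathfrak{y})$. Since $f_{\mathfrak{X}'}$ is closed, $f_{\mathfrak{X}'}(\overline{\{x_{\mathfrak{X}'}\}})$ is a closed set containing $x_{\mathfrak{X}}$, hence containing $\overline{\{x_{\mathfrak{X}}\}}\ni\mathfrak{y}$; so each $T_{\mathfrak{X}'}$ is non-empty, and the transition maps carry $T_{\mathfrak{X}'}$ into $T_{\mathfrak{X}''}$ by compatibility of the $f$'s and continuity. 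Each $T_{\mathfrak{X}'}$ is pro-constructible in $|\mathfrak{X}'|$ (a closed set intersected with the pro-constructible set $f_{\mathfrak{X}'}^{-1}(\mathfrak{y})$), hence closed, hence compact, in the constructible topology, and the transition maps are continuous for the constructible topologies because they are spectral. As $\Bl(\mathfrak{X})$ is cofiltered and essentially small, $\varprojlim_{\mathfrak{X}'}T_{\mathfrak{X}'}$ is a cofiltered limit of non-empty compact Hausdorff spaces, hence non-empty; any point $\underline{a}=(a_{\mathfrak{X}'})$ of it satisfies $a_{\mathfrak{X}'}\in\overline{\{x_{\mathfrak{X}'}\}}$ for all $\mathfrak{X}'$, i.e.\ $\underline{a}\in\overline{\{\underline{x}\}}$, and $a_{\mathfrak{X}}=\mathfrak{y}$, which is what was required.

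The main obstacle is precisely the step just carried out: passing from fibrewise solvability (each $T_{\mathfrak{X}'}\neq\emptyset$, which uses only that a proper morphism is closed) to a compatible choice across the whole cofiltered system $\Bl(\mathfrak{X})$, which forces one to work with the constructible topology and the compactness of cofiltered limits of compact Hausdorff spaces. Two variants are possible: one can avoid the ``spectral $\Leftrightarrow$ specializing'' reduction and argue directly that for closed $Z'\subseteq\varprojlim_{\mathfrak{X}'}|\mathfrak{X}'|$ one has $\pi_{\mathfrak{X}}(Z')=\overline{q_{\mathfrak{X}}(Z')}$, running the same compactness argument with $T_{\mathfrak{X}'}=\overline{q_{\mathfrak{X}'}(Z')}\cap f_{\mathfrak{X}'}^{-1}(c)$ for $c\in\overline{q_{\mathfrak{X}}(Z')}$; or one can simply cite the topological results of Fujiwara--Kato on Zariski--Riemann spaces (\cite{FK}, Ch.~II.4), available here through Remark \ref{Relation to Fujiwara-Kato}.
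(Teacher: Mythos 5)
Your proposal is correct and follows essentially the same route as the paper, whose proof is the one-line deduction from Theorem \ref{Generic fiber via formal blow-ups} together with the fact that admissible formal blow-ups are proper, hence closed; you simply make explicit the cofiltered-limit/constructible-topology compactness step (closed $\Leftrightarrow$ specializing for spectral maps, non-emptiness of $\varprojlim T_{\mathfrak{X}'}$) that the paper leaves implicit, and which is also available via the Fujiwara--Kato results cited in Remark \ref{Relation to Fujiwara-Kato}. One small remark: your justification that the transition morphisms of $\Bl(\mathfrak{X})$ are closed (``a composition of admissible formal blow-ups is again one'') is not quite the right statement, but your argument never actually uses closedness of the transition maps---only their continuity and spectrality---so nothing is affected.
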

\begin{proof}Follows from Theorem \ref{Generic fiber via formal blow-ups} and the fact that admissible formal blow-ups are proper and, in particular, closed. \end{proof}
\begin{cor}\label{Connectedness and generic fiber}Let $\mathfrak{X}$ be a quasi-compact quasi-separated locally rig-sheafy adic formal $R$-scheme and let $X$ be the generic fiber of $\mathfrak{X}$ over $(R, \varpi)$. If $X$ is connected, then $\mathfrak{X}$ is connected.\end{cor}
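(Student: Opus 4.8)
The plan is to realize $\mathfrak{X}$ as a continuous image of the connected space $X$ under a specialization map, and then to invoke the elementary fact that a continuous image of a connected space is connected. The one point requiring care is that this really uses a $\varpi$-torsion-free formal model: if $\mathfrak{X}$ had a nonempty connected component $\mathfrak{V}$ annihilated by a power of $\varpi$ — so that $\mathfrak{V}_{\eta}^{\ad}=\emptyset$ by Lemma~\ref{Empty generic fiber} — then $\mathfrak{X}$ could be disconnected while $X$ is connected. So the argument below proves the corollary for $\varpi$-torsion-free $\mathfrak{X}$ (the case of interest), and more generally whenever every connected component of $\mathfrak{X}$ has nonempty generic fiber.

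First I would dispose of the case in which $\mathfrak{X}$ is itself $\varpi$-torsion-free. Then $\mathfrak{X}$ is a $\varpi$-torsion-free object of $\Bl(\mathfrak{X})$ (the identity of $\mathfrak{X}$ being the admissible formal blow-up in the unit ideal $\mathcal{O}_{\mathfrak{X}}$), so Corollary~\ref{Center maps are surjective} applies with $\mathfrak{X}'=\mathfrak{X}$ and shows that the specialization map $\spc_{X,\mathfrak{X}}\colon X\to\mathfrak{X}$ is surjective; it is moreover continuous, this having been part of its construction in Section~\ref{sec:generic fiber} (see also the last assertion of Lemma~\ref{Description of inverse images}). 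Hence $\mathfrak{X}=\spc_{X,\mathfrak{X}}(X)$ is connected. For general $\mathfrak{X}$ I would run the same argument with a $\varpi$-torsion-free admissible formal blow-up $\mathfrak{X}'\to\mathfrak{X}$ in place of $\mathfrak{X}$ — such a blow-up exists (blow up $\varpi\mathcal{O}_{\mathfrak{X}}$, as in the proof of Lemma~\ref{Open covers and formal models}), and ${\mathfrak{X}'}_{\eta}^{\ad}\cong X$ stays connected by Lemma~\ref{Admissible formal blow-ups and generic fiber} — deduce that $\mathfrak{X}'$ is connected, and descend to $\mathfrak{X}$ using that $\mathfrak{X}'\to\mathfrak{X}$ is surjective under the proviso above.

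A more self-contained variant, which I would likely present instead, argues by contradiction and avoids blow-ups: suppose $\mathfrak{X}=\mathfrak{U}\sqcup\mathfrak{V}$ with $\mathfrak{U},\mathfrak{V}$ nonempty open (hence closed) formal subschemes. Since the generic fiber functor is built by gluing over affine opens (Lemma~\ref{Generic fibers and open immersions}, Lemma~\ref{Generic fiber via gluing}) and is visibly compatible with disjoint unions, one has $X=\mathfrak{U}_{\eta}^{\ad}\sqcup\mathfrak{V}_{\eta}^{\ad}$ as a disjoint union of open subspaces of $X$. Connectedness of $X$ forces one summand, say $\mathfrak{V}_{\eta}^{\ad}$, to be empty, and Lemma~\ref{Empty generic fiber} together with the $\varpi$-torsion-freeness of $\mathfrak{X}$ then gives $\mathfrak{V}=\emptyset$, a contradiction. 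I do not expect a genuine obstacle along either route: the substantive inputs — surjectivity of the specialization map, respectively the characterization of empty generic fibers — are already available as Corollary~\ref{Center maps are surjective} and Lemma~\ref{Empty generic fiber}. The only thing to watch is the hypothesis ruling out nonempty $\varpi$-power-torsion connected components, which is exactly what makes connectedness of the generic fiber propagate back to the formal model.
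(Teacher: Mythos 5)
Your first route is, in the $\varpi$-torsion-free case, exactly the paper's proof: the paper dispatches the corollary in one line by observing that $\spc_{X,\mathfrak{X}}$ is continuous and surjective, citing Corollary \ref{Center maps are surjective}, so a connected $X$ has connected image $\mathfrak{X}$. Your caveat about $\varpi$-torsion is not a defect of your argument but a point where you are more careful than the printed text: Corollary \ref{Center maps are surjective} asserts surjectivity of $\spc_{X,\mathfrak{X}'}$ only for $\varpi$-torsion-free $\mathfrak{X}'\in\Bl(\mathfrak{X})$, so invoking it with $\mathfrak{X}'=\mathfrak{X}$, as the paper's proof does, tacitly assumes $\mathfrak{X}$ itself $\varpi$-torsion-free, a hypothesis absent from the statement of Corollary \ref{Connectedness and generic fiber}; and, as you indicate, it cannot simply be dropped --- for instance $\mathfrak{X}=\Spf(\mathbb{Z}_p)\sqcup\Spf(\mathbb{F}_p)$ is a disconnected qcqs locally rig-sheafy adic formal $\mathbb{Z}_p$-scheme whose generic fiber $\Spa(\mathbb{Q}_p,\mathbb{Z}_p)$ is connected, the torsion component having empty generic fiber by Lemma \ref{Empty generic fiber}. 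Your reduction of the general case to a $\varpi$-torsion-free admissible blow-up correctly isolates where the descent to $\mathfrak{X}$ can break (surjectivity of $\mathfrak{X}'\to\mathfrak{X}$ fails precisely on torsion components), and your second, blow-up-free variant via a disjoint open decomposition plus Lemma \ref{Empty generic fiber} is also valid under the same proviso and is arguably the cleanest formulation. In short: no gap in your proposal; the only discrepancy with the paper is that the corollary as stated should carry the $\varpi$-torsion-freeness hypothesis (or your weaker condition that every connected component have nonempty generic fiber), which is exactly what you flag.
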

\begin{proof}If $X$ is connected, then $\mathfrak{X}$ is connected since $\spc_{X,\mathfrak{X}}$ is continuous and surjective, by Corollary \ref{Center maps are surjective}. \end{proof}

\section{Normalized formal blow-ups}\label{sec:normalized formal blow-ups}

In this section we introduce and study the notion of normalized formal blow-ups, which is necessary for the construction of formal $R$-models of general uniform qcqs adic spaces over $\Spa(R[\varpi^{-1}], \overline{R})$. We begin by recalling the following special case of \cite{FK}, Ch.~I, Def.~3.1.3.
\begin{mydef}[Fujiwara-Kato]\label{Adically quasi-coherent sheaf}Let $\mathfrak{X}$ be an adic formal scheme of finite ideal type and let $\mathcal{I}$ be an ideal of definition of finite type on $\mathfrak{X}$. A sheaf of $\mathcal{O}_{\mathfrak{X}}$-modules $\mathcal{F}$ is called \textit{adically quasi-coherent}~if it is complete, i.e.\begin{equation*}\mathcal{F}=\varprojlim_{k}\mathcal{F}/\mathcal{I}^{k}\mathcal{F},\end{equation*}and if the sheaf of $\mathcal{O}_{\mathfrak{X}}/\mathcal{I}^{k+1}\mathcal{O}_{\mathfrak{X}}$-modules $\mathcal{F}/\mathcal{I}^{k+1}\mathcal{F}$ is a quasi-coherent sheaf on the scheme $\mathfrak{X}_{k}$ for all $k\geq 0$.\end{mydef}
\begin{rmk}\label{Completeness does not depend on an ideal of definition}By \cite{FK}, Ch.~I, \S3.1, discussion preceding Definition 3.1.1, and by loc.~cit., Ch.~I, Lemma 3.1.2, the conditions defining the notion of an adically quasi-coherent sheaf do not depend on the choice of an ideal of definition of finite type.\end{rmk}
For an adically quasi-coherent sheaves of algebras $\mathcal{A}$ on an adic formal scheme of finite ideal type $\mathfrak{S}$, there is a notion of relative formal spectrum $\Spf(\mathcal{A})\to \mathfrak{S}$ similar to the notion of relative spectrum of a quasi-coherent algebra on a scheme. Moreover, similar to the scheme case, all affine morphisms of adic formal schemes of finite ideal type $f: \mathfrak{X}\to \mathfrak{S}$ can be characterized as morphisms of the form $\Spf(\mathcal{A})\to \mathfrak{S}$, namely, for any affine morphism $f: \mathfrak{X}\to \mathfrak{S}$ we can choose $\mathcal{A}=f_{\ast}\mathcal{O}_{\mathfrak{X}}$, see \cite{FK}, Ch.~I, Theorem 4.1.8.  
 \begin{prop}\label{Quasi-coherent algebras and formal models}Fix a complete adic ring $R$ with a non-zero-divisor $\varpi\in R$ generating an ideal of definition of $R$ and suppose that the Tate ring $R[\varpi^{-1}]$ is sheafy. Let $\mathfrak{S}$ be a locally rig-sheafy $\varpi$-torsion-free quasi-compact quasi-separated adic formal $R$-scheme with adic analytic generic fiber $S=\mathfrak{S}_{\eta}^{\ad}$ over $\Spa(R[\varpi^{-1}], \overline{R})$. For every admissible formal blow-up $\mathfrak{S}'\to \mathfrak{S}$ of $\mathfrak{S}$, the direct image sheaf $\spc_{S,\mathfrak{S}'\ast}\mathcal{O}_{S}^{+}$ is an adically quasi-coherent sheaf of algebras on $\mathfrak{S}'$.\end{prop}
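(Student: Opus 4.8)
The plan is to reduce the statement to the affine case and there identify $\spc_{S,\mathfrak{S}'\ast}\mathcal{O}_{S}^{+}$ with the pushforward of the Fujiwara--Kato integral structure sheaf, for which adic quasi-coherence is available. First I would observe that the assertion is local on $\mathfrak{S}'$ and, by Remark \ref{Completeness does not depend on an ideal of definition}, insensitive to the choice of an ideal of definition of finite type. Since admissible formal blow-ups are adic morphisms, $\mathfrak{S}'\to\Spf(R)$ is adic, and by the proof of Lemma \ref{Admissible formal blow-ups and generic fiber} the formal scheme $\mathfrak{S}'$ is again locally rig-sheafy over $(R,\varpi)$, so its rig-sheafy affine open subschemes form a basis (Lemma \ref{Topology on a locally rig-sheafy formal scheme}). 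Fixing such an open $\mathfrak{V}=\Spf(A)$, with $A$ carrying the $\varpi$-adic topology, and using the canonical isomorphism $S\cong{\mathfrak{S}'}_{\eta}^{\ad}$ of Lemma \ref{Admissible formal blow-ups and generic fiber} together with Lemma \ref{Description of inverse images}, one gets $(\spc_{S,\mathfrak{S}'\ast}\mathcal{O}_{S}^{+})\vert_{\mathfrak{V}}=\spc_{V,\mathfrak{V}\ast}\mathcal{O}_{V}^{+}$, where $V=\mathfrak{V}_{\eta}^{\ad}=\Spa(A[\varpi^{-1}],\overline{A})$. Thus everything comes down to showing that $\spc_{V,\mathfrak{V}\ast}\mathcal{O}_{V}^{+}$ is an adically quasi-coherent $\mathcal{O}_{\mathfrak{V}}$-algebra; concretely, it is the $\mathcal{O}_{\mathfrak{V}}$-algebra whose value on $D(g)$, $g\in A$, is $\overline{A\langle g^{-1}\rangle}$, the integral closure of the completed localization $A\langle g^{-1}\rangle$ inside $A\langle g^{-1}\rangle[\varpi^{-1}]$.

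The main step is to recognize this algebra via Theorem \ref{Generic fiber via formal blow-ups}. That theorem identifies $(V,\mathcal{O}_{V}^{+})$ with $\varprojlim_{\mathfrak{V}'\in\Bl(\mathfrak{V})}(\mathfrak{V}',\mathcal{O}_{\mathfrak{V}'})$, i.e.\ with the Zariski-Riemann space $\langle\mathscr{V}\rangle$ of the coherent rigid space $\mathscr{V}$ attached to $\mathfrak{V}$ together with its integral structure sheaf $\mathcal{O}_{\mathscr{V}}^{\mathrm{int}}$ (see Remark \ref{Relation to Fujiwara-Kato}), in such a way that $\spc_{V,\mathfrak{V}}$ becomes the composition of this isomorphism with the canonical projection $\psi_{\mathfrak{V}}\colon\langle\mathscr{V}\rangle\to\mathfrak{V}$. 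Hence $\spc_{V,\mathfrak{V}\ast}\mathcal{O}_{V}^{+}\cong\psi_{\mathfrak{V}\ast}\mathcal{O}_{\mathscr{V}}^{\mathrm{int}}$, and I would then invoke the Fujiwara--Kato theory of the integral structure sheaf, by which $\psi_{\mathfrak{V}\ast}\mathcal{O}_{\mathscr{V}}^{\mathrm{int}}$ is an adically quasi-coherent $\mathcal{O}_{\mathfrak{V}}$-algebra (\cite{FK}, Ch.~II, \S3). One can also argue this directly: $\langle\mathscr{V}\rangle$ and $\mathfrak{V}$ are spectral and $\psi_{\mathfrak{V}}$ spectral, so $\psi_{\mathfrak{V}\ast}$ commutes with filtered colimits of sheaves, whence $\psi_{\mathfrak{V}\ast}\mathcal{O}_{\mathscr{V}}^{\mathrm{int}}$ is the $\varpi$-adic completion of $\varinjlim_{\mathfrak{V}'\in\Bl(\mathfrak{V})}h_{\mathfrak{V}'\ast}\mathcal{O}_{\mathfrak{V}'}$, where $h_{\mathfrak{V}'}\colon\mathfrak{V}'\to\mathfrak{V}$ is the blow-up; each $h_{\mathfrak{V}'\ast}\mathcal{O}_{\mathfrak{V}'}$ is adically quasi-coherent since $h_{\mathfrak{V}'}$ is a proper adic morphism (\cite{FK}, Ch.~I, \S7, or directly from the scheme-theoretic blow-up), and the $\varpi$-adic completion of a filtered colimit of adically quasi-coherent sheaves is again adically quasi-coherent (\cite{FK}, Ch.~I, \S3).

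The only genuinely substantial input is the identification $\spc_{V,\mathfrak{V}\ast}\mathcal{O}_{V}^{+}\cong\psi_{\mathfrak{V}\ast}\mathcal{O}_{\mathscr{V}}^{\mathrm{int}}$, that is, Theorem \ref{Generic fiber via formal blow-ups}, whose proof rested on the Bhatt-style fact that the stalks of $\mathcal{O}_{\mathscr{V}}^{\mathrm{int}}$ are $\varpi$-valuative local rings. If instead one wants a purely ring-theoretic argument in the affine case, the content is exactly that normalization inside the generic fiber is compatible with $\varpi$-adic completion of localizations, in the sense that $\overline{A\langle g^{-1}\rangle}=\overline{A}\langle g^{-1}\rangle$ for every $g\in A$, where $\overline{A}$ (the integral closure of $A$ in $A[\varpi^{-1}]$, which is $\varpi$-adically complete since it is closed and bounded in the complete Tate ring $A[\varpi^{-1}]$) carries the $\varpi$-adic topology and $\overline{A}\langle g^{-1}\rangle$ denotes the $\varpi$-adic completion of $\overline{A}[g^{-1}]$; granting this, $\spc_{V,\mathfrak{V}\ast}\mathcal{O}_{V}^{+}$ is simply the pushforward along the affine morphism $\Spf(\overline{A})\to\Spf(A)$ of the structure sheaf, which is adically quasi-coherent by \cite{FK}, Ch.~I, Theorem 4.1.8. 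I expect this locality statement for normalization --- the only place where the rig-sheafy (sheafiness) hypothesis on $A[\varpi^{-1}]$ really intervenes --- to be the crux; the remaining steps are formal manipulations with the cited results.
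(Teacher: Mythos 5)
Your opening reduction is exactly the paper's: restrict to rig-sheafy affine opens $\mathfrak{V}=\Spf(A)$ of $\mathfrak{S}'$ using Lemma \ref{Description of inverse images} (and Lemma \ref{Admissible formal blow-ups and generic fiber}), so that everything comes down to $\spc_{V,\mathfrak{V}\ast}\mathcal{O}_{V}^{+}$ on $\Spf(A)$. The problem is the central step. Your main route, through Theorem \ref{Generic fiber via formal blow-ups} and the Fujiwara--Kato integral structure sheaf, rests on inputs that are not actually available: \cite{FK}, Ch.~II, \S3 studies the Zariski--Riemann triple but does not (so far as I can see) assert adic quasi-coherence of $\psi_{\mathfrak{V}\ast}\mathcal{O}_{\mathscr{V}}^{\mathrm{int}}$, and your backup colimit argument has two unjustified identifications. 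First, commuting $\psi_{\mathfrak{V}\ast}$ with the filtered colimit gives $\varinjlim_{\mathfrak{V}'}\psi_{\mathfrak{V}\ast}\,\mathrm{sp}_{\mathfrak{V}'}^{-1}\mathcal{O}_{\mathfrak{V}'}$, which you silently replace by $\varinjlim_{\mathfrak{V}'}h_{\mathfrak{V}'\ast}\mathcal{O}_{\mathfrak{V}'}$; pushing forward an inverse-image sheaf does not recover the pushforward of the original sheaf without an argument. Second, the claim that $h_{\mathfrak{V}'\ast}\mathcal{O}_{\mathfrak{V}'}$ is adically quasi-coherent because $h_{\mathfrak{V}'}$ is proper is precisely the kind of statement that fails to be off-the-shelf in this non-Noetherian generality: reduction modulo $\varpi^{k}$ does not commute with a non-affine pushforward, and the Fujiwara--Kato results on images of adically quasi-coherent sheaves under proper maps carry rigid-Noetherian/adhesive hypotheses that are not assumed here. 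So this route does not go through as written.

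Your ``purely ring-theoretic'' fallback is in substance the paper's proof, but you stop exactly where the work is: the identity $(\spc_{S,\mathfrak{S}\ast}\mathcal{O}_{S}^{+})(D(g))=\mathcal{O}_{S}^{+}(S)\widehat{\otimes}_{A}A\langle g^{-1}\rangle$, equivalently $\overline{A\langle g^{-1}\rangle}=\overline{A}\langle g^{-1}\rangle$, is labelled the ``expected crux'' rather than proved. The paper disposes of it in two lines: $\mathcal{O}_{S}^{+}(S)[g^{-1}]$ is an integrally closed open subring of $\mathcal{O}_{S}(S)[g^{-1}]$ (localization preserves integral closedness), and by \cite{Huber0}, Lemma 2.4.3(iv), the completion $\mathcal{O}_{S}^{+}(S)\langle g^{-1}\rangle$ is integrally closed in $\mathcal{O}_{S}(S)\langle g^{-1}\rangle$, hence equals $\mathcal{O}_{S}^{+}(S(\frac{1}{g}))$. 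Note also that your justification that $\overline{A}$ is ``closed and bounded'' in $A[\varpi^{-1}]$ is not correct in the stated generality: boundedness of an integrally closed ring of definition forces uniformity (Lemma \ref{Power-bounded elements}), which this proposition does not assume; fortunately boundedness is not needed for the quasi-coherence check, only the completed-localization identity above. In short: right reduction, correct identification of the crux, but the crux itself is left unproved, and the machinery you propose in its place does not supply it.
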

\begin{proof}It suffices to prove, for any affine open cover $(\mathfrak{U}_{i})_{i}$ of $\mathfrak{S}'$, the sheaf\begin{equation*}\spc_{S,\mathfrak{S}'\ast}\mathcal{O}_{S}^{+}\vert_{\mathfrak{U}_{i}}=\spc_{U_{i},\mathfrak{U}_{i}\ast}\mathcal{O}_{S}^{+}\end{equation*}(where in the above equality we tacitly used Lemma \ref{Description of inverse images}) is adically quasi-coherent for each $i$. Hence it suffices to prove the proposition in the case when $\mathfrak{S}'=\mathfrak{S}$, when $\mathfrak{S}$ is a rig-sheafy affine formal scheme (we use here that $\spc_{U_{i},\mathfrak{U}_{i}}^{-1}(\mathfrak{U}_{i})=(U_{i})_{\eta}^{\ad}$, by Lemma \ref{Description of inverse images}). For this it suffices to check that \begin{equation*}\mathcal{O}_{S}^{+}(S(\frac{1}{g}))=(\spc_{S,\mathfrak{S}\ast}\mathcal{O}_{S}^{+})(D(g))=\mathcal{O}_{S}^{+}(S)\widehat{\otimes}_{A}A\langle g^{-1}\rangle=\mathcal{O}_{S}^{+}(S)\langle g^{-1}\rangle\end{equation*}for every $g\in A$. Hence it suffices to prove that $\mathcal{O}_{S}^{+}(S)\langle g^{-1}\rangle$ is integrally closed inside $\mathcal{O}_{S}(S)\langle g^{-1}\rangle$. By \cite{Huber0}, Lemma 2.4.3(iv), this follows from $\mathcal{O}_{S}^{+}(S)[g^{-1}]$ being an integrally closed open subring of $\mathcal{O}(S)[g^{-1}]$. \end{proof}
 
In order to establish the existence of formal $R$-models for a general uniform quasi-compact quasi-separated adic space over $\Spa(R[\varpi^{-1}], \overline{R})$ we need the following notion of integrally closed models.
\begin{mydef}[Integrally closed formal models]A locally rig-sheafy $\varpi$-torsion-free adic formal $R$-scheme $\mathfrak{X}$ with adic analytic generic fiber $X$ over $\Spa(R[\varpi^{-1}], \overline{R})$ is said to be \textit{integrally closed inside its generic fiber $X$} if for every affine open subset $\mathfrak{V}$ of $\mathfrak{X}$ the ring $\mathcal{O}_{\mathfrak{X}}(\mathfrak{V})$ is integrally closed in the Tate ring $\mathcal{O}_{\mathfrak{X}}(\mathfrak{V})[\varpi^{-1}]$. In this case we also say that $\mathfrak{X}$ is an integrally closed formal $R$-model of the adic space $X$ over $\Spa(R[\varpi^{-1}], \overline{R})$. \end{mydef}
The first order of business is to verify that the above notion of integrally closed formal models is local.
\begin{lemma}\label{Integrally closed formal models}For a locally rig-sheafy $\varpi$-torsion-free adic formal $R$-scheme $\mathfrak{X}$ the following are equivalent: \begin{enumerate}[(1)]\item $\mathfrak{X}$ is integrally closed in its generic fiber. \item There exists an affine open cover $(\mathfrak{U}_{i})_{i}$ of $\mathfrak{X}$ such that $\mathcal{O}_{\mathfrak{X}}(\mathfrak{U}_{i})$ is integrally closed in $\mathcal{O}_{\mathfrak{X}}(\mathfrak{U}_{i})[\varpi^{-1}]$ for every $i$.\end{enumerate}\end{lemma}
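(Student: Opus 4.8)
The implication $(1)\Rightarrow(2)$ is trivial (take any affine open cover), so the content is $(2)\Rightarrow(1)$. Suppose we are given an affine open cover $(\mathfrak{U}_i)_i$ of $\mathfrak{X}$ with each $\mathcal{O}_{\mathfrak{X}}(\mathfrak{U}_i)$ integrally closed in $\mathcal{O}_{\mathfrak{X}}(\mathfrak{U}_i)[\varpi^{-1}]$, and let $\mathfrak{V}=\Spf(B)$ be an arbitrary affine open of $\mathfrak{X}$; we must show $B$ is integrally closed in $B[\varpi^{-1}]$. First I would reduce to a statement about a cover of the single affine $\mathfrak{V}$: intersecting, $\mathfrak{V}$ is covered by affine opens $\mathfrak{V}\cap\mathfrak{U}_i$, and refining further (using Lemma~\ref{Topology on a locally rig-sheafy formal scheme} and the fact that basic opens $D(g)$ form a basis) I may assume $\mathfrak{V}$ is covered by finitely many basic opens $D(g_1),\dots,D(g_n)$ of $\mathfrak{V}=\Spf(B)$ with $(g_1,\dots,g_n)_B$ an open ideal of $B$, and such that each $\mathcal{O}_{\mathfrak{X}}(D(g_j))=B\langle g_j^{-1}\rangle$ is integrally closed in $B\langle g_j^{-1}\rangle[\varpi^{-1}]=B[\varpi^{-1}]\langle g_j^{-1}\rangle$. (That each $B\langle g_j^{-1}\rangle$ is integrally closed in its generic-fiber Tate ring follows because $D(g_j)$ is itself covered by basic opens of the $\mathfrak{U}_i$, together with the one-variable localization case — or one simply invokes the argument of the previous sentence recursively on the sub-basis of basic opens contained in some $\mathfrak{U}_i$.)

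\medskip

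Having made this reduction, the core is a commutative-algebra statement: if $\varpi$ is a non-zero-divisor in a ring $B$, if $g_1,\dots,g_n$ generate an ideal containing some power of $\varpi$, and if each localization $B[g_j^{-1}]$ is integrally closed in $B[\varpi^{-1}][g_j^{-1}]$, then $B$ is integrally closed in $B[\varpi^{-1}]$. I would prove this by a standard sheaf-theoretic patching argument on $\Spec(B)$: let $x\in B[\varpi^{-1}]$ be integral over $B$. Then $x\in B[g_j^{-1}]$ for each $j$ (since $B[g_j^{-1}]$ is integrally closed in $B[\varpi^{-1}][g_j^{-1}]$, and $x$, being integral over $B$, is a fortiori integral over $B[g_j^{-1}]$ and lies in $B[\varpi^{-1}][g_j^{-1}]$). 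So $x$ is a section of the quasi-coherent sheaf $\widetilde{B}$ over each of the basic opens $D(g_j)$, $j=1,\dots,n$, and these cover $\Spec(B)$ because $(g_1,\dots,g_n)_B$ is open hence contains a power of $\varpi$ hence is not contained in any prime missing a power of $\varpi$ — but we need it to cover \emph{all} of $\Spec(B)$, and here one must be slightly careful: $(g_1,\dots,g_n)_B$ need not be the unit ideal of $B$. However, on $\Spec(B)$ the closed set $\mathcal{V}((g_1,\dots,g_n)_B)$ is contained in $\mathcal{V}(\varpi)$, and $x\in B[\varpi^{-1}]$ is anyway only constrained off $\mathcal{V}(\varpi)$; the cleanest fix is to localize at $\varpi$: the $D(g_j)$ cover $\Spec(B[\varpi^{-1}])=\Spec(B)\setminus\mathcal{V}(\varpi)$, and $x$ glues to an element of $B[\varpi^{-1}]$ that is locally in $B$ on this open set. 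Combined with the fact that $x$ is integral over $B$ globally (so $x$ satisfies a monic polynomial with $B$-coefficients), one concludes $x\in B$ — one way is to note that the finitely many denominators appearing when writing $x\in B[g_j^{-1}]$ as $b_j/g_j^{m_j}$ together with the integral equation force $x$ into $B$ by the usual determinant trick, or one invokes \cite{Huber0}, Lemma 2.4.3, or \cite{Stacks} faithfully-flat descent of integral closure.

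\medskip

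Alternatively — and this is the route I would actually take to keep things short — I would deduce the lemma directly from Proposition~\ref{Quasi-coherent algebras and formal models}. Indeed, that proposition shows $\spc_{X,\mathfrak{X}\ast}\mathcal{O}_X^{+}$ is an adically quasi-coherent $\mathcal{O}_{\mathfrak{X}}$-algebra on $\mathfrak{X}$ (the proof there is local and only uses that $\mathfrak{X}$ is locally rig-sheafy and $\varpi$-torsion-free), and by construction there is a canonical injective map of $\mathcal{O}_{\mathfrak{X}}$-algebras $\mathcal{O}_{\mathfrak{X}}\hookrightarrow\spc_{X,\mathfrak{X}\ast}\mathcal{O}_X^{+}$. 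The condition that $\mathfrak{X}$ be integrally closed in its generic fiber is precisely the condition that this map be an isomorphism. Now ``being an isomorphism'' for a morphism of adically quasi-coherent sheaves can be checked on an affine open cover (a morphism of adically quasi-coherent sheaves is an isomorphism iff it is so on each member of an affine open cover, by \cite{FK}, Ch.~I, \S3.1, since adic quasi-coherence and its sections are determined locally): on $\mathfrak{U}_i=\Spf(A_i)$ the map is $A_i\hookrightarrow \mathcal{O}_{U_i}^{+}(U_i)=\overline{A_i}$ (the integral closure of $A_i$ in $A_i[\varpi^{-1}]$, using Lemma~\ref{Description of inverse images} and the affine description of the generic fiber), and hypothesis $(2)$ says exactly that this is an equality. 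Hence $\mathcal{O}_{\mathfrak{X}}\to\spc_{X,\mathfrak{X}\ast}\mathcal{O}_X^{+}$ is an isomorphism, so on \emph{every} affine open $\mathfrak{V}=\Spf(B)$ we get $B=\mathcal{O}_V^{+}(V)=\overline{B}$, i.e.\ $B$ is integrally closed in $B[\varpi^{-1}]$, as desired.

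\medskip

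The main obstacle is purely bookkeeping: one must be sure that ``isomorphism of adically quasi-coherent sheaves is a local condition'' and that the sections of $\spc_{X,\mathfrak{X}\ast}\mathcal{O}_X^{+}$ over a basic affine open $D(g)$ really are the $\varpi$-adic completion / integral closure one expects — both of which are already contained in the proof of Proposition~\ref{Quasi-coherent algebras and formal models} (the identities $\mathcal{O}_S^{+}(S(\tfrac{1}{g}))=\mathcal{O}_S^{+}(S)\langle g^{-1}\rangle$ and integral closedness thereof). Given that proposition, the present lemma is essentially a formal corollary, so I would present it as such rather than redoing the patching by hand.
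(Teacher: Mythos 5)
Your preferred route (the deduction from Proposition~\ref{Quasi-coherent algebras and formal models}) is broadly in the spirit of the paper's argument --- both ultimately rest on \cite{Huber0}, Lemma 2.4.3(iv) --- but as written it has a gap at exactly the delicate point. The paper's proof of Lemma~\ref{Integrally closed formal models} is a direct reduction to the ring-theoretic fact that if a $\varpi$-adically complete, $\varpi$-torsion-free ring $A$ is integrally closed in $A[\varpi^{-1}]$, then $A\langle g^{-1}\rangle$ is integrally closed in $A\langle g^{-1}\rangle[\varpi^{-1}]$ for every $g\in A$ (by \cite{Huber0}, Lemma 2.4.3(iv)); arbitrary affine opens are then handled by covering them with opens that are simultaneously basic in them and in some $\mathfrak{U}_i$, and gluing with the sheaf property of $\mathcal{O}_{\mathfrak{X}}$ --- everything happens on the formal scheme. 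In your route, the identifications $\mathcal{O}_{U_i}^{+}(U_i)=\overline{A_i}$ (used to feed in hypothesis (2)) and $\mathcal{O}_{V}^{+}(V)=\overline{B}$ (used to extract the conclusion on an arbitrary affine open $\mathfrak{V}=\Spf(B)$) are only justified when the affine open in question is rig-sheafy, since only then is its generic fiber literally $\Spa(B[\varpi^{-1}],\overline{B})$. But hypothesis (2) allows an arbitrary affine cover, and the definition of ``integrally closed in its generic fiber'' quantifies over \emph{all} affine opens, which are not known to be rig-sheafy (sheafiness is not known to pass to arbitrary affine opens). The step is fixable: either refine the given cover to rig-sheafy basic opens --- which requires precisely the completed-localization fact the paper invokes --- and then glue along $\mathcal{O}_{\mathfrak{X}}$, or note that $\mathcal{O}_X^{+}(V)$ is integrally closed in $\mathcal{O}_X(V)$ and that $B[\varpi^{-1}]\to\mathcal{O}_X(V)$ is injective (it extends the isomorphism on $B$, $\varpi$ is invertible in the target, and $B$ is $\varpi$-torsion-free), so any $x\in B[\varpi^{-1}]$ integral over $B$ already lies in $B$. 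As written, though, the argument is incomplete at the very place where the lemma requires care.

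Your fallback ``core commutative-algebra statement'' is moreover false as stated: with $n=1$ and $g_1=\varpi$ the hypothesis that $B[g_1^{-1}]$ be integrally closed in $B[g_1^{-1}][\varpi^{-1}]=B[\varpi^{-1}]$ is vacuous, while $B$ need not be integrally closed in $B[\varpi^{-1}]$ (take $B=\mathbb{Z}_p[x]/(x^2-p^2)$, $\varpi=p$: the element $x/p$ satisfies $T^2-1=0$ but does not lie in $B$); your proposed fix of gluing over $\Spec(B[\varpi^{-1}])$ fails on the same example, since being integral over $B$ and lying in $B[\varpi^{-1}]$ does not force membership in $B$. The bookkeeping error is twofold. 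First, a cover of $\Spf(B)$ by the $D(g_j)$ does not merely make $(g_1,\dots,g_n)$ open: it gives $(g_1,\dots,g_n)+(\varpi)=B$, hence by $\varpi$-adic completeness $(g_1,\dots,g_n)=B$, so the $D(g_j)$ cover all of $\Spec(B)$. Second, the hypothesis you actually possess concerns the completed localizations $B\langle g_j^{-1}\rangle$, not $B[g_j^{-1}]$, and integral closedness does not transfer from the completed to the uncompleted localization. The correct patching therefore runs entirely on the formal scheme: for $x\in B[\varpi^{-1}]$ integral over $B$, its images lie in the rings $B\langle g_j^{-1}\rangle$, which are $\varpi$-torsion-free, so these local sections agree on overlaps and glue via the sheaf property of $\mathcal{O}_{\mathfrak{X}}$ to an element of $B$ equal to $x$. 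With these corrections your argument becomes, in substance, the paper's.
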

\begin{proof}This reduces to proving that for a $\varpi$-adically complete, $\varpi$-torsion-free ring $A$ which is integrally closed in $A[\varpi^{-1}]$ and $g\in A$, the completed localization $A\langle g^{-1}\rangle$ is integrally closed in $A\langle g^{-1}\rangle[\varpi^{-1}]$, which is a consequence of \cite{Huber0}, Lemma 2.4.3(iv).\end{proof}
Using Proposition \ref{Quasi-coherent algebras and formal models}, we can always construct an integrally closed formal $R$-model from an arbitrary one, at least for uniform adic spaces. This is a consequence of the following proposition, cf. also \cite{Pilloni-Stroh16}, Proposition 1.1, for a more classical result along these lines.
\begin{prop}\label{Normalization of a formal model}Fix a complete adic ring $R$ with a non-zero-divisor $\varpi\in R$ generating an ideal of definition of $R$ and suppose that the Tate ring $R[\varpi^{-1}]$ is sheafy. Let $\mathfrak{S}$ be a locally rig-sheafy $\varpi$-torsion-free quasi-compact quasi-separated adic formal $R$-scheme with adic analytic generic fiber $S=\mathfrak{S}_{\eta}^{\ad}$ over $\Spa(R[\varpi^{-1}], \overline{R})$. The affine morphism \begin{equation*}f_{0}: \mathfrak{X}=\Spf(\spc_{S,\mathfrak{S}\ast}\mathcal{O}_{S}^{+})\to \mathfrak{S}\end{equation*}satisfies $f_{0\eta}=\id$ and $\mathfrak{X}$ is an integrally closed formal $R$-model of $S$. Moreover, $f_{0}$ is the unique affine morphism of locally rig-sheafy adic formal $R$-schemes $\mathfrak{X}\to\mathfrak{S}$ with target $\mathfrak{S}$ such that $f_{0\eta}=\id$ and such that the source $\mathfrak{X}$ is $\varpi$-torsion-free and integrally closed in its generic fiber $S$.\end{prop}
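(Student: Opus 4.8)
The plan is to reduce to the case where $\mathfrak{S}=\Spf(A_0)$ is rig-sheafy affine and there to recognize $f_0$ as the formal spectrum of the integral closure $\overline{A_0}$ of $A_0$ in the sheafy Tate ring $A_0[\varpi^{-1}]$; the global assertion and uniqueness then follow by gluing and the $\Spa$--Huber-pair dictionary for sheafy rings. First I would treat the affine case. Since $\mathfrak{S}$ is adic over $\Spf(R)$, on an affine open $\mathfrak{U}=\Spf(A_0)\subseteq\mathfrak{S}$ the ring $A_0$ carries the $\varpi$-adic topology and is $\varpi$-torsion-free, so $\mathfrak{U}_{\eta}^{\ad}=\Spa(A_0[\varpi^{-1}],\overline{A_0})$ is an honest (sheafy) affinoid adic space; in particular $S$ is qcqs when $\mathfrak{S}$ is. By Lemma \ref{Description of inverse images}, $\spc_{S,\mathfrak{S}}^{-1}(\mathfrak{U})=\mathfrak{U}_{\eta}^{\ad}$, hence
\[(\spc_{S,\mathfrak{S}\ast}\mathcal{O}_{S}^{+})(\mathfrak{U})=\mathcal{O}^{+}_{\mathfrak{U}_{\eta}^{\ad}}(\mathfrak{U}_{\eta}^{\ad})=\overline{A_0},\]
the last equality because the $\mathcal{O}^{+}$-global sections of a sheafy affinoid adic space are its ring of integral elements. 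By Proposition \ref{Quasi-coherent algebras and formal models} the sheaf $\spc_{S,\mathfrak{S}\ast}\mathcal{O}_{S}^{+}$ is adically quasi-coherent, so $\overline{A_0}$ is $\varpi$-adically complete; it is also $\varpi$-torsion-free, being a subring of $A_0[\varpi^{-1}]$. Thus $\mathfrak{X}|_{\mathfrak{U}}=\Spf(\overline{A_0})$ is an adic formal $R$-scheme, and it is rig-sheafy over $(R,\varpi)$ because, being adic over $R$, by Lemma \ref{Rig-sheafiness does not depend on generators} this only requires $\overline{A_0}[\varpi^{-1}]=A_0[\varpi^{-1}]$ to be sheafy. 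It is integrally closed in its generic fiber by construction, and
\[\big(\mathfrak{X}|_{\mathfrak{U}}\big)_{\eta}^{\ad}=\Spa\big(\overline{A_0}[\varpi^{-1}],\overline{A_0}\big)=\Spa\big(A_0[\varpi^{-1}],\overline{A_0}\big)=\mathfrak{U}_{\eta}^{\ad},\]
with $(f_0|_{\mathfrak{U}})_{\eta}$ the identity, since it is induced by the inclusion $A_0\hookrightarrow\overline{A_0}$, which becomes the identity after inverting $\varpi$.

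Next I would globalize. Since $f_0$ is affine, for $\mathfrak{U}$ affine one has $f_0^{-1}(\mathfrak{U})=\Spf\big((\spc_{S,\mathfrak{S}\ast}\mathcal{O}_{S}^{+})(\mathfrak{U})\big)=\Spf(\overline{\mathcal{O}_{\mathfrak{S}}(\mathfrak{U})})$, so running the previous paragraph over the members of an affine open cover of a general qcqs $\mathfrak{S}$ exhibits $\mathfrak{X}$ as covered by such formal spectra. Consequently $\mathfrak{X}$ is a $\varpi$-torsion-free locally rig-sheafy adic formal $R$-scheme, $f_{0\eta}=\id$ after gluing the local identifications, and $\mathfrak{X}$ is integrally closed in its generic fiber $S$ by Lemma \ref{Integrally closed formal models}. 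This gives the first assertion.

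For uniqueness, let $g_0\colon\mathfrak{Y}\to\mathfrak{S}$ be an affine morphism of locally rig-sheafy adic formal $R$-schemes with $\mathfrak{Y}$ $\varpi$-torsion-free and integrally closed in its generic fiber and $g_{0\eta}=\id_S$. Since $g_0$ is affine, $\mathcal{B}:=g_{0\ast}\mathcal{O}_{\mathfrak{Y}}$ is an adically quasi-coherent algebra on $\mathfrak{S}$ and $g_0$ is recovered as its relative formal spectrum (\cite{FK}, Ch.~I, Theorem 4.1.8), so it suffices to identify $\mathcal{B}$ with $\mathcal{A}:=\spc_{S,\mathfrak{S}\ast}\mathcal{O}_{S}^{+}$ as $\mathcal{O}_{\mathfrak{S}}$-algebras, which may be checked on an affine open $\mathfrak{U}=\Spf(A_0)$ of $\mathfrak{S}$. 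Writing $g_0^{-1}(\mathfrak{U})=\Spf(B_0)$, the ring $B_0$ is $\varpi$-adically complete, $\varpi$-torsion-free and integrally closed in $B_0[\varpi^{-1}]$, so $(\Spf B_0)_{\eta}^{\ad}=\Spa(B_0[\varpi^{-1}],B_0)$, and $g_{0\eta}=\id$ forces the morphism of affinoid adic spaces $\Spa(B_0[\varpi^{-1}],B_0)\to\Spa(A_0[\varpi^{-1}],\overline{A_0})$ induced by $A_0\to B_0$ to be an isomorphism. Both Tate rings being sheafy (Lemma \ref{Rig-sheafiness does not depend on generators}), this isomorphism comes from an isomorphism of complete Huber pairs $(A_0[\varpi^{-1}],\overline{A_0})\xrightarrow{\sim}(B_0[\varpi^{-1}],B_0)$ whose underlying ring isomorphism is the localization of $A_0\to B_0$; identifying $A_0[\varpi^{-1}]=B_0[\varpi^{-1}]$ along it, $B_0$ must be the subring of elements integral over the image of $A_0$, i.e.\ $B_0=\overline{A_0}=\mathcal{A}(\mathfrak{U})$, compatibly with the $A_0$-algebra structures. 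Hence $\mathcal{B}=\mathcal{A}$ and $g_0=f_0$.

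I expect the one genuinely delicate point to be the affine-case input $(\spc_{S,\mathfrak{S}\ast}\mathcal{O}_{S}^{+})(\mathfrak{U})=\overline{\mathcal{O}_{\mathfrak{S}}(\mathfrak{U})}$ together with the check that the $\varpi$-adic topology on $\overline{\mathcal{O}_{\mathfrak{S}}(\mathfrak{U})}$ is exactly the one for which its formal spectrum has generic fiber $\mathfrak{U}_{\eta}^{\ad}$ — this is precisely where adic quasi-coherence (Proposition \ref{Quasi-coherent algebras and formal models}) and the boundedness of the ring of integral elements of a Tate ring are used; the remaining steps are bookkeeping with affine morphisms of formal schemes and the adic-spectrum/Huber-pair correspondence for sheafy rings.
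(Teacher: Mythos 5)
Your proof follows essentially the same route as the paper's: adic quasi-coherence from Proposition \ref{Quasi-coherent algebras and formal models}, the affine identification $(\spc_{S,\mathfrak{S}\ast}\mathcal{O}_{S}^{+})(\mathfrak{U})=\mathcal{O}_{S}^{+}(\mathfrak{U}_{\eta}^{\ad})=\overline{\mathcal{O}_{\mathfrak{S}}(\mathfrak{U})}$, and uniqueness via \cite{FK}, Ch.~I, Theorem 4.1.8 applied to the pushforward algebra of any competing integrally closed affine model. The one point to state more carefully is your appeal to ``the boundedness of the ring of integral elements of a Tate ring'': this is false for general Tate rings and is exactly the uniformity of the generic fiber, which is the (implicit) operative hypothesis here and which the paper's proof invokes explicitly when it says that $\mathcal{O}_{S}^{+}(U_{i})$ is a ring of definition ``since $\mathcal{O}_{S}(U_{i})$ are uniform Tate rings'' (cf.\ Lemma \ref{Power-bounded elements}, Lemma \ref{Uniform generic fiber} and Lemma \ref{Integrally closed formal models and uniform adic spaces}); with uniformity made explicit, your topology check and the rest of the argument coincide with the paper's.
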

\begin{proof}By Proposition \ref{Quasi-coherent algebras and formal models}, $\spc_{S,\mathfrak{S}\ast}\mathcal{O}_{S}^{+}$ is an adically quasi-coherent algebra on $\mathfrak{S}$, so the relative formal spectrum $\mathfrak{X}$ is defined and the morphism $f_{0}$ is affine. Let $(\mathfrak{U}_{i})_{i}$ be an affine open cover of $\mathfrak{S}$ and let $U_{i}=\spc_{S,\mathfrak{S}}^{-1}(\mathfrak{U}_{i})=\mathfrak{U}_{i\eta}$, which is an affinoid open subspace of $S$, for every $i$. The fact that $f_{0\eta}=\id$ and that $\mathfrak{X}$ is integrally closed in $S$ follows from $\spc_{S,\mathfrak{S}\ast}\mathcal{O}_{S}^{+}(\mathfrak{U}_{i})=\mathcal{O}_{S}^{+}(U_{i})$ being an integrally closed in $\mathcal{O}_{S}(U_{i})$ and a ring of definition of $\mathcal{O}_{S}(U)$ (since $\mathcal{O}_{S}(U_{i})$ are uniform Tate rings). 

It remains to prove the uniqueness part of the proposition. To this end, let $g_{0}: \mathfrak{Z}\to \mathfrak{S}$ be another affine adic morphism of $\varpi$-torsion-free adic formal $R$-schemes such that $\mathfrak{Z}_{\eta}^{\ad}=S$, $g_{0\eta}=\id$ and $\mathfrak{Z}$ is integrally closed in its generic fiber. By \cite{FK}, Ch.~I, Theorem 4.1.8, $g_{0\ast}\mathcal{O}_{\mathfrak{Z}}$ is an adically quasi-coherent $\mathcal{O}_{\mathfrak{S}}$-algebra, $\mathfrak{Z}=\Spf(g_{0\ast}\mathcal{O}_{\mathfrak{Z}})$ and $g_{0}$ is the canonical morphism $\Spf(g_{0\ast}\mathcal{O}_{\mathfrak{Z}})\to \mathfrak{S}$. By loc.~cit., Proposition 4.1.10, the pre-image $g_{0}^{-1}(\mathfrak{U})$ of every affine open subset $\mathfrak{U}\subseteq\mathfrak{S}$ is affine. Let $\mathfrak{U}$ be an affine open subset of $\mathfrak{S}$ and let $U=\spc_{S,\mathfrak{S}}^{-1}(\mathfrak{U})$. Since $S=\mathfrak{Z}_{\eta}^{\ad}$ and $\id=g_{0\eta}$, the commutative triangle \begin{center}\begin{tikzcd}S \arrow{d}{\spc_{X,\mathfrak{Z}}}  \arrow{dr}{\spc_{S,\mathfrak{S}}} \\ \mathfrak{Z} \arrow{r}{g_{0}} & \mathfrak{S} \end{tikzcd}\end{center}(together with Lemma \ref{Description of inverse images}) shows that $g_{0}^{-1}(\mathfrak{U})_{\eta}^{\ad}$ is equal to the affinoid open subspace \begin{equation*}U=\Spa(\mathcal{O}_{S}(U), \mathcal{O}_{S}^{+}(U))\end{equation*}of $S$. Therefore, we have \begin{equation*}\mathcal{O}_{\mathfrak{Z}}(g_{0}^{-1}(\mathfrak{U}))[\varpi^{-1}]=\mathcal{O}_{S}(U),\end{equation*}and the integral closure of $\mathcal{O}_{\mathfrak{Z}}(g_{0}^{-1}(\mathfrak{U}))$ inside $\mathcal{O}_{\mathfrak{Z}}(g_{0}^{-1}(\mathfrak{U}))[\varpi^{-1}]$ is equal to $\mathcal{O}_{S}^{+}(U)$. Since $\mathfrak{Z}$ is integrally closed in its generic fiber, we have \begin{equation*}\mathcal{O}_{\mathfrak{Z}}(g_{0}^{-1}(\mathfrak{U}))=\mathcal{O}_{S}^{+}(U).\end{equation*}Since $\mathfrak{U}$ was arbitrary, this means that \begin{equation*}g_{0\ast}\mathcal{O}_{\mathfrak{Z}}=\spc_{S,\mathfrak{S}\ast}f_{\ast}\mathcal{O}_{S}^{+}\end{equation*}as adically quasi-coherent algebras on $\mathfrak{S}$. By \cite{FK}, Ch.~I, Theorem 4.1.8, this implies the desired equality $g_{0}=f_{0}$.\end{proof}
\begin{mydef}[Normalization of a formal model]\label{Definition of normalization}In the situation of Proposition \ref{Normalization of a formal model} we call the formal scheme $\mathfrak{X}$ over $\mathfrak{S}$ the normalization of $\mathfrak{S}$ in its generic fiber $S$, or the normalization of the formal $R$-model $\mathfrak{S}$ of $S$.\end{mydef}
We also give a name to morphisms between formal models which induce the identity on the generic fiber. 
\begin{mydef}[Formal modification]\label{Definition of formal modification}For a locally rig-sheafy qcqs adic formal $R$-scheme $\mathfrak{X}$ with generic fiber $X$ over $\Spa(R[\varpi^{-1}], \overline{R})$, a \textit{formal modification} $f_{0}$ of $\mathfrak{X}$ is an adic morphism of locally rig-sheafy qcqs adic formal $R$-schemes $f_{0}: \mathfrak{X}'\to\mathfrak{X}$ which induces an isomorphism on the generic fibers. A morphism $h_{0}$ between two formal modifications $f_{0}: \mathfrak{X}'\to\mathfrak{X}$ and $g_{0}: \mathfrak{X}''\to\mathfrak{X}$ of $\mathfrak{X}$ is a commutative triangle \begin{center}\begin{tikzcd}\mathfrak{X}'\arrow{r}{h_{0}} \arrow{dr}{f_{0}} & \mathfrak{X}'' \arrow{d}{g_{0}} \\ & \mathfrak{X}\end{tikzcd}\end{center}of adic morphisms of formal schemes. If there exists a morphism from a formal modification $f_{0}$ to a formal modification $g_{0}$ of $\mathfrak{X}$, then we also say that $f_{0}$ dominates $g_{0}$ and $g_{0}$ is dominated by $f_{0}$.\end{mydef}
By Lemma \ref{Admissible formal blow-ups and generic fiber}, every admissible formal blow-up is a formal modification. On the other hand, we obtain a different class of examples from Proposition \ref{Normalization of a formal model}.
\begin{cor}\label{Normalization of a formal model 2}For every locally rig-sheafy quasi-compact quasi-separated adic formal $R$-scheme $\mathfrak{X}$ with uniform adic analytic generic fiber $X$ there exists a unique (up to isomorphism) affine formal modification $\mathfrak{Z}\to \mathfrak{X}$ of $\mathfrak{X}$ such that $\mathfrak{Z}$ is $\varpi$-torsion-free and integrally closed in its generic fiber $X$.\end{cor}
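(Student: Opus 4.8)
The plan is to reduce to Proposition~\ref{Normalization of a formal model} by first replacing $\mathfrak{X}$ with its maximal $\varpi$-torsion-free quotient $\mathfrak{X}^{\mathrm{tf}}$, to which that proposition applies directly. Since $\mathfrak{X}$ is adic over $\Spf(R)$, its structure sheaf is $\varpi$-adically complete on affine opens, so for $\mathfrak{U}=\Spf(A)\in\Aff_{\mathfrak{X}}$ I would form $A^{\mathrm{tf}}:=A/A[\varpi^{\infty}]$; this ring is $\varpi$-torsion-free and, by Lemma~\ref{Completions are torsion-free} applied to the complete ring $A$, again $\varpi$-adically complete (its $\varpi$-adic topology being the quotient topology). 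Using Lemma~\ref{Completions are torsion-free} once more together with exactness of ordinary localization, one checks that the formation of $A^{\mathrm{tf}}$ commutes with passing to completed localizations $A\langle g^{-1}\rangle$, so $\Spf(A)\mapsto\Spf(A^{\mathrm{tf}})$ glues to an affine adic morphism $\mathfrak{X}^{\mathrm{tf}}\to\mathfrak{X}$ of adic formal $R$-schemes, with $\mathfrak{X}^{\mathrm{tf}}$ quasi-compact quasi-separated and $\varpi$-torsion-free; moreover $\mathfrak{X}^{\mathrm{tf}}$ represents the functor of morphisms into $\mathfrak{X}$ from $\varpi$-torsion-free adic formal schemes.

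The key point to verify next is that $\mathfrak{X}^{\mathrm{tf}}$ is again locally rig-sheafy over $(R,\varpi)$ with unchanged generic fiber. If $f_{1},\dots,f_{r}$ generate a finitely generated ideal of definition of $A$ containing $\varpi$, then the completed affine blow-up algebra $A\langle\frac{f_{1}^{n},\dots,f_{r}^{n}}{\varpi}\rangle$ is $\varpi$-torsion-free and complete (Remark~\ref{Remark on rig-sheafiness}), so by the universal property recalled there the canonical map $A\to A\langle\frac{f_{1}^{n},\dots,f_{r}^{n}}{\varpi}\rangle$ factors through $A^{\mathrm{tf}}$ and identifies $A\langle\frac{f_{1}^{n},\dots,f_{r}^{n}}{\varpi}\rangle$ with $A^{\mathrm{tf}}\langle\frac{f_{1}^{n},\dots,f_{r}^{n}}{\varpi}\rangle$. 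Hence the Tate rings entering Definition~\ref{Locally rig-sheafy formal schemes} are unchanged, $\mathfrak{X}^{\mathrm{tf}}$ is locally rig-sheafy, $(\mathfrak{X}^{\mathrm{tf}})_{\eta}^{\ad}=\mathfrak{X}_{\eta}^{\ad}=X$, and $\mathfrak{X}^{\mathrm{tf}}\to\mathfrak{X}$ is an affine formal modification.

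For existence I would then apply Proposition~\ref{Normalization of a formal model} with $\mathfrak{S}=\mathfrak{X}^{\mathrm{tf}}$: since $X$ is uniform this produces an affine morphism $\mathfrak{Z}=\Spf(\spc_{X,\mathfrak{X}^{\mathrm{tf}}\ast}\mathcal{O}_{X}^{+})\to\mathfrak{X}^{\mathrm{tf}}$ with $\mathfrak{Z}$ a $\varpi$-torsion-free locally rig-sheafy adic formal $R$-scheme, integrally closed in its generic fiber $X$, inducing the identity on generic fibers, and quasi-compact quasi-separated since it is affine over $\mathfrak{X}^{\mathrm{tf}}$. Composing with $\mathfrak{X}^{\mathrm{tf}}\to\mathfrak{X}$ gives an affine adic morphism $\mathfrak{Z}\to\mathfrak{X}$ which is still an isomorphism on generic fibers, i.e. the required affine formal modification. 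For uniqueness, given another affine formal modification $g_{0}\colon\mathfrak{Z}'\to\mathfrak{X}$ with $\mathfrak{Z}'$ $\varpi$-torsion-free and integrally closed in its generic fiber, I would use that $g_{0}$ factors uniquely through an affine morphism $g_{0}'\colon\mathfrak{Z}'\to\mathfrak{X}^{\mathrm{tf}}$ (because $\mathfrak{Z}'$ is $\varpi$-torsion-free) and then invoke the uniqueness part of Proposition~\ref{Normalization of a formal model}, after adjusting along the generic-fiber isomorphism induced by $g_{0}$: its proof yields $(g_{0}')_{\ast}\mathcal{O}_{\mathfrak{Z}'}\cong\spc_{X,\mathfrak{X}^{\mathrm{tf}}\ast}\mathcal{O}_{X}^{+}$ as adically quasi-coherent $\mathcal{O}_{\mathfrak{X}^{\mathrm{tf}}}$-algebras, hence $\mathfrak{Z}'\cong\mathfrak{Z}$ over $\mathfrak{X}^{\mathrm{tf}}$ and therefore over $\mathfrak{X}$.

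I expect the main obstacle to be precisely the verification in the second paragraph that passage to the $\varpi$-torsion-free quotient preserves local rig-sheafiness and the adic analytic generic fiber; once this is secured, the statement follows formally from Proposition~\ref{Normalization of a formal model}, the only remaining care being the bookkeeping of the generic-fiber identification in the uniqueness step.
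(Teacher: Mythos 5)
Your proposal is correct, and at its core it is the same reduction the paper makes: the paper disposes of this corollary in one line, declaring it a reformulation of Proposition \ref{Normalization of a formal model}, whose existence and uniqueness statements indeed give everything once one is in the $\varpi$-torsion-free situation. What you add, and what the paper's one-line proof silently passes over, is that the corollary's hypotheses do not include $\varpi$-torsion-freeness of $\mathfrak{X}$, whereas the proposition's do; your preliminary passage to the maximal $\varpi$-torsion-free quotient $\mathfrak{X}^{\mathrm{tf}}$ (complete by Lemma \ref{Completions are torsion-free}, compatible with completed localizations $A\langle g^{-1}\rangle$, locally rig-sheafy with the same generic fiber since $A[\varpi^{-1}]=A^{\mathrm{tf}}[\varpi^{-1}]$ -- in the adic-over-$R$ case this last point is even immediate without invoking the blow-up algebras) closes that discrepancy cleanly, and the factorization of any competing torsion-free model through $\mathfrak{X}^{\mathrm{tf}}$ makes the uniqueness transfer correctly. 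Your other piece of bookkeeping is also apt: the proposition's uniqueness is stated for morphisms with $f_{0\eta}=\id$, while the corollary quantifies over formal modifications, whose generic fiber is only an isomorphism onto $X$; rerunning the proposition's proof after transporting along $g_{0\eta}$, as you indicate, yields the isomorphism of adically quasi-coherent algebras and hence $\mathfrak{Z}'\cong\mathfrak{Z}$ over $\mathfrak{X}$. So the route is the paper's, but fleshed out in a way that actually justifies the extra generality of the corollary's statement; no step of yours fails.
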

\begin{proof}This is a reformulation of Proposition \ref{Normalization of a formal model}.\end{proof}
The assumption that $X$ be uniform in Proposition \ref{Normalization of a formal model} and Corollary \ref{Normalization of a formal model 2} cannot be fully omitted.
\begin{lemma}\label{Integrally closed formal models and uniform adic spaces}If $X$ is an adic space over $\Spa(R[\varpi^{-1}], \overline{R})$ which admits an integrally closed formal $R$-model $\mathfrak{X}$, then $X$ has an open cover by affinoid open subspaces $U_{i}$ such that each of the Tate rings $\mathcal{O}_{X}(U_{i})$ is uniform.\end{lemma}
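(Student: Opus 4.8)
The plan is to pass to an affine open cover of the given formal model and verify uniformity member by member. Choose an affine open cover $(\mathfrak{U}_i)_i=(\Spf(A_i))_i$ of $\mathfrak{X}$. Since $\mathfrak{X}$ is an adic formal $R$-scheme, each $A_i$ is $\varpi$-torsion-free and $\varpi$-adically complete with $(\varpi)$ an ideal of definition; and since $\mathfrak{X}$ is integrally closed in its generic fiber, $A_i$ is integrally closed in the Tate ring $A_i[\varpi^{-1}]$. Moreover, by Lemma~\ref{Rig-sheafiness does not depend on generators} the Huber pair $(A_i[\varpi^{-1}],A_i)$ is sheafy. Because $(\varpi)$ is already an ideal of definition of $A_i$, one may take $r=1$, $f_1=\varpi$ in the construction of the generic fiber, so that every completed affine blow-up algebra $B_{n,A_i}$ there collapses to $A_i$; hence $U_i:=\mathfrak{U}_{i\eta}^{\ad}=\Spa(A_i[\varpi^{-1}],\overline{A_i})=\Spa(A_i[\varpi^{-1}],A_i)$ (the last equality by integral closedness), with $\mathcal{O}_X(U_i)=A_i[\varpi^{-1}]$ and ring of integral elements $A_i$. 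By the gluing construction of Lemma~\ref{Generic fiber via gluing} the $U_i$ form an affinoid open cover of $X$, so it remains only to prove that each $A_i[\varpi^{-1}]$ is a uniform Tate ring.

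For this it suffices to show $A_i[\varpi^{-1}]^{\circ}=A_i$, since $A_i$, being a ring of definition of $A_i[\varpi^{-1}]$, is bounded. The inclusion $A_i\subseteq A_i[\varpi^{-1}]^{\circ}$ is clear. Conversely, let $f\in A_i[\varpi^{-1}]^{\circ}$. The adic space $U_i$ is analytic (for instance because $X$ lies over the Tate affinoid $\Spa(R[\varpi^{-1}],\overline{R})$, equivalently because $\varpi$ is a topologically nilpotent unit of $A_i[\varpi^{-1}]$), so for every $x\in U_i$ the valuation ring $\{\,a:|a(x)|\le 1\,\}$ is open and therefore bounded on bounded subsets of $A_i[\varpi^{-1}]$; applying this to the bounded set $\{f^n\}_{n\ge 0}$ forces $|f(x)|\le 1$. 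Hence $f$ lies in $\{\,g\in A_i[\varpi^{-1}]:|g(x)|\le 1\text{ for all }x\in U_i\,\}=\mathcal{O}_X^{+}(U_i)$, and this ring is the integral closure of $A_i$ inside $A_i[\varpi^{-1}]$, which equals $A_i$ by hypothesis. Thus $f\in A_i$, so $A_i[\varpi^{-1}]^{\circ}=A_i$ is bounded and $\mathcal{O}_X(U_i)=A_i[\varpi^{-1}]$ is uniform, completing the argument.

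The only point that requires care — and the only place where the integral-closedness hypothesis is used — is the identification $\mathcal{O}_X^{+}(U_i)=A_i$, which rests on the standard fact that for a complete Tate Huber pair $(B,B^{+})$ one has $\mathcal{O}^{+}(\Spa(B,B^{+}))=B^{+}=\{\,g\in B:|g(x)|\le 1\ \forall x\,\}$, together with the elementary observation that power-bounded elements of a Tate ring have absolute value at most $1$ at every analytic point. Everything else is bookkeeping with the generic-fiber construction of Section~\ref{sec:generic fiber}; in particular, no finiteness, Noetherianity, or quasi-compactness assumptions on $\mathfrak{X}$ or $X$ enter the proof.
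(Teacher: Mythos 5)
Your reduction to the affine case is fine in outline (one small repair: take the cover by \emph{rig-sheafy} affine opens, which form a basis by Lemma~\ref{Topology on a locally rig-sheafy formal scheme}; for an arbitrary affine open $\Spf(A_i)$ the identification of $\spc_{X,\mathfrak{X}}^{-1}(\mathfrak{U}_i)$ with $\Spa(A_i[\varpi^{-1}],\overline{A_i})$ is only supplied by the paper in the rig-sheafy case). The genuine gap is in the uniformity step. Your ``elementary observation that power-bounded elements of a Tate ring have absolute value at most $1$ at every analytic point'' is false for valuations of rank $\geq 2$: boundedness of $\{f^n\}$ together with openness of $\{\,a:|a(x)|\le 1\,\}$ only yields $|f(x)|^{n}\le |\varpi(x)|^{-k}$ for all $n$, and this does not force $|f(x)|\le 1$ when $|f(x)|$ is infinitesimally larger than $1$ relative to $|\varpi(x)|$. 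Concretely, let $B=\mathbb{Q}_p\langle T\rangle$ and let $B^{+}$ be the integral closure of $\mathbb{Z}_p+p\mathbb{Z}_p\langle T\rangle$ in $B$; this is an integrally closed ring of definition, and $T\notin B^{+}$ (reduce a putative monic integral equation modulo $p\mathbb{Z}_p\langle T\rangle$ to get a vanishing monic polynomial in $\mathbb{F}_p[T]$). The rank-two valuation $v(\sum a_nT^n)=\max_n|a_n|\,\gamma^{n}$, with $\gamma>1$ infinitesimally close to $1$, is continuous, satisfies $v\le 1$ on $B^{+}$, hence is a point of $\Spa(B,B^{+})$, and yet $v(T)=\gamma>1$ although $T$ is power-bounded. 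Since by Huber the set $\{\,g\in B: |g(x)|\le 1 \text{ for all } x\in\Spa(B,B^{+})\,\}$ is exactly $B^{+}$, your key step ``$f\in B^{\circ}\Rightarrow |f(x)|\le 1$ everywhere'' is equivalent to $B^{\circ}\subseteq B^{+}$, i.e.\ essentially to the assertion being proved; worse, the intermediate equality $A_i[\varpi^{-1}]^{\circ}=A_i$ you aim for can actually fail under the hypotheses of the lemma: $\mathfrak{X}=\Spf(B^{+})$ is an integrally closed formal $\mathbb{Z}_p$-model of $\Spa(B,B^{+})$, while $B^{\circ}=\mathbb{Z}_p\langle T\rangle\supsetneq B^{+}$.

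The repair is the paper's purely algebraic argument (Lemma~\ref{Power-bounded elements}), which proves only what is needed, namely that $B^{\circ}$ is bounded, not that it equals $A_i$: if $f\in B^{\circ}$ then $f^{n}\in \varpi^{-m}A_i$ for all $n$ and some fixed $m$, so $(\varpi f)^{m}\in A_i$; hence $\varpi f$ is integral over $A_i$, and since $A_i$ is integrally closed in $B=A_i[\varpi^{-1}]$ we get $\varpi f\in A_i$. Thus $\varpi B^{\circ}\subseteq A_i$, so $B^{\circ}$ is bounded and $B$ is uniform. Substituting this for your valuation-theoretic step (and taking rig-sheafy affines) turns your argument into exactly the paper's proof.
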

\begin{proof}Covering $\mathfrak{X}$ by rig-sheafy affine open subsets, it suffices to assume that $\mathfrak{X}$ is a rig-sheafy affine adic formal $R$-scheme. Then $X$ is affinoid, namely, \begin{equation*}X=\Spa(\mathcal{O}_{\mathfrak{X}}(\mathfrak{X})[\varpi^{-1}], \overline{\mathcal{O}_{\mathfrak{X}}(\mathfrak{X})}),\end{equation*}where the topology on $\mathcal{O}_{\mathfrak{X}}(\mathfrak{X})[\varpi^{-1}]$ is the one defined by the pair $(\mathcal{O}_{\mathfrak{X}}(\mathfrak{X}), (\varpi))$. The assumption that $\mathfrak{X}$ is an integrally closed formal $R$-model of $X$ implies that \begin{equation*}\mathcal{O}_{\mathfrak{X}}(\mathfrak{X})=\overline{\mathcal{O}_{\mathfrak{X}}(\mathfrak{X})}=\mathcal{O}_{X}^{+}(X),\end{equation*}so $\mathcal{O}_{X}^{+}(X)$ is a ring of definition of $\mathcal{O}_{X}(X)$. Thus the assertion follows from the following elementary lemma, which follows, for example, from \cite{Nakazato-Shimomoto22}, Lemma 2.13(2), but whose proof we include for the reader's convenience.\end{proof}
\begin{lemma}\label{Power-bounded elements}Let $A$ be a Tate ring which has an integrally closed ring of definition $A^{+}$. Then $A$ is uniform.\end{lemma}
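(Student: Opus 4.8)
The plan is to prove directly that the subring $A^{\circ}$ of power-bounded elements of $A$ is bounded, which is precisely the assertion that $A$ is uniform. Since $A$ is a Tate ring it has a topologically nilpotent unit $\varpi$, and since $A^{+}$ is a ring of definition it is both open in $A$ and bounded. I will in fact establish the stronger inclusion $\varpi A^{\circ}\subseteq A^{+}$: granting this, $A^{\circ}\subseteq\varpi^{-1}A^{+}$, and $\varpi^{-1}A^{+}$ is bounded because multiplication by the unit $\varpi^{-1}$ is a homeomorphism of $A$ and therefore carries the bounded set $A^{+}$ to a bounded set; hence $A^{\circ}$ is bounded.

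The key step is the following claim: every topologically nilpotent element $y\in A$ lies in $A^{+}$. Indeed, the powers $y^{k}$ tend to $0$, so since $A^{+}$ is an open subring there is an integer $N\geq 2$ with $y^{N}\in A^{+}$. Then $y$ is a root of the monic polynomial $T^{N}-y^{N}\in A^{+}[T]$, hence integral over $A^{+}$; as $A^{+}$ is integrally closed in $A$, we conclude $y\in A^{+}$. (Applied to $\varpi$ itself this also shows $\varpi\in A^{+}$, as it must be.)

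To deduce $\varpi A^{\circ}\subseteq A^{+}$, fix $x\in A^{\circ}$ and note that $\varpi x$ is topologically nilpotent: one has $(\varpi x)^{k}=\varpi^{k}x^{k}$, the set $\{x^{k}\}_{k\geq 0}$ is bounded because $x$ is power-bounded, and $\varpi^{k}\to 0$, so the products $\varpi^{k}x^{k}$ converge to $0$. By the claim, $\varpi x\in A^{+}$, and since $x\in A^{\circ}$ was arbitrary this gives $\varpi A^{\circ}\subseteq A^{+}$, completing the proof along the lines indicated above.

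I do not anticipate a real obstacle: the only slightly delicate points are the purely topological manipulations (openness and boundedness of $A^{+}$, the definitions of power-bounded and topologically nilpotent elements, and invariance of boundedness under multiplication by a unit), all of which are elementary properties of Huber rings. The one substantive use of the hypothesis is the integral-closedness of $A^{+}$, invoked exactly once via the monic polynomial $T^{N}-y^{N}$.
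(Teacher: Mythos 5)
Your proof is correct and follows essentially the same route as the paper: both arguments show $\varpi A^{\circ}\subseteq A^{+}$ by getting some power of $\varpi x$ into $A^{+}$ (you via topological nilpotence of $\varpi x$ and openness of $A^{+}$, the paper via the explicit bound $x^{n}\in\varpi^{-m}A^{+}$) and then invoking integral closedness through the monic binomial $T^{N}-(\varpi x)^{N}$. The only difference is your slight repackaging through the claim that topologically nilpotent elements lie in $A^{+}$, which does not change the substance of the argument.
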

\begin{proof}Let $\varpi\in A^{+}$ be a non-zero-divisor which generates an ideal of definition of $A^{+}$. Let $f\in A^{\circ}$, i.e., there exists an integer $m\geq 0$ such that $f^{n}\in \varpi^{-m}A^{+}$ for all $n$. Then, in particular, $(\varpi f)^{m}\in A^{+}$. Since $A^{+}$ is integrally closed, this means that $\varpi f\in A^{+}$. It follows that $\varpi A^{\circ}\subseteq A^{+}$, so $A^{\circ}$ is bounded, as claimed.\end{proof}
Let us also give a name to those adic formal $R$-schemes whose generic fibers are uniform adic spaces (cf. \cite{Nakazato-Shimomoto22}, Definition 2.16 and Lemma 2.17).
\begin{mydef}[Locally stably uniform adic formal $R$-schemes]\label{Locally stably uniform adic formal scheme}~\begin{enumerate}[(1)]\item A complete adic ring $A$ with ideal of definition generated by a single non-zero-divisor $\varpi$ is called uniform (respectively, stably uniform) if the Tate ring $A[\varpi^{-1}]$ is uniform (respectively, stably uniform). \item An affine adic formal $R$-scheme $\mathfrak{X}$ is called uniform (respectively, stably uniform) if the adic ring $\mathcal{O}_{\mathfrak{X}}(\mathfrak{X})$ is uniform (respectively, stably uniform). An arbitrary adic formal $R$-scheme $\mathfrak{X}$ is then called locally uniform (respectively, locally stably uniform) if $\mathfrak{X}$ has an open cover by uniform (respectively, stably uniform) affine adic formal $R$-schemes.\end{enumerate}\end{mydef}
\begin{lemma}\label{Uniformity is local}If $\mathfrak{X}$ is a locally uniform (respectively, locally stably uniform) adic formal $R$-scheme, then every affine open subset $\mathfrak{U}$ of $\mathfrak{X}$ is uniform (respectively, stably uniform).\end{lemma}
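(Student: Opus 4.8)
The plan is to establish the ``uniform'' assertion directly and then bootstrap the ``stably uniform'' one from it. Throughout write $A=\mathcal{O}_{\mathfrak{X}}(\mathfrak{U})$; since $\mathfrak{X}$ is an adic formal $R$-scheme, $\varpi$ generates an ideal of definition of $A$, and whether $\mathfrak{U}$ is uniform (resp.\ stably uniform) is a property of the complete Tate ring $A[\varpi^{-1}]$ only. Note that until $\mathfrak{X}$ is known to be rig-sheafy we may not yet regard $\Spa(A[\varpi^{-1}],\overline{A})$ as an adic space, so the whole argument must be run in terms of the structure \emph{sheaf} $\mathcal{O}_{\mathfrak{X}}$ on the formal scheme.

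For the uniform case I would argue as follows. Choose a finite cover $\mathfrak{U}=\bigcup_{\alpha}\mathfrak{V}_{\alpha}$ (possible since $\mathfrak{U}$ is quasi-compact) by uniform affine open subschemes $\mathfrak{V}_{\alpha}=\Spf(C_{\alpha})$ of $\mathfrak{X}$, so each $C_{\alpha}[\varpi^{-1}]$ is uniform. The sheaf property of $\mathcal{O}_{\mathfrak{X}}$ exhibits $A$ as the equalizer of $\textstyle\prod_{\alpha}C_{\alpha}\rightrightarrows\prod_{\alpha,\beta}\mathcal{O}_{\mathfrak{X}}(\mathfrak{V}_{\alpha}\cap\mathfrak{V}_{\beta})$; passing to the scheme $(\mathfrak{X},\mathcal{O}_{\mathfrak{X}}/\varpi^{k}\mathcal{O}_{\mathfrak{X}})$, on which $\mathfrak{U}$ and the $\mathfrak{V}_{\alpha}$ are affine, yields $A/\varpi^{k}A\hookrightarrow\prod_{\alpha}C_{\alpha}/\varpi^{k}C_{\alpha}$, hence the identity $A\cap\prod_{\alpha}\varpi^{k}C_{\alpha}=\varpi^{k}A$ inside $\prod_{\alpha}C_{\alpha}$ for every $k$. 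Inverting $\varpi$, which is exact and commutes with finite products, makes $A[\varpi^{-1}]$ the equalizer of $\prod_{\alpha}C_{\alpha}[\varpi^{-1}]\rightrightarrows\prod_{\alpha,\beta}\mathcal{O}_{\mathfrak{X}}(\mathfrak{V}_{\alpha}\cap\mathfrak{V}_{\beta})[\varpi^{-1}]$, so $A[\varpi^{-1}]\hookrightarrow\prod_{\alpha}C_{\alpha}[\varpi^{-1}]$ is injective with closed image. The key point is that it is moreover a \emph{topological} embedding: if $b=a/\varpi^{j}$ has image in $\varpi^{k}C_{\alpha}$ inside $C_{\alpha}[\varpi^{-1}]$ for each $\alpha$, then for each $\alpha$ some power of $\varpi$ kills $a-\varpi^{j+k}c_{\alpha}$ for a suitable $c_{\alpha}\in C_{\alpha}$; choosing one power $\varpi^{N}$ valid for all the finitely many $\alpha$, the identity above forces $\varpi^{N}a\in\varpi^{N+j+k}A$, whence $b\in\varpi^{k}A$ in $A[\varpi^{-1}]$. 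Granting this, $A[\varpi^{-1}]^{\circ}=A[\varpi^{-1}]\cap\prod_{\alpha}(C_{\alpha}[\varpi^{-1}])^{\circ}$, since power-boundedness is detected componentwise in a subspace. Each $(C_{\alpha}[\varpi^{-1}])^{\circ}$ is bounded by uniformity, a finite product of bounded sets is bounded, so $A[\varpi^{-1}]^{\circ}$ is bounded and $A[\varpi^{-1}]$ is uniform.

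For the stably uniform case I would first note that an admissible formal blow-up $\mathfrak{X}'\to\mathfrak{X}$ of a locally stably uniform $\mathfrak{X}$ is again locally stably uniform: by the theorem of Buzzard--Verberkmoes--Mihara (\cite{BV}, \cite{Mihara}) a stably uniform chart $\Spf(C_{\alpha})$ has $C_{\alpha}[\varpi^{-1}]$ sheafy, hence (Lemma \ref{Rig-sheafiness does not depend on generators}) $C_{\alpha}$ rig-sheafy, and by Lemma \ref{Local explicit description} together with the description of completed affine blow-up algebras as rings of definition of rational localizations (as in the proof of Lemma \ref{Open covers and formal models}) every blow-up chart $\Spf(D)$ over $\Spf(C_{\alpha})$ satisfies that $D[\varpi^{-1}]$ is a rational localization of the stably uniform ring $C_{\alpha}[\varpi^{-1}]$, hence $D$ is stably uniform. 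Now let $A[\varpi^{-1}]\to B$ be an arbitrary rational localization; clearing denominators, $B$ is identified with $\mathcal{O}_{\mathfrak{X}'}(\mathfrak{W})[\varpi^{-1}]$ for $\mathfrak{W}=\Spf(C)$ a suitable chart of the admissible formal blow-up of $\mathfrak{U}$ in a finitely generated open ideal, $C$ the completed affine blow-up algebra (a ring of definition of $B$). By Lemma \ref{Admissible formal blow-up and affine opens} (and \cite{FK}, Ch.~II, Proposition 1.1.8) this blow-up is the restriction to $\mathfrak{U}$ of an admissible formal blow-up $\mathfrak{X}'\to\mathfrak{X}$, with $\mathfrak{W}$ an affine open subscheme of $\mathfrak{X}'$; since $\mathfrak{X}'$ is locally stably uniform, hence locally uniform, the uniform case applied to $\mathfrak{X}'$ shows $B$ is uniform. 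As $B$ was arbitrary, $A[\varpi^{-1}]$ is stably uniform.

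I expect the main obstacle to be the topology-matching step of the uniform case, namely proving $A\cap\prod_{\alpha}\varpi^{k}C_{\alpha}=\varpi^{k}A$ and its avatar after inverting $\varpi$; this is exactly where one uses that $\mathcal{O}_{\mathfrak{X}}$ is a sheaf of \emph{complete} rings and must be careful with $\varpi$-power-torsion in the $C_{\alpha}$ (so that only a common power $\varpi^{N}$, not a uniform torsion bound, is invoked) -- Lemma \ref{Completions are torsion-free} and Remark \ref{Remark on rig-sheafiness} being the relevant bookkeeping. A secondary subtlety, in the stably uniform case, is the identification of a rational localization of $A[\varpi^{-1}]$ with $\mathcal{O}_{\mathfrak{X}'}(\mathfrak{W})[\varpi^{-1}]$ for a blow-up chart $\mathfrak{W}$, which leans on the explicit local description of admissible formal blow-ups.
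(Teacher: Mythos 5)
There is a genuine gap at the very first step of your uniform case. From Definition \ref{Locally stably uniform adic formal scheme} you only know that $\mathfrak{X}$ admits \emph{some} open cover by uniform affine charts $\Spf(B_i)$; an arbitrary affine open $\mathfrak{U}$ is in general not a union of such charts, and the affine opens of $\mathfrak{U}$ contained in a chart $\Spf(B_i)$ are not yet known to be uniform --- that is exactly (a special case of) the statement of Lemma \ref{Uniformity is local} being proved. So the finite cover $\mathfrak{U}=\bigcup_{\alpha}\mathfrak{V}_{\alpha}$ ``by uniform affine open subschemes of $\mathfrak{X}$'' with which you begin is not available. The missing ingredient is precisely the paper's key point: if $B[\varpi^{-1}]$ is uniform, then so is $B\langle g^{-1}\rangle[\varpi^{-1}]=B[\varpi^{-1}]\langle g^{-1}\rangle$ for every $g\in B$. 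This cannot be taken for granted, since uniformity is \emph{not} preserved under general rational localizations (which is why ``stably uniform'' is a separate notion); for this Laurent-type localization the paper proves it by exhibiting an integrally closed ring of definition, namely $A^{+}\langle g^{-1}\rangle$ where $A^{+}$ is an integrally closed ring of definition of $B[\varpi^{-1}]$ (using \cite{Huber0}, Lemma 2.4.3(iii),(iv)), together with Lemma \ref{Power-bounded elements} (a Tate ring with an integrally closed ring of definition is uniform). Once this is in hand, $\mathfrak{U}$ can be covered by opens that are simultaneously basic in $\mathfrak{U}$ and in some chart, and then your gluing step --- the embedding $A/\varpi^{k}A\hookrightarrow\prod_{\alpha}C_{\alpha}/\varpi^{k}C_{\alpha}$ and the resulting boundedness of $A[\varpi^{-1}]^{\circ}$ --- is correct and in fact makes explicit what the paper compresses into ``since $\mathcal{O}_{\mathfrak{X}}[\varpi^{-1}]$ is a sheaf of complete topological rings''. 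In other words, you identified the gluing direction as the main obstacle, while the real crux (and the paper's actual content) is the localization direction, which your argument silently assumes.

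Two further remarks on your stably uniform case. First, it appeals to Lemma \ref{Admissible formal blow-up and affine opens} to extend the blow-up of $\mathfrak{U}$ to an admissible formal blow-up of $\mathfrak{X}$, but that lemma requires $\mathfrak{X}$ to be quasi-compact quasi-separated, which Lemma \ref{Uniformity is local} does not assume; this could be repaired by working with the preimages in the blow-up of $\mathfrak{U}$ of the overlaps $\mathfrak{U}\cap\Spf(B_i)$, whose charts have generic Tate rings that are rational localizations of the stably uniform rings $B_i[\varpi^{-1}]\langle h^{-1}\rangle$, but as written the step is not justified. Second, the whole blow-up detour is unnecessary: after reducing to basic opens, stable uniformity of $B\langle g^{-1}\rangle[\varpi^{-1}]$ follows essentially by definition, because a rational localization of $B[\varpi^{-1}]\langle g^{-1}\rangle$ is again a rational localization of $B[\varpi^{-1}]$; this is how the paper disposes of that case in one line, whereas the uniform case is where the integral-closedness argument is genuinely needed.
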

\begin{proof}Since $\mathcal{O}_{\mathfrak{X}}[\varpi^{-1}]$ is a sheaf of complete topological rings, this reduces to proving that for any uniform (respectively, stably uniform) complete adic ring $A$, with $\varpi\in A$ a non-zero-divisor generating an ideal of definition of $A$, and for any $g\in A$, the Tate ring $A\langle g^{-1}\rangle[\varpi^{-1}]=A[\varpi^{-1}]\langle g^{-1}\rangle$ is uniform (respectively, stably uniform). The stably uniform case is true by definition, so we only have to prove that $A[\varpi^{-1}]\langle g^{-1}\rangle$ is a uniform Tate ring whenever $A[\varpi^{-1}]$ is one. By Lemma \ref{Power-bounded elements} it suffices to show that $A[\varpi^{-1}]\langle g^{-1}\rangle$ has an integrally closed ring of definition. Let $A^{+}$ be an integrally closed ring of definition of $A[\varpi^{-1}]$. Then $A^{+}[g^{-1}]$ is an integrally closed ring of definition of $A[\varpi^{-1}][g^{-1}]$. It then follows from \cite{Huber0}, Lemma 2.4.3(iii) and (iv), that $A^{+}\langle g^{-1}\rangle$ is an integrally closed ring of definition of $A[\varpi^{-1}]\langle g^{-1}\rangle$.\end{proof}
Locally stably uniform adic formal $R$-schemes are locally rig-sheafy by the well-known theorem of Buzzard-Verberkmoes \cite{BV} and Mihara \cite{Mihara}. In particular, the adic analytic generic fiber $\mathfrak{X}_{\eta}^{\ad}$ over $\Spa(R[\varpi^{-1}], \overline{R})$ of such an adic formal $R$-scheme $\mathfrak{X}$ is defined. In fact, locally stably uniform adic formal $R$-schemes $\mathfrak{X}$ are precisely the locally rig-sheafy adic formal $R$-schemes whose generic fiber is uniform.
\begin{lemma}\label{Uniform generic fiber}Let $X$ be an adic space over $\Spa(R[\varpi^{-1}], \overline{R})$ and let $\mathfrak{X}$ be a formal $R$-model of $X$. The following are equivalent: \begin{enumerate}[(1)] \item $X$ is a uniform adic space. \item $\mathfrak{X}$ is a locally stably uniform adic formal $R$-scheme.\end{enumerate}\end{lemma}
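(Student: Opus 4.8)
The plan is to prove the two implications directly, transporting information between $\mathfrak{X}$ and $X$ through the generic fiber functor and exploiting its compatibility with rational localizations, together with the fact (Buzzard--Verberkmoes--Mihara \cite{BV}, \cite{Mihara}) that stably uniform Tate rings are sheafy.

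For $(1)\Rightarrow(2)$ I would argue as follows. Being a formal $R$-model, $\mathfrak{X}$ is locally rig-sheafy, so by Lemma \ref{Topology on a locally rig-sheafy formal scheme} it is covered by rig-sheafy affine opens $\mathfrak{U}=\Spf(A)$; since $\mathfrak{X}$ is adic over $\Spf(R)$, the topology on each such $A$ is the $\varpi$-adic one, so by Lemma \ref{Generic fibers and open immersions} (and the affine description recalled in Lemma \ref{Generic fibers of sheafy formal schemes}) the open $\spc_{X,\mathfrak{X}}^{-1}(\mathfrak{U})\subseteq X$ is the affinoid $\Spa(A[\varpi^{-1}],\overline{A})$, with $\mathcal{O}_X(\spc_{X,\mathfrak{X}}^{-1}(\mathfrak{U}))=A[\varpi^{-1}]$. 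After clearing powers of $\varpi$ from the defining parameters, every rational localization of $A[\varpi^{-1}]$ is of the form $\mathcal{O}_X(W)$ for $W$ a rational subset of $\spc_{X,\mathfrak{X}}^{-1}(\mathfrak{U})$, hence an affinoid open of $X$; as $X$ is uniform, each $\mathcal{O}_X(W)$ is a uniform Tate ring, so $A[\varpi^{-1}]$ is stably uniform and $\mathfrak{U}$ is a stably uniform affine formal $R$-scheme. Thus $\mathfrak{X}$ is locally stably uniform.

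For $(2)\Rightarrow(1)$: by Lemma \ref{Uniformity is local}, $\mathfrak{X}$ has an open cover by stably uniform affine opens $\mathfrak{U}_i=\Spf(A_i)$, each rig-sheafy by \cite{BV}, \cite{Mihara}; as in the previous paragraph $\spc_{X,\mathfrak{X}}^{-1}(\mathfrak{U}_i)=\Spa(A_i[\varpi^{-1}],\overline{A_i})$ is an affinoid open of $X$ with stably uniform structure ring, and these cover $X$. Every affinoid subdomain of such a $\spc_{X,\mathfrak{X}}^{-1}(\mathfrak{U}_i)$ is then uniform. To see that an arbitrary affinoid open $U\subseteq X$ is uniform I would cover $U$ by finitely many affinoid opens $V_k$, each a rational subset of some $\spc_{X,\mathfrak{X}}^{-1}(\mathfrak{U}_{i(k)})$, and use that $\mathcal{O}_X(U)$ is the (topological) equalizer of $\prod_k \mathcal{O}_X(V_k)\rightrightarrows \prod_{k,l}\mathcal{O}_X(V_k\cap V_l)$: it is a closed subring of a product of Tate rings complete for their spectral seminorms, and the subspace seminorm it inherits is the spectral seminorm over $U$ (because the $V_k$ cover $U$), so $\mathcal{O}_X(U)$ is complete for its spectral seminorm, i.e.\ uniform. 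Hence $X$ is uniform.

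The routine parts are the bookkeeping identifying rational localizations of $A[\varpi^{-1}]$ with structure rings of affinoid opens of $X$, and the definitional unwinding of ``stably uniform''. The one point that deserves care is the final step of $(2)\Rightarrow(1)$, namely propagating uniformity of the structure rings from a cover to all affinoid opens of $X$: if the ambient definition of a uniform adic space is the ``local'' one (existence of an affinoid cover with uniform structure rings), this step is vacuous; otherwise it is the standard fact that uniformity of an adic space can be checked on any affinoid cover, for which the seminorm-completeness argument above (or, alternatively, an application of Lemma \ref{Power-bounded elements} after identifying $\mathcal{O}_X^+(U)$ with an integrally closed ring of definition of $\mathcal{O}_X(U)$) suffices. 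I expect this to be the main obstacle only at the level of writing, not of ideas.
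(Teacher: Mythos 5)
Your argument is correct and essentially the paper's proof: take a rig-sheafy affine open cover $(\mathfrak{U}_i)_i$ of $\mathfrak{X}$, identify $\spc_{X,\mathfrak{X}}^{-1}(\mathfrak{U}_i)$ with the affinoid $\Spa(A_i[\varpi^{-1}],\overline{A_i})$ so that rational localizations of $A_i[\varpi^{-1}]$ are rings of sections of affinoid opens of $X$, and transfer uniformity/stable uniformity across this identification, using Lemma \ref{Uniformity is local} for the converse. The extra step you add in $(2)\Rightarrow(1)$ (propagating uniformity to arbitrary affinoid opens of $X$) is not part of the paper's proof, which stops once the covering affinoids are stably uniform; your ``complete for the spectral seminorm, hence uniform'' shortcut there would itself need a boundedness/open-mapping comparison of topologies, but since you correctly note the step is superfluous under the convention the paper uses, this does not affect the shared core of the argument.
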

\begin{proof}Let $(\mathfrak{U}_{i})_{i}$ be a rig-sheafy affine open cover of $\mathfrak{X}$ and let $U_{i}=\spc_{X,\mathfrak{X}}^{-1}(\mathfrak{U}_{i})$. If $X$ is a uniform adic space, then the Tate ring $\mathcal{O}_{X}(U_{i})=\mathcal{O}_{\mathfrak{X}}(\mathfrak{U}_{i})[\varpi^{-1}]$ is stably uniform for each $i$, so $\mathfrak{X}$ is locally stably uniform. Conversely, if $\mathfrak{X}$ is locally stably uniform, then, by Lemma \ref{Uniformity is local}, each $\mathfrak{U}_{i}$ is stably uniform and thus $\mathcal{O}_{X}(U_{i})$ is a stably uniform Tate ring for all $i$. \end{proof}
In particular, every formal modification of a locally stably uniform adic formal $R$-scheme is again locally stably uniform. We are now ready to introduce the following special kind of formal modifications.
\begin{mydef}[Normalized formal blow-up]\label{Normalized formal blow-up}Let $\mathfrak{X}$ be a locally rig-sheafy qcqs $\varpi$-torsion-free adic formal $R$-scheme with uniform generic fiber over $\Spa(R[\varpi^{-1}], \overline{R})$. A formal modification $f_{0}: \mathfrak{Z}'\to\mathfrak{X}$ of $\mathfrak{X}$ is called a \textit{normalized formal blow-up} if it is the composition of a $\varpi$-torsion-free admissible formal blow-up $\mathfrak{X}'\to\mathfrak{X}$ followed by its normalization $\mathfrak{Z}'\to\mathfrak{X}'$ (whose existence is guaranteed by Corollary \ref{Normalization of a formal model 2}).\end{mydef}
The following analog of Lemma \ref{Admissible formal blow-up and affine opens} holds true for normalized formal blow-ups.
\begin{lemma}\label{Normalized formal blow-up and affine opens}Let $\mathfrak{X}$ be a locally rig-sheafy qcqs $\varpi$-torsion-free adic formal $R$-scheme with uniform generic fiber $X$ over $\Spa(R[\varpi^{-1}], \overline{R})$. Let $\mathfrak{U}$ be a quasi-compact open formal subscheme of $\mathfrak{X}$ and let $\varphi: \mathfrak{V}\to\mathfrak{U}$ be a normalized formal blow-up. Then there exists a normalized formal blow-up $\psi: \mathfrak{Z}\to \mathfrak{X}$ and an open immersion $\mathfrak{V}\hookrightarrow \mathfrak{Z}$ such that the restriction of $\psi$ to $\mathfrak{V}$ equals $\varphi$.\end{lemma}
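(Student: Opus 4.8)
The plan is to combine the corresponding facts, already available, for admissible formal blow-ups (Lemma \ref{Admissible formal blow-up and affine opens}) and for normalizations (Proposition \ref{Normalization of a formal model}, Corollary \ref{Normalization of a formal model 2}). To start, I would fix a factorization of $\varphi$ of the form $\mathfrak{V}\xrightarrow{\nu}\mathfrak{U}'\xrightarrow{b_{0}}\mathfrak{U}$ as provided by Definition \ref{Normalized formal blow-up}, in which $b_{0}$ is a $\varpi$-torsion-free admissible formal blow-up of $\mathfrak{U}$, say along an admissible ideal $\mathcal{J}\subseteq\mathcal{O}_{\mathfrak{U}}$, and $\nu\colon\mathfrak{V}\to\mathfrak{U}'$ is the normalization of $\mathfrak{U}'$ in its generic fiber $U'=(\mathfrak{U}')_{\eta}^{\ad}$ in the sense of Corollary \ref{Normalization of a formal model 2}. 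Note that $\mathfrak{U}'$ is then itself a locally rig-sheafy $\varpi$-torsion-free qcqs adic formal $R$-scheme and that $U'$, being an open subspace of the uniform space $X$, is uniform, so that Proposition \ref{Normalization of a formal model} applies to $\mathfrak{U}'$.

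\emph{Step 1: extending the blow-up while keeping $\varpi$-torsion-freeness.} By Lemma \ref{Admissible formal blow-up and affine opens}(1) the ideal $\mathcal{J}$ extends to an admissible ideal on $\mathfrak{X}$ whose admissible formal blow-up $\mathfrak{X}_{1}\to\mathfrak{X}$ restricts over $\mathfrak{U}$ to $b_{0}$. I would then let $b\colon\mathfrak{X}'\to\mathfrak{X}$ be the composition of $\mathfrak{X}_{1}\to\mathfrak{X}$ with the admissible formal blow-up of $\mathfrak{X}_{1}$ in $\varpi\mathcal{O}_{\mathfrak{X}_{1}}$; by \cite{FK}, Ch.~II, Proposition 1.1.10 this $b$ is again an admissible formal blow-up of $\mathfrak{X}$, and $\mathfrak{X}'$ is $\varpi$-torsion-free. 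The point requiring attention is that $b$ still restricts over $\mathfrak{U}$ to $b_{0}$: admissible formal blow-ups commute with restriction to open formal subschemes (\cite{FK}, Ch.~II, Proposition 1.1.8), and the admissible formal blow-up of the already $\varpi$-torsion-free $\mathfrak{U}'$ in the invertible ideal $\varpi\mathcal{O}_{\mathfrak{U}'}$ is an isomorphism; hence the restriction of $b$ over $\mathfrak{U}$ is again $b_{0}$, and $\mathfrak{U}'$ is identified with the open formal subscheme $b^{-1}(\mathfrak{U})$ of $\mathfrak{X}'$. Moreover $\mathfrak{X}'$ is again locally rig-sheafy (by the argument in the proof of Lemma \ref{Admissible formal blow-ups and generic fiber}), qcqs, and has generic fiber $(\mathfrak{X}')_{\eta}^{\ad}\cong X$, which is uniform.

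\emph{Step 2: normalizing and locating $\mathfrak{V}$.} Applying Corollary \ref{Normalization of a formal model 2} to $\mathfrak{X}'$ gives its normalization $n\colon\mathfrak{Z}\to\mathfrak{X}'$ in its generic fiber, and then $\psi:=b\circ n\colon\mathfrak{Z}\to\mathfrak{X}$ is a normalized formal blow-up of $\mathfrak{X}$ by Definition \ref{Normalized formal blow-up}. It would then remain to identify the open formal subscheme $\mathfrak{W}:=n^{-1}(\mathfrak{U}')$ of $\mathfrak{Z}$ with $\mathfrak{V}$. Writing $\mathfrak{Z}=\Spf(\spc_{X,\mathfrak{X}'\ast}\mathcal{O}_{X}^{+})$ and using that the formation of the relative formal spectrum of an adically quasi-coherent algebra commutes with restriction to open formal subschemes (\cite{FK}, Ch.~I, \S4.1), together with the fact that $\spc_{X,\mathfrak{X}'}$ restricts over the open $\mathfrak{U}'$ to $\spc_{U',\mathfrak{U}'}$ (the generic fiber and the specialization map being local in nature, cf. Lemma \ref{Generic fibers and open immersions} and Lemma \ref{Generic fiber via gluing}), one obtains an isomorphism over $\mathfrak{U}'$
\begin{equation*}\mathfrak{W}\;\cong\;\Spf\bigl(\spc_{U',\mathfrak{U}'\ast}\mathcal{O}_{U'}^{+}\bigr),\end{equation*}
whose right-hand side is exactly the normalization of $\mathfrak{U}'$ in $U'$. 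By the uniqueness clause of Proposition \ref{Normalization of a formal model}, $\mathfrak{W}\to\mathfrak{U}'$ is canonically identified with $\nu$, so there is an isomorphism $\iota_{0}\colon\mathfrak{V}\xrightarrow{\sim}\mathfrak{W}$ over $\mathfrak{U}'$; composing $\iota_{0}$ with the open immersion $\mathfrak{W}\hookrightarrow\mathfrak{Z}$ gives the sought open immersion $\iota\colon\mathfrak{V}\hookrightarrow\mathfrak{Z}$. Finally a diagram chase — $n\circ\iota=\nu$ as morphisms into $\mathfrak{U}'\subseteq\mathfrak{X}'$, and $b$ restricted to $\mathfrak{U}'$ is $b_{0}$ — yields $\psi\circ\iota=b_{0}\circ\nu=\varphi$.

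I expect the only genuinely delicate point to be the compatibility in Step 1: arranging that the admissible formal blow-up extending $b_{0}$ can be taken $\varpi$-torsion-free without changing its restriction over $\mathfrak{U}$. This rests on the elementary observation that on a $\varpi$-torsion-free formal scheme $\varpi\mathcal{O}$ is an invertible ideal, so that its admissible formal blow-up is an isomorphism, combined with the compatibility of admissible formal blow-ups with passage to open formal subschemes. Everything after Step 1 is routine bookkeeping with the local and functorial behavior of the generic fiber, the specialization map, the relative formal spectrum, and the normalization, all established in the earlier sections.
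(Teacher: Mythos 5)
Your proof is correct and follows essentially the same route as the paper: extend the admissible formal blow-up underlying $\varphi$ to one of $\mathfrak{X}$ via Lemma \ref{Admissible formal blow-up and affine opens}, normalize the result, and identify the preimage of $\mathfrak{U}'$ with $\mathfrak{V}$ using the uniqueness statement of Corollary \ref{Normalization of a formal model 2}. The only differences are your extra care in Step 1 to make the extended blow-up $\varpi$-torsion-free (by further blowing up $\varpi\mathcal{O}$, which is an isomorphism over the already torsion-free $\mathfrak{U}'$) --- a point the paper's proof leaves implicit --- and your explicit computation of $n^{-1}(\mathfrak{U}')$ via the pushforward and relative $\Spf$ where the paper argues directly from uniqueness; both are harmless variations of the same argument.
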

\begin{proof}Let $\mathfrak{U}'\to\mathfrak{U}$ be an admissible formal blow-up such that $\mathfrak{V}$ is the normalization of $\mathfrak{U}'$ in its generic fiber. By Lemma \ref{Admissible formal blow-up and affine opens}, there exists an admissible formal blow-up $\mathfrak{X}'\to\mathfrak{X}$ extending $\mathfrak{U}'\to\mathfrak{U}$. Let $g_{0}: \mathfrak{Z}\to\mathfrak{X}'$ be the normalization of $\mathfrak{X}'$ in $X$. Since $g_{0}$ is an affine formal modification, so is the restricted morphism \begin{equation*}g_{0}\vert_{g_{0}^{-1}(\mathfrak{U}')}: g_{0}^{-1}(\mathfrak{U}')\to \mathfrak{U}'.\end{equation*}Moreover, $g_{0}^{-1}(\mathfrak{U}')$ is integrally closed in its generic fiber, $\mathfrak{Z}$ being integrally closed in its generic fiber. It follows that $g_{0}^{-1}(\mathfrak{U}')$ is canonically identified with the normalization $\mathfrak{V}$ of $\mathfrak{U}'$ in its generic fiber, by the uniqueness part of Corollary \ref{Normalization of a formal model 2}. Hence the composition $\psi$ of $\mathfrak{X}'\to\mathfrak{X}$ with $g_{0}: \mathfrak{Z}\to\mathfrak{X}'$ is a normalized formal blow-up of $\mathfrak{X}$ with the desired property.\end{proof}

\section{Proof of the main results}\label{sec:main results}

It turns out that normalized formal blow-ups play a role in the theory of formal $R$-models of a general uniform qcqs adic space over $\Spa(R[\varpi^{-1}], \overline{R})$ analogous to the role of admissible formal blow-ups in Raynaud's theory of formal models of rigid spaces.  
\begin{lemma}\label{Faithfulness}Let $f_{0}, g_{0}: \mathfrak{Z}\to\mathfrak{X}$ be morphisms of locally rig-sheafy $\varpi$-torsion-free adic formal $R$-schemes such that $f_{0\eta}=g_{0\eta}$. Then $f_{0}=g_{0}$.\end{lemma}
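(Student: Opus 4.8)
The plan is to follow the classical argument of Bosch--Lütkebohmert (\cite{BL1}, proof of Theorem 4.1, step (b)): reconstruct a morphism of formal models from its effect on the generic fiber by means of the specialization map. The key auxiliary fact is that the specialization morphism is compatible with morphisms of formal schemes: for every morphism $h_{0}\colon\mathfrak{Z}\to\mathfrak{X}$ of locally rig-sheafy adic formal $R$-schemes, with generic fibers $Z=\mathfrak{Z}_{\eta}^{\ad}$ and $X=\mathfrak{X}_{\eta}^{\ad}$, the square
\begin{center}\begin{tikzcd}Z\arrow{r}{h_{0\eta}}\arrow{d}{\spc_{Z,\mathfrak{Z}}} & X\arrow{d}{\spc_{X,\mathfrak{X}}} \\ \mathfrak{Z}\arrow{r}{h_{0}} & \mathfrak{X}\end{tikzcd}\end{center}
commutes. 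This is local on $\mathfrak{Z}$ and $\mathfrak{X}$, so by Lemma \ref{Generic fibers and open immersions} it reduces to the affine rig-sheafy case, where, writing $\varphi\colon A\to B$ for the continuous ring homomorphism corresponding to $h_{0}$, it amounts to the elementary identity $\varphi^{-1}(\{\,g\in B\mid |g(v)|<1\,\})=\{\,a\in A\mid |\varphi(a)(v)|<1\,\}$ for $v$ a point of $Z$, which is immediate from the definition of the specialization map.

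Applying this to $f_{0}$ and $g_{0}$ and using $f_{0\eta}=g_{0\eta}$ yields $f_{0}\circ\spc_{Z,\mathfrak{Z}}=g_{0}\circ\spc_{Z,\mathfrak{Z}}$ as maps $|Z|\to|\mathfrak{X}|$. Next I would invoke the surjectivity of $\spc_{Z,\mathfrak{Z}}$: this is local on $\mathfrak{Z}$, so one may assume $\mathfrak{Z}=\Spf(B)$ affine, hence quasi-compact quasi-separated, and then, e.g.\ by Corollary \ref{Center maps are surjective} (applied to the admissible formal blow-up of $\mathfrak{Z}$ in $\varpi\mathcal{O}_{\mathfrak{Z}}$, which is the identity and $\varpi$-torsion-free since $\mathfrak{Z}$ is), or by the affine-case surjectivity recalled in Remark \ref{History of the specialization map} via the integral surjection $\Spf(\overline{B})\to\Spf(B)$, the map $\spc_{Z,\mathfrak{Z}}$ is surjective. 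It follows that $f_{0}$ and $g_{0}$ have the same underlying continuous map $\phi\colon|\mathfrak{Z}|\to|\mathfrak{X}|$.

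It then remains to check that $f_{0}$ and $g_{0}$ induce the same map on structure sheaves; since two morphisms of locally $v$-ringed spaces agree once they agree on an open cover of the source, I would work near an arbitrary point $z\in\mathfrak{Z}$. Put $x=\phi(z)$, pick a rig-sheafy affine open $\mathfrak{U}=\Spf(A)\subseteq\mathfrak{X}$ containing $x$, and then, using Lemma \ref{Topology on a locally rig-sheafy formal scheme}, a rig-sheafy affine open $\mathfrak{V}=\Spf(B)\subseteq\phi^{-1}(\mathfrak{U})$ containing $z$. Because $|f_{0}|=|g_{0}|=\phi$, both $f_{0}$ and $g_{0}$ restrict to morphisms $\mathfrak{V}\to\mathfrak{U}$, i.e.\ to continuous $R$-algebra homomorphisms $\varphi,\psi\colon A\to B$, and it suffices to prove $\varphi=\psi$. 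Since $\mathfrak{X}$ and $\mathfrak{Z}$ are adic over $\Spf(R)$, the rings $A$ and $B$ are $\varpi$-adically complete and adic over $R$, so $A[\varpi^{-1}]$ and $B[\varpi^{-1}]$ are sheafy (Lemma \ref{Rig-sheafiness does not depend on generators}), $\mathfrak{U}_{\eta}^{\ad}=\Spa(A[\varpi^{-1}],\overline{A})$ and $\mathfrak{V}_{\eta}^{\ad}=\Spa(B[\varpi^{-1}],\overline{B})$; using the commuting square above together with Lemma \ref{Generic fibers and open immersions}, $f_{0\eta}$ carries $\mathfrak{V}_{\eta}^{\ad}$ into $\mathfrak{U}_{\eta}^{\ad}$, and by the affine functoriality of the generic fiber recalled after Lemma \ref{Ideals of definition and generic fiber} the induced morphism $\mathfrak{V}_{\eta}^{\ad}\to\mathfrak{U}_{\eta}^{\ad}$ is the one induced by $\varphi[\varpi^{-1}]\colon A[\varpi^{-1}]\to B[\varpi^{-1}]$, and likewise $g_{0\eta}$ corresponds to $\psi[\varpi^{-1}]$. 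As $f_{0\eta}=g_{0\eta}$ and a morphism of affinoid adic spaces is determined by the induced homomorphism on rings of global sections, $\varphi[\varpi^{-1}]=\psi[\varpi^{-1}]$; and since $B$ is $\varpi$-torsion-free, $B\hookrightarrow B[\varpi^{-1}]$ is injective, whence $\varphi=\psi$.

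The only non-formal ingredient is the surjectivity of the specialization map used in the second paragraph, which is already available through Corollary \ref{Center maps are surjective} (itself resting on the inverse-limit description of Theorem \ref{Generic fiber via formal blow-ups}); everything else is bookkeeping, so I do not anticipate a serious obstacle.
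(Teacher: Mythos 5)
Your proposal is correct and takes essentially the same route as the paper's own proof: surjectivity of the specialization map (Corollary \ref{Center maps are surjective}) combined with the commutativity of the specialization square gives equality of $f_{0}$ and $g_{0}$ on underlying topological spaces, and the affine reduction together with $\varpi$-torsion-freeness (injectivity of $\mathcal{O}_{\mathfrak{Z}}(\mathfrak{Z})\hookrightarrow\mathcal{O}_{\mathfrak{Z}}(\mathfrak{Z})[\varpi^{-1}]$) then identifies the ring maps. The only difference is that you make explicit two points the paper leaves implicit, namely the verification that the specialization square commutes and the localization needed to apply the qcqs surjectivity statement, which is fine.
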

\begin{proof}Let $X=\mathfrak{X}_{\eta}^{\ad}$ and $Z=\mathfrak{Z}_{\eta}^{\ad}$. Consider the commutative diagram \begin{center}\begin{tikzcd}Z\arrow{r}{f_{0\eta}} \arrow{d}{\spc_{Z,\mathfrak{Z}}} & X \arrow{d}{\spc_{X,\mathfrak{X}}} \\ \mathfrak{Z}\arrow{r}{f_{0}} & \mathfrak{X} \end{tikzcd}\end{center}and the analogous commutative diagram for $g_{0}$. Let $x_{0}\in\vert\mathfrak{Z}\vert$. By Corollary \ref{Center maps are surjective} we can choose $x\in\vert Z\vert$ such that $\spc_{Z,\mathfrak{Z}}(x)=x_{0}$. Then we have \begin{equation*}f_{0}(x_{0})=f_{0}(\spc_{Z,\mathfrak{Z}}(x))=\spc_{X,\mathfrak{X}}(f_{0\eta}(x))=\spc_{X,\mathfrak{X}}(g_{0\eta}(x))=g_{0}(x_{0}),\end{equation*}so $f_{0}$ and $g_{0}$ coincide as maps between the underlying topological spaces of $\mathfrak{Z}$ and $\mathfrak{X}$. Therefore, to prove that $f_{0}=g_{0}$, we may work locally and assume that $\mathfrak{Z}$ and $\mathfrak{X}$ are affine and rig-sheafy. In this case the assertion follows from the commutative diagram \begin{center}\begin{tikzcd}\mathcal{O}_{X}(X)\arrow{r}{f_{0\eta}^{\ast}} & \mathcal{O}_{Z}(Z) \\ \mathcal{O}_{\mathfrak{X}}(\mathfrak{X})\arrow{r}{f_{0}^{\ast}} \arrow[hook]{u} & \mathcal{O}_{\mathfrak{Z}}(\mathfrak{Z}), \arrow[hook]{u} \end{tikzcd}\end{center}where $f_{0}^{\ast}$ (respectively, $f_{0\eta}^{\ast}$) is the continuous ring map induced by $f_{0}$ (respectively, $f_{0\eta}$), and from the analogous diagram for $g_{0}$.\end{proof}
\begin{lemma}\label{Fullness}Let $\mathfrak{Z}$, $\mathfrak{X}$ be locally stably uniform $\varpi$-torsion-free qcqs adic formal $R$-schemes and let $f: \mathfrak{Z}_{\eta}^{\ad}\to\mathfrak{X}_{\eta}^{\ad}$ be a morphism of adic spaces over $\Spa(R[\varpi^{-1}], \overline{R})$. If $\mathfrak{Z}$ is integrally closed in its generic fiber, there exists a normalized formal blow-up $\mathfrak{Z}'\to\mathfrak{Z}$ and a unique adic morphism $f_{0}: \mathfrak{Z}'\to \mathfrak{X}$ with $f_{0\eta}=f$. If also $\mathfrak{X}$ is integrally closed in its generic fiber and $f$ is an isomorphism, then $f_{0}$ is an isomorphism.\end{lemma}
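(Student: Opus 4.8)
The plan is to follow the Bosch--Lütkebohmert scheme for ``formal models of morphisms'' (\cite{BL1}, proof of Theorem~4.1(b)), replacing admissible formal blow-ups by normalized ones at the single point where integral-closedness is needed. Uniqueness of $f_0$ is a formality: two adic morphisms $\mathfrak{Z}'\to\mathfrak{X}$ of $\varpi$-torsion-free locally rig-sheafy adic formal $R$-schemes with the same generic fiber agree by Lemma \ref{Faithfulness} (note that $\mathfrak{X}$ is locally stably uniform, hence locally rig-sheafy). So the work is to construct the normalized formal blow-up $\mathfrak{Z}'\to\mathfrak{Z}$ and the morphism $f_0$.

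Write $Z=\mathfrak{Z}_{\eta}^{\ad}$ and $X=\mathfrak{X}_{\eta}^{\ad}$. First I would fix a finite cover of $\mathfrak{X}$ by rig-sheafy affine open formal subschemes $\mathfrak{V}_j=\Spf(A_j)$; then the $V_j:=\mathfrak{V}_{j\eta}^{\ad}=\Spa(A_j[\varpi^{-1}],\overline{A_j})$ form a finite affinoid cover of $X$, so each quasi-compact open $f^{-1}(V_j)\subseteq Z$ can be covered by finitely many affinoid opens $U_{jk}\subseteq f^{-1}(V_j)$. Applying Corollary \ref{Open covers and formal models 2} to the affinoid cover $(U_{jk})$ of $Z$ yields a $\varpi$-torsion-free admissible formal blow-up $\mathfrak{Z}_1\to\mathfrak{Z}$ together with a rig-sheafy affine open cover $(\mathfrak{W}_{jk})$ of $\mathfrak{Z}_1$ satisfying $\mathfrak{W}_{jk\eta}^{\ad}=U_{jk}$. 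I would then let $\mathfrak{Z}'\to\mathfrak{Z}_1$ be the normalization in the generic fiber (Corollary \ref{Normalization of a formal model 2}), so that $\mathfrak{Z}'\to\mathfrak{Z}$ is a normalized formal blow-up (Definition \ref{Normalized formal blow-up}), $\mathfrak{Z}'$ is $\varpi$-torsion-free and integrally closed in $Z$, and the preimages $\mathfrak{W}'_{jk}=\Spf(C_{jk})$ of the $\mathfrak{W}_{jk}$ form a rig-sheafy affine open cover of $\mathfrak{Z}'$ with $\mathfrak{W}'_{jk\eta}^{\ad}=U_{jk}=\Spa(C_{jk}[\varpi^{-1}],\overline{C_{jk}})$ and $C_{jk}=\overline{C_{jk}}=\mathcal{O}_Z^{+}(U_{jk})$ (by Lemma \ref{Integrally closed formal models}).

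The heart of the argument is then local. For each $(j,k)$ the morphism $f|_{U_{jk}}\colon U_{jk}\to V_j$ of affinoid adic spaces corresponds to a continuous ring homomorphism $\varphi_{jk}\colon A_j[\varpi^{-1}]\to C_{jk}[\varpi^{-1}]$ sending $\overline{A_j}=\mathcal{O}_X^{+}(V_j)$ into $\mathcal{O}_Z^{+}(U_{jk})=C_{jk}$; in particular $\varphi_{jk}(A_j)\subseteq C_{jk}$, and since both rings carry the $\varpi$-adic topology and $\varpi\mapsto\varpi$, the restriction $A_j\to C_{jk}$ is an adic homomorphism, inducing $f_{0,jk}\colon\mathfrak{W}'_{jk}\to\mathfrak{V}_j\hookrightarrow\mathfrak{X}$ with $(f_{0,jk})_{\eta}=f|_{U_{jk}}$. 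It is exactly here that normalization is essential: over a mere admissible formal blow-up the ring of $\mathfrak{W}_{jk}$ is only a ring of definition of $\mathcal{O}_Z(U_{jk})$ and need not contain $\varphi_{jk}(A_j)$. To glue, cover each intersection $\mathfrak{W}'_{jk}\cap\mathfrak{W}'_{j'k'}$ by rig-sheafy affine opens $\mathfrak{T}$; on each $\mathfrak{T}$ the two restrictions are adic morphisms of $\varpi$-torsion-free locally rig-sheafy adic formal $R$-schemes with the same generic fiber (namely $f$ restricted to $\mathfrak{T}_{\eta}^{\ad}$), hence equal by Lemma \ref{Faithfulness}; thus the $f_{0,jk}$ patch to an adic morphism $f_0\colon\mathfrak{Z}'\to\mathfrak{X}$ with $f_{0\eta}=f$, the last equality holding because it holds over the cover $(U_{jk})$.

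For the final assertion, assume in addition that $\mathfrak{X}$ is integrally closed in its generic fiber and $f$ is an isomorphism. Then $f^{-1}(V_j)$ is itself affinoid, so in the construction I would take the cover of $Z$ to be simply $U_j:=f^{-1}(V_j)$; now $f|_{U_j}$ is an isomorphism, whence $C_j=\mathcal{O}_Z^{+}(U_j)=\mathcal{O}_X^{+}(V_j)=\overline{A_j}$ and the map $A_j\to C_j$ underlying $f_{0,j}$ is the inclusion $A_j\hookrightarrow\overline{A_j}$, which is an isomorphism precisely because $\mathfrak{X}$ is integrally closed in its generic fiber (Lemma \ref{Integrally closed formal models}). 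So each $f_{0,j}\colon\mathfrak{W}'_j\to\mathfrak{V}_j$ is an isomorphism, and a short argument with the (surjective) specialization maps together with Lemma \ref{Generic fibers and open immersions} identifies $f_0^{-1}(\mathfrak{V}_j)$ with $\mathfrak{W}'_j$, so $f_0$ is an isomorphism. The step I expect to be most delicate is the global patching --- producing one $\varpi$-torsion-free admissible formal blow-up of $\mathfrak{Z}$ that simultaneously realizes the pulled-back cover and then invoking normalization to secure $\varphi_{jk}(A_j)\subseteq C_{jk}$; by comparison, the isomorphism statement is soft once the cover is chosen adapted to $f$.
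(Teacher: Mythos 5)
Your proposal is correct and follows essentially the same route as the paper: the paper likewise handles the affine case by using integral closedness to identify $\mathcal{O}_{\mathfrak{Z}}(\mathfrak{Z})$ with $\mathcal{O}^{+}$ of the generic fiber, realizes an affinoid cover of $\mathfrak{Z}_{\eta}^{\ad}$ adapted to an affine cover of $\mathfrak{X}$ on an admissible formal blow-up via Corollary \ref{Open covers and formal models 2}, normalizes, glues the local models using the uniqueness coming from Lemma \ref{Faithfulness}, and obtains the isomorphism statement by taking the cover $V_i=f^{-1}(U_i)$. Your bookkeeping with the double index $(j,k)$ and gluing directly via Lemma \ref{Faithfulness} on overlaps are only cosmetic variations of the paper's argument.
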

Note that under the assumptions of the lemma the adic space $\mathfrak{Z}_{\eta}^{\ad}$ is uniform, by Lemma \ref{Uniform generic fiber}, so we are indeed allowed to talk about normalized formal blow-ups of $\mathfrak{Z}$. 
\begin{proof}Suppose first that $\mathfrak{Z}$ and $\mathfrak{X}$ are rig-sheafy affine formal $R$-schemes. Then $\mathfrak{Z}_{\eta}^{\ad}=\Spa(\mathcal{O}_{\mathfrak{Z}}(\mathfrak{Z})[\varpi^{-1}], \overline{\mathcal{O}_{\mathfrak{Z}}(\mathfrak{Z})})$ and similarly for $\mathfrak{X}_{\eta}^{\ad}$. In this case we establish a stronger claim: The morphism $f$ extends uniquely to an adic morphism $f_{0}: \mathfrak{Z}\to\mathfrak{X}$. Since $\mathfrak{Z}$ is integrally closed in its generic fiber, we have \begin{equation*}\mathcal{O}_{\mathfrak{Z}}(\mathfrak{Z})=\overline{\mathcal{O}_{\mathfrak{Z}}(\mathfrak{Z})}=\mathcal{O}_{\mathfrak{Z}_{\eta}^{\ad}}^{+}(\mathfrak{Z}_{\eta}^{\ad}).\end{equation*}Hence any morphism of adic spaces $f: \mathfrak{Z}_{\eta}^{\ad}\to \mathfrak{X}_{\eta}^{\ad}$ induces a continuous ring map $\mathcal{O}_{\mathfrak{X}_{\eta}^{\ad}}^{+}(\mathfrak{X}_{\eta}^{\ad})\to \mathcal{O}_{\mathfrak{Z}}(\mathfrak{Z})$. The latter restricts to a continuous ring map $\mathcal{O}_{\mathfrak{X}}(\mathfrak{X})\to \mathcal{O}_{\mathfrak{Z}}(\mathfrak{Z})$ which defines the desired morphism of formal schemes $f_{0}$ (the uniqueness of this morphism follows from Lemma \ref{Faithfulness}). Now suppose that $\mathfrak{X}$ is also integrally closed in its generic fiber. Then $\mathcal{O}_{\mathfrak{X}_{\eta}^{\ad}}^{+}(\mathfrak{X}_{\eta}^{\ad})=\mathcal{O}_{\mathfrak{X}}(\mathfrak{X})$, so the map $\mathcal{O}_{\mathfrak{X}}(\mathfrak{X})\to \mathcal{O}_{\mathfrak{Z}}(\mathfrak{Z})$ is the canonical map \begin{equation*}\mathcal{O}_{\mathfrak{X}_{\eta}^{\ad}}^{+}(\mathfrak{X}_{\eta}^{\ad})\to \mathcal{O}_{\mathfrak{Z}_{\eta}^{\ad}}^{+}(\mathfrak{Z}_{\eta}^{\ad})\end{equation*}induced by $f$. Thus, if $f$ is an isomorphism, the above continuous ring map\begin{equation*}\mathcal{O}_{\mathfrak{X}}(\mathfrak{X})=\mathcal{O}_{\mathfrak{X}_{\eta}^{\ad}}^{+}(\mathfrak{X}_{\eta}^{\ad})\tilde{\to} \mathcal{O}_{\mathfrak{Z}_{\eta}^{\ad}}^{+}(\mathfrak{Z}_{\eta}^{\ad})=\mathcal{O}_{\mathfrak{Z}}(\mathfrak{Z})\end{equation*}is an isomorphism, so the morphism of formal schemes $f_{0}$ is an isomorphism. This settles the case when $\mathfrak{Z}$ and $\mathfrak{X}$ are affine.

In the general case, cover $\mathfrak{X}$ by rig-sheafy affine open subsets $(\mathfrak{U}_{i})_{i\in I}$. Set \begin{equation*}U_{i}=\spc_{\mathfrak{X}_{\eta}^{\ad},\mathfrak{X}}^{-1}(\mathfrak{U}_{i}).\end{equation*}Possibly after enlarging the index set $I$, we can choose an affinoid open cover $(V_{i})_{i\in I}$ of $\mathfrak{Z}_{\eta}^{\ad}$ with $f(V_{i})\subseteq U_{i}$ for all $i$. By Corollary \ref{Open covers and formal models 2}, there exists an admissible formal blow-up $\mathfrak{T}\to\mathfrak{Z}$ of $\mathfrak{Z}$ and an affine open cover $(\mathfrak{T}_{i})_{i\in I}$ of $\mathfrak{T}$ such that \begin{equation*}V_{i}=\spc_{\mathfrak{Z}_{\eta}^{\ad},\mathfrak{T}}^{-1}(\mathfrak{T}_{i})\end{equation*}for all $i$. Composing $\mathfrak{T}\to\mathfrak{Z}$ with its normalization $\mathfrak{Z}'\to\mathfrak{T}$ in $\mathfrak{Z}_{\eta}^{\ad}$ and letting $\mathfrak{V}_{i}$ be the pre-image of $\mathfrak{T}_{i}$ in $\mathfrak{Z}'$, we obtain a normalized formal blow-up $\mathfrak{Z}'\to\mathfrak{Z}$ and an affine open cover $(\mathfrak{V}_{i})_{i}$ of $\mathfrak{Z}'$ such that \begin{equation*}V_{i}=\spc_{\mathfrak{Z}_{\eta}^{\ad},\mathfrak{Z}'}^{-1}(\mathfrak{V}_{i})\end{equation*}for all $i$ (we use here that the normalization morphism $\mathfrak{Z}'\to\mathfrak{T}$ is affine). Note that, by design, $\mathfrak{Z}'$ is again integrally closed in its generic fiber. For any pair of indices $i, j\in I$ let $(\mathfrak{W}_{ijk})_{k}$ be a finite affine open cover of $\mathfrak{V}_{i}\cap\mathfrak{V}_{j}$ and, for every $i, j, k$, set \begin{equation*}W_{ijk}=\spc_{\mathfrak{Z}_{\eta}^{\ad},\mathfrak{Z}'}^{-1}(\mathfrak{W}_{ijk}).\end{equation*}By what we established in the previous paragraph, we know that there exists, for every index $i$, a unique adic morphism $f_{0i}: \mathfrak{V}_{i}\to\mathfrak{U}_{i}$ with the property that \begin{equation*}f_{0i\eta}=f\vert_{V_{i}}: V_{i}\to U_{i}\end{equation*}and, for every $i, j, k$, there exists a unique adic morphism $f_{0ijk}: \mathfrak{W}_{ijk}\to\mathfrak{U}_{i}$ with the property that \begin{equation*}f_{0ijk\eta}=f\vert_{W_{ijk}}: W_{ijk}\to U_{i}.\end{equation*}By the uniqueness of $f_{0ijk}$, we have \begin{equation*}f_{0ijk}=f_{0i}\vert_{\mathfrak{W}_{ijk}}=f_{0j}\vert_{\mathfrak{W}_{ijk}}\end{equation*}for all $i, j, k$. It follows that the adic morphisms $f_{0i}$ for $i\in I$ glue to a unique adic morphism $f_{0}: \mathfrak{Z}'\to \mathfrak{X}$ satisfiying $f_{0\eta}=f$. Moreover, if also $\mathfrak{X}$ is integrally closed in its generic fiber and $f$ is an isomorphism, then we can take $V_{i}=f^{-1}(U_{i})$ for all $i$ in the above construction and then, by the affine case, all $f_{0i}$, $i\in I$, are isomorphisms. It follows that in this case $f_{0}$ is an isomorphism.\end{proof}
Using the above results on integrally closed formal models, we can establish the existence of formal $R$-models for an arbitrary uniform quasi-compact quasi-separated adic space $X$ over $\Spa(R[\varpi^{-1}], \overline{R})$. Our proof is similar in structure to the classical argument from Raynaud theory (\cite{BL1}, proof of Theorem 4.1(e)), but uses normalized formal blow-ups in place of admissible formal blow-ups.
\begin{thm}\label{Existence of formal models}Let $R$ be a complete adic ring with ideal of definition generated by a non-zero-divisor $\varpi$, suppose that the Tate ring $R[\varpi^{-1}]$ is sheafy and let $X$ be a uniform quasi-compact quasi-separated adic space over $\Spa(R[\varpi^{-1}], \overline{R})$. There exists a $\varpi$-torsion-free quasi-compact quasi-separated formal $R$-model $\mathfrak{X}$ of $X$ which is integrally closed in its generic fiber $X$.\end{thm}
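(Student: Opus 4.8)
The plan is to prove this by induction on the number $n$ of members of a finite affinoid open cover $X=\bigcup_{i=1}^{n}X_{i}$, following the structure of \cite{BL1}, proof of Theorem~4.1(e), but with normalized formal blow-ups replacing admissible ones.

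\emph{Base case $n=1$.} Here $X=\Spa(A,A^{+})$ with $A$ a complete Tate ring over $R[\varpi^{-1}]$ that is uniform, and I would take $\mathfrak{X}=\Spf(A^{+})$ with the $\varpi$-adic topology, where $\varpi$ now denotes its (nonzerodivisor) image in $A^{+}$. Uniformity of $A$ forces $A^{\circ}$, hence $A^{+}$, to be bounded, so $A^{+}$ is a ring of definition of $A$; being an open subgroup of the complete ring $A$ it is $\varpi$-adically complete, and it is integrally closed in $A^{+}[\varpi^{-1}]=A$ by the definition of a ring of integral elements. Since $X$ is an adic space, $A=A^{+}[\varpi^{-1}]$ is sheafy, so $\mathfrak{X}$ is rig-sheafy over $(R,\varpi)$ by Lemma \ref{Rig-sheafiness does not depend on generators}; it is affine (hence qcqs), $\varpi$-torsion-free, integrally closed in its generic fiber, and $\mathfrak{X}_{\eta}^{\ad}=\Spa(A^{+}[\varpi^{-1}],\overline{A^{+}})=\Spa(A,A^{+})=X$.

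\emph{Inductive step.} Put $X'=\bigcup_{i=1}^{n-1}X_{i}$ and $U=X'\cap X_{n}$, a qcqs open of both $X'$ and $X_{n}$ (quasi-compactness of $U$ uses that $X$, hence each of $X'$, $X_{n}$, is qcqs). By the inductive hypothesis choose $\varpi$-torsion-free qcqs formal $R$-models $\mathfrak{X}'$ of $X'$ and $\mathfrak{X}_{n}$ of $X_{n}$, both integrally closed in their generic fibers (note $X'$ and $X_{n}$ are uniform, being open in $X$). Applying Corollary \ref{Open covers and formal models 2} to the single qcqs open $U\subseteq X'$ and then taking the normalization supplied by Corollary \ref{Normalization of a formal model 2}, I obtain a normalized formal blow-up $\mathfrak{Z}'\to\mathfrak{X}'$ (itself a $\varpi$-torsion-free integrally closed qcqs model of $X'$) together with a qcqs open $\mathfrak{U}'\subseteq\mathfrak{Z}'$ whose preimage under $\spc_{X',\mathfrak{Z}'}$ is $U$; since the normalization is affine, $\mathfrak{U}'$ is a $\varpi$-torsion-free integrally closed qcqs formal $R$-model of $U$. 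The same construction applied to $U\subseteq X_{n}$ yields a normalized formal blow-up $\mathfrak{Z}_{n}\to\mathfrak{X}_{n}$ and a qcqs open $\mathfrak{U}_{n}\subseteq\mathfrak{Z}_{n}$ that is an integrally closed $\varpi$-torsion-free model of $U$. By Lemma \ref{Uniform generic fiber} all of these are locally stably uniform, so Lemma \ref{Fullness}, applied to $\id\colon U\to U$ together with the integrally closed models $\mathfrak{U}'$, $\mathfrak{U}_{n}$, yields a normalized formal blow-up $\mathfrak{U}''\to\mathfrak{U}'$ and an isomorphism $\varphi\colon\mathfrak{U}''\xrightarrow{\ \sim\ }\mathfrak{U}_{n}$ with $\varphi_{\eta}=\id_{U}$. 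By Lemma \ref{Normalized formal blow-up and affine opens} the blow-up $\mathfrak{U}''\to\mathfrak{U}'$ extends to a normalized formal blow-up $\mathfrak{W}'\to\mathfrak{Z}'$ containing $\mathfrak{U}''$ as a qcqs open; then $\mathfrak{W}'$ is again a $\varpi$-torsion-free integrally closed qcqs model of $X'$ with $\spc_{X',\mathfrak{W}'}^{-1}(\mathfrak{U}'')=U$. Finally I glue $\mathfrak{W}'$ and $\mathfrak{Z}_{n}$ along the isomorphism $\varphi$ between the qcqs open subschemes $\mathfrak{U}''$ and $\mathfrak{U}_{n}$. The resulting adic formal $R$-scheme $\mathfrak{X}$ is $\varpi$-torsion-free and integrally closed in its generic fiber (both local properties, by Lemma \ref{Integrally closed formal models}), is qcqs since it is covered by the two qcqs opens $\mathfrak{W}'$, $\mathfrak{Z}_{n}$ whose intersection $\mathfrak{U}''$ is qcqs, and satisfies $\mathfrak{X}_{\eta}^{\ad}\cong X$ because the generic fiber functor is compatible with gluing along open subschemes (Lemma \ref{Generic fibers and open immersions}) and $X$ is the gluing of $X'$ and $X_{n}$ along $U$ via the identity.

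\emph{The main obstacle} is precisely the gluing step: the formal models of the overlap $U$ inherited from $\mathfrak{X}'$ and from $\mathfrak{X}_{n}$ will in general not coincide, and this is exactly what normalized formal blow-ups are designed to repair — Lemma \ref{Fullness} makes the two integrally closed models of $U$ isomorphic after a single normalized formal blow-up, while Lemma \ref{Normalized formal blow-up and affine opens} propagates that blow-up from the $U$-model to all of $\mathfrak{X}'$, so that the gluing is globally well defined. Once these inputs are granted, the remaining points — the affinoid base case, stability of $\varpi$-torsion-freeness, integral closedness and the qcqs property under normalized formal blow-ups and under the gluing, and the compatibility of $(-)_{\eta}^{\ad}$ with gluing along opens — are routine and local in nature.
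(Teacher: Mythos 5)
Your argument is correct and is essentially the paper's own proof: induction on the size of an affinoid cover with base case $\Spf(A^{+})$, models of the two pieces, matching the two integrally closed models of the overlap via Lemma \ref{Fullness}, extending the resulting normalized formal blow-up to the whole model by Lemma \ref{Normalized formal blow-up and affine opens}, and gluing. One small correction: to realize the overlap $U$ as the preimage of a quasi-compact open in a blow-up you should invoke Lemma \ref{Open covers and formal models} (which applies to an arbitrary finite family of quasi-compact opens) rather than Corollary \ref{Open covers and formal models 2}, since the single open $U$ is not a cover of $X'$.
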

\begin{proof}Every uniform affinoid open subspace $\Spa(A, A^{+})$ admits the integrally closed $\varpi$-torsion-free affine formal $R$-model $\Spf(A^{+})$. We proceed by induction on the size of a finite affinoid open cover of $X$. Thus, suppose that the assertion holds for every qcqs uniform adic space over $\Spa(R[\varpi^{-1}], \overline{R})$ which can be covered by $n$ affinoid open subspaces. Let $X$ be a qcqs uniform adic space over $\Spa(R[\varpi^{-1}], \overline{R})$ which is covered by $n+1$ affinoid open subspaces $U_{1},\dots, U_{n}, U_{n+1}$. Set $U=U_{1}$ and $V=\bigcup_{i=2}^{n+1}U_{i}$. By the induction hypothesis, $U$ and $V$ admit $\varpi$-torsion-free qcqs formal $R$-models $\mathfrak{U}$ and $\mathfrak{V}$ which are integrally closed in their generic fibers. Set $W=U\cap V$. The hypothesis that $X$ is quasi-separated ensures that $W$ is quasi-compact. By Lemma \ref{Open covers and formal models} there exist formal modifications $\mathfrak{U}'\to\mathfrak{U}$ and $\mathfrak{V}'\to\mathfrak{V}$ and quasi-compact open subsets $\mathfrak{W}_{1}\subseteq \mathfrak{U}'$ and $\mathfrak{W}_{2}\subseteq \mathfrak{V}'$ such that \begin{equation*}W=\spc_{U,\mathfrak{U}'}^{-1}(\mathfrak{W}_{1})=\spc_{V,\mathfrak{V}'}^{-1}(\mathfrak{W}_{2}).\end{equation*}Using Corollary \ref{Normalization of a formal model 2} and replacing $\mathfrak{U}'$ (respectively, $\mathfrak{V}'$) with their normalizations inside $U$ (respectively, inside $V$), we may assume that $\mathfrak{U}'$ and $\mathfrak{V}'$ are $\varpi$-torsion-free and integrally closed in their generic fibers. In this case, $\mathfrak{W}_{1}$ and $\mathfrak{W}_{2}$ are also integrally closed in their generic fibers. Hence, by Lemma \ref{Fullness}, there exists a normalized formal blow-up $\mathfrak{W}_{1}'\to\mathfrak{W}_{1}$ of $\mathfrak{W}_{1}$ such that the isomorphism \begin{equation*}(\mathfrak{W}_{1})_{\eta}^{\ad}\cong(\mathfrak{W}_{2})_{\eta}^{\ad}\end{equation*}which arises from the identity $\spc_{U,\mathfrak{U}'}^{-1}(\mathfrak{W}_{1})=\spc_{V,\mathfrak{V}'}^{-1}(\mathfrak{W}_{2})$ induces an isomorphism \begin{equation*}\mathfrak{W}_{1}'\cong\mathfrak{W}_{2}.\end{equation*}By Lemma \ref{Normalized formal blow-up and affine opens} there exists a normalized formal blow-up $\mathfrak{U}''\to\mathfrak{U}'$ such that we can regard $\mathfrak{W}_{1}'$ as an open formal subscheme of $\mathfrak{U}''$. Gluing $\mathfrak{U}''$ and $\mathfrak{V}'$ along the above isomorphism $\mathfrak{W}_{1}'\cong\mathfrak{W}_{2}$, we obtain a formal $R$-model $\mathfrak{X}$ of $X$ with the desired properties.\end{proof}
We summarize our results in a form which makes the analogy with Raynaud's theory of formal models more apparent (cf. \cite{BL1}, Theorem 4.1).
\begin{thm}\label{Analog of Raynaud theory}Let $R$ be a complete adic ring with ideal of definition generated by a single non-zero-divisor $\varpi\in R$ and suppose that the Tate ring $R[\varpi^{-1}]$ is sheafy. The functor \begin{equation*}\mathfrak{X}\mapsto \mathfrak{X}_{\eta}^{\ad}\end{equation*}gives rise to an equivalence between \begin{enumerate}[(1)]\item the category of locally stably uniform $\varpi$-torsion-free quasi-compact quasi-separated adic formal $R$-schemes which are integrally closed in their generic fibers, localized by normalized formal blow-ups, and 
\item the category of uniform quasi-compact quasi-separated adic spaces over $\Spa(R[\varpi^{-1}], \overline{R})$.\end{enumerate}\end{thm}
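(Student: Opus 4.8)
The plan is to deduce the equivalence from the three preparatory results that have just been established, in exactly the way Bosch--Lütkebohmert deduce the classical Raynaud equivalence from their faithfulness, fullness and existence statements (cf. \cite{BL1}, Theorem 4.1). Write $\mathfrak{C}$ for the localized category in (1) and $\mathrm{Ad}^{\mathrm{unif}}_{S}$ for the category in (2), where $S = \Spa(R[\varpi^{-1}], \overline{R})$. The functor $\mathfrak{X}\mapsto\mathfrak{X}_{\eta}^{\ad}$ is well-defined on $\mathfrak{C}$ because every object of (1) is locally rig-sheafy (by Buzzard--Verberkmoes--Mihara, recalled before Lemma \ref{Uniform generic fiber}), because its generic fiber is uniform by Lemma \ref{Uniform generic fiber}, and because normalized formal blow-ups are formal modifications, hence are sent to isomorphisms; so by the universal property of localization the functor descends to $\mathfrak{C}$. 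The task is then to check that this induced functor is \emph{essentially surjective}, \emph{full}, and \emph{faithful}.

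First I would treat essential surjectivity: given a uniform qcqs adic space $X$ over $S$, Theorem \ref{Existence of formal models} produces a $\varpi$-torsion-free qcqs formal $R$-model $\mathfrak{X}$ of $X$ which is integrally closed in its generic fiber, and by Lemma \ref{Uniform generic fiber} such an $\mathfrak{X}$ is automatically locally stably uniform; hence $\mathfrak{X}$ is an object of (1) with $\mathfrak{X}_{\eta}^{\ad}\cong X$. Next, faithfulness: a morphism in $\mathfrak{C}$ from $\mathfrak{X}$ to $\mathfrak{Y}$ is represented by a roof $\mathfrak{X}\xleftarrow{s}\mathfrak{X}'\xrightarrow{f_0}\mathfrak{Y}$ with $s$ a (composition of) normalized formal blow-up(s); two such roofs inducing the same morphism $\mathfrak{X}_{\eta}^{\ad}\to\mathfrak{Y}_{\eta}^{\ad}$ can, after passing to a common dominating normalized blow-up (using Lemma \ref{Normalized formal blow-up and affine opens} to extend blow-ups of open subschemes, together with the fact from \cite{FK} that admissible blow-ups are cofiltered, and that normalizations compose with admissible blow-ups to give normalized blow-ups), be assumed to have the same source, whereupon Lemma \ref{Faithfulness} forces the two numerator morphisms to coincide. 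Fullness is the heart of the matter: given $f\colon\mathfrak{X}_{\eta}^{\ad}\to\mathfrak{Y}_{\eta}^{\ad}$, Lemma \ref{Fullness} yields a normalized formal blow-up $\mathfrak{X}'\to\mathfrak{X}$ and an adic morphism $f_0\colon\mathfrak{X}'\to\mathfrak{Y}$ with $f_{0\eta}=f$; the roof $\mathfrak{X}\leftarrow\mathfrak{X}'\xrightarrow{f_0}\mathfrak{Y}$ is then a preimage of $f$ in $\mathfrak{C}$. Finally, to see that the functor reflects isomorphisms (needed to conclude it is an equivalence, or alternatively to construct the quasi-inverse directly), one invokes the last clause of Lemma \ref{Fullness}: if $f$ is an isomorphism between generic fibers of two integrally-closed models, the constructed $f_0$ is already an isomorphism.

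The main obstacle, and the step requiring genuine care rather than formal nonsense, is the calculus-of-fractions bookkeeping in the faithfulness and fullness arguments, namely verifying that normalized formal blow-ups admit enough of a calculus of (left or right) fractions that morphisms in $\mathfrak{C}$ really are represented by single roofs and that two roofs can be compared over a common refinement. Concretely, one must know that the composition of two normalized formal blow-ups can be dominated by a normalized formal blow-up, and that a normalized formal blow-up of an open subscheme extends to one of the ambient scheme --- the latter is exactly Lemma \ref{Normalized formal blow-up and affine opens}, and the former follows by combining the cofiltered-ness of $\Bl(\mathfrak{X})$ (\cite{FK}, Ch.~II, Corollary 1.3.2), the stability of admissible blow-ups under composition (\cite{FK}, Ch.~II, Proposition 1.1.10), and the uniqueness of the normalization from Proposition \ref{Normalization of a formal model} / Corollary \ref{Normalization of a formal model 2}, which shows that normalizing is idempotent and commutes (up to canonical isomorphism) with passing to admissible blow-ups. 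Once this is in place the equivalence follows formally; I would organize the writeup so that this admissibility lemma for the localizing class is stated and proved first, and then essential surjectivity, fullness, and faithfulness are dispatched in that order exactly as above.
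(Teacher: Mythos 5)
Your proposal is correct and follows essentially the same route as the paper, whose proof is just the citation of Lemma \ref{Uniform generic fiber} (the functor lands in uniform spaces), Lemma \ref{Faithfulness} (faithfulness), Lemma \ref{Fullness} (fullness), and Theorem \ref{Existence of formal models} (essential surjectivity). The extra calculus-of-fractions bookkeeping you supply (domination of composites of normalized formal blow-ups via Lemma \ref{Normalized formal blow-up and affine opens}, cofilteredness of admissible blow-ups, and uniqueness of the normalization) is detail the paper leaves implicit, and your treatment of it is sound.
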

\begin{proof}That the functor takes values in the category of uniform adic spaces is the content of Lemma \ref{Uniform generic fiber}. Lemma \ref{Faithfulness} shows that the functor is faithful, while Lemma \ref{Fullness} shows that it is full. Finally, Theorem \ref{Existence of formal models} ensures that the functor is essentially surjective.\end{proof}
There arises the natural question of how the above analog of Raynaud theory compares to the classical theory in the case when the relevant formal schemes and adic spaces are of (topologically) finite type over $\Spf(R)$ (respectively, over $\Spa(R[\varpi^{-1}], \overline{R})$), i.e., the question of how in this finite-type situation normalized formal blow-ups compare to admissible formal blow-ups. This question is partially addressed by Proposition \ref{Formal modifications vs. formal blow-ups} below, whose proof is inspired by the proof of \cite{Temkin11}, Corollary 3.4.8. To formulate it, we need the following definition.
\begin{mydef}[Strong adic space]An adic space $X$ is called strong if there exists an affinoid open cover $(U_{i})_{i}$ of $X$ such that $\mathcal{O}_{X}(U_{i})$ is strongly sheafy, that is, $\mathcal{O}_{X}(U_{i})\langle T_1,\dots, T_n\rangle$ is sheafy for every $n\in\mathbb{N}$.\end{mydef}
\begin{mydef}[Balls over strong adic spaces]\label{Balls over strong adic spaces}Let $X$ be a strong adic space and let $(U_{i})_{i}$ be an affinoid open cover such that $\mathcal{O}_{X}(U_{i})$. For every pair of distinct indices $i, j$ let $(V_{ij})_{i,j}$ be an open cover of $U_{i}\cap U_{j}$ by affinoid open subspaces which are rational subsets of both $U_{i}$ and $U_{j}$. In particular, $\mathcal{O}_{X}(V_{ij})$ is strongly sheafy for all $i, j$. Then we can glue the balls $\mathbb{B}_{U_{i}}^{n}$ along $\mathbb{B}_{V_{ij}}^{n}$ to obtain an adic space $\mathbb{B}_{X}^{n}$ which we call the $n$-dimensional ball over $X$.\end{mydef}
\begin{prop}\label{Formal modifications vs. formal blow-ups}Let $R$ and $\varpi$ be as before. Let $\mathfrak{Z}\to \mathfrak{X}$ be a formal modification of topologically finite type between locally rig-sheafy $\varpi$-torsion-free qcqs adic formal $R$-schemes and suppose that the adic generic fiber $X$ of $\mathfrak{X}$ is a strong adic space. Then there exists an admissible formal blow-up $\mathfrak{X}'\to\mathfrak{X}$ such that for every admissible formal blow-up $\mathfrak{X}''\to\mathfrak{X}'$ of $\mathfrak{X}'$ the base change $\mathfrak{Z}''=\mathfrak{Z}\times_{\mathfrak{X}}\mathfrak{X}''\to\mathfrak{X}''$ is an admissible formal blow-up. In particular, $\mathfrak{Z}\to\mathfrak{X}$ is dominated, as a formal modification of $\mathfrak{X}$, by an admissible formal blow-up $\mathfrak{Z}'\to\mathfrak{X}$.\end{prop}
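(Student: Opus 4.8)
The plan is to transpose to formal geometry the classical fact that a proper modification is dominated by a blow-up (\cite{Conrad07}, Theorem 2.11), following the Raynaud--Gruson flattening argument used in the proof of \cite{Temkin11}, Corollary 3.4.8. Apart from the flattening theorem for adic formal schemes of Fujiwara--Kato, the inputs will be three results obtained above: admissible formal blow-ups induce isomorphisms on adic analytic generic fibers (Lemma \ref{Admissible formal blow-ups and generic fiber}), the specialization map onto a $\varpi$-torsion-free admissible formal blow-up is surjective (Corollary \ref{Center maps are surjective}), and the pre-image of an affine open formal subscheme under a specialization map is its adic analytic generic fiber (Lemma \ref{Description of inverse images}).

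First I would reduce to a setting where formal flattening applies. Covering $\mathfrak{X}$ by rig-sheafy affine open formal subschemes $\mathfrak{U}=\Spf(A)$ with generic fibers $U=\mathfrak{U}_{\eta}^{\ad}$, and gluing afterwards by means of Lemma \ref{Admissible formal blow-up and affine opens}, it is enough to work over one such $\mathfrak{U}$; there $\mathfrak{Z}\times_{\mathfrak{X}}\mathfrak{U}\to\mathfrak{U}$ is quasi-compact quasi-separated and of topologically finite type, hence is covered by affine formal schemes realized as closed formal subschemes of formal affine spaces $\Spf(A\langle T_{1},\dots,T_{n}\rangle)$. The hypothesis that $X=\mathfrak{X}_{\eta}^{\ad}$ is \emph{strong} is exactly what guarantees that each $A\langle T_{1},\dots,T_{n}\rangle$ is rig-sheafy over $(R,\varpi)$, its Tate ring $\mathcal{O}_{X}(U)\langle T_{1},\dots,T_{n}\rangle$ being sheafy by assumption; consequently these ambient formal affine spaces --- and all of the strict transforms subsequently formed from them --- remain locally rig-sheafy $\varpi$-torsion-free adic formal $R$-schemes with honest adic analytic generic fibers (the balls $\mathbb{B}_{U}^{n}$ of Definition \ref{Balls over strong adic spaces}), so that the specialization-map machinery of Section \ref{sec:formal blow-ups} stays available. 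I would then invoke the Fujiwara--Kato flattening theorem to obtain an admissible formal blow-up $\mathfrak{X}'\to\mathfrak{X}$ which, after blowing up the admissible ideal $\varpi\mathcal{O}_{\mathfrak{X}'}$ if necessary (as in the proof of Lemma \ref{Open covers and formal models}), we may take $\varpi$-torsion-free, and such that the strict transform $\mathfrak{Z}'$ of $\mathfrak{Z}$ over $\mathfrak{X}'$ is flat and topologically of finite presentation over $\mathfrak{X}'$; moreover the construction is compatible with further admissible formal blow-ups $\mathfrak{X}''\to\mathfrak{X}'$, in the sense that the strict transform of $\mathfrak{Z}$ over $\mathfrak{X}''$ is then the base change $\mathfrak{Z}'\times_{\mathfrak{X}'}\mathfrak{X}''$, still flat over $\mathfrak{X}''$.

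The heart of the matter is a monomorphism argument. Since the center of an admissible formal blow-up contains an ideal of definition and so lies in the special fiber, strict transforms and base changes agree on adic analytic generic fibers; combined with Lemma \ref{Admissible formal blow-ups and generic fiber} and the hypothesis that $\mathfrak{Z}\to\mathfrak{X}$ is a formal modification, this shows that $\mathfrak{Z}'\to\mathfrak{X}'$ is again a formal modification. As $\mathfrak{Z}'$ is flat over the $\varpi$-torsion-free $\mathfrak{X}'$, both $\mathfrak{Z}'$ and $\mathfrak{Z}'\times_{\mathfrak{X}'}\mathfrak{Z}'$ are $\varpi$-torsion-free; hence the ideal cutting out the diagonal (a closed immersion $\mathfrak{Z}'\hookrightarrow\mathfrak{Z}'\times_{\mathfrak{X}'}\mathfrak{Z}'$, $\mathfrak{Z}'\to\mathfrak{X}'$ being separated), which vanishes after inverting $\varpi$ because the diagonal is an isomorphism on generic fibers, is already zero; so $\mathfrak{Z}'\to\mathfrak{X}'$ is a monomorphism. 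A flat monomorphism of topologically finite presentation is an open immersion, so $\mathfrak{Z}'$ is an open formal subscheme of $\mathfrak{X}'$; by Lemma \ref{Description of inverse images} its pre-image under the specialization map onto $\mathfrak{X}'$ equals its generic fiber, which is all of $(\mathfrak{X}')_{\eta}^{\ad}$ since $\mathfrak{Z}'\to\mathfrak{X}'$ is a formal modification, and since that specialization map is surjective (Corollary \ref{Center maps are surjective}) we conclude $\mathfrak{Z}'=\mathfrak{X}'$. Composing the resulting isomorphism $\mathfrak{X}'\cong\mathfrak{Z}'$ with the strict-transform morphism $\mathfrak{Z}'\to\mathfrak{Z}$ exhibits the admissible formal blow-up $\mathfrak{X}'\to\mathfrak{X}$ as dominating $\mathfrak{Z}\to\mathfrak{X}$, which already gives the final (``in particular'') assertion.

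It remains to promote ``the strict transform of $\mathfrak{Z}$ over $\mathfrak{X}''$ equals $\mathfrak{X}''$'' to ``the base change $\mathfrak{Z}''=\mathfrak{Z}\times_{\mathfrak{X}}\mathfrak{X}''$ is an admissible formal blow-up of $\mathfrak{X}''$''. Here $\mathfrak{Z}''$ differs from its strict transform $\mathfrak{Z}'\times_{\mathfrak{X}'}\mathfrak{X}''\cong\mathfrak{X}''$ only by a $\varpi$-power-torsion ideal, which is coherent because $\mathfrak{Z}\to\mathfrak{X}$ is of topologically finite type (so $\mathfrak{Z}''$ is locally cut out in a formal affine space over $\mathfrak{X}''$ by a finitely generated ideal), and the closed immersion $\mathfrak{X}''\cong\mathfrak{Z}'\times_{\mathfrak{X}'}\mathfrak{X}''\hookrightarrow\mathfrak{Z}''$ splits it off. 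I expect this last step --- identifying $\mathfrak{Z}''$ with a blow-up of $\mathfrak{X}''$ uniformly over all further admissible formal blow-ups $\mathfrak{X}''\to\mathfrak{X}'$, equivalently choosing the blow-up $\mathfrak{X}'\to\mathfrak{X}$ so that $\mathfrak{Z}''\to\mathfrak{X}''$ becomes flat or so that the residual torsion is removed by a blow-up whose center is the pull-back of a fixed admissible ideal on $\mathfrak{X}'$ --- to be the main obstacle, and it is precisely here that the topologically-finite-type and strong-adic-space hypotheses do their work; in carrying it out the proof should follow \cite{Temkin11}, Corollary 3.4.8 most closely.
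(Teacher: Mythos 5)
Your route is genuinely different from the paper's, but it has two real gaps. The decisive one is the appeal to a flattening theorem for adic formal schemes: in this proposition $\mathfrak{X}$ is only a locally rig-sheafy $\varpi$-torsion-free qcqs adic formal $R$-scheme, with no Noetherian, rigid-Noetherian or valuation-ring hypotheses on $R$, and no flattening theorem is available (or cited anywhere in the paper) at that level of generality --- the formal/rigid flattening results underlying Temkin's Corollary 3.4.8 and the Bosch--L\"utkebohmert theory are proved for topologically finite type objects over a valuation ring, and Fujiwara--Kato's finiteness machinery runs under (universally) rigid-Noetherian-type assumptions. Avoiding exactly this kind of input is the point of the present setting, and the paper's proof never flattens: it uses Corollary \ref{Open covers and formal models 2} to choose an admissible formal blow-up $\mathfrak{X}'\to\mathfrak{X}$ admitting an affine cover whose generic-fiber preimages match those of an affine cover of $\mathfrak{Z}$; Lemma \ref{Formal modifications and affine opens} then shows both projections of $\mathfrak{Z}''=\mathfrak{Z}\times_{\mathfrak{X}}\mathfrak{X}''$ are affine; Lemma \ref{Formal modifications are universally closed} (this is where the strong hypothesis actually enters, via closedness of the induced maps $\mathbb{B}^{n}_{X'}\to\mathbb{B}^{n}_{X}$) gives that $\mathfrak{Z}''\to\mathfrak{X}''$ is universally closed, hence integral on special fibers and, being topologically of finite type, finite; and Lemma \ref{Finite formal modifications are admissible formal blow-ups} concludes. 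Your use of the strong hypothesis (to make formal affine spaces over $\mathfrak{X}$ rig-sheafy) is not how it functions in the argument the statement was designed for.

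Second, even granting flattening, your argument proves only the final \emph{in particular} clause: the flat-monomorphism/open-immersion step yields an admissible blow-up $\mathfrak{X}'\to\mathfrak{X}$ factoring through $\mathfrak{Z}$, but the main assertion --- that for \emph{every} further admissible formal blow-up $\mathfrak{X}''\to\mathfrak{X}'$ the base change $\mathfrak{Z}''\to\mathfrak{X}''$ is itself an admissible formal blow-up --- is exactly the step you defer as ``the main obstacle,'' and the sketch offered for it does not close: knowing that $\mathfrak{X}''\cong\mathfrak{Z}'\times_{\mathfrak{X}'}\mathfrak{X}''$ sits inside $\mathfrak{Z}''$ as a closed formal subscheme cut out by $\varpi$-power-torsion does not by itself exhibit $\mathfrak{Z}''\to\mathfrak{X}''$ as a blow-up of an admissible ideal. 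What is needed is that $\mathfrak{Z}''\to\mathfrak{X}''$ is a \emph{finite} formal modification, after which Lemma \ref{Finite formal modifications are admissible formal blow-ups} applies; producing that finiteness uniformly in $\mathfrak{X}''$, without flattening, is precisely the content of the paper's proof, so the proposal as written leaves the heart of the statement unproved.
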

We first prove several lemmas.
\begin{lemma}\label{Immersions and formal modifications}Let $j: \mathfrak{V}\hookrightarrow \mathfrak{X}$ be an immersion of locally rig-sheafy, $\varpi$-torsion-free, quasi-compact quasi-compact quasi-separated adic formal $R$-schemes which is also a formal modification. Then $j$ is actually an isomorphism.\end{lemma}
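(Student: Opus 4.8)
The plan is to reduce first to the case where $j$ is a closed immersion, and then to argue locally on $\mathfrak{X}$ that the ideal cutting out $\mathfrak{V}$ vanishes. For the reduction I would factor $j$ as a closed immersion $\mathfrak{V}\to\mathfrak{U}$ onto a closed formal subscheme of an open formal subscheme $\mathfrak{U}\subseteq\mathfrak{X}$, and then shrink $\mathfrak{U}$: since $\mathfrak{V}$ is quasi-compact its image is quasi-compact, hence covered by finitely many affine opens of $\mathfrak{X}$ lying inside $\mathfrak{U}$, whose union $\mathfrak{U}'$ is a quasi-compact open of $\mathfrak{X}$ still containing the image of $\mathfrak{V}$. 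Then $\mathfrak{V}\to\mathfrak{U}'$ is again a closed immersion, $\mathfrak{U}'$ is again a locally rig-sheafy, $\varpi$-torsion-free, qcqs adic formal $R$-scheme, and, by Lemma~\ref{Generic fibers and open immersions} and Lemma~\ref{Generic fiber via gluing}, the generic fibre of the open immersion $\mathfrak{U}'\hookrightarrow\mathfrak{X}$ is the open immersion $(\mathfrak{U}')_{\eta}^{\ad}=\spc_{\mathfrak{X}_{\eta}^{\ad},\mathfrak{X}}^{-1}(\mathfrak{U}')\hookrightarrow\mathfrak{X}_{\eta}^{\ad}$. Because $j_\eta$ is an isomorphism that factors through this open immersion, the open immersion is surjective, hence an isomorphism; so $\mathfrak{V}\to\mathfrak{U}'$ is again a formal modification, and after replacing $\mathfrak{X}$ by $\mathfrak{U}'$ we may assume $j$ is a closed immersion.

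Next I would record that $j$ is surjective. Since $\mathfrak{X}$ is $\varpi$-torsion-free, qcqs and locally rig-sheafy, Corollary~\ref{Center maps are surjective}, applied to the trivial admissible formal blow-up, gives that $\spc_{\mathfrak{X}_{\eta}^{\ad},\mathfrak{X}}$ is surjective; combining this with the commutative square relating $j$, $j_\eta$ and the two specialization maps, and with the fact that $j_\eta$ is an isomorphism, forces $j$ to be surjective.

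The assertion is local on $\mathfrak{X}$, so I would fix a rig-sheafy affine open $\mathfrak{W}=\Spf(A)$; then $j^{-1}(\mathfrak{W})=\Spf(A/I)$ for a closed ideal $I\subseteq A$, and both $A$ and $A/I$ are $\varpi$-torsion-free and $\varpi$-adically complete (the quotient because $\Spf(A/I)$ is an open subscheme of the $\varpi$-torsion-free $\mathfrak{V}$). The goal is $I=0$. Given a point $x\in\Spf(A)$, surjectivity of $j$ puts $x$ in $V(I)=\Spf(A/I)$, so by Lemma~\ref{Topology on a locally rig-sheafy formal scheme} there is $g\in A$ with $x\in D(g)$ and with $D(\bar g)=\Spf\bigl((A/I)\langle\bar g^{-1}\rangle\bigr)$ contained in a rig-sheafy affine open of $\Spf(A/I)$; thus $(A/I)\langle\bar g^{-1}\rangle$ is rig-sheafy, while $A\langle g^{-1}\rangle$ is rig-sheafy because $A$ is. Writing $B=A\langle g^{-1}\rangle$ and noting $(A/I)\langle\bar g^{-1}\rangle=B/\mathfrak{J}$ with $\mathfrak{J}=\overline{IB}$ the closed ideal generated by the image of $I$, the commutative square of specialization maps shows that $j_\eta$ restricts to an isomorphism $(\Spf(B/\mathfrak{J}))_{\eta}^{\ad}\xrightarrow{\ \sim\ }(\Spf(B))_{\eta}^{\ad}$. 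Since $B$ and $B/\mathfrak{J}$ are rig-sheafy and adic over $\Spf(R)$, Lemma~\ref{Rig-sheafiness does not depend on generators} makes $B[\varpi^{-1}]$ and $(B/\mathfrak{J})[\varpi^{-1}]$ sheafy, so these generic fibres are the affinoid adic spaces $\Spa(B[\varpi^{-1}],\overline{B})$ and $\Spa((B/\mathfrak{J})[\varpi^{-1}],\overline{B/\mathfrak{J}})$; taking global sections the isomorphism yields $B[\varpi^{-1}]\xrightarrow{\ \sim\ }(B/\mathfrak{J})[\varpi^{-1}]$, hence $\mathfrak{J}[\varpi^{-1}]=0$, hence $\mathfrak{J}=0$ because $B$ is $\varpi$-torsion-free; in particular the image of $I$ in $A\langle g^{-1}\rangle$ vanishes. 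The opens $D(g)$ produced this way cover the quasi-compact space $\Spf(A)$, so finitely many $D(g_1),\dots,D(g_m)$ cover it; since $A$ embeds into $\prod_k A\langle g_k^{-1}\rangle$ by the sheaf axiom for $\mathcal{O}_{\mathfrak{X}}$ and $I$ dies in each factor, $I=0$. As $\mathfrak{W}$ was an arbitrary rig-sheafy affine open, $j$ is an isomorphism.

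The step I expect to be the main obstacle is the handling of the sheafiness hypotheses in the third paragraph: even though $A$ is rig-sheafy, the quotient $A/I$ need not be, so one is forced to pass to basic opens $D(\bar g)$ small enough for rig-sheafiness to be available, and this is precisely why the surjectivity of $j$ established in the second step is needed — it guarantees that such opens exist around every point of $\Spf(A)$. A secondary technical point is the careful bookkeeping, via the naturality of the specialization map, needed to identify $j_\eta^{-1}\bigl((\Spf(B))_{\eta}^{\ad}\bigr)$ with $(\Spf(B/\mathfrak{J}))_{\eta}^{\ad}$ and thereby conclude that the restricted morphism on generic fibres is again an isomorphism.
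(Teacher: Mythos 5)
Your proposal is correct, but it does substantially more than the paper's own proof, which is a two-line argument: the paper simply observes that it suffices to prove $j$ is surjective and deduces surjectivity exactly as in your second paragraph, from Corollary \ref{Center maps are surjective} (applied to the trivial blow-up, using $\varpi$-torsion-freeness) together with the commutative square relating $j$, $j_{\eta}$ and the two specialization maps. That reduction to surjectivity is complete only when $j$ is an open immersion, which is the only case the paper ever uses (in Lemma \ref{Formal modifications and affine opens} the immersion in question is the open immersion $\mathfrak{U}\times_{\mathfrak{X}}\mathfrak{V}\hookrightarrow\mathfrak{U}\times_{\mathfrak{X}}\mathfrak{Z}$). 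You instead treat a general locally closed immersion: you factor it through an open formal subscheme, dispose of the open part by the same surjectivity argument, and then kill the ideal of the closed part locally, using the generic-fibre isomorphism on small rig-sheafy basic opens $D(\bar g)$, the identification $(A/I)\langle\bar g^{-1}\rangle=B/\overline{IB}$, sheafiness of $B[\varpi^{-1}]$, $\varpi$-torsion-freeness of $B$, and the sheaf property of $\mathcal{O}_{\mathfrak{X}}$. What this buys is a proof of the literal statement for closed and locally closed immersions, at the cost of a longer argument and of invoking the local structure of closed immersions of adic formal schemes (locally $\Spf(A/I)\to\Spf(A)$ for a closed ideal $I$), which the paper never develops; the paper's version buys brevity and exactly what is needed downstream. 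One small bookkeeping point in your reduction: after proving $\mathfrak{V}\to\mathfrak{U}'$ is an isomorphism you still need $\mathfrak{U}'=\mathfrak{X}$; this follows from surjectivity of the original $j$ (your second paragraph applies verbatim to it), or equivalently from noting that $\mathfrak{U}'\hookrightarrow\mathfrak{X}$ is itself an open immersion which is a formal modification and hence falls under the open-immersion case — worth making explicit, but not a gap.
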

\begin{proof}[Proof of Lemma \ref{Immersions and formal modifications}]We only have to prove that $j$ is surjective. But this follows from Corollary \ref{Center maps are surjective} and the commutative diagram \begin{center}\begin{tikzcd}\mathfrak{V}_{\eta}^{\ad}\arrow{r}{\cong} \arrow{d}{\spc_{\mathfrak{V}_{\eta}^{\ad},\mathfrak{V}}} & \mathfrak{X}_{\eta}^{\ad} \arrow{d}{\spc_{\mathfrak{X}_{\eta}^{\ad},\mathfrak{X}}} \\ \mathfrak{V} \arrow{r}{j} & \mathfrak{X}.\end{tikzcd}\end{center}\end{proof}
\begin{lemma}\label{Formal modifications and affine opens}Let $\mathfrak{X}$ be a rig-sheafy $\varpi$-torsion-free quasi-compact quasi-separated adic formal $R$-scheme with adic analytic generic fiber $X$ over $(R, \varpi)$ and let $\mathfrak{X}'\to \mathfrak{X}$, $\mathfrak{Z}\to\mathfrak{X}$ be formal modifications of $\mathfrak{X}$. Let $\mathfrak{U}\subseteq \mathfrak{X}'$ and $\mathfrak{V}\subseteq \mathfrak{Z}$ be quasi-compact open subsets such that $\spc_{X,\mathfrak{X}'}^{-1}(\mathfrak{U})=\spc_{X,\mathfrak{Z}}^{-1}(\mathfrak{V})$ in $X$. Then the pre-images of $\mathfrak{U}$ and $\mathfrak{V}$ in the fiber product $\mathfrak{X}'\times_{\mathfrak{X}}\mathfrak{Z}$ are both equal to $\mathfrak{U}\times_{\mathfrak{X}}\mathfrak{V}$.\end{lemma}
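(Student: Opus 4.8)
The plan is to realize the fiber product $T:=\mathfrak{X}'\times_{\mathfrak{X}}\mathfrak{Z}$ (formed in the category of $\varpi$-torsion-free adic formal $R$-schemes, i.e.\ replacing the naive fiber product by its $\varpi$-torsion-free modification) as itself a formal $R$-model of $X$, and then to deduce the statement from surjectivity of the specialization map. Write $p\colon T\to\mathfrak{X}'$ and $q\colon T\to\mathfrak{Z}$ for the two projections, and note that the assertion is local on $\mathfrak{X}$, so one reduces to $\mathfrak{X}=\Spf(A)$ rig-sheafy affine; covering $\mathfrak{X}'$, $\mathfrak{Z}$ by rig-sheafy affine opens $\Spf(A_i)$, $\Spf(B_j)$ presents $T$ as glued from the $\Spf(C_{ij})$, where $C_{ij}$ is the $\varpi$-torsion-free modification of the $\varpi$-adically completed tensor product $A_i\widehat{\otimes}_A B_j$. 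The first point to establish is that, with $U'_i=\Spf(A_i)_{\eta}^{\ad}$ and $V_j=\Spf(B_j)_{\eta}^{\ad}$ viewed as open subspaces of $X$ (via Lemma~\ref{Generic fibers and open immersions}), one has $(A_i\widehat{\otimes}_A B_j)[\varpi^{-1}]=\mathcal{O}_X(U'_i)\widehat{\otimes}_{\mathcal{O}_X(X)}\mathcal{O}_X(V_j)=\mathcal{O}_X(U'_i\cap V_j)$; granting this (and using Lemma~\ref{Completions are torsion-free} and the universal property of completed blow-up algebras recalled in Remark~\ref{Remark on rig-sheafiness}), each $C_{ij}$ is rig-sheafy and $\varpi$-torsion-free, $T$ is a locally rig-sheafy $\varpi$-torsion-free qcqs adic formal $R$-scheme with $T_{\eta}^{\ad}\cong X$, and both $p$ and $q$ are formal modifications in the sense of Definition~\ref{Definition of formal modification}.

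The second step is the specialization map $\sigma:=\spc_{X,T}\colon X\to T$. For $x\in X$ the canonical morphisms $\Spf(k(x)^{+})\to\mathfrak{X}'$ and $\Spf(k(x)^{+})\to\mathfrak{Z}$ used to define $\spc_{X,\mathfrak{X}'}(x)$ and $\spc_{X,\mathfrak{Z}}(x)$ agree over $\mathfrak{X}$, so the universal property of the fiber product yields a canonical morphism $\Spf(k(x)^{+})\to T$ (which factors through the $\varpi$-torsion-free modification since $k(x)^{+}$ is $\varpi$-torsion-free); setting $\sigma(x)$ to be the image of the closed point defines $\sigma$, with $p\circ\sigma=\spc_{X,\mathfrak{X}'}$ and $q\circ\sigma=\spc_{X,\mathfrak{Z}}$ (as in the proof of Lemma~\ref{Faithfulness} one sees $\sigma$ is a morphism of locally $v$-ringed spaces, but only the underlying map of sets will be used). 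Now, since $T$ is a $\varpi$-torsion-free locally rig-sheafy qcqs adic formal $R$-scheme with generic fiber $X$, the map $\sigma=\spc_{X,T}$ is surjective: this is exactly what Corollary~\ref{Center maps are surjective}, together with the reduction to the rig-sheafy affine case sketched in Remark~\ref{History of the specialization map} (which invokes only that $\Spf(\overline{C}_{ij})\to\Spf(C_{ij})$ is surjective and the affinoid case of Bhatt's theorem, not that $T$ is an admissible formal blow-up of anything), gives in this slightly more general setting.

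The conclusion is then a diagram chase. Applying $\sigma^{-1}$ and using the hypothesis,
\begin{equation*}
\sigma^{-1}\bigl(p^{-1}(\mathfrak{U})\bigr)=\spc_{X,\mathfrak{X}'}^{-1}(\mathfrak{U})=\spc_{X,\mathfrak{Z}}^{-1}(\mathfrak{V})=\sigma^{-1}\bigl(q^{-1}(\mathfrak{V})\bigr),
\end{equation*}
and surjectivity of $\sigma$ upgrades this to $p^{-1}(\mathfrak{U})=q^{-1}(\mathfrak{V})$ as open subsets, hence as open formal subschemes, of $T$. Intersecting gives $p^{-1}(\mathfrak{U})=p^{-1}(\mathfrak{U})\cap q^{-1}(\mathfrak{V})=\mathfrak{U}\times_{\mathfrak{X}}\mathfrak{V}$ and likewise $q^{-1}(\mathfrak{V})=\mathfrak{U}\times_{\mathfrak{X}}\mathfrak{V}$, which is the claim. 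The main obstacle is the first step: showing that the $\varpi$-torsion-free fiber product is genuinely a formal model of $X$, i.e.\ identifying $(A_i\widehat{\otimes}_A B_j)[\varpi^{-1}]$ with $\mathcal{O}_X$ of the intersection $U'_i\cap V_j$ — this is where rig-sheafiness of $T$, and with it the surjectivity of $\sigma$, is actually used; everything after that is formal.
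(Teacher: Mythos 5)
Your strategy is close in spirit to the paper's own argument (both ultimately run on surjectivity of specialization onto a piece of the fiber product, i.e.\ on Corollary \ref{Center maps are surjective}), but as written it has two genuine gaps. The first is precisely the step you yourself label ``the main obstacle'' and leave unproved: you need the torsion-free completed tensor products $C_{ij}$ to be rig-sheafy with $C_{ij}[\varpi^{-1}]\cong\mathcal{O}_X(U_i'\cap V_j)$, so that $T$ is a formal $R$-model of $X$ and $\sigma=\spc_{X,T}$ exists and is surjective. Nothing in the hypotheses of the lemma (no uniformity, no strong sheafiness, the formal modifications are arbitrary) makes this a routine check: a priori $U_i'\cap V_j$ need not be affinoid with ring $\mathcal{O}_X(U_i')\widehat{\otimes}_{\mathcal{O}_X(X)}\mathcal{O}_X(V_j)$, and sheafiness of such completed tensor products is exactly the kind of statement the paper's toolkit does not supply. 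Since surjectivity of $\sigma$ is the engine of your final diagram chase, the argument is incomplete without it. The paper's proof is arranged so as to avoid this identification: it only needs that the open immersion $\mathfrak{U}\times_{\mathfrak{X}}\mathfrak{V}\hookrightarrow\mathfrak{U}\times_{\mathfrak{X}}\mathfrak{Z}$ induces an isomorphism on generic fibers (both sides being formal modifications of $\mathfrak{U}$, using the hypothesis $\spc_{X,\mathfrak{X}'}^{-1}(\mathfrak{U})=\spc_{X,\mathfrak{Z}}^{-1}(\mathfrak{V})$), and then invokes Lemma \ref{Immersions and formal modifications}, plus the symmetric argument; no computation of $\mathcal{O}_X(U_i'\cap V_j)$ enters. (To be fair, the paper too implicitly assumes the fiber products are locally rig-sheafy and $\varpi$-torsion-free in order to apply that lemma, but it does not need your tensor-product identification.)

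Second, even granting your first step, what you prove is an equality of the preimages of $\mathfrak{U}$ and $\mathfrak{V}$ inside $T$, the $\varpi$-torsion-free modification, whereas the lemma asserts it inside the fiber product $\mathfrak{X}'\times_{\mathfrak{X}}\mathfrak{Z}$ itself. The underlying space of $T$ is a closed subset of that of $\mathfrak{X}'\times_{\mathfrak{X}}\mathfrak{Z}$ and can be strictly smaller: the $\varpi$-power-torsion ideal may cut out entire pieces of the special fiber, and your specialization map cannot see them. So ``$p^{-1}(\mathfrak{U})=q^{-1}(\mathfrak{V})$ as open subsets of $T$'' does not yield the asserted equality of $\mathfrak{U}\times_{\mathfrak{X}}\mathfrak{Z}$ and $\mathfrak{X}'\times_{\mathfrak{X}}\mathfrak{V}$ as open formal subschemes of the fiber product. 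Either work with the fiber product itself, as the statement requires (which forces you to confront its rig-sheafiness and torsion directly), or supply an additional argument controlling the torsion locus; the route through Lemma \ref{Immersions and formal modifications} taken in the paper is the cleaner way to package this.
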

\begin{proof}[Proof of Lemma \ref{Formal modifications and affine opens}]By the hypothesis that $\spc_{X,\mathfrak{X}'}^{-1}(\mathfrak{U})=\spc_{X,\mathfrak{Z}}^{-1}(\mathfrak{V})$, the generic fiber of $\mathfrak{U}\times_{\mathfrak{X}}\mathfrak{V}$ is \begin{equation*}\mathfrak{U}_{\eta}^{\ad}\times_{X}\mathfrak{V}_{\eta}^{\ad}=\mathfrak{U}_{\eta}^{\ad}\cap\mathfrak{V}_{\eta}^{\ad}=\mathfrak{U}_{\eta}^{\ad},\end{equation*}so the canonical morphism of formal schemes $\mathfrak{U}\times_{\mathfrak{X}}\mathfrak{V}\to \mathfrak{U}$ is a formal modification. On the other hand, the canonical morphism $\mathfrak{U}\times_{\mathfrak{X}}\mathfrak{Z}\to\mathfrak{U}$ is also a formal modification, since $\mathfrak{Z}\to\mathfrak{X}$ is a formal modification. It follows that the open immersion $\mathfrak{U}\times_{\mathfrak{X}}\mathfrak{V}\hookrightarrow \mathfrak{U}\times_{\mathfrak{X}}\mathfrak{Z}$ is a formal modification. By Lemma \ref{Immersions and formal modifications}, this means that\begin{equation*}\mathfrak{U}\times_{\mathfrak{X}}\mathfrak{V}=\mathfrak{U}\times_{\mathfrak{X}}\mathfrak{Z}.\end{equation*}Applying the same argument as above with the roles of $\mathfrak{U}$ and $\mathfrak{V}$ (respectively, of $\mathfrak{U}\times_{\mathfrak{X}}\mathfrak{Z}$ and $\mathfrak{X}'\times_{\mathfrak{X}}\mathfrak{V}$) reversed, we also find that \begin{equation*}\mathfrak{U}\times_{\mathfrak{X}}\mathfrak{V}=\mathfrak{X}'\times_{\mathfrak{X}}\mathfrak{V}.\end{equation*}Putting these two results together, we see that the pre-image $\mathfrak{U}\times_{\mathfrak{X}}\mathfrak{Z}$ of $\mathfrak{U}$ inside $\mathfrak{X}'\times_{\mathfrak{X}}\mathfrak{Z}$ is equal to the pre-image $\mathfrak{X}'\times_{\mathfrak{X}}\mathfrak{V}$ of $\mathfrak{V}$, as claimed.\end{proof}
\begin{lemma}\label{Formal modifications are universally closed}Let $f_{0}: \mathfrak{X}'\to\mathfrak{X}$ be a quasi-compact morphism of qcqs $\varpi$-torsion-free adic formal $R$-schemes such that the adic generic fibers $X$, $X'$ of $\mathfrak{X}$, $\mathfrak{X}'$ are strong adic spaces. Suppose that the morphism $\mathbb{B}_{X'}^{n}\to \mathfrak{B}_{X}^{n}$ induced by $f=f_{0\eta}: X'\to X$ is closed for every non-negative integer $n$. Then $f_{0}$ is universally closed. In particular, every formal modification is universally closed.\end{lemma}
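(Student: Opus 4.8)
The plan is to reduce the universal closedness of $f_{0}$ to the closedness of its base changes along the formal affine spaces $\widehat{\mathbb{A}}^{n}_{\mathfrak{X}}:=\Spf\big(A\langle T_{1},\dots,T_{n}\rangle\big)$ over $\mathfrak{X}$ --- the formal-scheme analogue of the classical criterion that a quasi-compact morphism of schemes is universally closed once it stays closed after adjoining variables --- and then to obtain that closedness from the hypothesis on balls by propagating it down along specialization maps, which are surjective and closed for $\varpi$-torsion-free qcqs locally rig-sheafy adic formal $R$-schemes by Corollary \ref{Center maps are surjective} and Corollary \ref{The specialization map is closed}.

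I would first carry out the standard reductions. Universal closedness is local on the target, and it is unchanged when $\mathfrak{X}$ --- or the source $\mathfrak{X}'$ --- is replaced by a $\varpi$-torsion-free admissible formal blow-up: such a blow-up is proper (hence closed) and surjective by Corollary \ref{Center maps are surjective}, and a map $h$ is closed whenever $h\circ q$ is closed with $q$ continuous and surjective; applying this after an arbitrary base change gives the claim. Combining this with Corollary \ref{Open covers and formal models 2}, applied to the affinoid covers witnessing strongness of $X$ and $X'$, one arranges that $\mathfrak{X}=\Spf A$ is affine with $A[\varpi^{-1}]$ strongly sheafy and that $\mathfrak{X}'$ has a finite affine cover $(\Spf B_{i})_{i}$ with each $B_{i}[\varpi^{-1}]$ strongly sheafy; note that $f_{0}$ is automatically adic since $\mathfrak{X}$ and $\mathfrak{X}'$ are adic over $\Spf R$. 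Because $f_{0}$ is adic, its base changes along adic formal $R$-schemes induce on underlying topological spaces the corresponding base changes of the scheme morphism $(f_{0})_{0}\colon\mathfrak{X}'_{0}\to\mathfrak{X}_{0}$; invoking the scheme-theoretic criterion (any affine $\mathfrak{X}_{0}$-scheme is a closed subscheme of some $\mathbb{A}^{I}_{\mathfrak{X}_{0}}$, and closedness over $\mathbb{A}^{I}$ follows from the finite-dimensional case by a limit argument over the spectral spaces involved, see \cite{Stacks}) together with the equality $(\widehat{\mathbb{A}}^{n}_{\mathfrak{X}})_{0}=\mathbb{A}^{n}_{\mathfrak{X}_{0}}$, it suffices to prove that for every $n\ge 0$ the base change
\[
g_{0}^{(n)}\colon\ \mathfrak{Z}^{(n)}:=\mathfrak{X}'\times_{\mathfrak{X}}\widehat{\mathbb{A}}^{n}_{\mathfrak{X}}\ \longrightarrow\ \widehat{\mathbb{A}}^{n}_{\mathfrak{X}}
\]
is a closed map.

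Here $\widehat{\mathbb{A}}^{n}_{\mathfrak{X}}$ and $\mathfrak{Z}^{(n)}$ are $\varpi$-torsion-free (Lemma \ref{Completions are torsion-free}) qcqs adic formal $R$-schemes, and they are locally rig-sheafy because $A\langle T_{1},\dots,T_{n}\rangle[\varpi^{-1}]=A[\varpi^{-1}]\langle T_{1},\dots,T_{n}\rangle$ and the rings $B_{i}\langle T_{1},\dots,T_{n}\rangle[\varpi^{-1}]$ are sheafy by strong sheafiness, so Corollary \ref{Center maps are surjective} and Corollary \ref{The specialization map is closed} apply to them. Unwinding the constructions of Section \ref{sec:generic fiber} and Definition \ref{Balls over strong adic spaces} --- and using the identity $C'\langle T_{1},\dots,T_{n}\rangle=C'\widehat{\otimes}_{C}C\langle T_{1},\dots,T_{n}\rangle$ for complete Tate rings $C\to C'$ --- one identifies $(\widehat{\mathbb{A}}^{n}_{\mathfrak{X}})^{\ad}_{\eta}=\mathbb{B}^{n}_{X}$ and $(\mathfrak{Z}^{(n)})^{\ad}_{\eta}=X'\times_{X}\mathbb{B}^{n}_{X}=\mathbb{B}^{n}_{X'}$, under which $g_{0}^{(n)}$ induces on generic fibers exactly the ball morphism $g^{(n)}\colon\mathbb{B}^{n}_{X'}\to\mathbb{B}^{n}_{X}$ attached to $f=f_{0\eta}$. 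Writing $\sigma'\colon\mathbb{B}^{n}_{X'}\to\mathfrak{Z}^{(n)}$ and $\sigma\colon\mathbb{B}^{n}_{X}\to\widehat{\mathbb{A}}^{n}_{\mathfrak{X}}$ for the two specialization maps --- both continuous, surjective (Corollary \ref{Center maps are surjective}) and closed (Corollary \ref{The specialization map is closed}) --- the conclusion is a one-line diagram chase: for closed $C\subseteq\mathfrak{Z}^{(n)}$,
\[
g_{0}^{(n)}(C)=g_{0}^{(n)}\!\big(\sigma'((\sigma')^{-1}(C))\big)=\sigma\big(g^{(n)}((\sigma')^{-1}(C))\big),
\]
where the first equality uses surjectivity of $\sigma'$ and the commutativity of the square, and the right-hand side is closed since $(\sigma')^{-1}(C)$ is closed, $g^{(n)}$ is closed by hypothesis, and $\sigma$ is closed. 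Hence $g_{0}^{(n)}$ is closed and $f_{0}$ is universally closed; the last assertion of the lemma is the special case where $f_{0\eta}$ is an isomorphism, so that every $g^{(n)}$ is an isomorphism, in particular closed.

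The diagram chase is the conceptual heart and is immediate. I expect the real work --- and the main obstacle --- to lie in the two more bureaucratic parts: verifying that none of the blow-up replacements nor the passage to formal affine spaces in the reduction step affects universal closedness (and that the scheme-theoretic criterion applies to the reductions modulo $\varpi$); and carefully setting up the identification $(\widehat{\mathbb{A}}^{n}_{\mathfrak{X}})^{\ad}_{\eta}=\mathbb{B}^{n}_{X}$, in particular checking the compatibility of the rings of integral elements --- that $\overline{A\langle T_{1},\dots,T_{n}\rangle}$ (the integral closure of $A\langle T_{1},\dots,T_{n}\rangle$ in $A\langle T_{1},\dots,T_{n}\rangle[\varpi^{-1}]$) agrees with the $\mathcal{O}^{+}$-structure defining $\mathbb{B}^{n}_{X}$; should this require adjustment, one would instead work with a suitable $\varpi$-torsion-free formal model of $\mathbb{B}^{n}_{X}$, and the argument would adapt verbatim.
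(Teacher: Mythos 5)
Your proposal is correct and follows essentially the same route as the paper's proof: base change $f_{0}$ along the formal balls $\mathfrak{X}\widehat{\otimes}_{R}R\langle T_1,\dots,T_n\rangle$, identify their adic generic fibers with $\mathbb{B}^{n}_{X}$, $\mathbb{B}^{n}_{X'}$, and use the commutative square with the specialization maps (surjective by Corollary \ref{Center maps are surjective}, closed by Corollary \ref{The specialization map is closed}) to deduce that each base-changed map of formal schemes is closed, then pass to the special fibers and apply \cite{Stacks}, Tag 05JX. The only difference is bookkeeping: the paper skips your preliminary blow-up/affine reductions and closes the argument by citing \cite{FK}, Ch.~I, Proposition 4.5.9 for the passage from universal closedness of $f_{0,s}$ back to $f_{0}$, which is exactly the formal-scheme criterion you re-derive by hand, and your worry about matching $\overline{A\langle T_1,\dots,T_n\rangle}$ with the $\mathcal{O}^{+}$-structure on $\mathbb{B}^{n}_{X}$ is immaterial since only the underlying topological spaces enter the closedness argument.
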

\begin{proof}[Proof of Lemma \ref{Formal modifications are universally closed}]Let \begin{equation*}f_{0, n}: \mathbb{B}_{\mathfrak{X}'}^{n}=\mathfrak{X}'\widehat{\otimes}_{R}R\langle T_1,\dots, T_n\rangle\to \mathbb{B}_{\mathfrak{X}}^{n}=\mathfrak{X}\widehat{\otimes}_{R}R\langle T_1,\dots, T_n\rangle\end{equation*}be the base change. Then \begin{equation*}(\mathbb{B}_{\mathfrak{X}'}^{n})_{\eta}^{\ad}=X'\widehat{\otimes}_{R[\varpi^{-1}]}R[\varpi^{-1}]\langle T_1,\dots, T_n\rangle=\mathbb{B}_{X'}^{n},\end{equation*}and similarly for $X$. For every $n$, the commutative square \begin{center}\begin{tikzcd}\mathbb{B}_{X'}^{n} \arrow{r} \arrow{d}{\spc} & \mathbb{B}_{X}^{n} \arrow{d}{\spc} \\ \mathbb{B}_{\mathfrak{X}'}^{n}\arrow{r} & \mathbb{B}_{\mathfrak{X}}^{n}\end{tikzcd}\end{center}shows that $f_{0, n}$ is closed. Thus the map on the special fiber \begin{equation*}f_{0,s,n}=f_{0,n,s}: \mathbb{A}_{\mathfrak{X}'_{s}}^{n}=\mathfrak{X}'_{s}\otimes_{R/\varpi}\mathbb{A}_{R/\varpi}^{n}\to \mathbb{A}_{\mathfrak{X}}^{n}=\mathfrak{X}_{s}\otimes_{R/\varpi}\mathbb{A}_{R/\varpi}^{n}\end{equation*}is closed, for every $n$. By \cite{Stacks}, Tag 05JX, this implies that $f_{0,s}: \mathfrak{X}'_{s}\to \mathfrak{X}_{s}$ is a universally closed morphism of schemes. But by \cite{FK}, Ch.~I, Proposition 4.5.9, this means that $f_{0}$ is universally closed. \end{proof}  
\begin{proof}[Proof of Proposition \ref{Formal modifications vs. formal blow-ups}]The second assertion of the proposition follows from the first by setting $\mathfrak{X}''=\mathfrak{X}'$ since, by \cite{FK}, Ch.~II, Proposition 1.1.10, the composition of two admissible formal blow-ups (between qcqs adic formal schemes of finite ideal type) is an admissible formal blow-up. 

Suppose that the proposition is known to hold for $\mathfrak{X}$ affine. Cover $\mathfrak{X}$ by rig-sheafy affine open subsets $\mathfrak{V}_{i}$. For every $i$, let $\mathfrak{Z}_{i}$ be the pre-image of $\mathfrak{V}_{i}$ under the formal modification $\mathfrak{Z}\to\mathfrak{X}$ and let $\mathfrak{V}_{i}'\to\mathfrak{V}_{i}$ be an admissible formal blow-up such that for every admissible formal blow-up $\mathfrak{V}_{i}''\to\mathfrak{V}_{i}'$ of $\mathfrak{V}_{i}'$ the base change $\mathfrak{Z}_{i}\times_{\mathfrak{V}_{i}}\mathfrak{V}_{i}''\to\mathfrak{V}_{i}''$ is an admissible formal blow-up. By Lemma \ref{Admissible formal blow-up and affine opens}, for every $i$ there exists an admissible formal blow-up $\mathfrak{X}_{i}\to\mathfrak{X}$ extending $\mathfrak{V}_{i}'\to\mathfrak{V}_{i}$ and there exists an admissible formal blow-up $\mathfrak{X}'\to \mathfrak{X}$ which factors as $\mathfrak{X}'\to\mathfrak{X}_{i}\to\mathfrak{X}$, with $\mathfrak{X}'\to\mathfrak{X}_{i}$ an admissible formal blow-up, for all $i$. 

We claim that this $\mathfrak{X}'$ satisfies the property in the statement of the proposition. To see this, let $\mathfrak{X}''\to\mathfrak{X}'$ be an arbitrary admissible formal blow-up of $\mathfrak{X}'$. For every $i$, let $\mathfrak{V}_{i}''$ be the pre-image of $\mathfrak{V}_{i}'\subseteq \mathfrak{X}_{i}$ in $\mathfrak{X}''$. By \cite{FK}, Ch.~II, Proposition 1.1.10, the composition $\mathfrak{X}''\to\mathfrak{X}'\to \mathfrak{X}_{i}$ is an admissible formal blow-up for each $i$. Furthermore, by \cite{FK}, Ch.~II, Proposition 1.1.8, $\mathfrak{V}_{i}'$ is precisely the pre-image of $\mathfrak{V}_{i}$ under $\mathfrak{X}_{i}\to\mathfrak{X}$ and the restriction $\mathfrak{V}_{i}''\to \mathfrak{V}_{i}'$ of $\mathfrak{X}''\to\mathfrak{X}_{i}$ is an admissible formal blow-up of $\mathfrak{V}_{i}'$, for every $i$. In particular, $\mathfrak{V}_{i}''$ is the pre-image of $\mathfrak{V}_{i}$ in $\mathfrak{X}''$ and thus $(\mathfrak{V}_{i}'')_{i}$ is an open cover of $\mathfrak{X}''$, the family $(\mathfrak{V}_{i})_{i}$ being an open cover of $\mathfrak{X}$. By hypothesis, the base change morphisms $\mathfrak{Z}_{i}\times_{\mathfrak{V}_{i}}\mathfrak{V}_{i}''\to\mathfrak{V}_{i}''$ are admissible formal blow-ups. But, on the other hand, the family $(\mathfrak{Z}_{i}\times_{\mathfrak{V}_{i}}\mathfrak{V}_{i}'')_{i}$ is an open cover of $\mathfrak{Z}\times_{\mathfrak{X}}\mathfrak{X}''$. It follows that $\mathfrak{Z}\times_{\mathfrak{X}}\mathfrak{X}''\to\mathfrak{X}''$ is an admissible formal blow-up. 

It remains to prove the claim when $\mathfrak{X}$ is affine. Let $X$ be the common adic analytic generic fiber of $\mathfrak{Z}$ and $\mathfrak{X}$. Let $\mathfrak{U}_{1},\dots, \mathfrak{U}_{n}$ be a rig-sheafy affine open cover of $\mathfrak{Z}$ and set $U_{i}=\spc_{X,\mathfrak{Z}}^{-1}(\mathfrak{U}_{i})$ for all $i=1,\dots, n$. By Corollary \ref{Open covers and formal models 2}, there is an admissible formal blow-up $\mathfrak{X}'\to\mathfrak{X}$ of $\mathfrak{X}$ and an affine open cover $(\mathfrak{U}_{i}')_{i}$ of $\mathfrak{X}'$ such that $\spc_{X,\mathfrak{X}'}^{-1}(\mathfrak{U}_{i}')=U_{i}$ for all $i$. Let $\mathfrak{X}''\to\mathfrak{X}'$ be an arbitrary admissible formal blow-up and, for every $i=1,\dots, n$, let $\mathfrak{U}_{i}''$ be the pre-image of $\mathfrak{U}_{i}'$ under $\mathfrak{X}''\to\mathfrak{X}'$. Note that \begin{equation*}\mathfrak{Z}''=\mathfrak{Z}\times_{\mathfrak{X}}\mathfrak{X}''\to\mathfrak{X}\end{equation*}is a formal modification of $\mathfrak{X}$ which dominates both $\mathfrak{Z}\to\mathfrak{X}$ and $\mathfrak{X}''\to\mathfrak{X}$. For every index $i$ consider the affine open subset $\mathfrak{U}_{i}\times_{\mathfrak{X}}\mathfrak{U}_{i}''$ of $\mathfrak{Z}''$. By Lemma \ref{Formal modifications and affine opens}, $\mathfrak{U}_{i}\times_{\mathfrak{X}}\mathfrak{U}_{i}''$ is equal to the pre-image in $\mathfrak{Z}''$ of $\mathfrak{U}_{i}\subseteq \mathfrak{Z}$ and to the pre-image in $\mathfrak{Z}''$ of $\mathfrak{U}_{i}''\subseteq\mathfrak{X}''$. In particular, the two projection morphisms $\mathfrak{Z}''\to \mathfrak{Z}$ and $\mathfrak{Z}''\to\mathfrak{X}''$ are affine. By virtue of \cite{FK}, Ch.~I, Proposition 4.1.12, this also means that the morphisms of schemes $\mathfrak{Z}''_{0}\to \mathfrak{Z}_{0}$ and $\mathfrak{Z}''_{0}\to \mathfrak{X}''_{0}$ are affine.    

By Lemma \ref{Formal modifications are universally closed}, the morphism $\mathfrak{Z}''\to\mathfrak{X}''$ is universally closed, being a formal modification. This implies that the affine morphism of schemes $\mathfrak{Z}''_{0}\to \mathfrak{X}''_{0}$ is universally closed and thus integral, by \cite{FK}, Ch.~I, Proposition 4.5.9, and \cite{Stacks}, Tag 01WM. Moreover, since $\mathfrak{Z}\to\mathfrak{X}$ is topologically of finite type, so is its base change $\mathfrak{Z}''=\mathfrak{Z}\times_{\mathfrak{X}}\mathfrak{X}''\to\mathfrak{X}''$. Consequently, the induced morphism of schemes $$\mathfrak{Z}''_{0}\to \mathfrak{X}''_{0}$$ is of finite type. But an integral morphism of finite type is finite, so $\mathfrak{Z}''_{0}\to \mathfrak{X}''_{0}$ is a finite morphism of schemes. By \cite{FK}, Ch.~I, Prop.~4.2.1, this means that the morphism of formal schemes $\mathfrak{Z}''\to\mathfrak{X}''$ is finite and we can conclude by the following lemma, which is a straightforward generalization of \cite{BL1}, Lemma 4.5.\end{proof}
\begin{lemma}\label{Finite formal modifications are admissible formal blow-ups}Every finite formal modification $\mathfrak{X}'\to\mathfrak{X}$ of a locally rig-sheafy $\varpi$-torsion-free qcqs adic formal $R$-scheme $\mathfrak{X}$ is an admissible formal blow-up.\end{lemma}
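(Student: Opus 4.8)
The plan is to follow the proof of \cite{BL1}, Lemma 4.5: to show that, locally on $\mathfrak{X}$, the finite formal modification $f_{0}\colon\mathfrak{X}'\to\mathfrak{X}$ is the chart of an admissible formal blow-up corresponding to a distinguished generator, and then to deduce — by surjectivity of the specialization map — that this chart exhausts the blow-up. First I would reduce to $\mathfrak{X}=\Spf(A)$ rig-sheafy affine, so $\mathfrak{X}'=\Spf(A')$ with $A'$ a finite $A$-algebra; here we also use that $\mathfrak{X}'$ is $\varpi$-torsion-free (a hypothesis that needs to be in force, valid in the situation where the lemma is applied). The isomorphism $(\mathfrak{X}')^{\ad}_{\eta}\xrightarrow{\sim}\mathfrak{X}^{\ad}_{\eta}=:X$ forces $A[\varpi^{-1}]\xrightarrow{\sim}A'[\varpi^{-1}]$, and since $A$ is $\varpi$-torsion-free we get $A\hookrightarrow A'$ with $A'/A$ killed by a power $\varpi^{c}$. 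Writing $A'=A\cdot1+\sum_{j}A\,t_{j}$ as an $A$-module with $t_{j}=a_{j}/\varpi^{c}\in A[\varpi^{-1}]$, the open ideal $\mathfrak{a}=(\varpi^{c},a_{2},\dots,a_{m})_{A}=\varpi^{c}A'\subseteq A$ satisfies $A[\mathfrak{a}/\varpi^{c}]=A'$ (both inclusions are immediate: $A'$ is a ring containing $A$ and the $t_{j}$, and $A'=A+\sum A t_{j}$ lies in $A[\mathfrak{a}/\varpi^{c}]$), and $A'$ is already $\varpi$-adically complete, so $A'=A\langle\mathfrak{a}/\varpi^{c}\rangle$ is exactly the chart of $\mathfrak{Y}:=\Bl_{\mathfrak{a}}\mathfrak{X}$ associated with the generator $\varpi^{c}$ of $\mathfrak{a}$.

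The key step is then as follows. The formal blow-up $\mathfrak{Y}$ is $\varpi$-torsion-free (its charts $A\langle\mathfrak{a}/g\rangle$ are $\varpi$-torsion-free completed affine blow-up algebras, by Lemma \ref{Completions are torsion-free} and Remark \ref{Remark on rig-sheafiness}) and locally rig-sheafy over $(R,\varpi)$, by the argument in the proof of Lemma \ref{Admissible formal blow-ups and generic fiber} (which applies since $\mathfrak{a}$ is a finitely generated open ideal). Consider the open subscheme $\mathfrak{Y}_{0}=\Spf(A')\subseteq\mathfrak{Y}$. Since $A'=\mathcal{O}_{\mathfrak{X}'}(\mathfrak{X}')$, its adic analytic generic fiber $(\mathfrak{Y}_{0})_{\eta}^{\ad}=\Spa(A'[\varpi^{-1}],\overline{A'})$ equals $(\mathfrak{X}')^{\ad}_{\eta}=X$, i.e.\ all of the generic fiber. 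By Lemma \ref{Description of inverse images} applied to $\mathfrak{Y}\to\mathfrak{X}$ we get
\[
\spc_{X,\mathfrak{Y}}^{-1}(\mathfrak{Y}_{0})=(\mathfrak{Y}_{0})_{\eta}^{\ad}=X,
\]
and since $\spc_{X,\mathfrak{Y}}\colon X\to\mathfrak{Y}$ is surjective by Corollary \ref{Center maps are surjective} ($\mathfrak{Y}$ being a $\varpi$-torsion-free admissible formal blow-up of the locally rig-sheafy qcqs adic formal $R$-scheme $\mathfrak{X}$), it follows that $\mathfrak{Y}_{0}=\spc_{X,\mathfrak{Y}}(X)=\mathfrak{Y}$. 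Hence $\mathfrak{Y}=\Spf(A')=\mathfrak{X}'$, so $f_{0}$ is identified with the admissible formal blow-up $\mathfrak{Y}\to\mathfrak{X}$, settling the affine case.

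To globalize I would fix a rig-sheafy affine open cover $(\mathfrak{U}_{i})$ of $\mathfrak{X}$ and a single exponent $c$ that works on every $\mathfrak{U}_{i}$ (possible because $\mathfrak{X}$ is quasi-compact and $f_{0\ast}\mathcal{O}_{\mathfrak{X}'}/\mathcal{O}_{\mathfrak{X}}$ is an adically quasi-coherent $\mathcal{O}_{\mathfrak{X}}$-module of finite type killed by a power of $\varpi$). The affine case presents $f_{0}^{-1}(\mathfrak{U}_{i})\to\mathfrak{U}_{i}$ as the admissible formal blow-up in $\mathfrak{a}_{i}:=\varpi^{c}\,\mathcal{O}_{\mathfrak{X}'}(f_{0}^{-1}(\mathfrak{U}_{i}))\subseteq\mathcal{O}_{\mathfrak{X}}(\mathfrak{U}_{i})$, canonically identified with its $\varpi^{c}$-chart through $\mathcal{O}_{\mathfrak{X}}(\mathfrak{U}_{i})[\mathfrak{a}_{i}/\varpi^{c}]=\mathcal{O}_{\mathfrak{X}'}(f_{0}^{-1}(\mathfrak{U}_{i}))$. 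The $\mathfrak{a}_{i}$ are the local sections of the globally defined ideal $\mathcal{J}:=\varpi^{c}\cdot f_{0\ast}\mathcal{O}_{\mathfrak{X}'}\subseteq\mathcal{O}_{\mathfrak{X}}$, which is adically quasi-coherent of finite type and contains $\varpi^{c}\mathcal{O}_{\mathfrak{X}}$, hence admissible; so the canonical isomorphisms $f_{0}^{-1}(\mathfrak{U}_{i})\cong(\Bl_{\mathcal{J}}\mathfrak{X})|_{\mathfrak{U}_{i}}$ glue to an isomorphism $\mathfrak{X}'\cong\Bl_{\mathcal{J}}\mathfrak{X}$ over $\mathfrak{X}$, identifying $f_{0}$ with the admissible formal blow-up along $\mathcal{J}$.

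The genuinely delicate points, I expect, are: (i) securing the $\varpi$-torsion-freeness of $\mathfrak{X}'$ at the outset — this is indispensable, since the identity $A'=A[\mathfrak{a}/\varpi^{c}]$ fails otherwise, and indeed $\Spf(A\times A/\varpi)\to\Spf(A)$ is a finite formal modification of a $\varpi$-torsion-free affine formal scheme which is not an admissible formal blow-up; and (ii) the bookkeeping in the last paragraph, i.e.\ checking that the local ideals $\mathfrak{a}_{i}$ patch to an adically quasi-coherent admissible ideal sheaf and that the canonical local isomorphisms agree on overlaps. The core geometric input — ``the chart with full generic fiber is the whole blow-up'' — is short once the surjectivity of the specialization map (Corollary \ref{Center maps are surjective}) is available.
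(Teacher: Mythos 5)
Your proof is correct in substance and reaches the paper's conclusion along the same initial track — reduce to $\mathfrak{X}=\Spf(A)$ affine, identify $\mathfrak{X}'=\Spf(A')$ with the distinguished chart of the blow-up along the very same ideal ($\mathfrak{a}=\varpi^{c}A'$ is exactly the paper's $(\varpi^{r},\varpi^{r}f_{1},\dots,\varpi^{r}f_{n})_{A}$) — but it differs genuinely at the decisive step, namely in showing that this chart exhausts the blow-up. The paper does this by elementary algebra on each of the other charts $A_{i}=A\langle\tfrac{I}{\varpi^{r}f_{i}}\rangle$: since $f_{i}$ is integral over $A$ and becomes a unit in $A_{i}[f_{i}]$ with inverse $\tfrac{\varpi^{r}}{\varpi^{r}f_{i}}\in A_{i}$, one gets $f_{i}\in A_{i}$ and hence $IA_{i}=\varpi^{r}A_{i}$, so every chart lies in the $\varpi^{r}$-locus. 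You instead observe that the $\varpi^{c}$-chart has full generic fiber and invoke surjectivity of the specialization map (Corollary \ref{Center maps are surjective}); this is legitimate (no circularity — that corollary is proved before and independently of this lemma) and geometrically transparent, but it leans on the Bhatt-type Theorem \ref{Generic fiber via formal blow-ups}, whereas the paper's integrality trick is self-contained; also, your appeal to Remark \ref{Remark on rig-sheafiness} for $\varpi$-torsion-freeness of $\mathfrak{Y}$ covers only the $\varpi$-chart, and for the other charts $A\langle\tfrac{\mathfrak{a}}{a_{j}}\rangle$ one should add the (easy) remark that they are completions of subrings of the $\varpi$-torsion-free localizations $A_{a_{j}}$. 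Your globalization via the intrinsic ideal $\mathcal{J}=\varpi^{c}f_{0\ast}\mathcal{O}_{\mathfrak{X}'}$ is more careful than the paper's terse "we may assume $\mathfrak{X}$, $\mathfrak{X}'$ affine", which silently requires the local blow-up ideals to glue — your canonical choice is precisely what makes that reduction honest, and the remaining patching you flag is routine. Finally, your point (i) is a genuine catch: the statement as printed assumes only $\mathfrak{X}$ to be $\varpi$-torsion-free, yet both your argument and the paper's (which regards $A'$ as a subring of $A[\varpi^{-1}]$ when choosing $r$ with $\varpi^{r}f_{i}\in A$) need $\mathfrak{X}'$ to be $\varpi$-torsion-free, and your example $\Spf(A\times A/\varpi A)\to\Spf(A)$ shows this hypothesis cannot be dropped, so it should be added to the statement or verified at each point of application.
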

\begin{proof}Since finite morphisms are affine we may assume that both $\mathfrak{X}$ and $\mathfrak{X}'$ are affine. In this case the proof of \cite{BL1}, Lemma 4.5, carries over almost verbatim; we include the argument for the sake of completeness. Let $f_1,\dots, f_n\in \mathcal{O}_{\mathfrak{X}'}(\mathfrak{X}')$ be elements which generate $\mathcal{O}_{\mathfrak{X}'}(\mathfrak{X}')$ as an $\mathcal{O}_{\mathfrak{X}}(\mathfrak{X})$-module. Since $\mathfrak{X}'\to\mathfrak{X}$ is a formal modification, $\mathcal{O}_{\mathfrak{X}}(\mathfrak{X})[\varpi^{-1}]=\mathcal{O}_{\mathfrak{X}'}(\mathfrak{X}')[\varpi^{-1}]$. Hence there exists an integer $r\geq 0$ such that $\varpi^{r}f_1,\dots, \varpi^{r}f_n\in \mathcal{O}_{\mathfrak{X}}(\mathfrak{X})$. Let $\mathfrak{X}''\to\mathfrak{X}$ be the admissible formal blow-up of $\mathfrak{X}$ in the ideal sheaf $\mathcal{I}$ defined by the open ideal \begin{equation*}I=(\varpi^{r},\varpi^{r}f_1,\dots, \varpi^{r}f_n)_{\mathcal{O}_{\mathfrak{X}}(\mathfrak{X})}.\end{equation*}We have \begin{equation*}\mathfrak{X}'=\Spf(\mathcal{O}_{\mathfrak{X}}(\mathfrak{X})[f_1,\dots, f_n])=\Spf(\mathcal{O}_{\mathfrak{X}}(\mathfrak{X})\langle f_1,\dots, f_n\rangle)=\Spf(\mathcal{O}_{\mathfrak{X}}(\mathfrak{X})\langle\frac{\varpi^{r},\varpi^{r}f_1,\dots, \varpi^{r}f_n}{\varpi^{r}}\rangle),\end{equation*}so $\mathfrak{X}'$ equals the open subset of $\mathfrak{X}''$ on which the stalks of the ideal sheaf $\mathcal{I}\mathcal{O}_{\mathfrak{X}''}$ are generated by $\varpi^{r}$. We claim that this open subset is equal to all of $\mathfrak{X}''$. To prove this, we have to verify the equalities \begin{equation*}IA_{i}=\varpi^{r}A_{i}\end{equation*}for $i=1,\dots, r$, where \begin{equation*}A_{i}=\mathcal{O}_{\mathfrak{X}}(\mathfrak{X})\langle\frac{\varpi^{r}}{\varpi^{r}f_{i}},\frac{\varpi^{r}f_{1}}{\varpi^{r}f_{i}},\dots,\frac{\varpi^{r}f_{n}}{\varpi^{r}f_{i}}\rangle.\end{equation*}Since the ideal $IA_{i}$ of the completed blow-up algebra $A_{i}=\mathcal{O}_{\mathfrak{X}}(\mathfrak{X})\langle\frac{I}{\varpi^{r}f_{i}}\rangle$ is generated by $\varpi^{r}f_{i}$, it suffices to prove that the element \begin{equation*}f_{i}\in A_{i}[f_{i}]\subseteq \mathcal{O}_{\mathfrak{X}'}(\mathfrak{X}')\langle\frac{\varpi^{r}}{\varpi^{r}f_{i}}, \frac{\varpi^{r}f_1}{f_{i}},\dots, \frac{\varpi^{r}f_{n}}{\varpi^{r}f_{i}}\rangle\end{equation*}belongs to $A_{i}$ (as in this case $f_{i}$ is an invertible element of $A_{i}$, with inverse $\frac{\varpi^{r}}{\varpi^{r}f_{i}}$). Since $\mathcal{O}_{\mathfrak{X}}(\mathfrak{X})[f_{i}]\subseteq \mathcal{O}_{\mathfrak{X}'}(\mathfrak{X}')$ is integral over $\mathcal{O}_{\mathfrak{X}}(\mathfrak{X})$, the element $f_{i}\in A_{i}[f_{i}]$ is, a fortiori, integral over $A_{i}$. But, at the same time, $f_{i}$ is a unit in $A_{i}[f_{i}]$ with inverse $\frac{\varpi^{r}}{\varpi^{r}f_{i}}\in A_{i}$, so we can multiply an integral equation \begin{equation*}f_{i}^{N}+a_{1}f_{i}^{N-1}+\dots+a_{N-1}f_{i}+a_{N}=0\end{equation*}of $f_{i}$ over $A_{i}$ by $f_{i}^{-(N-1)}$ to see that $f_{i}\in A_{i}$. This concludes the proof.\end{proof}
\begin{rmk}Proposition \ref{Formal modifications vs. formal blow-ups} can be viewed as an analog of the following theorem from algebraic geometry (\cite{Conrad07}, Theorem 2.11): Let $f: X'\to X$ be a proper morphism of qcqs schemes and let $U\subseteq X$ be a quasi-compact dense open subset with dense pre-image in $X'$ such that $f$ is an isomorphism over $U$. There exist $U$-admissible blow-ups $\widetilde{X'}\to X'$ and $\widetilde{X}\to X$ and an isomorphism $\widetilde{X'}\cong \widetilde{X}$ such that the diagram \begin{center}\begin{tikzcd}\widetilde{X'} \arrow{r}{\cong} \arrow{d} & \widetilde{X} \arrow{d} \\ X' \arrow{r}{f} & X\end{tikzcd}\end{center}is commutative.\end{rmk}
We call a morphism of adic spaces $f: X\to S$ locally of finite type if for any pair of affinoid open subspaces $V\subseteq X$, $U\subseteq S$ with $f(V)\subseteq U$ the corresponding morphism of Huber pairs $(\mathcal{O}_{S}(U), \mathcal{O}_{S}^{+}(U))\to (\mathcal{O}_{X}(V), \mathcal{O}_{X}^{+}(V))$ is of finite type. For adic spaces of finite type over a Tate affinoid adic space, we also have the following non-Noetherian version of Raynaud's theory of formal models.
\begin{thm}\label{Analog of Raynaud theory 2}Let $R$ be a complete adic ring with ideal of definition generated by a single non-zero-divisor $\varpi\in R$. Suppose that the Tate ring $R[\varpi^{-1}]$ is sheafy and that $R$ is integrally closed in $R[\varpi^{-1}]$. The functor \begin{equation*}\mathfrak{X}\mapsto\mathfrak{X}_{\eta}^{\ad}\end{equation*}gives rise to an equivalence between \begin{enumerate}[(1)]\item the category of locally rig-sheafy $\varpi$-torsion-free quasi-separated adic formal $R$-schemes topologically of finite type over $\Spf(R)$, localized by admissible formal blow-ups, and \item the category of quasi-separated adic spaces of finite type over $\Spa(R[\varpi^{-1}], R)$.\end{enumerate}\end{thm}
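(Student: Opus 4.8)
The plan is to mimic the proof of Theorem \ref{Analog of Raynaud theory} (i.e.\ Theorem \ref{Main theorem}), replacing ``locally stably uniform'' by ``topologically of finite type'' and ``normalized formal blow-up'' by ``admissible formal blow-up'' throughout, and then checking that the finite-type hypotheses are compatible with this substitution. Concretely, one needs three things: (i) the functor $\mathfrak{X}\mapsto\mathfrak{X}_{\eta}^{\ad}$ lands in quasi-separated adic spaces of finite type over $\Spa(R[\varpi^{-1}],R)$; (ii) faithfulness and fullness of the localized functor; (iii) essential surjectivity. Note first that since $R$ is integrally closed in $R[\varpi^{-1}]$ we have $\overline{R}=R$, so the base is $S=\Spa(R[\varpi^{-1}],R)$ as stated. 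For (i): if $\mathfrak{X}=\Spf(A)$ with $A$ topologically of finite type over $R$ (so $A$ is a quotient of some $R\langle T_1,\dots,T_m\rangle$ by a closed ideal, endowed with the $\varpi$-adic topology, whence $\mathfrak{X}$ is adic over $\Spf(R)$), then by Lemma \ref{Generic fibers of sheafy formal schemes} the generic fiber is $\Spa(A[\varpi^{-1}],\overline{A})$, and $A[\varpi^{-1}]$ is a quotient of $R[\varpi^{-1}]\langle T_1,\dots,T_m\rangle$, hence of finite type over $R[\varpi^{-1}]$; one checks the integral-closure condition $\mathcal{O}_X^+$ matches and that finite type is preserved under the rational localizations used to glue. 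The quasi-separatedness is immediate from the construction of the generic fiber as a gluing.

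For faithfulness: Lemma \ref{Faithfulness} applies verbatim, since a locally rig-sheafy $\varpi$-torsion-free adic formal $R$-scheme topologically of finite type over $\Spf(R)$ is in particular a locally rig-sheafy $\varpi$-torsion-free qcqs adic formal $R$-scheme (quasi-compactness of $\mathfrak{X}$ follows once we note finite-type over $\Spf(R)$ with $R$ complete gives qcqs; more precisely the objects in (1) that have qcqs generic fiber are themselves qcqs, and finite-type adic spaces over an affinoid are qcqs). For fullness: given $f\colon\mathfrak{Z}_\eta^{\ad}\to\mathfrak{X}_\eta^{\ad}$ of finite-type adic spaces over $S$, one must produce an admissible formal blow-up $\mathfrak{Z}'\to\mathfrak{Z}$ and a formal $R$-model $f_0\colon\mathfrak{Z}'\to\mathfrak{X}$ of $f$. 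This is the classical Bosch–Lütkebohmert argument (\cite{BL1}, proof of Theorem 4.1, parts (b)–(d)): reduce via an affinoid cover of $\mathfrak{X}$ and Lemma \ref{Admissible formal blow-up and affine opens}, Lemma \ref{Open covers and formal models} (or Corollary \ref{Open covers and formal models 2}) to the affine case; in the affine case $\mathfrak{X}=\Spf(A)$ with $A$ topologically of finite type means $A=R\langle T_1,\dots,T_m\rangle/\mathfrak{a}$, so $f$ is determined by the images of the $T_i$ in $\mathcal{O}_{\mathfrak{Z}_\eta^{\ad}}(\mathfrak{Z}_\eta^{\ad})$; these images are power-bounded after clearing a power of $\varpi$, and multiplying the generators defining a blow-up by a power of $\varpi$ one arranges them to lie in the relevant completed affine blow-up algebra, which is then a formal model of the target rational subspace. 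One then glues as in Lemma \ref{Fullness}, using that admissible formal blow-ups of a qcqs formal scheme compose (\cite{FK}, Ch.~II, Proposition 1.1.10) and can be dominated by a common one (\cite{FK}, Exercise II.1.1). The isomorphism statement follows by applying the construction to $f^{-1}$ as well and invoking faithfulness.

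For essential surjectivity: every adic space $X$ of finite type over $S$ is qcqs, and admits a finite affinoid open cover $X=\bigcup U_i$ with $U_i=\Spa(A_i,A_i^+)$ where $A_i$ is of finite type over $R[\varpi^{-1}]$; write $A_i$ as a quotient of $R[\varpi^{-1}]\langle T_1,\dots,T_{m_i}\rangle$, let $B_i$ be the image of the corresponding map $R\langle T_1,\dots,T_{m_i}\rangle\to A_i$ (a $\varpi$-torsion-free topologically finitely generated $R$-algebra with $B_i[\varpi^{-1}]=A_i$), so $\Spf(B_i)$ is a $\varpi$-torsion-free formal $R$-model of finite type of $U_i$. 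Then induct on the number of charts exactly as in Theorem \ref{Existence of formal models}, but now using Lemma \ref{Open covers and formal models} and the full-faithfulness established above (the finite-type analog of Lemma \ref{Fullness}) to glue formal models of $U$ and $V$ along a formal model of $U\cap V$ after passing to suitable admissible formal blow-ups; since admissible formal blow-ups of finite-type formal $R$-schemes are again finite-type (blow-up algebras of finitely generated ideals are finitely generated, \cite{Stacks}, Tag 052P, and this is preserved by $\varpi$-adic completion), the glued object lands in category (1). I expect the main obstacle to be the gluing/fullness step in the non-affine finite-type setting: one must be careful that after blowing up, the charts one glues remain \emph{of finite type} over $R$ (not merely rig-sheafy), and that the blow-ups chosen on overlaps extend compatibly — but both points are handled by the cited Fujiwara–Kato results on admissible formal blow-ups (\cite{FK}, Ch.~II, \S1) together with Lemma \ref{Admissible formal blow-up and affine opens}, so no genuinely new input beyond the toolkit already assembled in Sections \ref{sec:formal blow-ups}--\ref{sec:main results} is needed; the theorem is essentially a ``restriction to the finite-type locus'' of Theorem \ref{Analog of Raynaud theory}, with the simplification that normalization never has to be invoked because finite-type uniform rings need not be replaced by their integral closures to obtain a model.
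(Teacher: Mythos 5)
Your overall skeleton (faithfulness via Lemma \ref{Faithfulness}, reduction of fullness to the affine case via the covering argument of Lemma \ref{Fullness}, essential surjectivity by the induction/gluing of Theorem \ref{Existence of formal models}) agrees with the paper, but the heart of the theorem is the affine fullness step, and there your mechanism has a genuine gap. You propose: clear denominators so that the images $b_i=\varpi^N\varphi(T_i)$ lie in $B_0=\mathcal{O}_{\mathfrak{Z}}(\mathfrak{Z})$, blow up an ideal of the form $(\varpi^N,b_1,\dots,b_n)$, and observe that on the chart where $\varpi^N$ generates the blown-up ideal the elements $\varphi(T_i)$ lie in the completed affine blow-up algebra, ``which is then a formal model of the target.'' But that only defines the morphism to $\mathfrak{X}$ on \emph{one chart} of the blow-up. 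The other charts $\mathfrak{U}_i$ (where $b_i$ generates) have generic fibers contained in the generic fiber of the $\varpi^N$-chart, since $\vert\varphi(T_i)(x)\vert\le1$; but containment of generic fibers does not give containment of the formal charts themselves, so you have not produced a morphism defined on an entire admissible formal blow-up of $\mathfrak{Z}$, which is what a roof in the localized category requires. This can be repaired — e.g.\ one can use Corollary \ref{Center maps are surjective} to show that $\varpi^N/b_i$, having value $1$ at every point of the generic fiber of the $\varpi$-torsion-free chart $\mathfrak{U}_i$, avoids every open prime and hence is a unit, so $\varphi(T_i)\in\mathcal{O}(\mathfrak{U}_i)$ as well — but no such argument appears in your sketch, and your claim that ``no genuinely new input'' is needed is exactly where the proof is missing.

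The paper's fullness argument is different and avoids blowing up $\mathfrak{Z}$ altogether: since the morphism is one of finite-type adic spaces over $\Spa(R[\varpi^{-1}],R)$, Huber's structure result (\cite{Huber2}, Lemma 3.5(iii)) gives topologically finite type rings of definition $A_0\subseteq A$, $B_0\subseteq B$ with $A^+=\overline{A_0}$, $B^+=\overline{B_0}$; then $\varphi(A_0)\subseteq B^+=\overline{B_0}$, so $C_0=B_0[\varphi(f_1),\dots,\varphi(f_n)]$ is integral, hence \emph{finite}, over $B_0$, and Lemma \ref{Finite formal modifications are admissible formal blow-ups} (the generalization of \cite{BL1}, Lemma 4.5, proved precisely for this purpose) shows that $\Spf(C_0)\to\Spf(B_0)$ is itself an admissible formal blow-up; $\Spf(C_0)\to\Spf(A_0)$ is then the desired model. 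You never invoke Lemma \ref{Finite formal modifications are admissible formal blow-ups}, which is the key new ingredient of this theorem, and you only gesture at the Huber finite-type structure of the pairs (``one checks the integral-closure condition matches''), which is what both in the paper's argument and in any repaired version of yours guarantees integrality of the pulled-back generators over the model. Two smaller points: citing Lemma \ref{Generic fibers of sheafy formal schemes} in step (i) is off, since that lemma assumes local sheafiness of $\Spa(A,A)$ rather than rig-sheafiness (the affine generic fiber $\Spa(A[\varpi^{-1}],\overline{A})$ is available directly from the construction in Section \ref{sec:generic fiber}); and in the essential surjectivity step your claim that $\Spf(B_i)$ with $B_i$ the image of $R\langle T_1,\dots,T_{m_i}\rangle$ is a model of $U_i$ again needs $A_i^+=\overline{B_i}$, i.e.\ the same Huber Lemma 3.5(iii) input, which should be stated rather than assumed.
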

\begin{proof}Faithfulness of the functor is a special case of Lemma \ref{Faithfulness}. To prove that the functor is full, let $\mathfrak{X}$ and $\mathfrak{Z}$ be locally rig-sheafy quasi-separated $\varpi$-torsion-free adic formal $R$-schemes topologically of finite type and let $f: \mathfrak{Z}_{\eta}^{\ad}\to \mathfrak{X}_{\eta}^{\ad}$ be a morphism of adic spaces over $\Spa(R[\varpi^{-1}], R)$ between their generic fibers. By the same argument as in the second paragraph of the proof of Lemma \ref{Fullness}, we may assume that $\mathfrak{X}$ and $\mathfrak{Z}$ are affine. Recall that $\mathfrak{Z}_{\eta}^{\ad}$ and $\mathfrak{X}_{\eta}^{\ad}$ are both of finite type over $\Spa(R[\varpi^{-1}], R)$. It then follows from \cite{Huber2}, Lemma 3.5(iii), and the definition of a quotient map between Huber pairs that there exists a ring of definition $A_{0}$ of $A=\mathcal{O}_{\mathfrak{X}_{\eta}^{\ad}}(\mathfrak{X}_{\eta}^{\ad})$ (respectively, a ring of definition $B_{0}$ of $B=\mathcal{O}_{\mathfrak{Z}_{\eta}^{\ad}}(\mathfrak{Z}_{\eta}^{\ad})$) which is topologically of finite type over $R$ and satisfies $A^{+}=\overline{A_{0}}$ (respectively, $B^{+}=\overline{B_{0}}$). Consider the continuous ring map \begin{equation*}\varphi: A\to B\end{equation*}induced by $f$ and let $f_1,\dots, f_n\in A_{0}$ be topological generators of $A_{0}$ as an $R$-algebra, i.e., $A_{0}=R\langle f_1,\dots, f_n\rangle$. Set \begin{equation*}C_{0}=B_{0}[\varphi(f_1),\dots,\varphi(f_n)]\subseteq B.\end{equation*}Since \begin{equation*}\varphi(A_{0})\subseteq \varphi(A^{+})\subseteq B^{+}=\overline{B_{0}},\end{equation*}the subring $C_{0}$ is contained in $\overline{B_{0}}$. Consequently, $C_{0}$ is integral over $B_{0}$. Since $C_{0}$ is a finitely generated $B_{0}$-algebra by definition, it is a finite $B_{0}$-algebra. By Lemma \ref{Finite formal modifications are admissible formal blow-ups}, this entails that the morphism $\Spf(C_{0})\to \Spf(B_{0})$ is an admissible formal blow-up. But by construction, the morphism of formal schemes \begin{equation*}f_{0}: \Spf(C_{0})\to \Spf(A_{0})\end{equation*}induced by the restriction of $\varphi$ to $A_{0}$ induces the given morphism $f$ on the generic fiber. This shows that the functor in the theorem is indeed full. The essential surjectivity can be deduced from this by the same argument as in the proof of \cite{BL1}, Theorem 4.1(e) (i.e., by the same argument as in the proof of Theorem \ref{Existence of formal models} above).\end{proof}

\bibliographystyle{plain} 
\bibliography{Bib}

\textsc{Department of Mathematics, University of California San Diego, La Jolla, CA 92093, United States} \newline 

E-mail address: \textsf{ddine@ucsd.edu}

\end{document}